\def\F{\mathbf{F}}
\def\PP{\mathbf{P}}
\DeclareMathOperator{\car}{{\rm char}}
\DeclareMathOperator{\Jac}{{\rm Jac}}
\DeclareMathOperator{\ord}{ord}
\def\A{\mathbf{A}}
\def\P{\mathbf{P}}
\newcommand{\class}[1]{\left< #1 \right>}
\newcommand{\Ino}{\operatorname{{Ino}}}
\newcommand{\II}{\mathrm{II}}
\newcommand{\III}{\mathrm{III}}
\newcommand{\I}{\mathrm{I}}
\newcommand{\IV}{\mathrm{IV}}
\def\K3{$K$3}
\title{Higher modularity of elliptic curves over function fields}
\author{Adam Logan and Jared Weinstein, with an interlude by Masato Kuwata}
\begin{document}
\maketitle

\abstract{We investigate a notion of ``higher modularity'' for elliptic curves over function fields.   Given such an elliptic curve $E$ and an integer $r\geq 1$, we say that $E$ is $r$-modular when there is an algebraic correspondence between a stack of $r$-legged shtukas, and the $r$-fold product of $E$ considered as an elliptic surface.  The (known) case $r=1$ is analogous to the notion of modularity for elliptic curves over $\Q$.   Our main theorem is that if $E/\F_q(t)$ is a nonisotrivial elliptic curve with tame fibers whose conductor has degree 4, then $E$ is 2-modular.   Ultimately, the proof uses properties of K3 surfaces.   Along the way we prove a result of independent interest:  A K3 surface admits a finite morphism to a Kummer surface attached to a product of elliptic curves if and only if its Picard lattice is rationally isometric to the Picard lattice of such a Kummer surface.}

\tableofcontents

\section{Introduction}\label{sec:introduction}

Let $K$ be a function field over a finite field $\F_q$, and let $E/K$ be an elliptic curve.   In this article we investigate a notion of ``higher modularity'' for $E/K$.

\subsection{Analytic modularity, geometric modularity}

%Let $X$ be a nonsingular projective curve over a finite field $\F_q$,  with function field $K$.   For an elliptic curve $E/K$,  we let $\E\to X$ be the corresponding minimal elliptic surface.   

Let $E$ be an elliptic curve over the rational numbers $\Q$, with conductor $N$.  We review what it means for $E/\Q$ to be {\em modular}.  Following \cite{UlmerAnalogies}, we might distinguish between two notions of modularity for $E/\Q$:
\begin{enumerate}
\item (Analytic modularity) There exists a cuspidal holomorphic form $f$ of weight $2$ and level $\Gamma_0(N)$ such that $L(f, \chi,s)=L(E,\chi,s)$ for all Dirichlet characters $\chi$.
\item (Geometric/motivic modularity) There exists a non-constant morphism $X_0(N)\to E$ defined over $\Q$, where $X_0(N)$ is the modular curve of level $\Gamma_0(N)$.
\end{enumerate}
The two notions are equivalent.  If $E$ is geometrically modular, the required cusp form $f$ may be found by pulling back a nonzero holomorphic differential form on $E$ through $X_0(N)\to E$.  Conversely, if $E$ is analytically modular, there exists an elliptic curve quotient $\Jac X_0(N)\to E'$ corresponding to its cusp form $f$.  The equality of $L$-functions implies that the representations of $\Gal(\overline{\Q}/\Q)$ on the rational $\ell$-adic Tate modules of $E$ and $E'$ are isomorphic.  A theorem of Faltings \cite{FaltingsMordell} implies that $E$ and $E'$ are isogenous, from which we deduce that $E$ is geometrically modular.

In any case, the conjecture of Shimura-Tanayama is a theorem \cite{BCDT};  every elliptic curve $E/\Q$ is analytically modular, hence geometrically modular.  An important consequence is that the Heegner point on $X_0(N)$ for an imaginary quadratic field $F$ satisfying the Heegner hypothesis relative to $N$ gives rise to a Heegner point $P_F\in E(F)$.  The celebrated theorem of Gross-Zagier \cite{GrossZagier} relates the height of $P_F$ to the derivative $L'(E,F,1)$.  The theorem of Kolyvagin \cite{Kolyvagin} shows that if $P_F$ is not torsion, then it generates a finite-index subgroup of $E(F)$, and furthermore one gets finiteness of the Shafarevich-Tate group.  

Now suppose instead $E$ is an elliptic curve over a function field $K$ with field of scalars $\F_q$.   We assume that $E$ is nonisotrivial, meaning that $j(E)\not\in\F_q$.   Then $L(E,\chi,s)$ is entire for every Hecke character $\chi$.  (In fact $L(E,\chi,s)$ is a polynomial in $q^{-s}$.)  The analogue of analytic modularity for~$E$ is the condition that there exists a cuspidal automorphic representation $\pi$ of $\GL_2$ over $K$, such that $L(\pi,\chi,s)=L(E,\chi,s)$ for all $\chi$.  The analytic modularity of $E$ follows from a theorem of Deligne together with Weil's converse theorem (see \cite{UlmerAnalogies} for a discussion).

The analogue of geometric modularity for $E$ is the subject of this paper.  The interest here hinges on the correct analogue for the modular curve.  Let $X/\F_q$ be the nonsingular projective curve with function field $K$.  
Let $N$ be the conductor of $E$, with support $\Sigma\subset \abs{X}$.  As a natural starting point, we may consider the Drinfeld modular curve $\DrMod(\Gamma_0(N_f);\infty)$.  Here $\infty\in\abs{X}$ is a place of split multiplicative reduction for $E$;  thus $N=N_f+(\infty)$ for a divisor $N_f$ which is prime to $\infty$.  If $A$ is the ring of functions on $X$ regular away from $\infty$, then $\DrMod(\Gamma_0(N_f);\infty)$ parametrizes Drinfeld $A$-modules of rank 2 endowed with $\Gamma_0(N_f)$-structure.  There is a morphism $\DrMod(\Gamma_0(N_f);\infty)\to X\smallsetminus \Sigma$ (sending a Drinfeld module to its ``characteristic''), whose fibers are smooth curves.  Let $\DrMod(\Gamma_0(N_f);\infty)_K$ be the generic fiber of this morphism, and let $\DrMod(\Gamma_0(N_f);\infty)_{\overline{K}}$ be its base change to an algebraic closure $\overline{K}/K$.

Drinfeld's computation \cite{DrinfeldEllipticModules} of the cohomology of Drinfeld modular curves shows that $H^1(E_{\overline{K}})$ appears as a summand of $H^1(\DrMod(\Gamma_0(N_f);\infty)_{\overline{K}})$.   (We mean \'etale cohomology with $\Q_\ell$-coefficients, considered as a representation of the Galois group of $K$.) We may once again apply Faltings' theorem to conclude that there is a nonconstant morphism $\DrMod(\Gamma_0(N_f);\infty)_K\to E$.  If $K'/K$ is a separable quadratic extension satisfying the Heegner hypothesis relative to $E$, there is a Heegner point $P_{K'}\in E(K')$, whose height is related to the derivative $L'(E/K',1)$, see~\cite[Remark 1.5]{YunZhangShtukasII}.  If $L'(E/K',1)\neq 0$, then $P_{K'}$ is not a torsion point.  But also we have that the rank of $E(K')$ is $\leq 1$,  since the ``easy inequality'' of the Birch and Swinnerton-Dyer conjecture is a theorem for function fields.  We conclude immediately that the rank is exactly 1, and so the conjecture of Birch and Swinnerton-Dyer holds for $E'/K$.

\subsection{Stacks of shtukas, and the definition of higher modularity}

The notion of geometric modularity in the function field setting may be generalized far beyond the Drinfeld modular curve.  Namely, for $r=1,2,\dots$ we have the moduli stack of $r$-legged shtukas of rank 2, which lies over the $r$-fold product $X^r$ over $\F_q$.  These were introduced by Drinfeld \cite{DrinfeldLanglandsConjectureForGL2} in the case $r=2$ to establish the Langlands correspondence for $\GL_2$.  For general $r$, these are examples of the spaces used by V. Lafforgue \cite{VincentLafforgueChtoucas} to prove the automorphic-to-Galois direction of the Langlands correspondence for general reductive groups.  

The spaces of shtukas relevant to us are relative to the group $G=\PGL_2$.   To define them, we need an effective divisor $N$ of $X$, with support $\Sigma$.  We also need a subset $\Sigma_\infty\subset \Sigma$, such that each $v\in \Sigma_\infty$ appears with multiplicity 1 in $N$.  
Finally, we need an integer $r\geq 1$ satisfying the parity condition $\#\Sigma_\infty\equiv r \pmod{2}$.  Following \cite{YunZhangShtukasII}
 we let
\[ \lambda_r\from \Sht^r_G(\Gamma_0(N);\Sigma_\infty)\to (X\smallsetminus \Sigma)^r  \]
denote the moduli stack of $G$-shtukas with $\Gamma_0(N)$-level structure,  with $r$ ``moving legs'' in $X\smallsetminus \Sigma$, and ``fixed legs'' at the places in $\Sigma_\infty$ (these are analogous to the archimedean places in the number field setting).   These legs are ``minuscule''; i.e., they are the type considered by Drinfeld in his original definition.  We save the precise definition of $\Sht^r_G(\Gamma_0(N);\Sigma_\infty)$ for \S\ref{sec:shtukareview}.  It is a Deligne-Mumford stack, generically smooth of relative dimension $r$ over $(X\smallsetminus\Sigma)^r$.  When $r$ is even and $\Sigma_\infty=\emptyset$ we simply write $\Sht_G^r(\Gamma_0(N))$.  

For example,  when $r=0$, $\Sht^0_G(\Gamma_0(N))$ is the discrete set $G(K)\backslash G(\mathbf{A}_K)/\Gamma_0(N)$.   When $r=1$ and $\Sigma_\infty=\set{\infty}$ is a singleton, one of the connected components of $\Sht^1_G(\Gamma_0(N);\set{\infty})$ is isomorphic over $X\smallsetminus \Sigma$ to a quotient of $\DrMod(\Gamma_0(N_f);\infty)$;  this is the shtuka correspondence \cite{MumfordAnAlgebroGeometricConstruction}.  

%The Deligne-Mumford stack $\Sht_G^r(\Gamma_0(N);\Sigma_\infty)$ is generally not of finite type, but it may be expressed as an increasing union:
%\[ \Sht_G^r(\Gamma_0(N);\Sigma_\infty)=\bigcup_{\omega\in\Omega} \Sht_G^r(\Gamma_0(N);\Sigma_\infty)^\omega,\]
%where each $\Sht_G^r(\Gamma_0(N);\Sigma_\infty)^\omega$ is a finite type dense open substack, and $\Omega$ is a directed set.
%For instance we may take $\Omega=\set{0,1,2,\dots}$, and for each $\omega\in \Omega$ we may define $\Sht_G^r(\Gamma_0(N);\Sigma_\infty)^{\omega}$ to be the substack where the vector bundle underlying the shtuka has instability index $\leq \omega$, cf. \cite[7.1.1]{YunZhangShtukas}.

We can now start to describe our notion of ``higher modularity'' for our elliptic curve $E/K$.   Let $U=X\smallsetminus \Sigma$, and let $\E\to U$ be the family of elliptic curves with generic fiber $E$.  The idea goes like this:  Consider the family $h^1(\E)$ of motives over $U$.  Let $h^1(\E)^{\boxtimes r}$ be its $r$th external tensor power, meaning the family of motives over $U^r$ whose fiber over $(s_1,\dots,s_r)$ is $h^1(\E_{s_1})\otimes \cdots \otimes h^1(\E_{s_r})$.  For $E$ to be $r$-modular should mean that $h^1(\E)^{\boxtimes r}$ is a quotient of the (compactly supported) cohomology of a stack of $r$-legged shtukas.   
 
The precise condition involves the existence of a degree 0 algebraic correspondence between $\Sht^r:=\Sht^r_G(\Gamma_0(N);\Sigma_\infty)$ and $\E^r$.  By a degree 0 algebraic correspondence, we mean a $\Q$-linear combination of irreducible closed substacks
\[ Z\subset \Sht^r \times_{U^r} \E^r\]
such that $Z\to U^r$ is proper with $r$-dimensional fibers.  

Let $\eta$ (resp., $\eta_r$) be the generic point of $X$ (resp., $X^r$), and let $\overline{\eta}_r\to \eta_r$ be an algebraic closure lying over $\overline{\eta}^r\to \eta^r$. 
Let $\ell$ be a prime not dividing $q$.  The $\ell$-adic cycle class of $Z_{\overline{\eta}_r}$ gives rise to a map
\begin{equation}
\label{EqPZ}
 p_Z\from H^r_c(\Sht^r_{\overline{\eta}_r},\Q_\ell)\to \bigotimes_{i=1}^r H^1(\E_{\overline{\eta}},\Q_\ell)
 \end{equation}
(Explanation:  the $\ell$-adic cycle class of $Z_{\overline{\eta}_r}$ belongs to $H^{2r}_c(\Sht^r_{\overline{\eta}_r}\times (\E_{\overline{\eta}})^r,\Q_\ell)(r)$, which maps via the K\"unneth isomorphism to $H^r_c(\Sht^r_{\overline{\eta}_r},\Q_\ell)(r)\otimes \bigotimes_{i=1}^r H^1(\E_{{\overline{\eta}}},\Q_\ell)$.  Poincar\'e duality identifies the latter with the space of maps as in \eqref{EqPZ}.)  For $Z=\sum_j a_j Z_j$ a formal $\Q$-linear combination of correspondences $Z_j$ as above, we define $p_Z=\sum_j a_j p_{Z_j}$. 

\begin{defn}\label{DefnRModularity} Let $K/\F_q$ be a function field, and let $E/K$ be a nonisotrivial elliptic curve whose conductor $N$
has support $\Sigma$.  Let $U=X\smallsetminus \Sigma$, and let $\E\to U$ be the family of elliptic curves with generic fiber $E$.  We say that $E/K$ is {\em $r$-modular} if for some $\Sigma_\infty\subset \Sigma$ there exists a degree 0 algebraic correspondence $Z$ between $\Sht^r_G(\Gamma_0(N);\Sigma_\infty)$ and $\E^r$ such that the map $p_Z$ of \eqref{EqPZ} is surjective.
%Let $\Sigma_\infty\subset\Sigma$ be a subset of places appearing with multiplicity 1 in $N$, satisfying $\#\Sigma_\infty\equiv r\pmod{2}$.  We say that $E/K$ is {\em $(r,\Sigma_\infty)$-modular} if there exists a degree 0 algebraic correspondence $Z$ between $\Sht^r_G(\Gamma_0(N);\Sigma_\infty)$ and $\E^r$ such that for all geometric points $s=(s_1,\dots,s_r)$ of $U^r$, the map $p_Z(s)$ of \eqref{EqPZ} is surjective.
%We say that $E/K$ is {\em $r$-modular} if it is $(r,\Sigma_\infty)$-modular for all $\Sigma_\infty$ satisfying the hypotheses above.  
\end{defn}

%
%
%\begin{defn}\label{DefnRModularity} Let $X/\F_q$ be a curve, let $\E\to X$ be an elliptic fibration of conductor $N$ with support $\Sigma$.  Let $r\geq 1$.   Let $\Sigma_\infty\subset \Sigma$ be a subset of places appearing with multiplicity 1 in $N$, satisfying $\#\Sigma_\infty\equiv r\pmod{2}$.  We say that $\E\to X$ is {\em $(r,\Sigma_\infty)$-modular} if there exists a diagram of finite-type Deligne-Mumford stacks of dimension $r$ over $\eta_r$:
%\begin{equation}
%\label{EqCorrespondenceBetweenShtukaAndEr}
%\xymatrix{
%& & Y \ar[dl] \ar[dr] & \\
%\Sht^r_G(\Gamma_0(N);\Sigma_\infty)^\omega_{\eta_r} \ar[r] & Z & &\E^r_{\eta_r} 
%}
%\end{equation}
%Here $\Sht^r_G(\Gamma_0(N);\Sigma_\infty)^\omega_{\eta_r}$ is a finite-type dense open substack of $\Sht^r_G(\Gamma_0(N);\Sigma_\infty)_{\eta_r}$.  
%It is required that $Y$ and $Z$ are proper over $\eta_r$, and that the induced map on middle cohomology
%\begin{equation}
%\label{EqInducedMapOnCoho}
% H^r_c(\Sht^r_G(\Gamma_0(N);\Sigma_\infty)^\omega_{\overline{\eta}_r},\Q_\ell) \to H^r(\E^r_{\overline{\eta}_r},\Q_\ell) \stackrel{\eqref{EqMapFromHrToH1}}{\to} H^1(\E_{\overline{\eta}},\Q_\ell)^{\boxtimes r}  
% \end{equation}
%is nonzero (equivalently, is a surjection, due to irreducibility of the term on the right).  We say that $\E\to X$ is {\em $r$-modular} if it is $(r,\Sigma_\infty)$-modular for all $\Sigma_\infty$ satisfying the hypotheses above.  
%\end{defn}

\begin{rmk} When $r$ is odd, we must assume that $E/K$ has at least one place of multiplicative reduction in order to have an interesting notion of $r$-modularity.  
\end{rmk} 

%\begin{rmk} \label{RmkTruncation}
%To check that $E/K$ is $(r,\Sigma_\infty)$-modular it suffices to find an algebraic correspondence $Z^\omega$ between $\Sht^r_G(\Gamma_0(N);\Sigma_\infty)$ and $\E^r$ for some $\omega\in \Omega$, such that the induced map $p_{Z^\omega}$ on cohomology satisfies the appropriate condition.  Indeed if $Z^\omega$ is given, one can define $Z$ as its closure in $\Sht^r_G(\Gamma_0(N);\Sigma_\infty)\times_{U^r} \E^r$.  
%\end{rmk}

\begin{conj}  \label{ConjMain} A nonisotrivial elliptic curve $E/K$ is $r$-modular for all $r\geq 1$.
\end{conj}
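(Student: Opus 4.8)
The plan is to reduce the conjecture to a statement of Tate type, and to supply that statement from analytic modularity together with V. Lafforgue's description of the cohomology of shtuka stacks. Since $E/K$ is nonisotrivial it is analytically modular, so there is a cuspidal automorphic representation $\pi$ of $\PGL_2$ over $K$ (obtained from the $\GL_2$-representation attached to $E$ after the appropriate central twist) whose $L$-function matches that of $E$; in particular the $2$-dimensional $\ell$-adic Galois representation $\rho_\pi$ is isomorphic, up to twist, to $H^1(E_{\overline{K}})$. The first task is to locate $\bigotimes_{i=1}^r H^1(\E_{s_i},\Q_\ell)$ inside the cohomology of $\Sht^r := \Sht^r_G(\Gamma_0(N);\Sigma_\infty)$ as a Galois representation varying over $U^r$.

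For this I would use the Hecke and excursion action of \cite{VincentLafforgueChtoucas} on $H^*_c(\Sht^r)$, restricted to the $\pi$-isotypic component cut out by the Satake parameters of $\pi$ at the unramified places. Geometric Satake attaches to each of the $r$ minuscule legs a copy of the standard $2$-dimensional representation of the dual group, so that over a geometric point $s=(s_1,\dots,s_r)$ of $U^r$ the $\pi$-part of $H^r_c(\Sht^r_s,\Q_\ell)$ is, as a Galois module, an $r$-fold tensor construction on $\rho_\pi$. Matching this with $\rho_\pi\cong H^1(E_{\overline{K}})$ identifies it with $\bigotimes_i H^1(\E_{s_i},\Q_\ell)$ up to twist, and these identifications are compatible as $s$ varies, i.e.\ they upgrade to an isomorphism of lisse $\Q_\ell$-sheaves over a dense open of $U^r$. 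At this point $h^1(\E)^{\boxtimes r}$ is a subquotient of $H^r_c(\Sht^r)$ purely at the level of Galois representations.

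The remaining, and decisive, step is to realize the resulting Galois-equivariant surjection $p_Z(s)\from H^r_c(\Sht^r_s,\Q_\ell)\twoheadrightarrow \bigotimes_i H^1(\E_{s_i},\Q_\ell)$ by an actual algebraic correspondence $Z$. Equivalently, the graph of this surjection defines a Tate class in $H^{2r}(\Sht^r_s\times\E^r_s,\Q_\ell)(r)$, and one must show that this class is algebraic; this is exactly an instance of the Tate conjecture, and it is where I expect the real difficulty to lie. For general $r$ the Tate conjecture for the relevant high-dimensional varieties is open, so in full generality the statement must remain conjectural. For $r=2$, however, the problem becomes tractable: fiberwise $\E_{s_1}\times\E_{s_2}$ is an abelian surface whose Kummer surface is a K3, the factor $H^1(\E_{s_1},\Q_\ell)\otimes H^1(\E_{s_2},\Q_\ell)$ sits inside its transcendental $H^2$, and the Tate conjecture is a theorem for K3 surfaces over finite fields. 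I would then produce the correspondence explicitly by exhibiting a finite morphism between the shtuka-theoretic K3 and the Kummer surface of $\E_{s_1}\times\E_{s_2}$, using the criterion proved in this paper --- that such a finite morphism exists exactly when the Picard lattices are rationally isometric --- which for conductor of degree $4$ can be verified directly. Globalizing these fiberwise cycles into a single correspondence $Z$ over $U^2$ and checking the relative-dimension-$r$ (degree $0$) condition would then complete the $r=2$ case that the paper establishes, while the general conjecture awaits the Tate conjecture in higher dimension.
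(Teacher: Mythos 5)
The statement you are asked about is a conjecture, and the paper does not prove it; what the paper does in \S 1.3 is exactly the reduction you describe: analytic modularity plus the Lafforgue/Xue description of $H^r_c$ of the shtuka stack (Proposition \ref{PropCohoOfShtukaSpaceIntro}) produces a $\pi_1(U^r,\overline{\eta}_r)$-invariant class in $H^r(\Sht^r_{\overline{\eta}_r},\Q_\ell)\otimes H^1(\E_{\overline{\eta}},\Q_\ell)^{\boxtimes r}(r)$, whose algebraicity is precisely an instance of the Tate conjecture. Your first two paragraphs and your conclusion that the general case ``must remain conjectural'' therefore match the paper's own stance, and your location of $\sigma^{\boxtimes r}$ via excursion operators is the same mechanism as Proposition \ref{PropCohoOfShtukaSpace}.

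Your sketch of the $r=2$ case, however, has two concrete problems. First, there is no ``shtuka-theoretic K3'': for $q$ large the fibers of $\Sht^2_G(\Gamma_0(N);\Sigma_\infty)\to U^2$ are elliptic surfaces of general type (Theorem \ref{ThmPresentationOfShtukas} gives degree $(2,2,q)$ or $(2,2,q+1)$ hypersurfaces), so there is no direct finite morphism from them to a Kummer surface. The paper's essential new input is the coincidence map, which factors $\Sht^2$ through $\Sht^1\times U^2$ over the space of $3$-legged coincidences and, combined with $1$-modularity, produces the auxiliary family $\caZ^2(\E)\to U^2$ whose fibers \emph{are} K3 surfaces of rank $18$; only then does the lattice criterion (Theorem \ref{ThmIsogenyBetweenK3s}) apply. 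Your proposal omits this step entirely, and without it the K3 machinery has nothing to act on. Second, invoking ``the Tate conjecture for K3 surfaces over finite fields'' does not help: that theorem concerns divisor classes on a single K3, whereas the class to be algebraized lives in $H^4$ of the fourfold $\Sht^2_s\times\E^2_s$ in codimension $2$, and the Tate conjecture for codimension-$2$ cycles on such products is not known. The paper avoids this by constructing the correspondence as an explicit finite morphism $\caZ^2(\E)\to\Km(\E^2)$ (via Inose surfaces and $2$- and $3$-neighbor steps), never appealing to any case of the Tate conjecture.
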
 

The main goal of this article is to give the first examples of 2-modular and 3-modular elliptic curves.  Our results apply to what is arguably the simplest class of elliptic fibrations, namely the {\em extremal rational elliptic fibrations} $\E\to X$.
Here ``rational'' means that $\E$ is birational to $\P^2$ (and consequently $X\isom \P^1$), and ``extremal'' means that $\E\to X$ has Mordell-Weil rank 0.   (The generic rational elliptic fibration has Mordell-Weil rank 8.)   For such a fibration we necessarily have $\deg N=4$ and $L(E,s)=1$ identically.  Conversely, if an elliptic fibration $\E\to \P^1$ has degree 4 conductor, then it is extremal rational.  

The extremal rational elliptic fibrations $\E\to X$ over a field of characteristic $0$ are classified in \cite{MirandaPersson}.   The number of singular geometric fibers of a extremal rational elliptic fibration is either two, three, or four, with two occurring only if $\E\to X$ is isotrivial or if it has wild fibers (which happens only in characteristics 2 and 3).   A nonisotrivial extremal rational elliptic fibration always has at least one multiplicative fiber, and so there is always a nontrivial notion of higher modularity for $\E$.

\begin{thm}\label{ThmMain2Modularity} Let $\E\to X$ be a nonisotrivial tame extremal rational elliptic fibration with generic fiber $E$.   Then $E$ is 2-modular. 
\end{thm}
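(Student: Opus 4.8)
The plan is to produce, for every parity-admissible $\Sigma_\infty$ (so $\#\Sigma_\infty$ even), a single degree~$0$ algebraic correspondence $Z\subset \Sht^2\times_{U^2}\E^2$ and to verify that the induced map $p_Z(s)$ is surjective at one geometric point of $U^2$; this already suffices for all points. Indeed, over the geometrically connected base $U^2$ the target $H^1(\E_{s_1},\Q_\ell)\otimes H^1(\E_{s_2},\Q_\ell)$ is a lisse sheaf of rank $4$ (the fibers $\E_{s_i}$ have good reduction on $U$), and the image of $p_Z$, being stable under monodromy, is a sub-local-system of constant rank; hence $p_Z(s)$ is surjective at every geometric point once it is surjective at one. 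I would then recall, via Drinfeld's cohomological computation applied leg by leg together with V.~Lafforgue's spectral decomposition, that the $E$-isotypic part of $H^2_c(\Sht^2_s,\Q_\ell)$ is abstractly isomorphic as a Galois module to $H^1(\E_{s_1},\Q_\ell)\otimes H^1(\E_{s_2},\Q_\ell)$. The entire content of $2$-modularity is thus to realize this abstract isomorphism by an algebraic cycle.

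The bridge to surfaces is that $H^1(\E_{s_1},\Q_\ell)\otimes H^1(\E_{s_2},\Q_\ell)$ is, up to a Tate twist, the transcendental part of $H^2$ of the Kummer surface $\Kum(\E_{s_1}\times\E_{s_2})$: the involution $[-1]$ acts trivially on $H^2$ of the abelian surface, the two wedge summands $\wedge^2 H^1(\E_{s_i})$ are Tate classes, and the sixteen exceptional curves are algebraic, so at a point $(s_1,s_2)$ where $\E_{s_1}$ and $\E_{s_2}$ are nonisogenous the transcendental lattice is exactly $H^1(\E_{s_1})\otimes H^1(\E_{s_2})$. By the reduction above I am free to verify surjectivity at such a generic point. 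I would next attach to the pair $(\E_{s_1},\E_{s_2})$ a K3 surface $Y_s$ whose $E$-isotypic cohomology is matched, by an algebraic correspondence, with that of $\Sht^2_s$; it is here that the extremal rational hypothesis enters, forcing $\deg N=4$ and giving $Y_s$ Picard number~$18$ with a Picard lattice that can be read off from the Miranda--Persson list of singular fibers (the role of the appendix).

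The heart of the matter is then to invoke the result stated in the abstract: $Y_s$ carries a finite morphism $f_s\from Y_s\to \Kum(\E_{s_1}\times\E_{s_2})$ as soon as $\operatorname{Pic}(Y_s)$ and $\operatorname{Pic}(\Kum(\E_{s_1}\times\E_{s_2}))$ are rationally isometric. Granting the lattice computation, the Gysin pushforward $f_{s*}$ is surjective on $H^2$ and restricts to an isomorphism on transcendental parts; composing it with the standard degree~$2$ correspondence between $\Kum(\E_{s_1}\times\E_{s_2})$ and $\E_{s_1}\times\E_{s_2}$ (whose transcendental lattice is again $H^1(\E_{s_1})\otimes H^1(\E_{s_2})$), and with the correspondence identifying the $E$-part of $H^2(Y_s)$ with that of $H^2_c(\Sht^2_s,\Q_\ell)$, produces a cycle on $\Sht^2_s\times(\E_{s_1}\times\E_{s_2})$ whose cohomological action surjects onto $H^1(\E_{s_1})\otimes H^1(\E_{s_2})$. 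This is precisely a surjective $p_Z(s)$ at the chosen point.

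I expect two steps to be the real obstacles. The first is pinning down the shtuka-side surface $Y_s$ and computing its Picard lattice precisely enough---uniformly over the finitely many Miranda--Persson configurations and for each parity-admissible $\Sigma_\infty$---to match the rank-$18$ Kummer lattice; this is where I would lean on the appendix. The second, and more structural, is to upgrade the fiberwise, $\overline{K}$-rational morphism $f_s$ supplied by the lattice criterion into an honest algebraic correspondence over $U^2$ defined over $\F_q$. The relative Kummer family and the degree~$2$ correspondence globalize without trouble, but $f_s$ a priori exists only over an algebraic closure and need not be Galois-stable; I would spread the $f_s$ out over a dense open of $U^2$, take the closure of the resulting cycle, and replace it by a $\Q$-linear combination of its Galois conjugates, checking that this combination still induces a surjection onto the Galois-stable target $H^1(\E_{s_1})\otimes H^1(\E_{s_2})$.
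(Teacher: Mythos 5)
Your outline identifies the right target (the transcendental part of $H^2$ of $\Km(\E_{s_1}\times \E_{s_2})$) and the right lattice-theoretic tool, but it omits the one construction that makes the theorem nontrivial: you never produce an algebraic correspondence between $\Sht^2_G(\Gamma_0(N);\Sigma_\infty)$ and any surface whose cohomology you control. You write that you would ``attach to the pair $(\E_{s_1},\E_{s_2})$ a K3 surface $Y_s$ whose $E$-isotypic cohomology is matched, by an algebraic correspondence, with that of $\Sht^2_s$,'' but that matching correspondence is precisely the content of the theorem; positing it is circular. Note also that the fibers of $\Sht^2$ over $U^2$ are not K3 surfaces: by Theorem \ref{ThmPresentationOfShtukas} they are birational to hypersurfaces of degree $(2,2,q+1)$ or $(2,2,q)$ in $(\P^1)^3$, hence elliptic surfaces of general type for large $q$, so there is no direct lattice-theoretic route from $\Sht^2_s$ to a Kummer surface. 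The paper's bridge is the coincidence map: a rational map $c\from \Sht^2_G(\Gamma_0(N);\Sigma_\infty)\dashrightarrow \Sht^1_G(\Gamma_0(N);\Sigma_\infty')$ fitting into the cartesian square \eqref{EqBigCoincidenceDiagram} over the space of $3$-legged coincidences. Composing $c$ with the known $1$-modularity morphism $\Sht^1\to\E$ yields a dominant rational map $\Sht^2\dashrightarrow \caZ^2(\E)$, where $\caZ^2(\E)$ is the base change of $\E$ by a family of double covers of $\P^1$ ramified at two points; it is $\caZ^2(\E)$, not $\Sht^2$, that is a family of K3 surfaces of Picard rank $18$, and only then does the finite morphism $\caZ^2(\E)\to\Km(\E^2)$ (Theorem \ref{ThmIsogenyFromZtoKm}, proved case by case via the Inose surface) enter. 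Without the coincidence map, or a substitute for it, your argument does not get off the ground.

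Two further points. First, your reduction to a single geometric point via monodromy is not justified: $H^2_c(\Sht^2_s,\Q_\ell)$ is the stalk of a constructible, not lisse, sheaf on $U^2$ (the shtuka fibers are noncompact, non-proper over $U^2$, and vary), so the image of $p_Z$ need not be a sub-local-system of constant rank, and Definition \ref{DefnRModularity} demands surjectivity at every geometric point. The paper instead verifies surjectivity pointwise, using that a dominant morphism from a nonsingular quasi-projective surface onto a projective surface induces a map on $H^2$ whose image contains the transcendental part. Second, the lattice criterion of Theorem \ref{ThmIsogenyBetweenK3s} only supplies a finite morphism to $\Km(E_1\times E_2)$ for \emph{some} pair of nonisogenous elliptic curves; identifying that pair with $\E_{s_1}\times\E_{s_2}$ up to isogeny requires either the explicit computations of \S\ref{sec:extremalrational} or the cohomological argument comparing with Proposition \ref{PropCohoOfShtukaSpace}, neither of which your sketch supplies.
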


In the next two subsections we will (a) explain why one should believe Conjecture \ref{ConjMain}, and (b) sketch the proof of Theorem \ref{ThmMain2Modularity}.

\subsection{Relation to the Tate conjecture}

Here we motivate the definition of higher modularity.  Using analytic modularity (which is a theorem) combined with prior results on the cohomology of stacks of shtukas, we will reduce Conjecture \ref{ConjMain} to a sufficiently strong version of the Tate conjecture.   Let $\eta=\Spec K$ be the generic point of $X$, and let $\overline{\eta}=\Spec \overline{K}$ be a geometric generic point.

We use the normalization of the global Langlands correspondence found in \cite{DrinfeldLanglandsConjectureForGL2}, adapted to the group $G=\PGL_2$.  This is a bijection between isomorphism classes:
\begin{itemize}
\item Cuspidal automorphic representations $\pi$ of $G$ with coefficients in $\overline{\Q}_\ell$, and
\item Irreducible representations $\sigma\from \Gal(\overline{K}/K)\to \GL_2\overline{\Q}_\ell$, such that $\det \sigma=\overline{\Q}_\ell(-1)$.  
\end{itemize}
Suppose that $\pi$ and $\sigma$ correspond.   Let $N$ be the conductor of $\pi$, in the sense that $\dim \pi^{\Gamma_0(N)}=1$, and let $\Sigma$ be the support of $N$.   Let $U=X\smallsetminus \Sigma$.  Then $\sigma$ factors through a representation of $\pi_1(U,\overline{\eta})$ of conductor $N$.  The correspondence is characterized by the property that for all $v\in U$ with residue field $\F_{q_v}$ and Frobenius $\Frob_v$, we have $L(s-1/2,\pi_v)=\det(1-\sigma(\Frob_v) q_v^{-s})^{-1}$.  

\begin{rmk} We have chosen this normalization of the global Langlands correspondence so that the $H^1$ of a nonisotrivial elliptic curve over $K$ corresponds to a cuspidal automorphic representation of $G$.  Under the ``usual'' normalization of the correspondence, the Langlands parameter of an automorphic representation of $G$ is a homomorphism $\Gal(\overline{K}/K)\to \SL_2\overline{\Q}_\ell$.  This is largely an aesthetic choice;  working with $G$ rather than $\GL_2$ saves us certain notational headaches having to do with the center of $\GL_2$.  
\end{rmk}

We now reference some facts about the relation between the cohomology of stacks of shtukas and the global Langlands correspondence.  Let $r\geq 1$, write $\eta^r$ for the $r$-fold product of $\eta$ over $\F_q$, and write $\eta_r$ for the generic point of $X^r$.   Let $\overline{\eta}_r\to\eta_r$ be an algebraic closure lying over $\overline{\eta}^r\to \eta^r$. Then there is a homomorphism 
\begin{equation}
\label{EqGaloisMap}
\pi_1(U^r,\overline{\eta}_r) \to \pi_1(U,\overline{\eta})^r
\end{equation}
The cohomology $H^r_c(\Sht^r_G(\Gamma_0(N);\Sigma_\infty)_{\overline{\eta}_r},\Q_\ell)$ admits commuting actions of $\pi_1(U^r,\overline{\eta}_r)$ and the Hecke algebra $T_0(N)$ for $G$ with level $\Gamma_0(N)$.   The following proposition is along the lines of the Kottwitz conjecture for Shimura varieties.   It appears later as Proposition \ref{PropCohoOfShtukaSpace}.  We warn that it is conditional on the extension of the main results of Xue in \cite{Xue} and \cite{XueFiniteness} to the situation $\Sigma_\infty\neq \emptyset$.  
\begin{prop}  
\label{PropCohoOfShtukaSpaceIntro}  Assume that $\#\Sigma_\infty\leq 1$.  Let $\sigma\from \pi_1(U,\overline{\eta})\to \GL_2\overline{\Q}_\ell$ be an irreducible representation of conductor $N$ with $\det \sigma=\overline{\Q}_\ell(-1)$.   Then as representations of $\pi_1(U^r,\overline{\eta}_r)$, the external tensor power $\sigma^{\boxtimes r}$ appears as a 
%direct summand of
subspace of $H^r_c(\Sht^r_G(\Gamma_0(N);\Sigma_\infty)_{\overline{\eta}_r},\overline{\Q}_{\ell})$.   (Here $\pi_1(U^r,\overline{\eta}_r)$ acts on $\sigma^{\boxtimes r}$ by means of the homomorphism \eqref{EqGaloisMap}.)
\end{prop}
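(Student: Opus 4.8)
The plan is to deduce the statement from analytic modularity together with the known spectral decomposition of the cohomology of stacks of shtukas, the point being to match the cuspidal part of $H^r_c$ with external tensor powers of Langlands parameters.

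First I would use analytic modularity, which is a theorem over function fields, to attach to the given irreducible $\sigma$ (with $\det\sigma=\overline{\Q}_\ell(-1)$ and conductor $N$) a cuspidal automorphic representation $\pi$ of $G=\PGL_2$ whose Langlands parameter is $\sigma$. By the definition of the conductor one has $\dim\pi^{\Gamma_0(N)}=1$, and because each $v\in\Sigma_\infty$ occurs in $N$ with multiplicity $1$ (a place of multiplicative reduction), the local component $\pi_v$ is Steinberg there. This is precisely the local condition needed to transfer $\pi$ to the inner form whose ramification locus is $\Sigma_\infty$, and hence to see $\pi$ in the cohomology of a stack of shtukas with fixed legs at $\Sigma_\infty$.

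Next I would invoke the decomposition of $H^r_c(\Sht^r_G(\Gamma_0(N);\Sigma_\infty)_{\overline{\eta}_r},\overline{\Q}_\ell)$ under the commuting actions of $\pi_1(U^r,\overline{\eta}_r)$ and the Hecke algebra $T_0(N)$, in the style of V. Lafforgue \cite{VincentLafforgueChtoucas} and its refinement by Xue \cite{Xue,XueFiniteness}. By the decomposition via excursion operators, the cuspidal part of this middle-degree cohomology splits as a direct sum indexed by Hecke eigensystems, and the summand attached to the eigensystem of $\pi$ is a $\pi_1(U^r,\overline{\eta}_r)$-stable direct summand of $H^r_c$. The crucial structural input is that on this $\pi$-isotypic summand the Galois action factors through the homomorphism \eqref{EqGaloisMap} and is isomorphic, as a representation of $\pi_1(U,\overline{\eta})^r$, to $\sigma^{\boxtimes r}$ with multiplicity $\dim\pi^{\Gamma_0(N)}=1$. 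That the representation is \emph{precisely} $\sigma^{\boxtimes r}$ is forced by geometric Satake: each of the $r$ minuscule legs contributes the standard $2$-dimensional representation of the dual group $\widehat{G}=\SL_2$, and the Galois action at the $i$-th leg is the $i$-th projection $\pi_1(U^r,\overline{\eta}_r)\to\pi_1(U,\overline{\eta})$ followed by $\sigma$. Combining this identification with the previous step exhibits $\sigma^{\boxtimes r}$ as a direct summand, which is the assertion.

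The main obstacle is the middle step in the presence of fixed legs, i.e. the case $\Sigma_\infty\neq\emptyset$; this is exactly the point on which the proposition is conditional. The published forms of Xue's finiteness and spectral-decomposition theorems treat shtukas with moving legs only, whereas here the fixed legs at $\Sigma_\infty$ make $\Sht^r_G(\Gamma_0(N);\Sigma_\infty)$ an inner-form (quaternionic) analogue. Carrying out the argument therefore requires (a) extending the finiteness and Hecke-semisimplicity of the cuspidal cohomology to this inner-form setting, and (b) a Jacquet--Langlands-type matching at the places of $\Sigma_\infty$ ensuring that the relevant local packet is a singleton, so that the $\pi$-contribution is exactly $\sigma^{\boxtimes r}$ with multiplicity one rather than a sum over a packet. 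I expect (a) to be the technically demanding part, since it is where the analytic input---finiteness of the cuspidal spectrum and the requisite semisimplicity of the Hecke action---must be re-established for shtukas with fixed legs.
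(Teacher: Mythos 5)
Your proposal follows essentially the same route as the paper's proof: attach $\pi$ to $\sigma$ via the global Langlands correspondence, transfer to the quaternionic inner form ramified at $\Sigma_\infty$ (or at $\Sigma_\infty\smallsetminus\{\infty\}$ when $r$ is odd) using that $\pi_v$ is Steinberg at those places, apply the excursion-operator decomposition and the Lafforgue--Zhu multiplicity statement to identify the $\sigma$-isotypic part of $(H^r_c)^{\mathrm{cusp}}$ with $A_\sigma\otimes\sigma^{\boxtimes r}$, and flag the conditionality on extending Xue's theorems to $\Sigma_\infty\neq\emptyset$. The only steps you elide, both handled in the paper, are the passage from the cuspidal part to the full (infinite-dimensional) $H^r_c$ --- done there via finite generation over a local spherical Hecke algebra and the quotient $H^r_c/\mathcal{J}H^r_c$ --- and the identification of the multiplicity space $A_\sigma$ with $\pi^{\Gamma_0(N)}$, which the paper obtains by specializing to $r=0$ (even case) or $r=1$ (odd case, using Laumon--Rapoport--Stuhler).
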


 Now suppose $E/K$ is a nonisotrivial elliptic curve of conductor $N$.  Let $\E\to U$ be the family of elliptic curves with generic fiber $E$.    Let $\sigma$ be the the 2-dimensional representation of $\pi_1(U,\overline{\eta})$ on $H^1(\E_{\overline{\eta}},\Q_\ell)$.   Then $\sigma$ is irreducible, and (owing to the Weil pairing on $E$) we have $\det \sigma=\Q_\ell(-1)$.  By Proposition \ref{PropCohoOfShtukaSpaceIntro}, there exists a $\pi_1(U^r,\overline{\eta}_r)$-equivariant embedding $\sigma^{\boxtimes r}\injects H^r_c(\Sht^r_{\overline{\eta}_r},\Q_\ell)$, where $\Sht^r=\Sht_G^r(\Gamma_0(N);\Sigma_\infty)$.   Therefore we have embeddings:
 \[ (\sigma\otimes \sigma(1))^{\boxtimes r} \injects 
 H^r_c(\Sht^r_{\overline{\eta}_r},\Q_\ell) \otimes H^1(\E_{\overline{\eta}},\Q_\ell)^{\boxtimes r}(r) 
\injects
H^{2r}_c(\Sht^r_{\overline{\eta}_r}\times \E^r_{\overline{\eta}_r},\Q_\ell(r))
\]
The representation $(\sigma\otimes\sigma(1))^{\boxtimes r}$ contains a $\pi_1(U^r,\overline{\eta}_r)$-invariant vector (again due to the Weil pairing), and therefore so does $H^{2r}_c(\Sht^r_{\overline{\eta}_r}\times \E^r_{\overline{\eta}_r},\Q_\ell(r))$. If the stack of shtukas were a projective variety, the Tate conjecture would predict that $w$ is the class of an algebraic correspondence $Z$.  This is why we have claimed that a sufficiently strong version of the Tate conjecture predicts the algebraic correspondence $Z$ appearing in Definition \ref{DefnRModularity}.

%  Thus there exists a nonzero $\pi_1(U^r,\overline{\eta}_r)$-equivariant map
% \begin{equation}
% \label{EqMapFromHrToHrc}
% H^r(\E^r_{\overline{\eta}_r},\Q_\ell) \to H^r_c(\Sht_G^r(\Gamma_0(N);\Sigma_\infty)_{\overline{\eta}_r},\Q_\ell) 
% \end{equation}
%(projection onto $\sigma^{\boxtimes r}$ via the K\"unneth formula, followed by the inclusion of $\sigma^{\boxtimes r}$ into the $H^r_c$).   Under Poincar\'e duality,  \eqref{EqMapFromHrToHrc} corresponds to a vector
%\[ w\in H^{2r}_c((\E_r\times_{U^r} \Sht_G^r(\Gamma_0(N);\Sigma_\infty))_{\overline{\eta}_r},\Q_\ell(r)) \]
%which is fixed by $\pi_1(U^r,\overline{\eta}_r)$.  

\subsection{Strategy of proof of 2-modularity}  

Let $N$ be an effective divisor of $X=\P^1_{\F_q}$ of degree 4 with support $\Sigma$.   As before, let $U=X\smallsetminus \Sigma$.   Let $\Sigma_\infty\subset \Sigma$ be a set of places appearing with multiplicity 1 in $N$, such that $\#\Sigma_\infty$ is even.   The stack of shtukas in this situation can be described with simple equations, at least up to birational equivalence.  More precisely, each fiber of 
\[ \Sht^2_G(\Gamma_0(N);\Sigma_\infty) \to U^2 \]
is birational to an elliptic surface (Theorem \ref{ThmPresentationOfShtukas}).  
For $q$ large these elliptic surfaces are of general type, which makes it hard to imagine finding the desired algebraic correspondence directly.  Instead, we found an unexpected relation between shtuka spaces with different numbers of legs, which we have called the ``coincidence map''.  The coincidence map is a rational map
\[ c\from \Sht^2_G(\Gamma_0(N);\Sigma_\infty) \dashrightarrow \Sht^1_G(\Gamma_0(N);\Sigma_\infty').\]
between stacks of shtukas with 2 and 1 legs, respectively; its domain of definition meets every fiber over $U^2$.   Here $\Sigma'_\infty\subset \Sigma_\infty$ is another set of places appearing with multiplicity 1 in $N$, such that $\#\Sigma_\infty'$ is odd.  The coincidence map fits into a cartesian diagram of varieties with rational maps:
\begin{equation}
\label{EqCartesianIntro}
\xymatrix{
\Sht_G^2(\Gamma_0(N);\Sigma_\infty)\ar@{-->}[rr]^{c\times\lambda_2} \ar@{-->}[d] &&   \Sht^1_G(\Gamma_0(N);\Sigma_\infty') \times U^2  \ar[d]^{\lambda_1\times \id}  \\
\Coinc^3_G(\Gamma_0(N);\Sigma_\infty'') \ar[rr]_{\lambda_3} && U^3
}
\end{equation}
Here, $\Coinc^3_G(\Gamma_0(N);\Sigma_\infty'')$ is a moduli space of what we have called ``3-legged coincidences'':   these are certain modifications of vector bundles on the projective line, but Frobenius is not involved (and indeed the space of coincidences can be defined in any characteristic).  The set of archimedean places $\Sigma''$ is the symmetric difference between $\Sigma$ and $\Sigma'$.  The leg map $\lambda_3$ in \eqref{EqCartesianIntro} is a double cover of $U^3$, branched over a divisor of degree $(2,2,2)$.  
%We now take the fiber of \eqref{EqCartesianIntro} over $\eta_2$.  Noting that $\Coinc^3(\Gamma_0(N);\Sigma_\infty'')_{\eta_2}$ is a double cover of $\P^1_{\eta_2}$ branched over two points, it must be isomorphic to $\P^1_{\eta_2}$ again, and so we obtain a cartesian diagram of varieties over $\eta_2\isom \Spec \overline{\F}_q(t_1,t_2)$:
%\begin{equation}
%\label{EqCartesianIntro2}
%\xymatrix{
%\Sht_G^2(\Gamma_0(N);\Sigma_\infty)^{\omega}_{\eta_2} \ar[rr] \ar[d] &&  \Sht^1_G(\Gamma_0(N);\Sigma_\infty')^{\omega'} \times_{\F_q} \eta_2 \ar[d] \\
%\P^1_{\eta_2} \ar[rr]_{2:1} && \P^1_{\eta_2} 
%}
%\end{equation}

Now suppose $E/K$ is a nonisotrivial elliptic curve of conductor $N$, corresponding to a family of elliptic curves $\E\to U$.  Let $\infty$ be a place of multiplicity 1 in $N$.  We apply the fact that $E$ is 1-modular, which is to say there is a finite morphism
\begin{equation}
\label{Eq1Modular}
\Sht^1_G(\Gamma_0(N),\set{\infty})\to \E  \\
 \end{equation}
commuting with the maps to $U$.  Combining \eqref{EqCartesianIntro} with \eqref{Eq1Modular}, we obtain a dominant rational map:
 \begin{equation}
 \label{EqMapFromSht2ToZ2}
 \Sht^2_G(\Gamma_0(N)) \dashrightarrow \caZ^2(\E)
 \end{equation}
where $\caZ^2(\E)$ is defined as the cartesian product:
 \begin{equation}
 \label{EqDefnZ}
 \xymatrix{
\caZ^2(\E) \ar[r] \ar[d] & \E\times U^2 \ar[d] \\
\Coinc^3_G(\Gamma_0(N);\set{\infty}) \ar[r] & U^3
} 
 \end{equation}
We have a map $\caZ^2(\E)\to U^2$ obtained by composing the upper row of \eqref{EqDefnZ} with the projection; with respect to this, the morphism \eqref{EqMapFromSht2ToZ2} commutes with the maps to $U^2$.  By replacing $\E\to U$ with the complete elliptic fibration over $\P^1$ in \eqref{EqDefnZ}, it is possible to redefine $\caZ^2(\E)$ so that the morphism $\caZ^2(\E)\to U^2$ is proper and smooth; we do this.  Then each fiber of $\caZ^2(\E)\to U^2$ is the base change of a rational elliptic fibration by a double cover of $\P^1$ ramified at 2 points.   Generically, such a base change is a K3 surface of Picard rank 18.  Our plan of attack now shifts to the study of K3 surfaces.

Recall that the Kummer surface $\Km(A)$ of an abelian surface $A$ is the K3 surface obtained by resolving singularities on the quotient $A/[-1]$.  If $A$ is the product of generic nonisogenous elliptic curves, then $\Km(A)$ has Picard rank 18.

We prove the following theorem:
\begin{thm}  \label{ThmIsogenyBetweenK3s} Let $S$ be a K3 surface over an algebraically closed field.  Assume there is an isometry $\Pic S\otimes\Q\isom \Pic K \otimes\Q$, where $K$ is a Kummer surface of the form $\Km(E_1\times E_2)$, where $E_1,E_2$ are nonisogenous elliptic curves.  Then there exists a finite morphism from $S$ to a Kummer surface of this form.
\end{thm}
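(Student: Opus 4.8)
The plan is to reduce the statement to a computation with the transcendental lattice, use the accidental isomorphism in rank four to manufacture the two elliptic curves, and then upgrade the resulting rational isometry to a geometric map by a Kummer-sandwich construction. First I would record that the hypothesis is really a statement about the transcendental lattice. Writing $\Lambda$ for the fixed unimodular K3 lattice of signature $(3,19)$ and $T_S=(\Pic S)^{\perp}$ for the transcendental lattice, the subspaces $\Pic S\otimes\Q$ and $T_S\otimes\Q$ are mutually orthogonal complements inside the fixed quadratic space $\Lambda\otimes\Q$, and likewise for $K=\Km(E_1\times E_2)$. By Witt cancellation, an isometry $\Pic S\otimes\Q\cong\Pic K\otimes\Q$ is therefore equivalent to an isometry $T_S\otimes\Q\cong T_K\otimes\Q$. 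Since $E_1,E_2$ are nonisogenous one has $T_K\cong T_{E_1\times E_2}(2)\cong U(2)^{\oplus2}$, and $U(2)\otimes\Q\cong U\otimes\Q$; hence the hypothesis says precisely that $T_S\otimes\Q$ is the \emph{split} quadratic space of signature $(2,2)$ and trivial discriminant. In particular $\rho(S)=18$ and $S$ is not supersingular.

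Next I would produce the two elliptic curves from this rank-four datum. Over $\mathbf{C}$ the weight-two Hodge structure on $V:=T_S\otimes\Q$ is of K3 type ($\dim V^{2,0}=1$) and is polarized by a split form of signature $(2,2)$. Via the accidental isomorphism $\mathrm{SO}(U\oplus U)\cong(\mathrm{SL}_2\times\mathrm{SL}_2)/\{\pm1\}$ --- concretely, identifying $V$ with $2\times2$ matrices under the determinant form, so that the even Clifford algebra is $M_2(\Q)\times M_2(\Q)$ --- any such Hodge structure factors as a tensor product $V\cong H_1\otimes H_2$ of two rank-two weight-one Hodge structures, with quadratic form $\omega_1\otimes\omega_2$. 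Each $H_i$ is the $H^1$ of an elliptic curve $E_i'$, determined up to isogeny, so that $T_S\otimes\Q\cong H^1(E_1')\otimes H^1(E_2')\cong T_{\Km(E_1'\times E_2')}\otimes\Q$ as polarized Hodge structures. This is the step that explains why the Kummer surface must be of product type, and it furnishes the candidate target $\Km(E_1'\times E_2')$.

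Finally I would upgrade this rational isometry to a finite morphism. Because $T_S\otimes\Q$ is split, $\Pic S$ contains an isotropic class, so $S$ carries a genus-one fibration; the fact that $T_S\otimes\Q$ is rationally isometric to $U(2)^{\oplus2}$ (a ``two-divisibility'' of the transcendental form) is exactly the condition governing the existence of a dominant degree-two rational map onto a Kummer surface of product type, by Shioda's Kummer-sandwich theorem together with Inose's pencil construction. Using the freedom to move $E_1',E_2'$ within their isogeny classes, I would arrange the \emph{integral} lattice data by Nikulin's gluing and primitive-embedding criteria so that such a map exists, and then, after resolving indeterminacy or passing to the relevant quotient, extract the asserted finite morphism $S\to\Km(E_1''\times E_2'')$. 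In positive characteristic the lattice computations of the first two steps are unchanged once one replaces Hodge theory by the $\ell$-adic and crystalline realizations and invokes the Tate conjecture for K3 surfaces; one then either lifts $S$ together with this structure to characteristic zero and specializes, or builds the map directly, allowing it to be inseparable.

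I expect the real difficulty to be concentrated in this last step. A rational isometry of transcendental lattices produces in general only an isogeny, that is,\ an algebraic correspondence, between K3 surfaces, and extracting from it an honest finite morphism onto a Kummer surface of product type is not formal: the crux is to match the integral lattices --- deciding whether $T_S$ is already of the form $U(2)^{\oplus2}$ (whence an isomorphism by the Torelli theorem) or merely rationally so (whence a genuine degree-two sandwich map) --- and to control the associated Nikulin involution while exploiting the freedom in $E_1',E_2'$ to meet the integral constraints. Handling the inseparable and supersingular-adjacent phenomena in small characteristic is the subsidiary technical obstacle.
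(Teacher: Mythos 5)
Your first two steps reproduce, in essence, the paper's Proposition \ref{pro:Mukai}: reducing to the transcendental lattice and factoring the rank-four Hodge structure through $(\mathrm{SL}_2\times\mathrm{SL}_2)/\{\pm 1\}\to \mathrm{SO}(2,2)$ does recover the isogeny class of a pair $(E_1',E_2')$, but --- as you yourself observe --- this yields only a correspondence, not a finite morphism. The genuine gap is in your third step, and it is not merely technical. The obstruction does not live on the Kummer side, so ``the freedom to move $E_1',E_2'$ within their isogeny classes'' cannot repair it: it lives in $\Pic S$ itself. A Shioda--Inose structure on a rank-$18$ surface requires (by Morrison's criterion) that $T_S$ split off a hyperbolic plane \emph{integrally}, and its codomain is a Kummer surface of product type only when $T_S\isom U^{\oplus 2}$ integrally, i.e.\ when $\Pic S$ is unimodular. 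The hypothesis only gives that $\abs{\disc \Pic S}$ is a square, and lattices such as $U(3)^{\oplus 2}$ are rationally but not integrally isometric to $U^{\oplus 2}$; for such an $S$ no Kummer-sandwich map exists directly, and no amount of Nikulin gluing on the target will produce one.

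The paper's proof supplies exactly the mechanism you are missing: a chain of finite morphisms $S\to S'\to\cdots\to S''$ along which the discriminant of the Picard lattice is improved to $-1$. Each step is the finite map from a genus-one fibration of multisection index $p$ to its Jacobian (or $p$-Jacobian), which replaces the Picard lattice by an index-$p$ overlattice (Lemma \ref{lem:keum}); the existence of a fibration class realizing a prescribed overlattice is a $p$-adic lattice computation (Lemma \ref{lem:represent-zp}, Proposition \ref{prop:genus-1-exists}) that crucially uses $\rho(S)\geq 13$, and Corollary \ref{cor:drop-p} together with Proposition \ref{prop:always-contained} shows the chain can always be continued until $\Pic S''$ is unimodular. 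Only then does the Shioda--Inose step (your Kummer sandwich; Proposition \ref{prop:to-kum}) apply, and the target is identified as $\Km(E_1\times E_2)$ by exhibiting four $\I_0^\ast$ fibers and untwisting (Proposition \ref{PropRecognizeKummer}), with no appeal to the accidental isomorphism. To complete your argument you would have to supply this reduction or an equivalent one; everything before it is motivation rather than proof.
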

%\adam{I think the hypothesis of finite height follows from the other one.  Even in characteristic $p$ the Picard rank of $\Km(E_1 \times E_2)$ is $18+ \rank_\Z \Hom(E_1,E_2)$?}\jared{I removed the phrase `finite height' before `K3 surface'.}

\begin{rmk}  Theorem \ref{ThmIsogenyBetweenK3s} is in the spirit of a result of Buskin \cite[Theorem 1.1]{Buskin} which states that if $S$ and $S'$ are two complex K3 surfaces, then a Hodge isometry $H^2(S,\Q)\isom H^2(S',\Q)$ is necessarily induced from an algebraic correspondence between $S$ and $S'$.   
\end{rmk}

Recall our family of K3 surfaces $\caZ^2(\E)\to U^2$.  The idea now is to apply Theorem \ref{ThmIsogenyBetweenK3s} (suitably modified to work in families) to conclude the existence of a family of abelian surfaces $\mathcal{A}\to U^2$, which factors as a product of elliptic curves \'etale-locally on $U^2$, together with an isogeny $\caZ^2(\E)\to \Km(\mathcal{A})$.  

The next phase of the operation is to find an isogeny between $\Km(\mathcal{A})$ and $\Km(\E^2)$ over $U^2$, where $\E\to U$ is the given family of elliptic curves.  This implies that $\E$ is 2-modular:  there is then a dominant rational map $\Sht^2\dashrightarrow \Km(\E^2)$ over $U^2$, which lifts over $\E^2\dashrightarrow \Km(\E^2)$ to produce (after taking Zariski closures) the desired correspondence between $\Sht^2$ and $\E^2$.  

We can leverage prior knowledge of the cohomology of $\Sht^2$ to force an isogeny between $\Km(\mathcal{A})$ and $\Km((\E')^2)$, where $\E'\to U$ is some family of elliptic curves of conductor $N$, a posteriori 2-modular (Theorem \ref{ThmASeparates}).  In the case that $\E$ is semistable (meaning $N$ is multiplicity-free), this is enough to force an isogeny between $\E$ and $\E'$, and so $\E$ is 2-modular as well.  In the other cases, explicit calculations were necessary to find the isogeny between $\mathcal{Z}^2(\mathcal{E})$ and $\Km(\E^2)$.  

The phenomenon of the coincidence map applies to spaces of shtukas with arbitrarily many legs, and opens up an avenue of attack to prove $r$-modularity for any $r$.

\begin{thm} \label{ThmMain3Modularity} Assume $q$ is odd.  Let $E/\F_q(t)$ be the Legendre elliptic curve, with Weierstrass equation $y^2=x(x-1)(x-t)$.    Then $E$ is 3-modular.
\end{thm}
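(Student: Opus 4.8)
The plan is to follow the same architecture used to prove $2$-modularity (Theorem~\ref{ThmMain2Modularity}), now iterated one step higher via a $3$-legged coincidence map. Concretely, I would seek a cartesian diagram analogous to \eqref{EqCartesianIntro} relating $\Sht^3_G(\Gamma_0(N);\Sigma_\infty)$ to $\Sht^2_G(\Gamma_0(N);\Sigma_\infty')$ through a space of $4$-legged coincidences $\Coinc^4_G(\Gamma_0(N);\Sigma_\infty'')$, where now $\#\Sigma_\infty$ is odd and $\#\Sigma_\infty'$ is even (respecting the parity condition $\#\Sigma_\infty\equiv r\pmod 2$ of Definition~\ref{DefnRModularity}). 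Since the Legendre curve $y^2=x(x-1)(x-t)$ has conductor of degree $4$ with a place of multiplicative reduction, it is an extremal rational elliptic fibration, so Theorem~\ref{ThmMain2Modularity} gives us $2$-modularity for free, i.e.\ a finite morphism $\Sht^2_G(\Gamma_0(N);\Sigma_\infty')\to\E^2$ commuting with the maps to $U^2$. Composing the coincidence map with this $2$-modularity morphism should produce a dominant rational map
\begin{equation*}
\Sht^3_G(\Gamma_0(N);\Sigma_\infty)\dashrightarrow \caZ^3(\E),
\end{equation*}
where $\caZ^3(\E)$ is the fiber product of $\E^2\times U^3$ with $\Coinc^4_G$ over the appropriate copy of $U^4$, and whose domain of definition meets every fiber over $U^3$.

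The next step is to understand the geometry of the fibers of $\caZ^3(\E)\to U^3$. By the same mechanism as in the $2$-legged case, each fiber should be the base change of the relevant family by a double cover branched over a divisor of the appropriate multidegree. I would expect a generic fiber to be a threefold built from products and Kummer-type quotients of elliptic curves: most plausibly (after the isogeny computation) a piece related to $\Km(\E^2)\times\E$ or, more symmetrically, a quotient associated to $\E^3$. For the Legendre curve specifically, the highly explicit Weierstrass equation makes the singular fibers, the monodromy, and the branch loci completely computable, so I would carry out the analogue of the effective computation in \S\ref{sec:extremalrational} to identify the isogeny class of the constituent elliptic factors over $U^3$ and confirm they are isogenous (after extending $\F_q$) to copies of $\E$. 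The appendix by Kuwata is presumably where the delicate lattice-theoretic or explicit-geometric identification of this threefold is carried out, playing the role that Theorem~\ref{ThmIsogenyBetweenK3s} played for $2$-modularity.

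The crucial algebraic-cycle input is the existence of the correspondence realizing $\sigma^{\boxtimes 3}$ as a quotient. Here I would need a threefold analogue of Theorem~\ref{ThmIsogenyBetweenK3s}: an algebraicity statement guaranteeing that the relevant Hodge/Tate class on the fiber of $\caZ^3(\E)$ is induced by an honest algebraic correspondence to $\E^3$. For Kummer surfaces this came from the rational map $\E^2\dashrightarrow\Km(\E^2)$ together with Buskin-type algebraicity; for the threefold I would either build the correspondence by hand from the explicit geometry (exploiting that the Legendre family is isotrivial-free but very symmetric, so that the needed cycles are visible as graphs of isogenies and quotient maps) or reduce to products of the already-established $2$-dimensional case. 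Once the correspondence $Z$ between $\Sht^3$ and $\E^3$ is in hand, surjectivity of $p_Z(s)$ for every geometric point $s\in U^3$ follows from the same cohomological principle invoked in the $2$-modular proof: for a dominant morphism from a nonsingular quasi-projective variety to a projective one, the image of $H^\bullet_c$ of the source contains the transcendental part of the target's cohomology.

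The main obstacle, I expect, is establishing algebraicity of the relevant cycle class on the three-legged geometry. In the $2$-legged case everything funneled through K3 surfaces, where the Tate/Hodge conjecture for the relevant divisor classes is known and Theorem~\ref{ThmIsogenyBetweenK3s} supplied the finite morphism to a Kummer surface. For threefolds, the analogous algebraicity is not available in general, so the proof must be specific to the Legendre curve: the restriction to this single explicit family is almost certainly what makes the cycle constructible by hand. I would therefore concentrate effort on producing the $3$-legged coincidence diagram explicitly and on the direct geometric construction of $Z$ from the Legendre equation, rather than appealing to any general conjecture. A secondary technical point is verifying that the domain of definition of the rational map \eqref{EqMapFromSht2ToZ2}'s analogue genuinely meets every fiber over $U^3$, which is needed so that passing to the Zariski closure yields a correspondence with the fiberwise surjectivity required by Definition~\ref{DefnRModularity}.
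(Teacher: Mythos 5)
The architecture you describe is broadly right, but two things go wrong. First, the coincidence diagram in the paper always relates $\Sht^r_G(\Gamma_0(N);\Sigma_\infty)$ to the \emph{one}-legged space $\Sht^1_G(\Gamma_0(N);\Sigma_\infty')$ over $\Coinc^{r+1}_G(\Gamma_0(N);\Sigma_\infty'')\to U^{r+1}$: the construction (via Lemma~\ref{LemmaMapFromH} and the auxiliary shtuka $f'$ with a single indeterminate leg) produces a $1$-legged shtuka from an $r$-legged one, never an $(r-1)$-legged one. So the input for $r=3$ is $1$-modularity, not $2$-modularity, and $\caZ^3(\E)$ is the fiber product of a single copy of $\E_{\eta_3}$ with the rational elliptic surface $\caC=\Coinc^4_G(\Gamma_0(N);\Sigma_\infty)_{\eta_3}\to\P^1_{t_4}$ of Example~\ref{ex:coinc-4} --- a Schoen-type fiber product of two elliptic fibrations over $\P^1$. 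Your proposed cartesian square through $\Sht^2$, with $\caZ^3(\E)$ built from $\E^2\times U^3$, is not constructed anywhere in the paper, has no analogue of Lemma~\ref{LemmaMapFromH} behind it, and does not even match dimensionally. (Also, the Kuwata appendix concerns Inose surfaces for the $2$-modularity argument; it plays no role here.)

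Second, and more seriously, the heart of the proof is exactly the step you leave as a placeholder. The paper shows (i) that $\caZ^3(\E)$ admits a crepant resolution and is hence birational to a Calabi--Yau threefold, even though an additive and a multiplicative fiber collide at $\infty$ (which puts it outside Schoen's setting); (ii) that the generalized Kummer threefold $\Km(E_1\times E_2\times E_3)$, the crepant resolution of $(E_1\times E_2\times E_3)/V$ with $V=(\Z/2\Z)^{\oplus 2}$ negating two factors at a time, is also Calabi--Yau; and (iii) --- the decisive, Legendre-specific computation --- that $\caZ^3(\E)$ and $\Km(E_1\times E_2\times E_3)$ are \emph{birational} over $\eta_3$, found by exhibiting K3 fibrations on the two threefolds with isomorphic generic fibers (Theorem~\ref{ThmLegendreBirational}). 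The correspondence $Z$ then comes from composing with the quotient map $\E^3\dashrightarrow\Km(\E^3)$, and surjectivity of $p_Z(s)$ follows as in the surface case. Your two fallbacks --- ``build the cycle by hand from graphs of isogenies'' or ``reduce to products of the $2$-dimensional case'' --- are not what happens, and neither obviously works: the transcendental part of $H^3(\E^3_{s})$ is a genuine triple tensor product, not assembled from classes supported on surfaces, and there is no general algebraicity theorem for such classes on threefolds. The birational identification with the Kummer threefold is the missing idea, and without it the proposal does not close.
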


This $E$ has conductor $N=(0)+(1)+2(\infty)$.    Theorem \ref{ThmMain3Modularity} was proved by finding a birational map between two families of Calabi-Yau threefolds $\caZ^3(\E)$ and $\Km(\E^3)$ over $U^3$, where $U=\mathbf{P}^1\smallsetminus\set{0,1,\infty}$.  Here $\caZ^3(\E)$ is defined as a fiber product analogous to \eqref{EqDefnZ}, and $\Km(\E^3)$ is a generalized Kummer variety.

\subsection{Application:  Heegner-Drinfeld cycles on $\mathcal{E}^r$}
\label{sec:application}

This article was inspired by the theorem of Yun-Zhang \cite{YunZhangShtukasII} relating {\em Heegner-Drinfeld cycles} on stacks of shtukas, to the Taylor expansion of $L$-functions of automorphic forms.    We explain here the contact between higher modularity and the conjecture of Birch and Swinnerton-Dyer for elliptic curves over function fields.

As before, we let $X$ be a curve over $\F_q$ with function field $K$.   Let $f\from \E\to X$ be an elliptic fibration with generic fiber $E/K$.  Let $\ell$ be a prime not dividing $q$.  Recall \cite{TateBSD} that the inequality $\ord_{s=1}L(E/K,s)\geq \rank E(K)$ is known unconditionally, and that the following are equivalent:

\begin{enumerate}
\item The conjecture of Birch and Swinnerton-Dyer (BSD) holds for $E/K$;  i.e., $\ord_{s=1} L(E/K,s)=\rank E(K)$.  
\item The Tate conjecture holds for the surface $\E/\F_q$.  By this,  we mean that $H^2(\E_{\overline{\F}_q},\Q_\ell(1))^{\Fr_q=1}$ is spanned by classes of divisors.
\item The Tate-Shafarevich group $\Sha(E/K)$ is finite.
\end{enumerate}
We briefly review the connection between these statements.   Let $\NS(\E)$ be the N\'eron-Severi group of $\E$ (divisors modulo algebraic equivalence).  Define a decreasing two-step filtration $\Fil^i \NS(\E)$, with $\Fil^1\NS(\E)$ the subgroup of divisors whose intersection pairing with a fiber of $f$ is zero, and with $\Fil^2 \NS(\E)$ the subgroup generated by irreducible components of fibers.    Then $\NS(\E)/\Fil^1\NS(\E)\isom \Z$ and $\Fil^1\NS(\E)/\Fil^2\NS(\E)\isom E(K)$, the Mordell-Weil group.   The identity section splits off the first filtrant:  $\NS(\E)\isom \Fil^1\NS(\E)\oplus \Z$.

Meanwhile, the Leray spectral sequence for the composition $\E\to X\to \Spec \F_q$ degenerates on the second page.  As a result there is a decreasing two-step filtration $\Fil^i H^2(\E_{\overline{\F}_q},\Q_\ell(1))$ with graded pieces $H^2(X_{{\overline{\F}_q}},f_*\Q_\ell(1))\isom \Q_\ell$, $H^1(X_{\overline{\F}_q},R^1f_*\Q_\ell(1))$, $H^0(X_{{\overline{\F}_q}},R^2f_*\Q_\ell(1))$.   This filtration is preserved by the action of Frobenius $\Fr_q$.  Once again, the identity section splits off the first filtrant:  $H^2(\E_{\overline{\F}_q},\Q_\ell(1))\isom \Fil^1 H^2(\E_{\overline{\F}_q},\Q_\ell(1))\oplus \Q_\ell$.  

The cycle class map $\NS(\E)\otimes\Q_\ell \to H^2(\E_{\overline{\F}_q},\Q_\ell(1))$ respects the filtrations on either side, and induces an isomorphism on the first and third graded pieces.   On the second graded pieces, we get an injective map
\[ E(K)\otimes\Q_\ell \to V:=H^1(X_{\overline{\F}_q},R^1f_*\Q_\ell(1))\]
which lands in the Frobenius-fixed part $V^{\Fr_q=1}$.   The Tate conjecture for $\E/\F_q$ reduces to the statement that $E(K)\otimes\Q_\ell\to V^{\Fr_q=1}$ is an isomorphism.   Note that $V^{\Fr_q=1}$ is the Selmer group for $E/K$ (with $\Q_\ell$-coefficients), and the statement that $E(K)\otimes \Q_\ell\to V^{\Fr_q=1}$ is an isomorphism is equivalent to the statement that the $\ell$-primary part of $\Sha(E/K)$ is finite. 

The vector space $V$ is related to the $L$-function via:
\[ L(E/K,s) = \det (1-q^{1-s}\Fr_q \vert V ).\]
Thus $\ord_{s=1} L(E/K,s)$ is the dimension of the generalized $1$-eigenspace for $\Fr_q$ acting on $V$, which a priori may be larger than $V^{\Fr_q=1}$.   Thus we have inequalities:
\[ \rank E(K) \leq \dim V^{\Fr_q = 1} \leq \ord_{s=1} L(E/K,s) \]
If BSD holds for $E/K$, then the leftmost and rightmost quantities are equal, and consequently the Tate conjecture holds for $\E$, and as a bonus we also find that the generalized $1$-eigenspace for $\Fr_q$ on $V$ coincides with the eigenspace.  Conversely, if the Tate conjecture holds for $\E$, Tate shows that the generalized $1$-eigenspace for $\Fr_q$ on $V$ coincides with the eigenspace, and therefore BSD holds for $E/K$.  Note that the statement that the generalized $1$-eigenspace for $\Fr_q$ on $V$ coincides with the eigenspace is a consequence of the general conjecture that $\Fr_q$ acts semisimply on all $H^i(\E_{\overline{\F}_q},\Q_\ell)$;  this statement is sometimes packaged along with the Tate conjecture.

Let $N$ be the conductor of $E$, and let $\Sigma$ be the support of $N$.  Assume that $N$ is multiplicity-free.  Let $\Sigma_\infty\subset \Sigma$ be a subset.   For each $r\geq 0$ with the same parity as $\#\Sigma_\infty$, we have a space of $r$-legged shtukas $\lambda_r\from \Sht^r_G(\Gamma_0(N);\Sigma_\infty)\to X^r$.   (The map is really defined over all of $X^r$, without removing $\Sigma$.  In the following discussion we are working with the iterated shtukas as defined in \cite{YunZhangShtukasII}.  In particular $\Sht^r_G(\Gamma_0(N);\Sigma_\infty)$ is a smooth Deligne-Mumford stack.)   Abbreviate this as $\Sht^r$.  

Let $K'/K$ be a quadratic extension corresponding to a (possibly branched) double cover $X'\to X$.  We assume that $X'$ is geometrically connected over $\F_q$, and that the branch points of $X'\to X$ are disjoint from $\Sigma$.  Let us write $\Sigma=\Sigma_f\cup \Sigma_\infty$.  Assume the Heegner hypothesis:  all places in $\Sigma_f$ are split in $X'$, and all places in $\Sigma_\infty$ are inert in $X'$.   Let $\E'=\E\times_X X'$.  Then (under an assumption on the expected behavior of the cohomology of the stacks of shtukas, see \cite[Equation 1.9]{YunZhangShtukasII}), Yun-Zhang construct a {\em Heegner-Drinfeld class}
\[ HD_{E,r} \in ((V')^{\otimes r})^{\Fr_q=1} \]
where $V'$ is the analogue of $V$ for $E'\to X'$.    

The following theorem is claimed by Yun-Zhang (see \cite[Section 1.3.1]{YunZhangShtukasII}):

\begin{thm} \label{ThmGeneratesLine} Let $r_0\geq 0$ be the smallest integer $r$ such that $HD_{E,r}\neq 0$.  Then the Selmer rank $\dim (V')^{\Fr_q=1}$ is $r_0$, and $HD_{E,r_0}$ spans the line $\bigwedge^{r_0} (V')^{\Fr_q=1}$.  
\end{thm}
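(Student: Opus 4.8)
The plan is to deduce the statement from two inputs: the higher Gross--Zagier formula of Yun--Zhang \cite{YunZhangShtukasII}, which computes the self-intersection of the Heegner--Drinfeld cycle in terms of the Taylor coefficients of $L(E'/K',s)$, together with the symmetry and Tate properties of the classes $HD_{E,r}$ as $r$ varies. Throughout I abbreviate $W=(V')^{\Fr_q=1}$, the Selmer group with $\Q_\ell$-coefficients, and I let $d=\dim_{\Q_\ell}W$ be the Selmer rank and $m=\ord_{s=1}L(E'/K',s)$ the analytic rank; recall from the discussion preceding the theorem that $d\le m$, with equality precisely when $\Fr_q$ acts semisimply on the generalized $1$-eigenspace. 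Cup product on $H^{2r}_c(\Sht^r_G(\Gamma_0(N);\Sigma_\infty))$ restricts on the $E$-isotypic summand to the $r$-fold tensor product of the (symmetric, $\Fr_q$-equivariant) Poincar\'e pairing on $V'$, so $((V')^{\otimes r})^{\Fr_q=1}$ carries an induced pairing, and I would reduce everything to linear algebra for this pairing on the Frobenius-invariants.

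First I would establish the structural fact that $HD_{E,r}$ lies in the exterior power $\bigwedge^r W\subseteq((V')^{\otimes r})^{\Fr_q=1}$. The statement itself forces membership in an exterior power, and I would derive the needed anti-symmetry from the action of $S_r$ permuting the $r$ legs of $\Sht^r_G$: the Heegner--Drinfeld cycle is $S_r$-stable as a cycle, but the Koszul sign introduced by Poincar\'e duality when two legs are interchanged makes its cohomology class transform through the sign character, placing it in $\bigwedge^r V'$. Since the class is algebraic over $\F_q$ it is $\Fr_q$-invariant, so it lies in $(\bigwedge^r V')^{\Fr_q=1}$. Upgrading this to membership in $\bigwedge^r W$ --- i.e.\ showing the class is supported on the genuine $1$-eigenspace of $\Fr_q$ rather than on some mixed piece $v_\alpha\wedge v_{\alpha^{-1}}\wedge\cdots$ of the invariants --- reflects the split (``CM'') nature of the Heegner--Drinfeld cycle, and is one of the delicate points I treat as part of the obstacle below. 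Granting it, we get for free that $HD_{E,r}=0$ whenever $r>d$, since then $\bigwedge^r W=0$.

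Next I would run the Gross--Zagier input. By \cite{YunZhangShtukasII} the self-pairing satisfies $\langle HD_{E,r},HD_{E,r}\rangle=c_r\,L^{(r)}(E'/K',1)$ with $c_r\neq 0$. Taking $r=m$, the right-hand side is nonzero by definition of the analytic rank, so $HD_{E,m}\neq 0$; but $HD_{E,m}\in\bigwedge^m W$ forces $m\le d$, whence $m=d$, and in particular $\Fr_q$ is semisimple at the eigenvalue $1$. Now $\bigwedge^d W$ is one-dimensional, so the nonzero class $HD_{E,d}$ spans it; this proves the spanning assertion and shows $r_0\le d$. It is worth noting that semisimplicity, hence the equality of Selmer rank and analytic rank, emerges here as a byproduct rather than as a hypothesis.

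It remains to prove $r_0\ge d$, i.e.\ that $HD_{E,r}=0$ for every $r<d$, and this is the main obstacle. The self-pairing alone is insufficient: the formula only gives $\langle HD_{E,r},HD_{E,r}\rangle=c_r L^{(r)}(E'/K',1)=0$ for $r<m=d$, which says $HD_{E,r}$ is isotropic but not that it vanishes. The plan is to realize the family $\{HD_{E,r}\}_r$ as a Kolyvagin system in the sense of Mazur--Rubin, for which the vanishing below the rank and the generation of the top exterior power are formal consequences of the system's axioms; the geometric work is to verify these axioms. Concretely I would (i) produce the required compatibilities between $HD_{E,r}$ and $HD_{E,r-1}$ from the correspondence on $\Sht^r_G$ that forgets a leg, and from the analogous Heegner--Drinfeld cycles attached to varying the auxiliary double cover $X'\to X$ and to Hecke translates; and (ii) combine these with the nondegeneracy of the pairing on $\bigwedge^r W$ (now available, since semisimplicity is known) to show that $HD_{E,r}$ pairs trivially against a spanning family of test classes, hence vanishes. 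Verifying the local conditions and compatibilities needed for the Kolyvagin-system formalism, and in particular pinning down the exterior-power refinement of the second paragraph, is where I expect the real difficulty to lie.
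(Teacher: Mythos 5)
The first thing to say is that the paper does not prove this statement: Theorem \ref{ThmGeneratesLine} is quoted as forthcoming work of Yun--Zhang (see the sentence immediately preceding it) and is used in \S\ref{sec:application} purely as an external input. So there is no internal proof to compare yours against; your proposal has to be judged on its own terms.

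On those terms it is a program rather than a proof, and the two steps you defer are precisely the content of the theorem. First, the claim that $HD_{E,r}$ lies in $\bigwedge^{r}(V')^{\Fr_q=1}$, and not merely in $((V')^{\otimes r})^{\Fr_q=1}$, is essential to your deduction of $m\le d$ from $HD_{E,m}\ne 0$: without excluding contributions from pairs of reciprocal Frobenius eigenvalues, a nonzero invariant tensor of degree $m$ says nothing about $\dim (V')^{\Fr_q=1}$. You offer no argument for this beyond an appeal to the ``CM nature'' of the cycle, and even the Koszul-sign heuristic for anti-symmetry is not obviously available, since the legs of an (iterated) shtuka are ordered and permuting them is not an evident automorphism of $\Sht^r_G(\Gamma_0(N);\Sigma_\infty)$. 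Second, the vanishing of $HD_{E,r}$ for $r<d$ is the hard direction, and the paper itself signals that the self-intersection formula \eqref{EqSelfIntersectionOfHD} cannot deliver it: it remarks that even identifying $r_0$ with the analytic rank would require a definiteness hypothesis on the pairing restricted to Heegner--Drinfeld classes, which is not known. Your proposed remedy --- realizing the $HD_{E,r}$ as a Kolyvagin system --- comes with no construction of the auxiliary classes, no norm or derivative compatibilities, and no verification of local conditions, so it remains a hope rather than an argument. What does survive is the observation that $L^{(m)}(E'/K',1)\ne 0$ forces $HD_{E,m}\ne 0$ at the analytic rank $m$; that much is correct, but it is the easy half.
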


% The idea behind the proof of Theorem \ref{ThmGeneratesLine} is to leverage a relation between the classes $HD_{E,r}$ for varying $r$.    For instance if $\sigma$ transposes the last two coordinates in $X^r$, the relation is $HD_{E,r}+\sigma^*(HD_{E,r})=HD_{E,r-2}\otimes \eta$, where $\eta\in V^{\otimes 2}$ is dual to the pairing $V^{\otimes 2}\to\Q_\ell$.  From this one gets immediately that $HD_{E,r_0}\in \bigwedge^{r_0} ((V')^{\otimes r})^{\Fr_q=1}$.  The compatibility with partial Frobenius implies that $HD_{E,r_0}\in \bigwedge^{r_0} (V')^{\Fr_q=1}$, so that $\dim (V')^{\Fr_q=1}\geq r_0$.  That is one half of the inequality in Theorem \ref{ThmGeneratesLine}.

On the other hand,  the main result \cite[Corollary 1.4]{YunZhangShtukasII} applied to this situation reads:

\begin{equation}
\label{EqSelfIntersectionOfHD}
 (HD_{E,r},HD_{E,r}) \dot{=} L^{(r)}(E/K',1), 
 \end{equation}
where $(\;,\;)\from V\times V\to\Q_\ell$ is the natural symmetric pairing on $V$, and $\dot{=}$ means equality up to an explicit nonzero constant.   If one knew definiteness of the restriction of the pairing to the Heegner-Drinfeld classes, one could conclude that the $r_0$ of Theorem \ref{ThmGeneratesLine} is the analytic rank of $E/K'$, and therefore that Selmer rank = analytic rank.   (This is true unconditionally in the case of analytic rank 3, using the Hodge index theorem for surfaces.) The upshot is that Heegner-Drinfeld classes can be used to construct a basis for the determinant of the Selmer group of $E/K'$, but (unless the analytic rank is 1) one cannot use them to construct elements of the Mordell-Weil group.

Under the assumption that $E$ is $r$-modular, the Heegner-Drinfeld classes actually arise from algebraic cycles on $(\E')^r$, in the following sense.  Define the base change
\[ (\Sht^r)'=\Sht^r \times_{X^r} X'. \]
The {\em Heegner-Drinfeld cycle} is a codimension $r$ cycle with proper support:
\[ \mathcal{HD}_r\in Z^r_c((\Sht^r)',\Q). \]
It is obtained as the locus of rank 2 $X$-shtukas which arise from rank 1 $X'$-shtukas.  (To define $\mathcal{HD}_r$ one needs to make some auxiliary choices, see \cite[1.1.3]{YunZhangShtukasII}.  In particular one has to choose a place of $X'$ above each $v\in\Sigma_{f}$.)

Let $Z$ be an algebraic correspondence between $\Sht^r$ and $\E^r$ as in Definition \ref{DefnRModularity} (extended over all of $X^r$).   Such a correspondence induces a map on algebraic cycles:
\[ p_Z\from Z^r_c((\Sht^r)',\Q)\to Z^r((\E')^r,\Q). \]
Define
\[ \mathcal{HD}_{E,r}: = p_Z(\mathcal{HD}_r) \in Z^r((\E')^r,\Q).\]
Its cohomology class is an element
\[ \cl(\mathcal{HD}_{E,r})\in H^{2r}((\E')^r_{\overline{\F}_q},\Q_\ell(r)) \]
which is $\Fr_q$-invariant.  

Owing to the K\"unneth formula and the splitting of $H^2(\E'_{\overline{\F}_q},\Q_\ell(1))$ noted above, we have a surjective map
\begin{equation}
\label{EqMapFromHrToVr}
H^{2r}((\E')^r_{\overline{\F}_q},\Q_\ell(r)) \to (V')^{\otimes r}
\end{equation}
We expect that the image of $\cl(\mathcal{HD}_{E,r})$ in $(V')^{\otimes r}$ is $HD_{E,r}$.    The moral of our story is this:  suppose $r$ is the Selmer rank of $E/K'$.  Under the assumption of $r$-modularity, one still doesn't know that the Selmer group $(V')^{\Fr_q=1}$ is spanned by classes of algebraic cycles in $\E'$, but one does know that the determinant of the Selmer group is spanned by the class of an algebraic cycle in $(\E')^r$.

\subsection{Acknowledgments}

The authors would like to thank Noam Elkies, Mar\'ia In\'es de Frutos Fern\'andez, Colin Ingalls, Masato Kuwata, Will Sawin,  Ari Shnidman, John Voight, Ziquan Yang, Wei Zhang, and Xinwen Zhu for many helpful discussions.  We are also grateful to the anonymous referee,
whose meticulous review improved the paper in many ways.  A.L. would like to thank the Tutte Institute for its support of his external research.   J.W. was supported by NSF grant DMS-1902148 and an award from the Simons Foundation.   

\section{The stacks of shtukas for $\PGL_2$}
\label{sec:shtukareview}

In this section we review the construction of the stack of $r$-legged shtukas $\Sht^r_G(\Gamma_0(N);\Sigma_\infty)$ for the group $G=\PGL_2$.  We have tried to keep our notation consistent with \cite{YunZhangShtukasII}.   
%One of the innovations of \cite{YunZhangShtukasII} is that if the level group $\Gamma$ is Iwahori at all places, then $\Sht^r(\Gamma;\Sigma_\infty)$ can be extended over all of $X^r$.  We will be adapting their constructions to situations where $\Gamma$ is Iwahori at some set of places including $\Sigma_\infty$ but not others.

From \S\ref{SectionShtukasOverP1} on we specialize to the case that the base curve $X$ is $\P^1_{\F_q}$.   In that case the shtuka space $\Sht^r(\Gamma;\Sigma_\infty)$ is amenable to concrete calculations.

\subsection{Vector bundles of rank 2, fractional twists, Atkin-Lehner automorphisms, and passage to $G=\PGL_2$}

Let $F$ be a field, and let $X/F$ be a smooth projective curve with fraction field $K$.  For each closed point $x\in \abs{X}$, let $K_x$ be the completion of $K$ at $x$.  We have the stack $\Bun_2$, which assigns to an $F$-scheme $S$ the groupoid of rank 2 vector bundles $\caF$ on $X\times S$.   If $D\in \Divi X$, we have the twist $\caF(D):=\caF\otimes \OO_X(D)$.  If $N\subset X$ is an effective divisor and $\caF$ is a rank 2 vector bundle on $X\times S$, then a $\Gamma_0(N)$-structure on $\caF$ is a rank 1 subbundle $\mathcal{L}_N\subset \caF\vert_{N\times S}$.

%\begin{defn} \label{DefnGammaStructures}
%For an effective divisor $N\subset X$ and a rank 2 vector bundle $\caF$ on $X\times S$ we define:
%\begin{itemize}
%\item A $\Gamma_0(N)$-structure is a rank 1 subbundle $\mathcal{L}_N\subset \caF\vert_{N\times S}$.  
%\item A $\Gamma(N)$-structure is a trivialization $\OO_{N\times S}^{\oplus 2} \isomto \caF\vert_{N\times S}$.
%\end{itemize}
%\end{defn}
%For $N$ and $N'$ disjoint, we will write $\Gamma=\Gamma_0(N)\cap \Gamma(N')$ to mean the group which classifies both $\Gamma_0(N)$- and $\Gamma(N')$-structures simultaneously.  

In the special case that $N$ is multiplicity-free with support $\Sigma$, a $\Gamma_0(N)$-structure may alternatively be described as a system of {\em fractional twists} $\caF(\half P)$ for each $P\in \Sigma$, which lies in between $\caF$ and $\caF(P)$:
\[ \caF\subset \caF(\half P) \subset \caF(P)\]
The quotients $\caF(\half P)/\caF$ and $\caF(P)/\caF(\half P)$ are required to be rank 1 vector bundles on $P\times S$.  Given the data of the $\caF(\half P)$, one can define for each $D\leq N$ the twist $\caF(\half D)$, defined as the subbundle of $\caF(D)$ generated by $\caF$ and the $\caF(\half P)$ for each $P\in \supp D$.  The line bundle $\L_N$ can be recovered as the kernel of $\caF\vert_{N\times S} \to \caF(\half N)\vert_{N\times S}$.  

 We can go further than this and define $\caF(D)$ for any $D$ belonging to $\half \Z \Sigma$, in such a way that $\caF(D+D')=\caF(D)(D')$ for all $D,D'\in\half\Z N$. Then whenever $D\leq D'$ for $D,D'\in \half \Z N$, we have an inclusion $\caF(D)\subset \caF(D')$ with cokernel supported on $D'-D$.  

Keeping the hypothesis that $N$ is multiplicity-free, we may write objects of $\Bun_2(\Gamma_0(N))$ as pairs $\caF^\dagger = (\caF,\set{\caF(D)})$, where $\set{\caF(D)}$ is a system of fractional twists for $D\in \half\Z \Sigma$.  For an object $\caF^\dagger$ of $\Bun_2(\Gamma_0(N))$ and $D'\in \half\Z \Sigma$ we can form the fractional twist $\caF^\dagger(D')$, simply by replacing each $\caF(D)$ with $\caF(D+D')$.  The resulting automorphism of $\Bun_2(\Gamma_0(N))$ is called the {\em Atkin-Lehner automorphism} $\AL(D')$.  These satisfy $\AL(D)\circ \AL(D')=\AL(D+D')$.\footnote{To be pedantic:  a certain diagram of stacks 2-commutes.}  
%
%If $N'\subset X$ is an effective divisor disjoint from $N$, then an object of $\Bun_2\left( \Gamma_0(N)\cap \Gamma(N') \right)$
%is a triple $\caF^\ddag = (\caF,\set{\caF(D)},\gamma_{N'})$, where now $\gamma_{N'}$ is a trivialization of $\caF\vert_{N'\times S}$.  Note that we automatically obtain from this such a trivialization of each fractional twist $\caF(D)$ at $N$, whenever $D$ is disjoint from $N$. We use the notation $\caF^\ddag$ for objects of $\Bun_2\left( \Gamma_0(N)\cap \Gamma(N') \right)$, with $\caF^\ddag\mapsto \caF^\dagger$ referring to the functor which forgets the $\Gamma(N')$-structure.

A rank 2 vector bundle $\caF$ on a curve is semistable if $2\deg \L\leq \deg \caF$ for all rank 1 subbundles $\L\subset \caF$.  The index of instability of a vector bundle is 
\[ \inst(\caF) = \max_{\caL}(2\deg \L - \deg \caF) \]
 where $\caL$ runs through rank 1 subbundles of $\caF$.  Note that $\inst(\caF)$ is invariant under (integral) twists by line bundles.   This index allows us to define a stratification on $\Bun_2(\Gamma_0(N))$.
 For $i\geq 0$, let 
 \[ \Bun_2(\Gamma_0(N))^{\leq i} \subset \Bun_2(\Gamma_0(N)) \]
 denote the open substack determined by the condition that $\inst \caF\leq i$ on all geometric points.
 Similarly define $\Bun_2(\Gamma_0(N))^{d,\leq i}$ as the degree $d$ component.  Then each $\Bun_2(\Gamma_0(N))^{d,\leq i}$ is an Artin stack of finite type.
% 
% We may similarly define an open substack $\Hecke^\mu(\Gamma)^{\leq n}$, as the locus where $\caF_0$ and $\caF_r$ have index $\leq n$.  Similarly we may define $\Sht^\mu(\Gamma;\Sigma_\infty)^{\leq n}$, as the preimage of $\Hecke^\mu(\Gamma)^{\leq n}$.    Then $\Bun_2(\Gamma)^{\leq n}$ is finite type, and $\Sht^\mu(\Gamma;\Sigma_\infty)^{\leq n}$ is locally of finite type.  

As in \cite{YunZhangShtukas}, our main focus is on shtukas for the group
\[ G = \PGL_2.\]
Let $\Bun_G$ be the stack which assigns to an $F$-scheme $S$ the groupoid of $G$-torsors $\caF$ on $X\times S$.  Then 
\[ \Bun_G \isom \Bun_2/\Pic_X,\]
where the Picard stack $\Pic_X$ acts on $\Bun_2$ by twisting.   We may also define the stack $\Bun_G(\Gamma_0(N))$, classifing $G$-torsors $\caF$ over $X\times S$ together with a reduction of $\caF\vert_{N\times S}$ to the Borel subgroup of $G$.

The degree of an object of $\Bun_G$ is valued in $\Z/2\Z$, and we write $\Bun_G(\Gamma_0(N))^d$ ($d=0,1$) for the appropriate component.   Similarly as above we have $\Bun_G(\Gamma_0(N))^{d,\leq i}$, an Artin stack of finite type.  The Atkin-Lehner automorphisms described above descend to $\Bun_G(\Gamma_0(N))$:  for any subset $\Sigma_0\subset \Sigma$ supported on places appearing with multiplicity 1 in $N$, we have the Atkin-Lehner automorphism $\AL(\half \Sigma_0)$, which is in fact an involution.  For two such subsets $\Sigma_0,\Sigma_0'\subset \Sigma$, we have the relation $\AL(\half \Sigma_0)\circ \AL(\half\Sigma_0') = \AL(\half \Sigma_0'')$, where $\Sigma_0''\subset \Sigma$ is the symmetric difference of $\Sigma_0$ and $\Sigma_0'$.

\subsection{Stacks of $G$-shtukas}
Now let $\F_q$ be a finite field, and let $X/\F_q$ be a curve.   The definition of shtukas we use coincides with that appearing in Varshavsky \cite{Varshavsky} for general reductive groups $G$, except that our shtukas feature a set of ``archimedean places'' $\Sigma_\infty$, as in \cite{YunZhangShtukasII}.   

\begin{defn} \label{DefnShtuka} Let $S$ be an $\F_q$-scheme.  Let $G$ be $\GL_2$ or $\PGL_2$.  Suppose we are given the following data:
\begin{itemize}
\item An effective divisor $N\subset X$, with support $\Sigma$.
\item A subset $\Sigma_\infty \subset \Sigma$ of degree 1 places\footnote{The assumption that all places of $\Sigma_\infty$ are degree 1 is just for notational convenience.} appearing in $N$ with multiplicity 1.
\item An integer $r\geq 0$.
\item Morphisms $x_i\from S\to X\smallsetminus \Sigma$ for $i=1,\dots,r$.  Let $\gamma_i\subset X\times_{\F_q} S$ be the graph of $x_i$;  this is a divisor of $X\times_{\F_q} S$.
\end{itemize}  
A {\em $G$-shtuka with legs $x_1,\dots,x_r$, archimedean places $\Sigma_\infty$, and $\Gamma_0(N)$-structure} is an isomorphism of $G$-torsors with $\Gamma_0(N)$-structure:
\[ f\from \caF^\dagger\vert_{(X\times_{\F_q} S) \smallsetminus \bigcup_{i=1}^r \gamma_i } \isomto \Fr_S^*\caF^\dagger(\half \Sigma_\infty)\vert_{(X\times_{\F_q} S) \smallsetminus \bigcup_{i=1}^r \gamma_i }. \]
Here the ``$\Fr_S$'' is shorthand for the endomorphism $\id_X\times \Fr_q$ on $X\times S$, where $\Fr_q\from S\to S$ is the $q$th power Frobenius morphism.
\end{defn}  
One can measure the failure of $f$ to be an isomorphism along $\gamma_1,\dots,\gamma_r$ by means of an algebraic representation $W$ of the $r$-fold product $\hat{G}^r$, where $\hat{G}$ is the Langlands dual group.   We write $\Sht^W_G(\Gamma_0(N);\Sigma_\infty)$ for the moduli stack of $G$-shtukas with $\Gamma_0(N)$-structure and archimedean places $\Sigma_\infty$ which are bounded by $W$.

%For us, $G=\PGL_2$ and $\hat{G}=\SL_2$, and $W=\mathrm{std}^{\boxtimes r}$, where $\mathrm{std}$ is the standard representation of $\SL_2$.   In other words, our shtukas are ``minuscule at every leg''.  We write $\Sht^r(\Gamma_0(N);\Sigma_\infty)$ for the moduli stack of $G$-shtukas with $\Gamma_0(N)$-structure which are bounded by $W$, and 
%\[ \lambda_r\from \Sht^r(\Gamma_0(N);\Sigma_\infty)\to X^r \]
%for the morphism sending a shtuka to its legs.   

We explain here in explicit terms the condition that a shtuka as above is ``bounded by $W$'', in the case that $G=\GL_2$ (so that $\hat{G}=\GL_2$ as well) and $W$ is the representation
\[ W_\mu = \mathrm{std}^{\mu_1} \boxtimes \cdots \boxtimes \mathrm{std}^{\mu_r} \]
of $\hat{G}^r$, where $\mathrm{std}$ is the standard representation of $\GL_2$ and $\mu=(\mu_1,\dots,\mu_r)\in\set{1,-1}^r$ is a vector of signs (with $\mathrm{std}^{-1}$ meaning the dual of $\mathrm{std}$).  
Consider the following data:
\begin{itemize}
\item Vector bundles $\caG_0,\caG_1,\cdots,\caG_r$ on $X\times_{\F_q} S$,
\item For each $i=1,\dots,r$, an isomorphism $f_i\from \caG_{i-1}\vert_{(X\times_{\F_q} S)\backslash \gamma_i} \to \caG_i\vert_{(X\times_{\F_q} S)\backslash \gamma_i}$.   If $\mu_i=1$, then $f_i$ is required to extend to a morphism of vector bundles $\caG_{i-1}\to \caG_i$ whose cokernel is length 1 supported on $\gamma_i$.  If $\mu_i=-1$, then $f_i^{-1}$ is required to have this property,
\end{itemize}
An isomorphism between $\caG_0$ and $\caG_r$ away from $\bigcup_i^r \gamma_i$ is bounded by $W_\mu$ if it factors into $f_r\circ\cdots \circ f_1$, where $f_1,\dots,f_r$ are as above.   Comparing degrees of vector bundles, we find that such an isomorphism can exist only if $\sum_i \mu_i = \deg \caG_r-\deg\caG_0$.  

From now on we fix
\[ G = \PGL_2,\] 
so that $\hat{G}=\SL_2$.  Note that $\mathrm{std}$ is self-dual when considered as a representation of $\hat{G}$.  Let us simply write
\[ \Sht_G^r(\Gamma_0(N);\Sigma_\infty) \]
for the moduli stack of $G$-shtukas with $\Gamma_0(N)$-structure and archimedean places $\Sigma_\infty$ which are bounded by
\[ W_r: = \mathrm{std}^{\boxtimes r}. \]
We have an isomorphism
\[ \Sht_G^r(\Gamma_0(N);\Sigma_\infty) \isom \Sht_{\GL_2}^{W_\mu}(\Gamma_0(N);\Sigma_\infty)/\Pic_X(\F_q), \]
 where $\mu$ is any vector of signs satisfying $\sum_i \mu_i=\#\Sigma_\infty$.  Therefore a necessary condition for $\Sht^r_G(\Gamma_0(N);\Sigma_\infty)$ to be nonempty is the congruence
\[ r\equiv \#\Sigma_\infty\pmod{2}.\]

The stack $\Sht_G^r(\Gamma_0(N);\Sigma_\infty)$ is a Deligne-Mumford stack, and the morphism
\[ \lambda^r\from \Sht_G^r(\Gamma_0(N);\Sigma_\infty) \to (X\smallsetminus \Sigma)^r \]
sending a shtuka to its legs $x_1,\dots,x_r$ is generically smooth of relative dimension $r$.  

\begin{rmk} Keeping the intermediate $\caG_i$ as above, one obtains the notion of an iterated shtuka.  The moduli space of iterated $\GL_2$-shtukas bounded by $\mu=(1,-1)$ was originally considered by Drinfeld \cite{DrinfeldLanglandsConjectureForGL2}.  If we had worked with iterated shtukas, the morphism $\lambda^r$ would be smooth over $(X\smallsetminus \Sigma)^r$.  The spaces of iterated and non-iterated shtukas coincide over the locus in $X^r$ where the legs are pairwise distinct.  
\end{rmk}

%\begin{rmk}  We could also have defined a space of ``iterated shtukas'', which remember the intermediate objects $\caG_1,\dots,\caG_r$.  This is necessary if one wants to study the partial Frobenius maps.  The map from iterated shtukas to shtukas induces an isomorphism on cohomology.
%\end{rmk}

\begin{ex}[Relation to Drinfeld modular curves]
In the special case that $r=1$ and $\Sigma_\infty=\set{\infty}$ is a singleton, the stack $\Sht^1_G(\Gamma_0(N);\infty)$ is related to a Drinfeld modular curve.  Namely,  decompose $N$ as $N=N_f+(\infty)$, and let $\mathrm{DrMod}(\Gamma_0(N_f);\infty)$ classify Drinfeld $A$-modules, where $A$ is the ring of functions on $X$ regular away from $\infty$, equipped with a $\Gamma_0(N_f)$-structure.     The precise relationship is that there is an isomorphism
\[ \Sht^1_G(\Gamma_0(N);\infty) \isom 
\mathrm{DrMod}(\Gamma_0(N_f);\infty)/\Pic^\circ_X(\F_q).\]
\end{ex}

For $r>1$ the stack $\Sht_G^r(\Gamma_0(N);\Sigma_\infty)$ is generally not of finite type.  To produce finite-type stacks, we need to pass to truncations.   There is a map 
\[ \Sht_G^r(\Gamma_0(N);\Sigma_\infty) \to \Bun_G(\Gamma_0(N))\]
sending a shtuka as in Definition \ref{DefnShtuka} to its $G$-torsor $\caF$.    For each $i\geq 0$, let $\Sht_G^r(\Gamma_0(N);\Sigma_\infty)^{\leq i}$ be the pullback under this map of $\Bun_G(\Gamma_0(N))^{\leq i}$.  Then $\Sht_G^r(\Gamma_0(N);\Sigma_\infty)^{\leq i}$ is of finite type.

\subsection{Cohomology of stacks of shtukas}
\label{sec:CohoShtukas}

Here we prove Proposition \ref{PropCohoOfShtukaSpaceIntro}, which states that irreducible Langlands parameters for $G$ appear as subspaces of the cohomology of stacks of shtukas.  We will need the main results of  \cite{Xue} and \cite{XueFiniteness}, which we now review.

Recall our conventions on generic points:  $\eta$ (resp., $\eta_r$) is the generic point of $X$ (resp., $X^r$), and $\overline{\eta}_r\to \eta_r$ is an algebraic closure lying over $\overline{\eta}^r\to \eta^r$.   Let $\Sigma$ be the support of $N$, and let $U=X\smallsetminus \Sigma$.

The truncated stack of shtukas $\Sht_G^r(\Gamma_0(N);\Sigma_\infty)^{\leq i}$ is of finite type over $U^r$, so it makes sense to define the cohomology with compact supports 
\[H^r_c(\Sht_G^r(\Gamma_0(N);\Sigma_\infty)^{\leq i}_{\overline{\eta}_r},\overline{\Q}_\ell),\]
a finite-dimensional $\overline{\Q}_\ell$-vector space admitting an action of $\pi_1(U^r,\overline{\eta}_r)$.  Define
\[ H^r_c(\Sht_G^r(\Gamma_0(N);\Sigma_\infty)_{\overline{\eta}_r},\overline{\Q}_\ell):=\varinjlim_{i\geq 0} H^r_c(\Sht_G^r(\Gamma_0(N);\Sigma_\infty)^{\leq i}_{\overline{\eta}_r},\overline{\Q}_\ell),\]
and abbreviate
\[ H^r_c:=H^r_c(\Sht_G^r(\Gamma_0(N);\Sigma_\infty)_{\overline{\eta}_r},\overline{\Q}_\ell). \]
Then $H^r_c$ admits an action of the Hecke algebra $T_0(N)$ which commutes with the action of $\pi_1(U^r,\overline{\eta}_r)$.  Let $(H^r_c)^{\text{cusp}}\subset H^r_c$ be the cuspidal part.  Assuming that $\Sigma_\infty=\emptyset$, a theorem of Xue  \cite[Theorem 5.0.1]{Xue} shows that $(H^r_c)^{\text{cusp}}$ is finite-dimensional, and then Drinfeld's lemma extends the action of $\pi_1(U^r,\overline{\eta}_r)$ on $(H^r_c)^{\text{cusp}}$ to an action of $\pi_1(U,\overline{\eta})^r$.

\begin{prop}  
\label{PropCohoOfShtukaSpace} Assume that $\#\Sigma_\infty\leq 1$.  When $r$ is odd,  assume that Xue's theorem \cite[Theorem 5.0.1]{Xue} extends to the case of $(r,\Sigma_\infty)$.  Then as a representation of $T_0(N)\times \pi_1(U,\overline{\eta})^r$, there is a decomposition
\[ (H^r_c)^{\mathrm{cusp}} = \bigoplus_\pi \pi^{\Gamma_0(N)} \otimes \sigma^{\boxtimes r}_\pi, \]
where $\pi$ runs over cuspidal representations of $G$ containing a $\Gamma_0(N)$-fixed vector, and $\sigma_\pi$ is the 2-dimensional representation of $\pi_1(U,\overline{\eta})$ corresponding to $\pi$ under the global Langlands correspondence.
\end{prop}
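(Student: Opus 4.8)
The plan is to obtain the decomposition from three inputs: the finiteness and cuspidal structure supplied by the assumed extension of Xue's theorem together with Drinfeld's lemma; the Hecke decomposition of $(H^r_c)^{\mathrm{cusp}}$ into $\pi$-isotypic pieces; and V.\ Lafforgue's description of the Galois action on each piece through its Langlands parameter. First I would record the structural consequences of the hypothesis. By the assumed extension of \cite[Theorem 5.0.1]{Xue}, the space $(H^r_c)^{\mathrm{cusp}}$ is a finite-dimensional $\overline\Q_\ell$-vector space carrying commuting actions of $\pi_1(U^r,\overline\eta_r)$ and the Hecke algebra $T_0(N)$, and the finiteness results of \cite{XueFiniteness} ensure that the hypotheses of Drinfeld's lemma are met, so that the $\pi_1(U^r,\overline\eta_r)$-action extends to an action of $\pi_1(U,\overline\eta)^r$ through \eqref{EqGaloisMap}. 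Decomposing under the commutative algebra $T_0(N)$ according to the Hecke eigensystems that occur (these are semisimple on cuspidal cohomology) and invoking strong multiplicity one for $\GL_2$ (equivalently, for $\PGL_2$), the eigensystems are in bijection with cuspidal automorphic representations $\pi$ admitting a $\Gamma_0(N)$-fixed vector, each contributing its Hecke-multiplicity space $\pi^{\Gamma_0(N)}$. Since $T_0(N)$ commutes with $\pi_1(U,\overline\eta)^r$, this produces a $\pi_1(U,\overline\eta)^r$-equivariant decomposition $(H^r_c)^{\mathrm{cusp}}=\bigoplus_\pi \pi^{\Gamma_0(N)}\otimes M_\pi$, and it remains only to identify the multiplicity space $M_\pi$ with $\sigma_\pi^{\boxtimes r}$.

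The identification of $M_\pi$ is the heart of the matter, and here I would invoke V.\ Lafforgue's construction of the global Langlands parameter via excursion operators \cite{VincentLafforgueChtoucas}. The boundedness datum for our shtukas is the representation $W_r=\mathrm{std}^{\boxtimes r}$ of $\widehat G^{\,r}=\SL_2^r$, and Lafforgue's formalism identifies, for the cuspidal eigensystem with parameter $\sigma_\pi\from\pi_1(U,\overline\eta)\to\widehat G(\overline\Q_\ell)$, the multiplicity space $M_\pi$ with the representation of $\pi_1(U,\overline\eta)^r$ on $W_r$ obtained from the composite
\[
\pi_1(U,\overline\eta)^r \xrightarrow{\ \sigma_\pi\times\cdots\times\sigma_\pi\ } \widehat G(\overline\Q_\ell)^{\,r} \xrightarrow{\ W_r\ } \GL(W_r).
\]
Because $\mathrm{std}$ is self-dual for $\SL_2$, this composite is precisely the external tensor product $\sigma_\pi\boxtimes\cdots\boxtimes\sigma_\pi=\sigma_\pi^{\boxtimes r}$, yielding $M_\pi\isom\sigma_\pi^{\boxtimes r}$. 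Concretely, the compatibility can be checked leg by leg: the partial Frobenius at a place $v$ acting on the $i$-th leg satisfies an Eichler--Shimura relation expressing its action on $M_\pi$ in terms of the Hecke eigenvalue $a_v(\pi)$, which matches the characteristic polynomial of $\sigma_\pi(\Frob_v)$ under the normalization fixed in \S\ref{sec:introduction}.

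The main obstacle is the precise matching of normalizations and the passage to the case $\Sigma_\infty\neq\emptyset$. Lafforgue's parameter is naturally $\SL_2$-valued, whereas the $\sigma_\pi$ of the statement is its $\GL_2$-valued twist with $\det\sigma_\pi=\overline\Q_\ell(-1)$; I must check that the Tate twists introduced by this renormalization are exactly those carried by the cohomology in degree $r$, so that the self-dual $\mathrm{std}$-composite reproduces $\sigma_\pi^{\boxtimes r}$ on the nose rather than a twist of it. Geometrically, the fixed legs at the places of $\Sigma_\infty$ enter through the half-twist $\caF(\half\Sigma_\infty)$ of Definition \ref{DefnShtuka}, and I must verify that incorporating these archimedean places---granting the assumed extension of Xue's finiteness and cuspidality results to $\Sigma_\infty\neq\emptyset$---leaves the excursion-operator computation of $M_\pi$ intact, the net effect being absorbed into the determinant condition on $\sigma_\pi$. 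These are the two points at which the argument is genuinely conditional and where the bulk of the verification lies.
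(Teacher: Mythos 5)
Your overall architecture is right --- finiteness from the assumed extension of Xue's theorem, Drinfeld's lemma to extend the Galois action, Lafforgue's excursion operators to cut $(H^r_c)^{\mathrm{cusp}}$ into pieces indexed by Langlands parameters, and the self-duality of $\mathrm{std}$ for $\SL_2$ to recognize $W_r\circ(\sigma_\pi\times\cdots\times\sigma_\pi)$ as $\sigma_\pi^{\boxtimes r}$. But there is a genuine gap at the step you pass over most quickly: you assert that the Hecke eigensystem of $\pi$ contributes ``its Hecke-multiplicity space $\pi^{\Gamma_0(N)}$'' and that the Galois multiplicity is a single copy of $\sigma_\pi^{\boxtimes r}$. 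Neither strong multiplicity one nor the bare excursion formalism gives this: a priori the $\sigma$-isotypic piece has the form $A_\sigma\otimes\sigma_\pi^{\boxtimes r}$ for some finite-dimensional $T_0(N)$-module $A_\sigma$ whose size is unknown. The content of \cite[Proposition 1.2]{LafforgueZhu} (using triviality of the centralizer $\mathcal{S}_\sigma$) is precisely that such an $A_\sigma$ exists and is \emph{independent of $r$} within a parity class; the proposition is then proved by anchoring $A_\sigma$ at the smallest value of $r$, namely $r=0$ (where $(H^0_c)^{\mathrm{cusp}}_\sigma=\pi^{\Gamma_0(N)}$ by the classical theory of automorphic forms on a quaternion algebra) or $r=1$ (where the decomposition of $H^1$ of Drinfeld modular curves, resp.\ moduli of $D$-elliptic sheaves, is known by \cite[Theorem 14.9]{LaumonRapoportStuhler}). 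Without this reduction to $r=0,1$ you have no control on $\dim A_\sigma$, and the claimed equality of multiplicity spaces is unproved.

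Relatedly, your treatment of $\Sigma_\infty\neq\emptyset$ --- ``the net effect being absorbed into the determinant condition on $\sigma_\pi$'' --- misses the actual mechanism. The half-twist by $\Sigma_\infty$ is handled by reinterpreting $\Sht^r_G(\Gamma_0(N);\Sigma_\infty)$ as a stack of shtukas with \emph{no} archimedean places for an inner form $G'$ of $\PGL_2$ attached to the quaternion algebra ramified exactly at $\Sigma_\infty$ (for $r$ even) or at $\Sigma_\infty\smallsetminus\{\infty\}$ (for $r$ odd, keeping one archimedean place to stay in the Drinfeld-modular-curve setting). This inner-form passage is what makes the Lafforgue--Zhu machinery and the $r=0,1$ base cases available; it is not a normalization issue to be absorbed into $\det\sigma_\pi=\overline{\Q}_\ell(-1)$. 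I would rework the second half of your argument around the $r$-independence of $A_\sigma$ and its computation at $r=0$ or $r=1$ after passing to $G'$.
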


\begin{rmk} When $r=2$ and $\Sigma_\infty=\emptyset$, \eqref{PropCohoOfShtukaSpace} is a special case of a theorem of Drinfeld \cite{DrinfeldLanglandsConjectureForGL2}. 
\end{rmk}

\begin{proof}  The formalism of excursion operators developed in \cite{VincentLafforgueChtoucas} furnishes a decomposition of $T_0(N)\times \pi_1(U,\overline{\eta})^r$-modules $(H^r_c)^{\mathrm{cusp}}=\bigoplus_{\sigma} (H^r_c)^{\mathrm{cusp}}_\sigma$ indexed by isomorphism classes of Langlands parameters $\sigma\from \pi_1(U,\overline{\eta}) \to \GL_2\overline{\Q}_\ell$. 

Let $\sigma$ be an irreducible Langlands parameter for $G$.   In the case that $r$ is even and $\Sigma_\infty=\emptyset$,  Corollaire 2.1 to  \cite[Proposition 1.2]{LafforgueZhu} identifies $(H^r_c)^{\mathrm{cusp}}_{\sigma}$ with $\pi^{\Gamma_0(N)}\otimes \sigma^{\boxtimes r}$, where $\pi$ corresponds to $\sigma$ under the Langlands correspondence.   (We remark here that our $(H^r_c)^{\mathrm{cusp}}$ is the space denoted $H^{\text{cusp}}_{I,W}$ in \cite{LafforgueZhu}, where $I$ is a set of cardinality $r$ and $W=\mathrm{std}^{\boxtimes I}$ is the external tensor product of $I$ copies of the standard representation std of $\hat{G}=\SL_2$.)  The proposition follows.

%For $r$ even and $\Sigma_\infty$ nonempty, let $G'$ be the form of $G$ corresponding to the quaternion algebra over $K$ which is ramified exactly at $\Sigma_\infty$.  Consider $G'$ as a group scheme over $X$ which is parahoric at the places in $\Sigma_\infty$ and reductive elsewhere.   Then our stack of shtukas $\Sht_G^r(\Gamma_0(N);\Sigma_\infty)$ is isomorphic to a stack of shtukas $\Sht_{G'}^r(\Gamma_0(N_f))$ for the group $G'$ (with no archimedean places).  Here we have written $N_f$ for the part of $N$ away from $\Sigma_\infty$.  Under the Langlands correspondence, $\sigma$ corresponds to a cuspidal representation $\pi'$ of $G'$ of level $N_f$.  
%We now apply \cite[Proposition 1.2]{LafforgueZhu}:  
%there exists a finite-dimensional $T_0(N)$-module $A_\sigma$ such that for all even $r$, there is an isomorphism of $T_0(N)\times\pi_1(U,\overline{\eta})^r$-modules:
%\[ (H^r_c)^{\mathrm{cusp}}_{\sigma}\isom A_\sigma\otimes \sigma^{\boxtimes r}. \]
%(Note that the group $\mathcal{S}_\sigma$ in \cite[Proposition 1.2]{LafforgueZhu} is trivial in this setting.)  
%Setting $r=0$ we find that $A_\sigma=(H^0_c)^{\mathrm{cusp}}_{\sigma}=\pi^{\Gamma_0(N)}$ is nonzero.  Thus for any even $r$ we find a nonzero $\sigma$-isotypic component in $(H^r_c)^{\mathrm{cusp}}$.

In the case that $r$ is odd and $\Sigma_\infty=\set{\infty}$ is a singleton,
%and let $G'$ be the form of $G$ corresponding to the quaternion algebra $D/K$ ramified exactly at $\Sigma_\infty\smallsetminus\set{\infty}$.  
%Then $\Sht_G^r(\Gamma_0(N);\Sigma_\infty)$ is isomorphic to a stack of shtukas $\Sht_{G'}^r(\Gamma_0(N_f);\set{\infty})$.  Let $\pi'$ be the automorphic representation of $G'$ corresponding to $\sigma$.  
the same technique as in \cite{LafforgueZhu} applies (conditionally on the extension of Xue's theorem to this case) to give a finite-dimensional $T_0(N)$-module $A_\sigma$ such that for all odd $r$, there is an isomorphism of $T_0(N)\times\pi_1(U,\overline{\eta})^r$-modules:
\[ (H^r_c)^{\mathrm{cusp}}_{\sigma}\isom A_\sigma \otimes \sigma^{\boxtimes r}. \]
Setting $r=1$, we have that $(H^1_c)^{\mathrm{cusp}}_{\sigma}$ is the cohomology of a Drinfeld modular curve, namely the curve which parametrizes Drinfeld modules over the ring $H^0(X\backslash \set{\infty},\OO_X)$.   
%(if $\Sigma_\infty=\set{\infty}$) or of the (also 1-dimensional) moduli space of D-elliptic sheaves in the sense of \cite{LaumonRapoportStuhler} (otherwise).
The behavior of the cohomology of Drinfeld modular curves was known to Drinfeld:  $A_\sigma=\pi^{\Gamma_0(N)}$.  
%In either case the decomposition of $H^1$ is known \cite[Theorem 14.9]{LaumonRapoportStuhler}:  $A_\sigma=\pi^{\Gamma_0(N)}$. 
We proceed as above.
 \end{proof}
 
We can now complete the proof of Proposition \ref{PropCohoOfShtukaSpaceIntro}.  Suppose $\sigma$ is an irreducible Langlands parameter of conductor $N$.  Proposition \ref{PropCohoOfShtukaSpace} shows that $\sigma^{\boxtimes r}$ appears as a summand of $(H^r_c)^{\mathrm{cusp}}$.  Therefore it is a subspace of $H^r_c$.

\subsection{Vector bundles with level structures on $\mathbf{P}^1$}
\label{SectionShtukasOverP1}
At this point we specialize our study of shtuka spaces to the case that the base curve $X$ is the projective line.  We make heavy use of the fact that the degree $d$ component $\Bun_2^d$ admits a dense open substack containing only one isomorphism class, namely $\OO(d/2)^{\oplus 2}$ or $\OO((d-1)/2)\oplus \OO((d+1)/2)$ as $d$ is even or odd, respectively;  these are exactly the objects with instability index $\leq 1$.   As a result, it becomes possible to give explicit equations for dense open substacks of the moduli stacks of shtukas.  

The following lemmas, concerning $\Bun_G(\Gamma_0(N))$ for divisors $N$ of degree 3 and 4, exemplify the simplifications that will occur.

\begin{lm}\label{LemmaDescriptionOfBunGGamma0deg3} Let $F$ be a field, and let $X=\mathbf{P}^1_F$.  Let $N$ be an effective divisor of degree 3.  There is a dense substack $\Bun_G(\Gamma_0(N))^{\omega}$ of $\Bun_G(\Gamma_0(N))$, such that each connected component $\Bun_G(\Gamma_0(N))^{d,\omega}$ ($d=0,1$) is isomorphic to a single point $\Spec F$.
\end{lm}

\begin{proof}  We given an explicit description of $\Bun_G(\Gamma_0(N))^{\omega}$ in the case that $N=P_1+P_2+P_3$ where each $P_i$ is $F$-rational, the other cases being similar.   It is the locus of $\caF^\dagger$ where the instability of each fractional twist $\caF(\half D)$ is $\leq 1$, where $D$ runs over divisors supported on $N_1,N_2,N_3$.  One sees that each of its components $\Bun_G(\Gamma_0(N))^{d,\omega}$ is a single point.  In degree 0, for instance, let $\caF^\dagger\in \Bun_G^0(\Gamma_0(N))(F)$ be the object with $\caF$ the trivial $G$-torsor.  The level structure, represented by a triple of pairwise distinct elements of $\P^1(F)$, determines an object of $\Bun_G(\Gamma_0(N))^{\omega}$ if and only if the three points in the triple are pairwise distinct.  (Translated into vector bundles, this is the observation that elementary modifications of $\OO^{\oplus 2}$ at points $P_1,P_2$ along the lines $L_1,L_2\subset F^2$ produce $\OO(1)\oplus\OO(1)$ generically, but will produce $\OO(2)\oplus\OO$ if $L_1=L_2$.)  As $G$ acts simply transitively on such triples, we find an isomorphism $\Bun_G(\Gamma_0(N))^{0,\omega}\isom \Spec F$.
%In the case that $(P_1,P_2,P_3)=(0,1,\infty)$, let us record now that every object in $\Bun_2(\Gamma_0(N))^{1,\leq 1}$ is isomorphic to 
%\begin{equation}
%\label{EqG1}
%\caH^\dagger_1=\AL_\infty(\caH_0^\dagger)=(\OO(1)\oplus \OO, (\caL_0,\caL_1,\caL_\infty)),\;\;(\caL_1,\caL_2,\caL_3)=(0,1,0).
%\end{equation}

%In general, the points $P_1,P_2,P_3$ may not all be $F$-rational, in which case a $\Gamma_0(N)$ structure on $\OO^{\oplus 2}_X$ would be a point of the Weil restriction $\Res^N_{\Spec F} \mathbf{P}^1$.  The triple $(P_1,P_2,P_3)$ determines such a point, so that the object $\caH^\dagger$ from \eqref{EqG0} is well-defined.  
\end{proof}

\begin{lm}\label{LemmaDescriptionOfBunGGamma0}
Let $F$ be a field, and let $X=\PP^1_F$.  
\begin{enumerate}
\item Let $N=P_1+P_2+P_3+P_4$ be a multiplicity-free divisor of degree $4$.  There exists a dense open substack $\Bun_G(\Gamma_0(N))^{\omega}$, stable under the Atkin-Lehner operators, each of whose connected components is isomorphic to $\PP^1_F\smallsetminus N$.  With respect to these isomorphisms, the Atkin-Lehner involution $\AL(\half (P_1+P_2)))$ is the unique involution which exchanges $P_1$ with $P_2$ and $P_3$ with $P_4$.
\item Let $N=P_1+P_2+2P_3$, with $P_1,P_2,P_3$ pairwise distinct.  There exists a dense open substack $\Bun_G(\Gamma_0(N))^{\omega}$, stable under the Atkin-Lehner operators (supported only on $P_1,P_2$), each of whose connected components is isomorphic to $\mathbf{A}^1_F$.  
\end{enumerate}
\end{lm}

\begin{proof}  (1) Let $\Bun_G(\Gamma_0(N))^{\omega}$ be the locus of $\caF^\dagger$ where (once again) all Atkin-Lehner twists $\caF(\half \Sigma)$ have instability index $\leq 1$.  This time, the $\Gamma_0(N)$-level structures $\caF^\dagger$ on the trivial $G$-torsor correspond to quadruples of points of $\P^1$.  One computes that $\caF^\dagger$ lies in $\Bun_G(\Gamma_0(N))^{\omega}$ if and only if the quadruple is pairwise distinct and is not in the $G$-orbit of $(P_1,P_2,P_3,P_4)$.  The set of $G$-orbits of such quadruples is $\P^1_F\smallsetminus N$.  (The claim about Atkin-Lehner involutions is not used in the sequel and we omit its proof.)

(2) Let $\Bun_2(\Gamma_0(N))^{\omega}$ be the preimage of the substack described in Lemma \ref{LemmaDescriptionOfBunGGamma0deg3}.    The parameter space for $\Gamma_0(2P_3)$-structures lying over a given $\Gamma_0(P_3)$-structure is described by the fibers of $\Res^{2P_3}_{P_3} \PP^1_F\to \PP^1_F$, which are all $\mathbf{A}^1_F$.

\end{proof}

\subsection{Explicit presentations of shtuka spaces with $\Gamma_0(N)$-structure}

The following theorem and its proof give an explicit presentation of the space of shtukas with $\Gamma_0(N)$-structure, where $N$ is a divisor of degree 3 or 4.  
It is a special case of the main results of \cite{Fernandez}.  Define $\Sht^r_G(\Gamma_0(N);\Sigma_\infty)^{\omega}$ to be the dense open substack where the underlying $G$-bundle of the shtuka lies in the substack $\Bun_G(\Gamma_0(N))^{\omega}$ described by Lemmas \ref{LemmaDescriptionOfBunGGamma0deg3} and \ref{LemmaDescriptionOfBunGGamma0}.  

\begin{thm} \label{ThmPresentationOfShtukas} Let $X=\mathbf{P}^1_{\F_q}$,  let $r\geq 1$ be an integer, and let $\Sigma_\infty\subset \abs{X}$ be a finite set of degree 1 points satisfying $\#\Sigma_\infty\equiv r\pmod{2}$.   Let $\eta_r= \Spec \F_q(t_1,\dots,t_r)$ be the generic point of $X^r$.  
% Let $N\in\Div X$ be a sum of distinct degree 1 points, and let $\Sigma_\infty\subset \supp N$ satisfy $\#\Sigma_\infty\equiv r\pmod{2}$.  Then we have the stack $\Sht^r_G(\Gamma_0(N);\Sigma_\infty)\to X^r$, its components $\Sht^r_G(\Gamma_0(N);\Sigma_\infty)^d$ ($d=1,2$) and the dense open substack $\Sht^r_G(\Gamma_0(N);\Sigma_\infty)^{d,\leq 1}\subset \Sht^r_G(\Gamma_0(N);\Sigma_\infty)^{d}$.
\begin{enumerate}
\item Let $N$ be an effective divisor of degree 3, such that each point of $\Sigma_\infty$ has multiplicity 1 in $N$.    The stack $\Sht^r_G(\Gamma_0(N);\Sigma_\infty)^{\omega}_{\eta_r}$ is a quasiprojective variety, each of whose connected components is rational; more precisely, there is a birational morphism from each component to $(\P^1_{\eta_r})^r$.  
\item Let $N$ be a multiplicity-free effective divisor of degree 4 whose support contains $\Sigma_\infty$.  The stack $\Sht^r_G(\Gamma_0(N);\Sigma_\infty)^{\omega}_{\eta_r}$ is a quasiprojective variety.  Each of its connected components is birational to a hypersurface in $(\P^1_{\eta_r})^{r+1}$ of degree $(2,2,\dots,2,q+1)$.  
\item Let $N=P_1+P_2+2P_3$ be an effective divisor of degree 4, such that $P_1,P_2,P_3$ are pairwise distinct and $\Sigma_\infty\subset \set{P_1,P_2}$. The stack $\Sht^r_G(\Gamma_0(N);\Sigma_\infty)^{\omega}_{\eta_r}$ is a quasiprojective variety.  Each of its connected components is birational to a hypersurface in $(\P^1_{\eta_r})^{r+1}$ of degree $(2,2,\dots,2,q)$.  
\end{enumerate}
\end{thm}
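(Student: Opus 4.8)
The plan is to realize $\Sht^r_G(\Gamma_0(N);\Sigma_\infty)^{\omega}_{\eta_r}$ as a space of iterated Hecke modifications cut out by a single \emph{coincidence} (Frobenius-matching) equation, and then to read off the stated birational models from the explicit descriptions of $\Bun_G(\Gamma_0(N))^{\omega}$ in Lemmas \ref{LemmaDescriptionOfBunGGamma0deg3} and \ref{LemmaDescriptionOfBunGGamma0}. Over $\eta_r$ the $r$ legs are pairwise distinct, so iterated and non-iterated shtukas agree, and a shtuka is the same datum as a chain of minuscule (std) modifications $\caF^\dagger=\caG_0\to\caG_1\to\cdots\to\caG_r$ at the legs $x_1,\dots,x_r$ together with an identification $\caG_r\isom \Fr_S^*\caF^\dagger(\half\Sigma_\infty)$. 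First I would form the Hecke stack $\mathrm{Hk}^{r,\omega}$ of such chains whose terms all lie in $\Bun_G(\Gamma_0(N))^{\omega}$, with its two maps $p_0,p_r$ remembering $\caG_0$ and $\caG_r$. A single std-modification at a point is a choice of line in the rank-$2$ fibre, i.e.\ a point of $\P^1$, so $p_0$ is birationally an iterated $\P^1$-bundle over $\Bun_G(\Gamma_0(N))^{\omega}$; writing $w:=\AL(\half\Sigma_\infty)$ for the Atkin--Lehner involution and $\Fr$ for the effect of $\Fr_S^*$ on $\Bun_G(\Gamma_0(N))^\omega$, the shtuka space is the locus $\{p_r = w\circ\Fr\circ p_0\}$ inside $\mathrm{Hk}^{r,\omega}$.

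For part (1), Lemma \ref{LemmaDescriptionOfBunGGamma0deg3} says each component $\Bun_G(\Gamma_0(N))^{d,\omega}$ is a single point $\Spec F$. The parity hypothesis $r\equiv\#\Sigma_\infty\pmod 2$ puts $\caG_r$ and $\Fr_S^*\caF^\dagger(\half\Sigma_\infty)$ in the same degree component, so $p_0$ and $p_r$ automatically land in the same point and the coincidence equation is vacuous; since a generic bundle-with-level-structure on $\P^1$ has trivial $\PGL_2$-automorphisms, the identification $\caG_r\isom\Fr_S^*\caF^\dagger(\half\Sigma_\infty)$ is then unique. Thus $\Sht^{r,\omega}_{\eta_r}\isom\mathrm{Hk}^{r,\omega}_{\eta_r}$, and recording the $r$ modification directions gives a birational morphism onto $(\P^1_{\eta_r})^r$, which is the assertion.

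For parts (2) and (3), $\Bun_G(\Gamma_0(N))^{\omega}$ acquires one modulus: by Lemma \ref{LemmaDescriptionOfBunGGamma0} each component is $\P^1_F\smallsetminus N$ in case (2) and $\mathbf{A}^1_F$ in case (3). Writing $s$ for this coordinate and $u_1,\dots,u_r$ for the modification directions, $\mathrm{Hk}^{r,\omega}_{\eta_r}$ is birational to $(\P^1_{\eta_r})^{r+1}$ with coordinates $(s,u_1,\dots,u_r)$, and $\Sht^{r,\omega}_{\eta_r}$ is the hypersurface
\[ s'(s,u_1,\dots,u_r) \;=\; w\bigl(s^{q}\bigr), \]
where $s'=p_r$ and $s\mapsto s^q$ is the action of $\Fr_S^*$ over $\eta_r$. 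Then I would compute a single elementary modification in local coordinates, keeping track of the fractional twists defining the $\Gamma_0(N)$-structure, to show that $s'$ is fractional-linear (degree $1$) in $s$ and degree $2$ in each direction $u_i$; since $u_i$ enters only the $i$-th modification, the multidegree in $(u_1,\dots,u_r)$ is $(2,\dots,2)$. As $w$ is a M\"obius involution in $s$, the right-hand side $w(s^q)$ has degree $q$ in $s$, and clearing denominators against the degree-$1$ dependence of $s'$ on $s$ produces total $s$-degree $q+1$, giving the multidegree $(2,\dots,2,q+1)$ of case (2). In case (3) the double point $2P_3$ forces the boundary point $s=\infty$ of $\mathbf{A}^1$ to be a common fixed point of $\Fr$ and of the (affine) Atkin--Lehner involution, so both sides agree there and a spurious factor is removed, dropping the $s$-degree to $q$ and yielding $(2,\dots,2,q)$. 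In all cases the $\omega$-locus is dense and its objects are rigid, so $\Sht^{r,\omega}_{\eta_r}$ is a quasiprojective variety birational to the displayed hypersurface.

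The hard part will be the explicit single-modification computation behind the multidegree: one must choose compatible trivializations near each leg, determine precisely how an elementary modification moves the point $s\in\Bun_G(\Gamma_0(N))^{\omega}$, and confirm the clean bidegree $(1,2)$ in $(s,u_i)$ together with its correct interaction with the fractional twists and with $w$. The degree drop from $q+1$ to $q$ in case (3), governed by the behaviour at the double point $P_3$, and the verification that the coincidence locus is genuinely a scheme (trivial automorphisms) over $\eta_r$, will likewise require care. These are exactly the verifications carried out in \cite{Fernandez}, of which this theorem is a special case.
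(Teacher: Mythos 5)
Your proposal is correct and follows essentially the same route as the paper: the paper's period map $\Pi$ onto $(\P^1_{\eta_r})^r$ is exactly your map recording the $r$ modification directions, its inverse of multidegree $(2,\dots,2)$ is your chain-endpoint computation, and your final coordinate (the modulus $s$ of $\Bun_G(\Gamma_0(N))^{\omega}$) is identified with the paper's level line $v$ at the fourth point of $N$, satisfying the same Frobenius equation of degree $q+1$, resp.\ $q$. The only cosmetic difference is that the paper reduces parts (2)--(3) to part (1) by writing $N=N_0+P$ and carrying the level structure at $P$ as an extra $\P^1$ (resp.\ $\A^1$) coordinate on which $f(P)$ acts by a M\"obius (resp.\ affine) transformation, rather than imposing the coincidence condition on the one-dimensional moduli of $\Bun_G(\Gamma_0(N))^{\omega}$ directly.
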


\begin{rmk}  There are no cusp forms on $\PGL_2$ of conductor $N$ if $\deg N=3$, whereas if $\deg N=4$ and $N$ is multiplicity-free, then the dimension of the space of cusp forms of conductor $N$ is exactly $q$.  (See \cite[\S 7]{DeligneFlicker} for these results and related formulas.) So it is not surprising that non-rational varieties appear when passing from degree 3 to degree 4.
\end{rmk}

\begin{rmk} It is possible to extend Theorem \ref{ThmPresentationOfShtukas} to include the cases $N=2P_1+2P_2$, $N=P_1+3P_2$, and $N=4P$.   We refer the reader to \cite{Fernandez} for details.
\end{rmk}

\begin{proof} 
(1) By Lemma \ref{LemmaDescriptionOfBunGGamma0deg3}, $\Bun_G(\Gamma_0(N))^{d,\omega}$ is a point, corresponding to an object $\caF^\dagger$ of $\Bun_G(\Gamma_0(N))(\F_q)$.  Thus for a test scheme $S$, the shtukas in $\Sht_G^r(\Gamma_0(N);\Sigma_\infty)^{d,\omega}(S)$ classify isomorphisms of $G$-torsors with $\Gamma_0(N)$-structure:
\[ f\from \caF^\dagger\vert_{(X\times_{\F_q} S) \backslash \cup_i \gamma_i}  \isomto \caF^\dagger(\half \Sigma_\infty) \vert_{(X\times_{\F_q} S) \backslash \cup_i \gamma_i}
\]
which are bounded by $W_r$.  

With Frobenius out of the picture, all that remains is linear algebra.   Let $\caF_1^\dagger$ and $\caF_2^\dagger$ be any two lifts of $\caF$ and $\caF(\half \Sigma_\infty)$ to $\Bun_2(\F_q)$, such that $\deg \caF_2=\deg \caF_1+r$.  (This is possible due to the parity condition on $r$.)  Note that the dimension of $\Hom(\caF_1^\dagger,\caF_2^\dagger)$ (meaning, morphisms of vector bundles, not necessarily injective, which preserve level structures) is $2r+1$.  
The stack of shtukas $\Sht_G^r(\Gamma_0(N);\Sigma_\infty)^{d,\omega}$ is isomorphic to the quotient by $\mathbf{G}_m$ of the moduli stack of morphisms of vector bundles with $\Gamma_0(N)$-structure $\caF_1^\dagger\to \caF_2^\dagger$ which are bounded by $\mathrm{std}^{\boxtimes r}$ at the legs.  This is a quasi-projective variety;  indeed it is the locally closed subvariety of 
\[ (\P^1)^r\times \P\Hom(\caF_1^\dagger,\caF_2^\dagger)\isom (\P^1)^r\times \P^{2r}, \]
consisting of data $(x_1,\dots,x_r,f)$, where the divisor of $\det f$ is $\gamma_1+\dots+\gamma_r$.  

For each $i=1,\dots,r$, the local behavior of $f$ at $\gamma_i$ gives an $S$-point of the affine Grassmannian $\mathrm{Gr}_G$: this is the period morphism at the $i$th leg.  If the legs are pairwise disjoint, all the period morphisms land in the minuscule Schubert cell $\mathrm{Gr}_G^{\mathrm{std}}=\P^1$.  We claim that over the generic point $\eta_r$, the product of the period morphisms
\[\Pi\from \Sht^r_G(\Gamma_0(N);\Sigma_\infty)^{d,\omega}_{\eta_r}\to (\P^1_{\eta_r})^r \]
is a birational map.  Indeed, let $x_1,\dots,x_r$ be generic legs, and consider the fiber of $\Pi$ over $(u_1,\dots,u_r)\in (\P^1)^r$.  This is the subspace of $f\in \P\Hom(\caF_1^\dagger,\caF_2^\dagger)\isom \P^{2r}$ such that $f$ is generically an isomorphism, and such that $f(x_i)$ has rank 1, with kernel $u_i$.  This is $2r$ linear conditions; for generic $(u_1,\dots,u_r)$ there is a unique solution.

A computation involving minors shows that the rational map $\Pi^{-1}\from (\P^1_{\eta_r})^r\to \P_{\eta_r}^{2r}$ has degree $(2,\dots,2)$.

(2) Now suppose $N$ is multiplicity-free of degree 4.  Let $N=N_0+P$ for a point $P$ disjoint from $N_0$.  The quasi-projective variety $\Sht^r_G(\Gamma_0(N);\Sigma_\infty)_{\eta_r}^{d,\omega}$ can be modeled on a subvariety of $\P^{2r}\times\P^1$, where the $\P^{2r}$ is as in part (1) and the $\P^1$ records the additional level structure at $P$.  Namely, it is the space of pairs $(f,v)$, where the divisor of $\det f$ is $\gamma_1+\dots+\gamma_r$, and $f$ preserves the level structure at $P$ in the sense that $f(P)(v)=\Fr_q(v)$.  Part (1) shows that this is birational via $\Pi$ to the subvariety $Z$ of $(\P^1)^r\times \P^1$ consisting of pairs $(u,v)$, where $f=\Pi^{-1}(u)$ satisfies $f(P)(v)=\Fr_q(v)$.  We saw that $\Pi^{-1}$ has degree $(2,\dots,2)$, which is to say that $f(t)$ has coordinates which are degree $(2,\dots,2)$ in $u=(u_1,\dots,u_r)$.  Finally we observe that the equation $f(P)(v)=\Fr_q(v)$ has degree $q+1$ in $v$. 

(3) Finally, suppose $N=P_1+P_2+2P_3=N_0+P_3$.  The calculation is similar to (2), except that the additional level structure at $P_3$ has parameter space $\mathbf{A}^1$, this being the fiber of $\Res^{2P_3}_{P_3} \P^1\to \P^1$.  The operator $f(P)$ is an automorphism of this $\mathbf{A}^1$; i.e., an affine transformation.  Thus $f(P)(v)=\Fr_q(v)$ has degree $q$ in $v$.
\end{proof}

%\begin{ex}[The case $r=2$,  $\Sigma_\infty=\emptyset$, and $N=(0)+(1)+(\infty)+(v)$]  Let $N$ be the sum of four distinct degree 1 points of $\P^1(\F_q)$;  without loss of generalization $N=(0)+(1)+(\infty)+(v)$ for some $v\in\P^1(\F_q)\backslash\set{0,1,\infty}$.   Theorem \ref{ThmPresentationOfShtukas} shows that $\Sht^2_G(\Gamma_0(N),\emptyset)^{0,\leq 1}$ is birationally equivalent over $X^2$ to a hypersurface $Z\subset X^2\times (\P^1)^2\times \P^1$.  Here we make completely explicit the defining equation for $Z$, in homogeneous variables $t_{10},t_{11},t_{20},t_{21},u_{10},u_{11},u_{20},u_{21},w_0,w_1$.  
%
%For $i=1,2$ let $f_i(T)$ be a copy of the matrix in \eqref{EqfTMatrix}: 
%\[f_i(T) = \begin{pmatrix}
%t_{i0}u_{i1}(u_{i0}-u_{i1})T_1 & -t_{i1}u_{i0}(u_{i0}-u_{i1})T_0 \\
%-u_{i1}(t_{i0}u_{i1}-t_{i1}u_{i0}) & u_{i0}(t_{i0}u_{i1}-t_{i1}u_{i0}) \end{pmatrix} \]
%Then $f(T):=f_2(T)^{-1}f_1(T)$, considered as a rational map from $\OO^2$ to itself, constitutes a shtuka of type $\mu=(1,-1)$ with legs at $t_1,t_2$ and level structure at $0,1,\infty$.  The final variables $w=(w_0:w_1)$ record the level structure at $v$; the condition for the shtuka to preserve this level structure is $f(v)w=w^q$, or equivalently $f_1(v)\begin{pmatrix} w_0 \\ w_1\end{pmatrix} \dot{=} f_2(v)\begin{pmatrix} w_0^q\\ w_1^q \end{pmatrix}$, where $\dot{=}$ means equality up to a nonzero scalar.  Cross-multiplying, we obtain an equation of degree $(1,1,2,2,q+1)$;  this is the equation for the hypersurface $Z$.
%\end{ex}

\begin{rmk}  Consider the case $r=2$.   Let $N$ be a divisor of degree 4 as in part (2) or (3) of Theorem \ref{ThmPresentationOfShtukas}.  Let $\eta_2\isom \Spec \F_q(t_1,t_2)$ be the generic point of $X^2$.  The theorem shows that each connected component of $\Sht_G^2(\Gamma_0(N);\Sigma_\infty)^{\omega}_{\eta_2}$ is birational to a hypersurface in $(\P^1_{\eta_2})^3$ of degree $(2,2,q)$ or $(2,2,q+1)$.  Since $(2,2)$-curves in $(\P^1)^2$ are of genus 1, there is a desingularization, call it $Z$, which is a genus 1 fibration over $\P^1_{\eta_2}$.    In fact $Z\to\P^1_{\eta_2}$ admits a section, so that $Z$ is an elliptic surface.  The nature of the singular fibers of $Z\to \P^1_{\eta_2}$ was studied in \cite{Fernandez}.  

Specializing even further to the case $q=2$ and $N=(0)+(1)+2(\infty)$, one obtains a hypersurface $Z$ defined over $\F_2(t_1,t_2)$ of degree $(2,2,2)$, which is a K3 elliptic surface.  This project originated in 2019, when the second author sent the equation for $Z$ to Noam Elkies, who (a) recognized $Z$ as the universal K3 elliptic surface with a 6-torsion section, and (b) computed a finite morphism from $Z$ to the Kummer surface $\Km(\E_{\eta_2}^2)$, where $\E\to\P^1_{\F_2}$ is an elliptic fibration of conductor $N$.  (More precisely, $\E\to\P^1_{\F_2}$ is the elliptic fibration with $\I_2,\I_6,\IV$ fibers at $0,1,\infty$, with Mordell-Weil group $\Z/6\Z$.)   This $\E$ was the first example of a elliptic fibration we knew to be 2-modular.  
\end{rmk}

\subsection{The coincidence map}

Theorem \ref{ThmPresentationOfShtukas} allows us to derive equations for shtuka spaces of level $\Gamma_0(N)$, where $N$
is a degree 4 effective divisor of $\P^1_{\F_q}$ supported on at least one place $\infty$ of multiplicity 1.   In an early phase of this line of research, we dealt directly with those equations in an attempt to prove 2-modularity of some elliptic fibrations of conductor $N$.   To our surprise, a change of variables turned the defining equation for $\Sht^2_G(\Gamma_0(N))^{d,\omega}_{\eta_2}$ into one in which we could recognize an equation for the Drinfeld modular curve $\Sht^1_G(\Gamma_0(N);\infty)$.  Eventually we found a generalization of this phenomenon, in the form of the coincidence map described in the introduction.

Let $r\geq 2$, and let $\Sigma_\infty,\Sigma'_\infty\subset \Supp N$ be sets of degree 1 places with multiplicity 1 in $N$, such that $\#\Sigma_\infty\equiv r\pmod{2}$ and $\#\Sigma_\infty'\equiv 1\pmod{2}$.  (Under our hypotheses on $N$, such sets exist for any $r$.)  Let $\Sigma_\infty''$ be the symmetric difference of these sets, so that $\#\Sigma_\infty''\equiv r+1\pmod{2}$.  Our goal is to give a factorization of the variety of $r$-legged shtukas into a cartesian square:

\begin{equation}
\label{EqBigCoincidenceDiagram}
\xymatrix{
\Sht_G^r(\Gamma_0(N);\Sigma_\infty) \ar@{-->}[rr]^{c\times\lambda_r} \ar@{-->}[d]_{\beta} &&   \Sht^1_G(\Gamma_0(N);\Sigma_\infty') \times U^r  \ar[d]^{\lambda_1\times \id}  \\
\Coinc^{r+1}_G(\Gamma_0(N);\Sigma_\infty'') \ar[rr]_{\lambda_{r+1}} && U^{r+1}
}
\end{equation}

We have called the variety $\Coinc^{r+1}(\Gamma_0(N);\Sigma''_\infty)$ the moduli space of coincidences, and $c$ the coincidence map.  The surprising thing (to us) about this state of affairs was that the family of varieties $\Coinc^{r+1}(\Gamma_0(N);\Sigma''_\infty)\to U^{r+1}$ was symmetric not just in the first $r$ copies of $U$ as we would expect, but also in the final copy.

First we define the variety of coincidences.  The ``coincidences'' classified by these spaces are modifications of vector bundles, but Frobenius is not involved and they may be defined in any characteristic.  
Therefore let $F$ be any field, and let $N$ be an effective divisor on $\P^1$ of degree 4.   Let $\Sigma_\infty\subset \Supp N$ be a set of places appearing with multiplicity 1 in $N$, such that $\#\Sigma_\infty\equiv r\pmod{2}$.  

Let $\caH^{\dagger}\in \Bun_G(\Gamma_0(N))(F)$ be the following ``reducible object'' of instability index 2.  This will be the image of an object of $\Bun_2(\Gamma_0(N))(F)$ represented by the data $\caH=\OO(2)\oplus \OO$ together with the $\Gamma_0(N)$-level structure which is just $\OO_N$ embedded into $\caH\vert_N$ along the second factor.  Note that $\Aut\caH^{\dagger}$ (considered as a group scheme) is $\mathbf{T}$, the diagonal torus in $G$.   Thus $\Bun_G(\Gamma_0(N))^0$ contains a locally closed substack isomorphic to $B\mathbf{T}$, whose $S$-points classify those $\caH_0^\dagger\in \Bun_G(\Gamma_0(N))$ which are locally isomorphic to $\caH^\dagger$.

We need $\caH^{\dagger}$ for the following property:

\begin{lm}  \label{LemmaMapFromH}  Consider the stack $\mathcal{U}$ whose $S$-points classify tuples $(\caH^\dagger_0,\caF^\dagger,\tau,h)$, where $\caH^\dagger_0$ is an object of $\Bun_G(\Gamma_0(N))^0(S)$ which is locally isomorphic to $\caH^\dagger$, where $\caF^\dagger\in \Bun_G(\Gamma_0(N))^{1,\omega}(S)$, where $\tau\in\P^1(S)$, and where 
\[h\from\caH_0^\dagger\vert_{\P^1\smallsetminus\gamma_{\tau}}\isom \caF^\dagger\vert_{\P^1\smallsetminus\gamma_{\tau}}\]
is an isomorphism bounded by $\mathrm{std}$ on $\tau$.  Then the projection $\mathcal{U}\mapsto \Bun_G(\Gamma_0(N))^{1,\omega}$ is an isomorphism.  That is, $\caF^\dagger$ determines the data $(\tau,\caH^\dagger,h)$.
\end{lm}

\begin{proof}  A restatement of the lemma is that, given $\caF^\dagger$, the space classifying isomorphisms $h$ between $\caH^\dagger$ and $\caF^\dagger$ away from an unspecified leg and bounded by $\mathrm{std}$ there is a $\mathbf{T}$-torsor.  The space of such $h$ can be modeled in an ambient space $\P\Hom(\OO(2)\oplus \OO, \OO(2)\oplus\OO(1))\isom \P^5$.  The fact that $h$ preserves $\Gamma_0(N)$-level structures (recall that $\deg N = 4$) means that it belongs to a $\P^1$, but then two of the points on the $\P^1$ correspond to $h$ with $\det h=0$.  The complement of these points is a $\mathbf{T}$-torsor. 
\end{proof}

\begin{defn}[The stack of $r$-legged coincidences]\label{DefnGCoincidences} A {\em $G$-coincidence} with legs $x_i\from S\to \P^1$ (for $i=1,\dots,r$), archimedean places $\Sigma_\infty$, and $\Gamma_0(N)$-structure is an isomorphism of $G$-torsors with $\Gamma_0(N)$-structure:
\[ f\from \caH_1^{\dagger}\vert_{\PP^1_S\smallsetminus \bigcup_i \gamma_i } \isomto \caH_2^{\dagger}\vert_{\PP^1_S\smallsetminus \bigcup_i \gamma_i} \]
such that locally $\caH_1^\dagger \isom \caH^\dagger$ and $\caH_2^\dagger \isom \caH^\dagger(\half \Sigma_\infty)$, and such that $f$ is bounded by $\mathrm{std}^{\boxtimes r}$ at the legs.  Let $\Coinc^{r}_G(\Gamma_0(N);\Sigma_\infty)$ be the stack classifying such $G$-coincidences, and let
\[ \Coinc^{r}_G(\Gamma_0(N);\Sigma_\infty)\to (\P^1)^r \]
be the morphism sending a $G$-coincidence to its legs.
\end{defn}

In other words, $\Coinc^{r}_G(\Gamma_0(N);\Sigma_\infty)$ classifies such isomorphisms involving $\caH^\dagger$ and $\caH^\dagger(\half \Sigma_\infty)$, modulo the action of $\mathbf{T}\times\mathbf{T}$.

Now we can construct the coincidence map $c$.  Return to the setting in the overview, so that the base field is $\F_q$ and we have $\Sigma_\infty,\Sigma_\infty',\Sigma_\infty''\subset \Sigma$, whose parities are $r,1,r+1$, respectively.  Consider the stack $\mathcal{V}$ whose $S$-points classifying the following data:
\begin{itemize}
\item $\caF^\dagger$ is an object of $\Bun_G(\Gamma_0(N))^{0,\omega}(S)$,
\item $\caH_1^\dagger,\caH_2^\dagger$ are objects of $\Bun_G(\Gamma_0(N))^0(S)$ locally isomorphic to $\caH^\dagger$,
\item $f\from \caF^\dagger\vert_{X_S\smallsetminus \bigcup_{i=1}^r \gamma_i } \isomto \Fr^\ast\caF^\dagger(\half \Sigma_\infty)\vert_{X_S\smallsetminus \bigcup_{i=1}^r \gamma_i}$ is a shtuka bounded by $\mathrm{std}^{\boxtimes r}$ at the legs $x_1,\dots,x_r\in\P^1(S)$,
\item $f'\from \caF^\dagger\vert_{X_S\smallsetminus \gamma_{r+1}}\isomto \Fr^\ast\caF^\dagger(\half \Sigma_\infty')\vert_{X_S\smallsetminus \gamma_{r+1}}$ is a shtuka bounded by $\mathrm{std}$ at the leg $x_{r+1}\in\P^1(S)$,
\item For $i=1,2$, $h_i\from \caH_i^{\dagger}\vert_{X_S\smallsetminus \gamma_\tau} \isomto \caF^\dagger(\half \Sigma_\infty')\vert_{X_S\smallsetminus \gamma_\tau} $ is an isomorphism bounded by $\mathrm{std}$ at $\tau\in\P^1(S)$.
\end{itemize}
These are subject to a condition ($\ast$) that the composition 
\begin{equation}
\label{EqFormulaForG}
 g:=h_2(\half \Sigma_\infty'')^{-1}\circ f'(\half \Sigma_\infty)^{-1}\circ f(\half \Sigma_\infty') \circ h_1,
\end{equation}
a priori an isomorphism from $\caH_1^\dagger$ to $\caH_2(\half \Sigma_\infty'')$ away from $x_1,\dots,x_{r+1},\tau$ and bounded by $\mathrm{std}^{\boxtimes r+1} \boxtimes \mathrm{std}^{\otimes 2}$, extends over $\tau$ (more precisely, is bounded by the subrepresentation $\mathrm{std}^{\boxtimes r+1} \boxtimes \mathrm{triv}$).  

Now, by Lemma \ref{LemmaMapFromH}, $\caF^\dagger$ completely determines the data of $\tau$ and the $h_i$, so we see that $\mathcal{V}$ is a closed substack of $\Sht_G^r(\Gamma_0(N);\Sigma_\infty)^{0,\omega}\times \Sht_G^1(\Gamma_0(N);\Sigma_\infty')^{0,\omega}$.  We claim that $\mathcal{V}$ is birational to the graph of a dominant rational map $c\from \Sht_G^r(\Gamma_0(N);\Sigma_\infty)^0\dashrightarrow\Sht_G^1(\Gamma_0(N);\Sigma_\infty')^0$.

Proof of claim:  Given an $r$-legged shtuka $f$ involving $\caF^\dagger$, the space of shtukas $f'$ with one indeterminate leg lives in an ambient space $\P\Hom(\OO^2,\OO(1)\oplus\OO)\isom \P^5$, subject to 4 level conditions plus the additional linear condition $(\ast)$, which determines $f'$ uniquely, at least for generic $f$.  

We have defined a rational map
\[ c\from \Sht^r_G(\Gamma_0(N);\Sigma_\infty)^0 \dashrightarrow \Sht_G^1(\Gamma_0(N);\Sigma_\infty')^0\]
sending $f$ to $f'$.  There is also a morphism $\mathcal{V}\to \Coinc_G^r(\Gamma_0(N);\Sigma_\infty'')$ whose output is the composition $g$ above, inducing a rational map 
\[ \beta\from \Sht^r_G(\Gamma_0(N);\Sigma_\infty)^{0,\omega}\dashrightarrow \Coinc^{r+1}_G(\Gamma_0(N);\Sigma_\infty''). \]
We have defined a commutative diagram of rational maps \eqref{EqBigCoincidenceDiagram} at least on the degree 0 component.  To see that it is cartesian, suppose $g$ and $f'$ are given.  By Lemma \ref{LemmaMapFromH} we get an $h_1$ and an $h_2$, from which it is possible to solve for $f$ in \eqref{EqFormulaForG}. Finally, we may extend $c$ and $\beta$ to the degree 1 component using an Atkin-Lehner involution.

\subsection{Explicit equations for the spaces of $G$-coincidences} 

Let $F$ be a field, let $N\subset \P^1_F$ be an effective divisor of degree 4 with support $\Sigma$,  let $U=\P^1_F\smallsetminus \Sigma$, and let $\Sigma_\infty\subset \Sigma$ be a subset consisting of points appearing in $N$ with multiplicity 1.  Then we have defined the space of $G$-coincidences $\Coinc^r_G(\Gamma_0(N);\Sigma_\infty)\to U^r$ in Definition \ref{DefnGCoincidences}.  The fiber of this morphism over the generic point $\eta=\Spec F(t_1,\dots,t_r)$ is, as we will see, a quasi-projective variety over $\eta$ of dimension $r-3$. We record here some explicit equations for $\Coinc^r_G(\Gamma_0(N);\Sigma_\infty)_{\eta_r}$.  The case
\[ N = (0)+(1)+2(\infty) \]
being especially simple, we assume this.  Fix $\Sigma_\infty$ in the following way: $\emptyset$ if $r$ is even, and $\set{1}$ if $r$ is odd.   Let $\caH_0^\dagger$ be the object of $\OO(2)\oplus \OO$ of $\Bun_2(\Gamma_0(N))$, where the $\Gamma_0(N)$-level structure is $\OO_N$ embedded in the second factor.  Let $\caH_1^\dagger$ be a twist of $\caH_0^\dagger(\half \Sigma_\infty)$ which has degree $r+2$:  thus $\caH_1$ is isomorphic to $\OO((r+3)/2)\oplus \OO((r+1)/2)$ if $r$ is odd and $\OO(r/2+2)\oplus\OO(r/2)$ if $r$ is even.  
We can model $\Coinc^r_G(\Gamma_0(N);\Sigma_\infty)$ on the quotient by $\mathbf{T}\times\mathbf{T}$ of the subvariety $A\subset \P\Hom(\caH_0^\dagger,\caH^\dagger_1)\isom \P^{2r+3}$ describing those $f$ for which $\det f$ is nonzero.  Explicitly, $A$ classifies matrices with polynomial entries
\[ 
f(T)=\begin{pmatrix}
a(T) & b(T) \\
c(T) & d(T)
\end{pmatrix}
\]
taken up to nonzero scalar, where the entries $a,b,c,d$ have degrees bounded by $r/2$,$r/2+2$,$r/2-2$,$r/2$ (for $r$ even) or 
$(r-1)/2,(r+3)/2,(r-3)/2,(r+1)/2$ (for $r$ odd), such that $f(T)$ preserves the $\Gamma_0(N)$-level structure, and such that $\det f(T)=\prod_{i=1}^r (T-t_i)$ (again up to nonzero scalar).  The preservation of level structure means:
\begin{itemize}
\item $b(0)=b(1)=0$, and $\deg b(t)\leq r/2$, if $r$ even,
\item $b(0)=d(1)=0$, and $\deg b(t)\leq (r-1)/2$, if $r$ odd.
\end{itemize}
The action of $\mathbf{T}\times\mathbf{T}$ is by left and right translation on $f(T)$.    This is a quasi-projective variety over $\eta_r$ of dimension $r-3$. 

\begin{ex}\label{ex:coinc-3}[The case $r=3$.]  In this case, $\Coinc^3_G(\Gamma_0(N);\Sigma_\infty)_{\eta_3}\to\eta_3$ is a separable double cover of $\eta_3\isom \Spec F(t_1,t_2,t_3)$.  For our values of $N$ and $\Sigma_\infty$, we have that $\Coinc^3_{\circ}(\Gamma_0(N);\set{1})_{\eta_3}$ is isomorphic to the 0-dimensional subvariety of $\mathbf{A}^3$ in variables $a,b,d$ over $\eta_3=\Spec F(t_1,t_2,t_3)$, which classifies matrices 
\[ f(T)=\begin{pmatrix}
(T-a) & b T \\
1 & (T-1)(T-d)
\end{pmatrix}
\]
which are singular at $t_1,t_2,t_3$.  This translates into the equations
\[ (t_i-1)(t_i-a)(t_i-d)-b t_i = 0,\;i=1,2,3. \]
We can eliminate $b$ to obtain two equations for $a,d$, revealing that these are the roots of the irreducible polynomial
\begin{equation}
\label{EqMinPolyForCoinc3}
s^2 - (t_1+t_2+t_3-1)s + t_1t_2t_3.
\end{equation}
Then \eqref{EqMinPolyForCoinc3} is the equation for the double cover $\Coinc^3_G(\Gamma_0(N);\Sigma_\infty)_{\eta_3}\to\eta_3$ and its discriminant is
$(t_1+t_2+t_3-1)^2 - 4t_1t_2t_3$.  Considered as a quadratic polynomial in $t_3$, this discriminant is not a square for any specializations of $t_1$ and $t_2$ other than at $0,1$.  (Indeed, when considered this way, the discriminant of the discriminant is $16t_1t_2(t_1-1)(t_2-1)$, which is nonzero away from such values.) 
Therefore the double cover can be spread out into a family of double covers $\P^1_s\times U^2\to \P^1_{t_3} \times U^2$ defined by the equation \eqref{EqMinPolyForCoinc3}, which has the property that it is nonsplit over every point of $U^2$.  
\end{ex}

\begin{ex}\label{ex:coinc-4}[The case $r=4$.] $\Coinc^4_G(\Gamma_0(N))_{\eta_4}$ is isomorphic to the 1-dimensional subvariety of $\mathbf{A}^5_{\eta_4}$ in variables $a_0,a_1,b,d_0,d_1$ over the base $\eta_4\isom F(t_1,t_2,t_3,t_4)$, which classifies matrices
\[ f(T) = \begin{pmatrix} 
T^2 + a_1T+a_0 & bT(T-1) \\
1 & T^2+d_1T+d_0 
\end{pmatrix}
\]
which are singular at $t_1,t_2,t_3,t_4$.   The closure of $\Coinc^4_G(\Gamma_0(N))_{\eta_4}$ in $\mathbf{A}^5_{\eta_4}$ contains 6 obvious rational points $P_{ij}$, one for each partition of $\set{1,2,3,4}$ into ${i,j}$ and ${i',j'}$, defined by 
\[ f_{ij}(T)=\begin{pmatrix} (T-t_i)(T-t_j) & 0 \\ 1 & (T-t_{i'})(T-t_{j'}) \end{pmatrix} \]
Each of these points is nonsingular.   There is also an obvious involution of $\Coinc^4_G(\Gamma_0(N))_{\eta_4}$ which exchanges $(a_0,a_1)$ with $(d_0,d_1)$. 

The completion of this curve, call it $\caC$, has genus 1.  (One way to see this is that the projective closure in $\mathbf{P}^5$ has a unique singular point, and projection from this point to $\P^4$ gives a nonsingular curve of genus 1.  Another way to see this is to observe that projection onto $\A^2$ via the final coordinates $d_0,d_1$ is an isomorphism of $\Coinc^4_G(\Gamma_0(N))_{\eta_4}$ onto an affine cubic whose projective closure is nonsingular.)  A Weierstrass equation for $\caC$ is
\begin{equation}
\label{EqWeierstrassC}
 y^2 +e_1xy = x^3 + (-e_2+e_3-2e_4)x^2 + (1-e_1+e_2-e_3+e_4)e_4x,
 \end{equation}
where $e_1,\dots,e_4$ are the elementary symmetric polynomials in
$t_1,\dots,t_4$.   The points $P_{ij}$ generate the Mordell-Weil group of $\caC$, which is isomorphic to $\Z^3\oplus \Z/2\Z$.  The points $P_{ij}$ have coordinates 
\[ P_{ij}:\;\;(t_i(t_i-1)t_j(t_j-1),t_i(t_i-1)t_j(t_j-1)(-t_it_j+t_{i'}t_{j'}-t_{i'}-t_{j'}))\]
in the model \eqref{EqWeierstrassC};  we remark that $P_{12},P_{13},P_{14}$ are independent.  The involution noted above is translation by the 2-torsion point, $T$ which has coordinates $(0,0)$  in \eqref{EqWeierstrassC}.

It turns out that $\caC$ is the universal elliptic curve with this data, in the following sense.

\begin{prop}\label{prop:univ-ec-mbar14-order2} Let $\caM_{1,4}'$ be the moduli space of curves over $k$ of genus 1 together with 4 marked points and also an element of order 2 in $\Pic^0$.  There is an isomorphism of $\eta=\Spec k(t_1,t_2,t_3,t_4)$ with the generic point of $\caM_{1,4}'$, such that the pullback of the universal object is $(\caC;O,P_{12},P_{13},P_{14},T)$. 
\end{prop}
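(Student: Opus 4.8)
The plan is to show that the classifying morphism $\phi\from\eta\to\caM_{1,4}'$ attached to the family $(\caC;O,P_{12},P_{13},P_{14},T)$ is an isomorphism onto the generic point. First I would check that the quintuple genuinely defines a point of $\caM_{1,4}'$: for generic $(t_1,t_2,t_3,t_4)$ the points $O,P_{12},P_{13},P_{14}$ are distinct and $T=(0,0)$ is of exact order $2$, so $\phi$ is well defined. The target $\caM_{1,4}'$ is integral of dimension $4$ (it is a finite cover of $\caM_{1,4}$, which has dimension $4$), and $\eta=\Spec k(t_1,t_2,t_3,t_4)$ has transcendence degree $4$ over $k$. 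Hence it suffices to prove that $\phi$ is birational, and for this the plan is to express each $t_i$ as a rational function of the intrinsic moduli data of the quintuple; this simultaneously yields dominance and degree $1$.

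The reconstruction uses the distinguished degree-$2$ function. Given $(\caC,O,T)$ there is a rational function $x$ on $\caC$, unique up to scalar, with $\mathrm{div}(x)=2(T)-2(O)$; for the model \eqref{EqWeierstrassC} it is the Weierstrass coordinate, since $x=0$ cuts out only $T=(0,0)$ with multiplicity $2$. Writing $u_i=t_i(t_i-1)$, the stated coordinates give $x(P_{ij})=u_iu_j$ for all six points $P_{ij}$, where the three unmarked ones are recovered from the marked data as the $T$-translates $P_{34}=P_{12}+T$, $P_{24}=P_{13}+T$, $P_{23}=P_{14}+T$ (translation by $T$ sends $x$ to $a_4/x$, with $a_4=(1-e_1+e_2-e_3+e_4)e_4=u_1u_2u_3u_4$). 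Because the index $1$ is the common index of the three ordered marked points, I would recover the ordered tuple $(u_1,u_2,u_3,u_4)$ up to a single common scalar through formulas such as $u_1^2=x(P_{12})\,x(P_{13})/x(P_{23})$ together with its cyclic analogues.

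It then remains to pin down the scalar and the signs. I would fix the scalar using the curve modulus: the other two $2$-torsion points $T',T''$ have $x$-values equal to the roots of an explicit quadratic whose coefficients are read off from \eqref{EqWeierstrassC}, so the scale-invariant cross-ratio of $\{0,\infty,x(T'),x(T'')\}$ is an invariant of $(\caC,O,T)$; equating it to the value forced by the reconstructed ratios $u_i/u_j$ gives a single equation determining the scalar up to finitely many choices. Having recovered the $u_i$ themselves, each $t_i$ is a root of $t^2-t-u_i=0$, and the correct root is singled out by requiring the elementary symmetric functions $e_1,\dots,e_4$ of $(t_1,\dots,t_4)$ to reproduce the actual coefficients of \eqref{EqWeierstrassC}. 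This exhibits the ordered tuple $(t_1,t_2,t_3,t_4)$ as a rational function of the moduli data, so $\phi$ is birational.

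The hard part will be confirming that $\phi$ has degree exactly $1$, i.e.\ that the scalar-and-sign resolution above is unambiguous on a dense open locus. The conceptual input is a rigidity statement for the ordered marking: any isomorphism between two members of the family induces a permutation $\sigma\in S_4$ of the labels fixing each of the ordered partitions $12|34$, $13|24$, $14|23$ attached to $P_{12},P_{13},P_{14}$; their stabilizers in $S_4$ are the order-$4$ subgroups $\langle(12),(34)\rangle$, $\langle(13),(24)\rangle$, $\langle(14),(23)\rangle$, whose common intersection is trivial, so $\sigma=\mathrm{id}$, and the generic object carries no nontrivial automorphism since $[-1]$ already moves the $P_{ij}$. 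This forces the generic fibre of $\phi$ to be a single reduced point. To make the argument airtight I would still verify, either by a direct elimination or by specializing $(t_1,t_2,t_3,t_4)$ to a convenient value, that the scalar equation coming from the curve modulus has a unique relevant solution, ruling out any extra non-permutation tuple mapping to the same moduli point.
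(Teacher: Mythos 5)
Your overall strategy --- prove birationality by showing that $(t_1,\dots,t_4)$ can be recovered rationally from the intrinsic data of the quintuple --- is viable and close in spirit to the paper's proof, which works with the same invariants (the $x$-coordinates of the marked points with respect to the distinguished degree-$2$ function attached to $(O,T)$, together with one invariant of the underlying curve $(\caC,O,T)$). Your setup checks out: $x(P_{ij})=u_iu_j$ with $u_i=t_i(t_i-1)$, the unmarked points are the $T$-translates of the marked ones, and translation by $T$ sends $x$ to $a_4/x$ with $a_4=u_1u_2u_3u_4$, so the expressions such as $x(P_{12})x(P_{13})/x(P_{23})=u_1^2$ are correct up to the overall scalar in the choice of $x$.

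The gap is that the decisive step is never carried out. After recovering the $u_i^2$ up to a common scalar, you are left with an ambiguity of the shape $\mathbf{G}_m\times\{\pm 1\}^4\times(\text{root choices }t_i\leftrightarrow 1-t_i)$, and the entire content of the proposition is that the cross-ratio condition plus the consistency check against the curve cut this down, generically, to a single solution. You explicitly defer exactly this verification (``I would still verify\dots''), and the rigidity argument offered in its place does not substitute for it: an isomorphism between two members of the family does not a priori ``induce a permutation $\sigma\in S_4$ of the labels'' --- that already presupposes the two parameter tuples are permutations of one another, which is precisely the non-permutation ambiguity you concede remains open. (A smaller imprecision: ``the actual coefficients of \eqref{EqWeierstrassC}'' are not invariants of the moduli point, since a Weierstrass model with $T=(0,0)$ is unique only up to rescaling and shearing, so the final consistency check must be phrased in terms of invariant combinations.) The paper discharges this same issue by a cleaner device: it quotients $\Mbar'_{1,4}$ by the group $V\cong(\Z/2\Z)^2$ of sign changes on $P_3,P_4$, identifies $\Mbar'_{1,4}/V$ birationally with $\A^4$ via $(s/r^2,x(P_2),x(P_3),x(P_4))$, and computes (in cited code) that the composite $\eta\to\A^4$ is dominant of degree exactly $4=|V|$, which forces $\eta\to\Mbar'_{1,4}$ to have degree $1$. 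Your argument would become a proof once the analogous elimination is actually performed; as written, the uniqueness claim is asserted rather than established.
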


%(1
%$C_3$ is the universal elliptic curve over $\Mbar_{1,4}$ with a
%point of order $2$.  In other words, if $\Mbar'_{1,4}$ is the moduli space of
%curves of genus $1$ with $4$ marked points and a fixed torsion divisor of order
%$2$, then the map $\mu: \A^4 \dashrightarrow \Mbar'_{1,4}$ taking a point
%$(x_1,x_2,x_3,x_4)$ to the specialization of $C_3$ at $t_i = x_i$ with the points
%$(1:0:0), P_{12}, P_{13}, P_{14}$ as the $4$ marked points
%and the $2$-torsion divisor $(-1:1:0)-(1:0:0)$ is a birational equivalence.
%\end{prop}

\begin{proof} We assume $\car k\neq 2$ for convenience.  Let $V$ be the group $\{(\pm 1, \pm 1)\}$ under multiplication, and let $V$ act on $\Mbar'_{1,4}$ by
$(\sigma_1,\sigma_2)(E;P_1,P_2,P_3,P_4;t) = (E;P_1,P_2,\sigma_1(P_3),\sigma_2(P_4);t)$, where negation is with respect to the origin $P_1$.  We claim that
$\Mbar'_{1,4}/V$ is birational to $\mathbf{A}^4_k$.  Indeed,  an elliptic curve with a point of
order $2$ has a model $y^2 = x(x^2+rx+s)$ which is unique up to replacing $(r,s)$ with $(\lambda^2 r,\lambda^4 s)$.   Since negation is an automorphism,  $P_2$ is only defined up to sign.  Our birational map $\Mbar'_{1,4}/V\dashrightarrow \mathbf{A}^4_k$ sends $(E;P_1,P_2,P_3,P_4;t)$ to 
$(s/r^2,x(P_2),x(P_3),x(P_4)) \in \A_k^4$. 

On the other hand, the data $(\caC,O,P_{12},P_{13},P_{14},T)$ defines a map $\eta\to \Mbar'_{1,4}$.  One calculates \cite{code} that the composition $\eta\to \Mbar'_{1,4}\to \Mbar'_{1,4}/V\dashrightarrow \A_k^4$ is dominant of degree 4.   We conclude that $\eta\to \Mbar'_{1,4}$ is an isomorphism from $\eta$ onto the generic point of $\Mbar'_{1,4}$.  %univ-ec.mag
\end{proof}
\end{ex}

We can only speculate on the geometry of $\Coinc^r_G(\Gamma_0(N);\Sigma_\infty)_{\eta_r}$ for general $r$.   Let $n_1,n_2$ be the bounds on the degree of $c(T),d(T)$ in the description we gave above:  thus $n_1=r/2-2,(r-3)/2$ and $n_2=r/2,(r+1)/2$ as $r$ is even or odd, respectively.  If $r$ is odd, we have an
additional condition that $d(1) = 0$, so let $n'_2 = r/2$ if $r$ is even
and $(r-1)/2$ if $r$ is odd.  In both cases we have $n_1+n_2=r-2$.  Projection onto the pair $(c(T),d(T))$ or $(c(T),d(T)/(T-1))$, depending on the parity
of $r$, gives a birational equivalence between $\Coinc^r_G(\Gamma_0(N);\Sigma_\infty)_{\eta_r}$ and a hypersurface $C$ in $\P^{n_1}\times\P^{n'_2}$ of degree $(n_1+1,n'_2+1)$.  
If such a hypersurface has canonical singularities, it has a resolution
which is a (nonsingular) Calabi-Yau variety.  For instance, this holds in the case $r=5$, in which the variety $C$ is birational to an elliptic K3 surface.
We guess that it is true in general.

\section{Isogenies between K3 surfaces}

\subsection{Motivation: $2$-modularity of extremal rational elliptic fibrations}
Let $E/\F_q(t)$ be a nonisotrivial elliptic curve with degree 4 conductor $N$, with corresponding rational elliptic fibration $\E\to \P^1$.  Let $\Sigma$ be the support of $N$, and let $U=\P^1\smallsetminus \Sigma$.    In the introduction we sketched a plan to prove that $E$ is $2$-modular.  In the first phase of the plan, we found a dominant rational map
\[ \Sht^2_G(\Gamma_0(N)) \dashrightarrow \caZ^2(\E), \]
where $\caZ^2(\E)$ is defined as a certain cartesian product:
\begin{equation}
 \xymatrix{
\caZ^2(\E) \ar[r] \ar[d] & \E\times_{\F_q} U^2 \ar[d] \\
 \P^1\times U^2 \ar[r]_{2:1} & \P^1\times U^2
} 
 \end{equation}
The lower horizontal arrow is a certain family of nonsplit double covers of $\P^1$ parametrized by $U$, derived from the moduli space of 3-legged coincidences.  Now, the base change of a rational elliptic surface $\mathcal{A}\to \P^1$ by a nonsplit double cover $\P^1\to \P^1$ is a K3 surface.  Thus $\caZ^2(\E)\to U^2$ is a family of K3 surfaces.  As explained in the introduction, in order to prove that $E$ is 2-modular, it suffices to find a finite morphism $\caZ^2(\E) \to \Km(\E^2)$ commuting with the maps to $U^2$.  This is what we accomplish in this section and the next:

\begin{thm}  \label{ThmIsogenyFromZtoKm} There exists a finite morphism $\caZ^2(\E)\to \Km(\E^2)$ commuting with the maps to $U^2$.
\end{thm}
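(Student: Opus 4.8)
The plan is to apply Theorem~\ref{ThmIsogenyBetweenK3s} to the geometric generic fiber of $\caZ^2(\E)\to U^2$ and then to spread the resulting morphism out over the base. Let $\eta=\Spec\overline{\F_q(t_1,t_2)}$ be a geometric generic point of $U^2$ and let $S=\caZ^2(\E)_\eta$. As recorded just before the statement, $S$ is the base change of the rational elliptic surface $\E\to\P^1$ along the double cover $\phi\from\P^1\to\P^1$ of Example~\ref{ex:coinc-3}, which is ramified over two points $\beta_1,\beta_2$ of the base; for generic $(t_1,t_2)$ these avoid the singular fibers of $\E\to\P^1$, so $S$ is a K3 surface. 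It carries an elliptic fibration $S\to\P^1$ with section whose singular fibers are the preimages under $\phi$ of those of $\E\to\P^1$, and the covering involution exhibits $S$ as a double cover of $\E$ branched along the two smooth fibers $E_{\beta_1},E_{\beta_2}$.

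My first task is to verify the hypothesis of Theorem~\ref{ThmIsogenyBetweenK3s}. Shioda--Tate applied to the fiber configuration above computes $\Pic S$ and shows that it has rank $18$. Since $\Pic\Km(E_1\times E_2)$ also has rank $18$ for nonisogenous $E_1,E_2$, a rational isometry $\Pic S\otimes\Q\isom\Pic\Km(E_1\times E_2)\otimes\Q$ is equivalent to one between the orthogonal complements, i.e.\ between the rank-$4$ transcendental lattices $T_S$ and $T_{\Km(E_1\times E_2)}=U(2)^{\oplus2}$. The crucial input is the identification of $T_S$, rationally, with the quadratic space underlying $H^1(E_{\beta_1})\otimes H^1(E_{\beta_2})$: the anti-invariant part of the cohomology of the double cover $S\to\E$ concentrates along the branch fibers and realizes this tensor product, exactly as in the Inose/Shioda--Inose description of $\Km(E_1\times E_2)$ as a quadratic base change of a rational elliptic surface. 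As $(H^1(E_{\beta_1})\otimes H^1(E_{\beta_2}))\otimes\Q\isom(U\oplus U)\otimes\Q$ whenever $E_{\beta_1},E_{\beta_2}$ are nonisogenous, this matches $T_{\Km(E_1\times E_2)}\otimes\Q$ and verifies the hypothesis.

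Next I would invoke the effective form of Theorem~\ref{ThmIsogenyBetweenK3s} from \S\ref{sec:extremalrational}, which evaluates $\beta_1,\beta_2$ as functions of $t_1,t_2$ and shows that $E_{\beta_1}\times E_{\beta_2}$ is isogenous, after a finite extension of $\F_q$, to $E_{t_1}\times E_{t_2}$. In particular the two factors are nonisogenous, since $\E\to U$ is nonisotrivial and $t_1,t_2$ are algebraically independent, so an isogeny would force an algebraic relation between $j(t_1)$ and $j(t_2)$. Theorem~\ref{ThmIsogenyBetweenK3s} then produces a finite morphism from $S$ to the Kummer surface, which after composing with the isogeny is identified with $\Km(E_{t_1}\times E_{t_2})$. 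To globalize, note that this generic-fiber morphism is defined over a finite extension of $\F_q(t_1,t_2)$; since the construction is algebraic in $t_1,t_2$, it spreads out to a finite morphism of the two K3 families over a dense open $V\subseteq U^2$ (possibly after extending $\F_q$), compatibly with the projections to $V$. Using that $\caZ^2(\E)\to U^2$ is proper and smooth and that the relative Kummer family $\Km(\E^2)\to U^2$ is separated and normal, the morphism extends across the remaining fibers, and finiteness is preserved as it may be checked fiberwise.

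The main obstacle is the input to the second paragraph together with its effective refinement: establishing that the transcendental lattice $T_S$ is rationally $H^1(E_{\beta_1})\otimes H^1(E_{\beta_2})$, and computing the branch points $\beta_1,\beta_2$ well enough to recognize $E_{\beta_1}\times E_{\beta_2}$ as isogenous to $E_{t_1}\times E_{t_2}$. Both rest on the explicit geometry of the coincidence double cover $\phi$ of Example~\ref{ex:coinc-3} and are where the computations of \S\ref{sec:extremalrational} carry the weight; by contrast the Shioda--Tate bookkeeping, the nonisogeny remark, and the spreading-out step are comparatively routine.
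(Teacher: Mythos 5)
There is a genuine gap, and it sits exactly where the content of the theorem lies. Theorem \ref{ThmIsogenyBetweenK3s} only yields a finite morphism from a fiber $S$ of $\caZ^2(\E)\to U^2$ to \emph{some} Kummer surface $\Km(E_1\times E_2)$; the point of Theorem \ref{ThmIsogenyFromZtoKm} is that the target can be taken to be $\Km(\E^2)$, in a family over $U^2$ (the paper says explicitly that the abstract theorem merely ``predicts'' this). Your route to identifying the target rests on the claim that $T_S$ is rationally $H^1(E_{\beta_1})\otimes H^1(E_{\beta_2})$ for the branch fibers of the coincidence double cover, and that $E_{\beta_1}\times E_{\beta_2}$ is isogenous to $E_{t_1}\times E_{t_2}$. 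The second claim is provably false, and hence so is the first. The analogy you invoke does not apply: in the twist description of $\Km(E_1\times E_2)$ the cover is $E_2\to\P^1$, branched at the four $2$-torsion points, and the curves entering the transcendental lattice are the covering curve $E_2$ and the constant fiber $E_1$ --- not fibers over branch points of a two-point-branched cover of a nonconstant fibration. Concretely, the branch points here are the roots of $(t+t_1+t_2-1)^2=4t_1t_2t$, a quadratic in $t$ whose discriminant $16t_1t_2(t_1-1)(t_2-1)$ is not a square, so $\beta_1,\beta_2$ are conjugate over $\F_q(t_1,t_2)$ and $E_{\beta_1},E_{\beta_2}$ are Galois conjugate. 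If $E_{\beta_1}\times E_{\beta_2}$ were isogenous to $E_{t_1}\times E_{t_2}$ with $E_{t_1}$ not isogenous to $E_{t_2}$, then some $E_{\beta_i}$ would be isogenous to some $E_{t_j}$; applying the Galois automorphism (which fixes $E_{t_j}$) would make both $E_{\beta_i}$ isogenous to $E_{t_j}$, hence to each other, forcing $E_{t_1}$ isogenous to $E_{t_2}$, a contradiction. The true statement, $T_S\otimes\Q\isom \bigl(H^1(E_{t_1})\otimes H^1(E_{t_2})\bigr)\otimes\Q$ with the \emph{legs} rather than the branch points, is exactly what must be proved and is not a formal consequence of the double-cover structure. (For the hypothesis of Theorem \ref{ThmIsogenyBetweenK3s} itself, Proposition \ref{PropRationalIsometry} gives a purely lattice-theoretic verification via $L^{\oplus 2}\oplus U\subset E_8^{\oplus 2}\oplus U$.)

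Your appeal to ``the effective form of Theorem \ref{ThmIsogenyBetweenK3s} from \S\ref{sec:extremalrational}'' is moreover circular: those computations \emph{are} the proof of Theorem \ref{ThmIsogenyFromZtoKm}. What the paper actually does is go case by case through the classification of nonisotrivial extremal rational fibrations: substitute the explicit equation of the coincidence cover into the Weierstrass equation, locate a genus-$1$ fibration on $\caZ^2(\E)$ with two $\II^*$ fibers (four $\I_0^*$ fibers in the Legendre case), pass to its Jacobian by neighbor steps, and recognize the result as the Inose surface $\Ino(\E^2)$, which is linked to $\Km(\E^2)$ by finite maps. Even this is not always enough: in two semistable cases the Jacobian of a fibration admitting only a degree-$5$ or degree-$6$ multisection could not be computed, and the isogeny class of the target is pinned down indirectly via Proposition \ref{PropCohoOfShtukaSpace} and the classification in Proposition \ref{PropClassificationSemistable}. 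Your proposal supplies no independent argument for this identification, so the central step is missing; the spreading-out discussion at the end is secondary by comparison.
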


\begin{rmk} Theorem \ref{ThmIsogenyFromZtoKm} can be stated in any characteristic, since the definition of the space of coincidences is independent of characteristic.  In the cases where $\E\to \P^1$ is unstable our formulas confirm Theorem \ref{ThmIsogenyFromZtoKm} in any characteristic.  
\end{rmk}
%
%
%\begin{thm} \label{ThmIsogenyFromZToKm}
%Let $F$ be an algebraically closed field, and let $\E\to \P^1_F$ be a nonisotrivial extremal rational elliptic fibration with conductor $N$ supported on $\Sigma$.  Let $\Sigma_\infty\subset \Sigma$ be a subset of points of odd cardinality, each appearing with multiplicity 1 in $N$.  Let $\Coinc_G^3(\Gamma_0(N);\Sigma_\infty)\to (\P^1_F)^3=(\P^1_F)^2\times \P^1_F$ be the stack of coincidences from Definition \ref{DefnGCoincidences}.  Let $\eta_2\isom \Spec F(t_1,t_2)$ be the generic point 
%of $(\P^1_F)^2$, and form the fiber product $\caZ^2(\E)$ of $\E\times_F\eta_2\to \P^1_{\eta_2}$ by the double cover $\Coinc_G^3(\Gamma_0(N);\Sigma_\infty)_{\eta_2}\to \P^1_{\eta_2}$. Then $\caZ^2(\E)$ is a K3 surface, and there exists a finite morphism $\caZ^2(\E)\to \Km(\E^2_{\eta_2})$.  
%\end{thm}

Generally, it is an interesting problem to give sufficient conditions for two K3 surfaces to be isogenous,  see \cite{Mukai}, \cite{Buskin}, \cite{Yang}.   A necessary condition is some relation between the Picard lattices of the two surfaces.  Naturally one wants to know if this is sufficient:  

\begin{question}\label{QuestionIsogeny} Let $S$ be a K3 surface over an algebraically closed field.  Let $\Lambda$ be a lattice such that $\Pic S\otimes \Q$ and $\Lambda\otimes\Q$ are isometric.  Does there exist a K3 surface $S'$ in correspondence with $S$ such that $\Pic S'\isom \Lambda$?
\end{question}

For many values of $\Lambda$, operations on elliptic fibrations can be used to construct the correspondence.  The main theorem of this section answers a special case of Question \ref{QuestionIsogeny}.

\begin{thm} \label{ThmIsogenyFromK3ToKummer} Let $S$ be a K3 surface over an algebraically closed field.  Suppose there is an isometry $\Pic S \otimes \Q\isom \Pic K\otimes \Q$, where $K=\Km(E_1\times E_2)$ for two non-isogenous elliptic curves $E_1$ and $E_2$.  Then there is a morphism of finite degree from $S$ to a Kummer surface of this form.
\end{thm}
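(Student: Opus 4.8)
The plan is to realize $S$ as the Shioda--Inose partner of a Kummer surface attached to a product of elliptic curves, where the two curves are read off from the rational Hodge structure on the transcendental lattice of $S$, and then to convert the resulting degree--$2$ rational map into a map of finite degree onto a product Kummer surface. I first record the lattice data. Since $\Pic S\otimes\Q\isom \Pic K\otimes\Q$ and both surfaces carry the same K3 lattice, taking orthogonal complements gives a rational isometry of transcendental lattices $T(S)\otimes\Q\isom T(K)\otimes\Q$; in particular $\rho(S)=\rho(K)=18$. Because $T(K)=T(E_1\times E_2)(2)\isom U(2)\oplus U(2)$, the space $T(S)\otimes\Q$ is four-dimensional of signature $(2,2)$ and isometric to $U_\Q\oplus U_\Q$.

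The key classical input is the exceptional isomorphism $\mathrm{SO}(2,2)\isom(\mathrm{SL}_2\times\mathrm{SL}_2)/\{\pm1\}$: a weight-two Hodge structure of K3 type on a rank-four space of signature $(2,2)$ factors as a tensor product $H^1(E_1)\otimes H^1(E_2)$ of two weight-one Hodge structures, i.e.\ of two elliptic curves. Applying this to $T(S)\otimes\Q$ produces elliptic curves $E_1,E_2$ together with a rational Hodge isometry $T(S)\otimes\Q\isom H^1(E_1)\otimes H^1(E_2)$. Since $\rho(S)=18$ exactly, this tensor Hodge structure carries no extra Hodge classes, which forces $E_1$ and $E_2$ to be non-isogenous; and as $T(E_1\times E_2)\otimes\Q$ and $T(\Km(E_1\times E_2))\otimes\Q$ are this same rational Hodge structure (the latter scaled by $2$), I have produced a product Kummer surface whose transcendental Hodge structure agrees rationally with that of $S$.

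The geometric step is to equip $S$ with a Shioda--Inose structure: a Nikulin involution $\iota$ whose resolved quotient $\widetilde{S/\iota}$ is a Kummer surface $\Km(A)$ with $T(A)\isom T(S)$, furnishing a dominant rational map $S\dashrightarrow \Km(A)$ of degree $2$. By Morrison's criterion this structure exists once $T(S)$ embeds primitively into $U^{\oplus 3}$ (equivalently $E_8(-1)^{\oplus 2}\subseteq\Pic S$), and the resulting abelian surface then has $\rho(A)=2$ with $\mathrm{NS}(A)\otimes\Q$ hyperbolic. Such an $A$ contains an elliptic curve and is therefore isogenous to a product $E_1'\times E_2'$ of non-isogenous elliptic curves, carrying the Hodge data found above. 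A chosen isogeny $A\to E_1'\times E_2'$ induces a finite-degree dominant rational map $\Km(A)\dashrightarrow\Km(E_1'\times E_2')$, and composing with $S\dashrightarrow\Km(A)$ produces the desired map of finite degree.

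The main obstacle is the passage from a rational to an integral statement: the hypothesis supplies only a rational isometry of Picard lattices, which need not guarantee the primitive embedding $T(S)\hookrightarrow U^{\oplus 3}$ demanded by Morrison's criterion (indeed the criterion already fails for $K$ itself, whose discriminant form has length $4$). I expect to circumvent this using the freedom built into the conclusion, where any product Kummer surface and any finite degree are permitted: either by replacing $S$ up to a finite correspondence in the spirit of Buskin's theorem \cite{Buskin}, which produces the algebraic cycle underlying the map directly from a rational Hodge isometry, or by a direct analysis of the discriminant form controlling when the embedding exists. A second difficulty is positive characteristic, where Hodge theory is unavailable; here the plan is to lift $S$ together with $\Pic S$ to characteristic zero (legitimate because a Picard rank $18$ K3 surface has finite height and so is not supersingular), to carry out the construction there, and to specialize the resulting correspondence, using the explicit characteristic-free Weierstrass models of Inose pencils as a consistency check.
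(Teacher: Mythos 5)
You have correctly assembled the Hodge-theoretic input (the $(2,2)$ transcendental lattice factoring as $H^1(E_1)\otimes H^1(E_2)$, the non-isogeny from $\rho=18$, the lifting to characteristic zero via finite height), and you have correctly located the obstacle: Morrison's criterion needs a primitive embedding $T(S)\hookrightarrow U^{\oplus 3}$, which a rational isometry of Picard lattices does not supply and which, as you note, already fails for the target $\Km(E_1\times E_2)$ itself. But neither of your proposed escapes closes the gap. Buskin's theorem produces an algebraic correspondence inducing the Hodge isometry, not a morphism of finite degree; the whole point of the theorem (and the reason the paper contrasts it with Buskin) is to upgrade the correspondence to an actual finite map, so you cannot invoke Buskin to finish. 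And no ``direct analysis of the discriminant form'' will make Morrison's criterion apply to $S$ directly, because the required embedding genuinely does not exist when the discriminant group has length $4$ --- the failure is not an artifact of the argument.

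The missing idea is a mechanism for changing the Picard lattice of $S$ by finite morphisms \emph{before} invoking Shioda--Inose. The paper's proof does this with elliptic-fibration surgery: if $L'=L[D/p]$ is an even overlattice of $L=\Pic S$ of index $p$ and $\rho(S)\geq 13$, then there is a genus~$1$ fibration on $S$ of multisection index $p$ whose ($p$-)Jacobian $S'$ satisfies $\Pic S'\isom L'$, giving a finite morphism $S\to S'$ of degree $p^2$ (this rests on a primitive-representation statement for unimodular $\Z_p$-forms of rank $\geq 3$, and the rank bound $13$ is what guarantees a large unimodular direct summand of $\Pic S\otimes\Z_p$). Iterating, one first strips all odd primes from $\disc\Pic S$, then shows every even lattice of signature $(1,17)$ and discriminant $-4^i$ embeds in $II_{1,17}=U\oplus E_8^{\oplus 2}$ and climbs that chain of index-$2$ overlattices, arriving at a K3 surface $S''$ with $\Pic S''\isom II_{1,17}$ and hence $T_{S''}\isom U^2$. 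Only at that point does Morrison's criterion apply trivially, and the resulting Shioda--Inose quotient is recognized as $\Km(E_1\times E_2)$ from its Picard lattice (four $\I_0^*$ fibers, quadratic twist back to a constant fibration). Without this discriminant-reduction engine your argument has no way to reach a surface to which the Shioda--Inose construction applies, so the proof as proposed does not go through.
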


\subsection{Elliptic fibrations on K3 surfaces}

We begin with a few remarks on elliptic fibrations and especially elliptic K3 surfaces.  Some standard references are \cite{SchuettShioda} for elliptic fibrations and \cite{huybrechts} for K3 surfaces.  Unless stated otherwise, by ``surface'' we mean a smooth projective surface over an algebraically closed field.

\subsubsection{Generalities on elliptic surfaces}  Let $S$ be a surface. A {\em genus 1 fibration} on $S$ is a morphism $S\to X$ to a curve whose geometric generic fiber is a smooth integral curve of genus 1.  If $S\to X$ admits a section, we call it an {\em elliptic fibration}.   In any case it always has a {\em multisection}, meaning a curve $C\subset S$ such that $C\to X$ is finite;  the degree of the multisection is the degree of $C\to X$.  An {\em elliptic surface} is a surface admitting an elliptic fibration, such that no fiber contains an exceptional curve (meaning a smooth rational curve of self-intersection $-1$).
In this paper all elliptic surfaces are assumed to be nonisotrivial; in other
words, they are not products of two curves.

%Jared: I have implemented Adam's suggestions here.
%(We reserve the term {\em Jacobian fibration} for elliptic fibrations admitting a section.  \adam{Normally I use ``genus-1 fibration'' and ``elliptic fibration'' respectively for what you have called ``elliptic fibration'' and ``Jacobian fibration''.  I believe that both ways are standard; maybe we could avoid all confusion by saying ``genus-1'' and ``Jacobian''.  Another difference in conventions is that I take root lattices to be negative definite, which is definitely more common in this subject even though not in mathematics as a whole.}

For a surface $S$, the {\em N\'eron-Severi group} $\NS(S)$ is the group of divisors modulo algebraic equivalence.  It is a finitely generated abelian group endowed with an intersection pairing, which we write as $(v,w)$.  This pairing is nondegenerate on the torsion-free part of $\NS(S)$.  We write $\rho(S)$ for the rank of the torsion-free part of $\NS(S)$.  If $S\to X$ is an elliptic fibration with general fiber $F$ and section $O$, the {\em trivial lattice} $T\subset \NS(S)$ is generated by $O$ and all irreducible components of fibers of $S\to X$.  Suppose the reducible fibers occur at $R\subset X$.  For $v\in R$, let $x_0,\dots,x_{m_v-1}\in \NS(S)$ be the irreducible components of the fiber $S_v$, where $x_0$ is the unique such component which meets $O$.  As all fibers are equivalent to $F$, the trivial lattice $T$ is the orthogonal direct sum of $\class{O,F}$ and $\class{x_1,\dots,x_{m_v-1}}$.  

The isomorphism classes of singular fibers of an elliptic surface have a well-known classification by Kodaira symbol.  For each such fiber, the corresponding summand of $T$ is a root lattice of type $A_n$, $D_n$ $(n\geq 4)$, $E_6$, $E_7$, or $E_8$.  Here our convention is that our root lattices are {\em negative definite}.  We will make use of the standard facts about each Kodaira symbol without comment, e.g., the corresponding Dynkin diagram, multiplicity of components, discriminant, etc.

There are two useful formulas relating the geometry of an elliptic surface $S$ to the singular fibers of an elliptic fibration on $S$.  One is the Shioda-Tate formula, and the other is a formula for the Euler number of $S$.

The {Mordell-Weil group} of an elliptic fibration $S\to X$ is the group of sections $S(X)$. Equivalently it is the set of rational points of the generic fiber of $S\to X$, which is an elliptic curve. There is an isomorphism 
\begin{equation}
\label{EqNSFormula}
NS(S)/T \isomto S(X)
\end{equation}
 sending a divisor to its generic fiber.  In particular we have the Shioda-Tate formula \cite[Proposition 6.6]{SchuettShioda}:
 \begin{equation}
 \label{EqShiodaTate}
 \rho(S) = 2+\sum_{v\in R} (m_v-1) + \rank S(X)/(\text{torsion})
\end{equation}

The smooth locus in each singular fiber $S_v$ of an elliptic fibration has a group structure, with neutral component either $\mathbf{G}_m$ (type $I_n$) or $\mathbf{G}_a$ (all other types).  We call $S_v$ a multiplicative or additive fiber, respectively.  
For an additive fiber $v$, let $\delta_v$ be the index of wild ramification, defined by the formula:
\[ \delta_v = v(\Delta) - 1 - m_v \]
where $\Delta$ is the discriminant of a Weierstrass equation for $S$ over the local ring $\OO_{X,v}$.  Then $\delta_v\neq 0$ only if the ground field has characteristic 2 or 3.  If $\delta_v=0$ for all $v$, we say that $S\to X$ is tame.  

For a projective variety $Y$, write $e(Y)$ for the Euler number (= topological Euler characteristic = alternating sum of Betti numbers of $Y$).  For a singular fiber $S_v$ of an elliptic fibration $S\to X$, we have 
\begin{equation}
\label{EqEulerNumberFormula}
 e(S_v)=\begin{cases} m_v, & \text{if $S_v$ is multiplicative} \\ m_v+1, & \text{if $S_v$ is additive.} \end{cases} 
\end{equation}
We have the following relations:
\begin{equation}
\label{EqEulerNumberChi}
\chi(S,\OO_S) = \frac{1}{12}e(S) 
\end{equation}
(where $\chi$ = Euler characteristic), and 
\begin{equation}
e(S) = \sum_v (e(S_v)+\delta_v),
\end{equation}
where the sum is over the singular fibers of $S\to X$.

\subsubsection{K3 elliptic surfaces} A smooth projective surface $S$ is {\em K3} if it has trivial canonical bundle and $H^1(S,\OO_S)=0$.  

Let $S$ be a K3 surface.  Then on $S$, the notions of linear, algebraic, and numerical equivalence of divisors agree, and $\Pic S\isom \NS(S)$ is torsion-free.   We refer to this group as the Picard lattice. It has signature $(1,\rho(S)-1)$.  By Riemann-Roch, an irreducible curve $C\subset S$ of arithmetic genus $g$ has $(C,C)=2g-2$.  This implies that $\Pic S$ is an even lattice.  

In the case that $k=\mathbf{C}$ is the field of complex numbers, then $\Pic S$ is a primitive sublattice of $H^2(S,\Z)$.  For its part, $H^2(S,\Z)$ is isomorphic to the {\em K3 lattice} $E_8^2\oplus U^3$, where $U$ is the hyperbolic plane.  By the Lefschetz principle, this fact about $\Pic S$ extends to any field of characteristic 0.  In positive characteristic, we can use the following lemma.

\begin{prop}[{\cite[Proposition 5.8, Chapter 9]{huybrechts}}] \label{PropLifting} Let $S$ be a K3 surface of finite height
  over a perfect field $k$ of characteristic $p$.  Then $S$ can be lifted
  to $W(k)$ in such a way that the specialization map from the N\'eron-Severi
  group of the general to the special fibre is an isomorphism.
\end{prop}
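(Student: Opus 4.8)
The plan is to run the classical deformation-theoretic argument, exhibiting a $W(k)$-point of the formal deformation space over which the entire Néron--Severi group of $X$ survives. First I would invoke the unobstructedness of K3 deformations (Deligne for $p\ge 3$, with the Deligne--Illusie and Lieblich--Maulik refinements covering $p=2,3$) to obtain a smooth formal deformation space $\mathrm{Spf}\,R$ with $R\cong W(k)[[t_1,\dots,t_{20}]]$ and a universal formal K3 surface over it; here the $k$-tangent space is $H^1(X,T_X)$, which is $20$-dimensional. For each class $L\in\NS(X)$ the deformation theory of the pair $(X,L)$ shows that the locus $V(L)\subset\mathrm{Spf}\,R$ over which $L$ extends is a formal Cartier divisor, whose tangent space inside $H^1(X,T_X)$ is the kernel of the cup product (Kodaira--Spencer contraction) with $c_1(L)\in H^1(X,\Omega^1_X)$, the obstruction landing in the one-dimensional space $H^2(X,\mathcal{O}_X)$.

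The heart of the argument is to show that $V(\NS):=\bigcap_{i} V(L_i)$, for $L_1,\dots,L_\rho$ a basis of $\NS(X)$, is formally smooth over $W(k)$ of relative dimension exactly $20-\rho(X)\ge 0$. Using the canonical self-duality $T_X\cong\Omega^1_X$ on a K3 surface, the contraction pairing $H^1(X,T_X)\times H^1(X,\Omega^1_X)\to H^2(X,\mathcal{O}_X)$ is perfect, so the tangent functionals of the $V(L_i)$ are linearly independent precisely when the classes $c_1(L_i)$ are independent in $H^1(X,\Omega^1_X)$; that is, the required transversality is equivalent to the injectivity of the Hodge first Chern class map $\NS(X)\otimes k\to H^1(X,\Omega^1_X)$. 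I expect this to be the main obstacle, and it is exactly here that the finite-height hypothesis is indispensable: for a K3 surface of finite height $h$ the Newton polygon of $H^2_{\mathrm{cris}}(X/W)$ forces $\rho(X)\le 22-2h\le 20$, and the crystalline cycle class map $\NS(X)\otimes\Z_p\hookrightarrow H^2_{\mathrm{cris}}(X/W)$ embeds $\NS(X)$ primitively into the slope-one part, which (upon reduction modulo $p$ and passage to $\mathrm{gr}^1$ of the Hodge filtration) yields the needed injectivity into $H^1(X,\Omega^1_X)$. In the supersingular case one has $\rho=22>20$, and the argument correctly collapses: no such lift can exist.

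Granting this key input, the $\rho$ defining equations of $V(\NS)$ have independent differentials along the special fibre, hence form a regular sequence; $V(\NS)$ is therefore $W(k)$-flat and formally smooth over $W(k)$ of relative dimension $20-\rho\ge 0$, and it is nonempty since it contains the closed point of $\mathrm{Spf}\,R$ (namely $X$ itself). Formal smoothness over $W(k)$ then lets me lift this closed point to a $W(k)$-point, giving a formal lift of $X$ over $W(k)$ to which every class of $\NS(X)$ extends. Because an ample class on $X$ is among the $L_i$ and hence lifts, Grothendieck's existence theorem algebraizes this formal lift to a projective K3 surface $\mathcal{X}$ over $W(k)$.

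Finally I would verify that the specialization map $\NS(\mathcal{X}_{\bar\eta})\to\NS(X)$ is an isomorphism. This map is always injective and preserves the intersection form, so it suffices to check surjectivity; but surjectivity holds \emph{by construction}, since each generator $L_i$ of $\NS(X)$ extends to a line bundle on $\mathcal{X}$ and thus lies in the image. A surjective homomorphism of free abelian groups that is also injective is an isomorphism, so $\rho(\mathcal{X}_{\bar\eta})=\rho(X)$ and the specialization map is the desired isometry $\NS(\mathcal{X}_{\bar\eta})\isom\NS(X)$, with no separate Noether--Lefschetz genericity needed.
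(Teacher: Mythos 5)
The paper offers no proof of this proposition: it is imported verbatim from the literature (the cited source being Huybrechts, the result going back to Deligne's divisorial deformation theory of pairs $(X,L)$ together with Ogus's crystalline analysis of finite-height K3 surfaces, in this exact form due to Nygaard--Ogus and Lieblich--Maulik). Your argument is precisely that standard proof, and its outline is correct: the smooth $20$-dimensional formal deformation space, Deligne's formal Cartier divisors $V(L)$, the reduction of transversality via the Serre-duality-perfect pairing $H^1(X,T_X)\times H^1(X,\Omega^1_X)\to H^2(X,\OO_X)$ to injectivity of $\NS(X)\otimes k\to H^1(X,\Omega^1_X)$, formal smoothness of $V(\NS)$ over $W(k)$, algebraization by Grothendieck existence using a lifted ample class, and surjectivity of specialization by construction together with its standard injectivity. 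The one point you assert rather than prove is exactly the crux: that finite height forces $\NS(X)\otimes k\hookrightarrow H^1(X,\Omega^1_X)$. Your gloss --- primitivity of $\NS(X)\otimes\Z_p$ in the slope-one part of $H^2_{\mathrm{cris}}(X/W)$, then reduce mod $p$ --- is the right mechanism, but passing from the slope statement to the conclusion that the mod-$p$ image is nonzero \emph{and avoids} $F^2H^2_{\mathrm{dR}}$ (so that it survives in $\mathrm{gr}^1_F$) requires Mazur's theorem relating the Hodge filtration to Frobenius divisibility, and the primitivity itself is a theorem of Ogus (\emph{Supersingular K3 crystals}); this step should carry a citation rather than a parenthetical. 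With that reference supplied, the proof is complete and coincides with the published one.
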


We are interested in those K3 surfaces which admit genus 1 fibrations. If $S$ is K3 and $S\to X$ is a genus 1 fibration, then necessarily $X\isom \P^1$.  An interesting fact about elliptic K3 surfaces $S$ is, if their Picard rank is not too small, they typically admit several genus 1 fibrations which are distinct modulo the action of $\Aut S$.  It will be important for us to recognize the genus 1 fibrations on $S$ purely by examining the lattice $\Pic S$.

\begin{prop}[{\cite[\S3, Theorem 1]{PSS}}]  \label{PropExistenceOfEllFib}
Assume that the characteristic of $k$ is not 2 or 3.  Let $S$ be a K3 surface of finite height.  
\begin{enumerate}
\item Suppose $F$ is a primitive class in $\Pic S$ with $(F,F)=0$.  Then there exists a genus 1 fibration $S\to\mathbf{P}^1$ whose fiber class is $\OO(\Pic S)$-equivalent to $F$.  
\item Continuing, let $d$ be the greatest common divisor of $(F,D)$ for all $D\in \Pic S$.   Then $S\to\mathbf{P}^1$ admits a multisection of degree $d$.  
\item Continuing further, suppose $d=1$, so that $S\to\mathbf{P}^1$ is an elliptic fibration with fiber $F$ and section $O$.  Then $\class{F,O}$ is the hyperbolic plane.   The reducible fibers of $S\to\mathbf{P}^1$ correspond exactly to the root lattice summands of the root sublattice of $\class{F,O}^{\perp}$.  (The {\em root sublattice} is the sublattice spanned by vectors $v$ with $(v,v)=-2$.)   
\end{enumerate}
\end{prop}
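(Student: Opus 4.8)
The plan is to deduce all three parts from Riemann--Roch on a K3 surface, which for a class $D$ reads $\chi(D)=2+\tfrac12(D,D)$, together with the reflection-group description of the positive cone and Saint-Donat's analysis of linear systems on K3 surfaces. Throughout I would use that $\car k\neq 2,3$ makes every genus 1 fibration we produce tame, and in particular rules out quasi-elliptic pencils, so that the geometric generic fiber is genuinely a smooth curve of genus $1$. For part (1), I would first arrange effectivity: since $(F,F)=0$, Riemann--Roch gives $\chi(F)=2$, and Serre duality $h^2(F)=h^0(-F)$ forces $h^0(F)+h^0(-F)\geq 2$, so after replacing $F$ by $-F$ if necessary we may assume $|F|\neq\emptyset$. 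Next I would move $F$ into the nef cone: the Weyl group $W\subset O(\Pic S)$ generated by the reflections $s_\delta$ in the $(-2)$-classes $\delta$ acts on the positive cone with the nef cone as a fundamental domain, so after replacing $F$ by $wF$ for a suitable $w\in W$ we may assume $F$ is nef, and since $W\subset O(\Pic S)$ this changes $F$ only within its $O(\Pic S)$-orbit. A nef, primitive, isotropic class is base-point free by Saint-Donat's theorem, with primitivity guaranteeing that $|F|$ is a genuine pencil rather than a multiple of one; the resulting morphism $S\to\mathbf{P}^1$ has generic fiber of arithmetic genus $1$ by adjunction, since $(F,F)=2p_a-2=0$.

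For part (2), set $d=\gcd_D(F,D)$; the pairing is nondegenerate, so $d\geq 1$, and any effective multisection $C$ has $(F,C)\in d\Z_{>0}$, whence degree $\geq d$. To realize degree exactly $d$, I would pick $D_0$ with $(F,D_0)=d$ and consider $D_0+nF$. Because $(F,F)=0$, this does not change the pairing with $F$, while $(D_0+nF,D_0+nF)$ grows by $2nd$; for $n\gg 0$ the class has positive square and positive intersection with an ample class, hence is effective by Riemann--Roch. Decomposing the effective representative as $V+C$ with $V$ vertical (supported on fibers) and $C$ horizontal (no fiber component) and using $(F,V)=0$ gives $(F,C)=(F,D_0+nF)=d$, so $C$ is a multisection of degree $d$.

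For part (3), suppose $d=1$, so a section $O$ exists, with $(F,O)=1$ and $(O,O)=-2$. The Gram matrix of $\class{F,O}$ is $\left(\begin{smallmatrix}0&1\\1&-2\end{smallmatrix}\right)$, of determinant $-1$; replacing $O$ by $O+F$ turns it into $\left(\begin{smallmatrix}0&1\\1&0\end{smallmatrix}\right)$, so $\class{F,O}$ is the hyperbolic plane. To identify the root sublattice of $\class{F,O}^\perp$, note first that each reducible fiber at $v$ contributes its non-identity components $x_1^v,\dots,x_{m_v-1}^v$, which are $(-2)$-curves orthogonal to $F$ (being fiber components) and to $O$ (being disjoint from the zero section), spanning a negative-definite root lattice $R_v$ of the claimed $A$--$D$--$E$ type. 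Conversely, I would show any root $\delta\in\class{F,O}^\perp$ lies in $\bigoplus_v R_v$: by Riemann--Roch $\pm\delta$ is effective, and since $F$ is nef with $(F,\delta)=0$ every component of the effective representative is a fiber component, so $\delta$ is vertical; then $(O,\delta)=0$ forces $\delta$ to avoid the identity components $x_0^v$. Hence the root sublattice of $\class{F,O}^\perp$ is exactly $\bigoplus_v R_v$, with summands in bijection with the reducible fibers.

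The hard part will be the base-point-freeness underlying part (1): passing from a lattice-theoretic isotropic class to an actual fibration is exactly Saint-Donat's study of linear systems on K3 surfaces, and it is here that the hypothesis $\car k\neq 2,3$ does real work, both to exclude quasi-elliptic pencils and to secure the clean $A$--$D$--$E$ classification of fibers invoked in part (3). In positive characteristic I would reduce the existence and base-point-freeness statements to the complex case through the lifting of Proposition \ref{PropLifting}, which identifies the N\'eron--Severi lattices of the special and general fibers and therefore transports all of the lattice-theoretic input above.
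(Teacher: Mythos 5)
Your proposal is correct and follows essentially the same route as the paper: reflections in $(-2)$-curves to make the isotropic class nef, Saint-Donat's base-point-freeness to get the fibration, adding large multiples of $F$ to produce an effective multisection of degree $d$, and the vertical/horizontal decomposition of effective roots to identify the root sublattice of $\class{F,O}^\perp$ with the reducible fibers. The only caveat is your final reduction to characteristic $0$ via Proposition \ref{PropLifting}, which requires finite height and so misses supersingular surfaces; the paper instead invokes the characteristic-$p$ form of Saint-Donat's results directly (via \cite[Proposition 3.10]{huybrechts}), which is the cleaner fix.
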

%\adam{Again, I don't see why this theorem and corollary require finite height.  Huybrechts is at some pains to make it clear that everything in chapter 8 works in positive characteristic, even for supersingular K3 surfaces.}

\begin{proof} (1) One can apply standard results on $\Pic S$ \cite[Corollary 8.2.9]{huybrechts} to find a sequence of smooth rational curves $C_1,\dots,C_r$ on $S$ such that $F'=\pm \rho_{C_r}\circ\cdots\circ \rho_{C_1} (F)$ is % numerically effective
nef, where $\rho_C(v)=v-(2(v,C)/(C,C))C$ is the reflection in the hyperplane orthogonal to a Picard class $C \in \Pic S$ satisfying $C^2 = -2$.  Since $\rho_C$ preserves the intersection pairing, we have $(F',F')=0$.  By \cite[Proposition 2.3.10]{huybrechts}, the linear system $\abs{F'}$ has no base points, and the associated morphism to projective space factors through a genus 1 fibration $S\to \P^1$, with fiber equivalent to $F$.

(2) It is clear that $F'\equiv F\pmod{d}$ in $\Pic S$, so that there exists $D\in \Pic S$ with $(F',D)=d$.  After replacing $D$ with $D+nF'$ for $n\gg 0$, we may assume that $D$ is effective.  Then $D$ is a multisection of the fibration of degree $d$.

(3) 
Let ${\mathcal C} \subset \class{F,O}^\perp$ be the set of $C$ that satisfy
$(C,C) = -2$.  By Riemann-Roch either $C$ or $-C$ is effective, so let
${\mathcal C}^+$ be the set of effective elements.  We claim that every 
element of ${\mathcal C}^+$ is a sum of classes of curves contained in fibers.

Indeed, write $C \in {\mathcal C}^+ = \sum a_i [C_i]$, where the $a_i$ are
nonnegative integers and the $C_i$ are classes of irreducible curves.
Since $[F]$ moves with empty base locus, it has nonnegative intersection
with all curves, and positive intersection with all curves not contained in
any member of the linear series $|F|$.  In particular $O$ is not among the
$C_i$, because $[F]\cdot[O] = 1$.
Therefore $O \cdot C_i \ge 0$ for all $i$, and so $F$ is not one of
the $C_i$ either.  On the other hand it is clear that if $C_i$ is a
curve in a fibre that does not meet $O$, then $[C_i] \in {\mathcal C}$.

Let $R$ be the root sublattice of $\class{F,O}^\perp$.  From the above we see
that $R$ is freely generated by the classes of curves in fibers that do not
meet $O$.  For each reducible fibre we obtain a root lattice as in
\cite[\S11.13]{SchuettShioda}, and distinct fibers give orthogonal
sublattices of $R$.
\end{proof}

\begin{cor} Let $S$ be a K3 surface of finite height.   If the Picard rank $\rho(S)$ is at least 5, then $S$ admits a genus 1 fibration.
\end{cor}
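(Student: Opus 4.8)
The plan is to reduce the corollary to a purely lattice-theoretic statement and then invoke part (1) of Proposition \ref{PropExistenceOfEllFib}. That proposition produces a genus $1$ fibration $S\to\P^1$ out of any primitive class $F\in\Pic S$ with $(F,F)=0$. Hence it suffices to exhibit a single nonzero isotropic class in the Picard lattice; primitivity can then be arranged for free, as explained below.

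Next I would bring in the arithmetic of $\Pic S$. As recalled above, $\Pic S$ is an even lattice of signature $(1,\rho(S)-1)$, so once $\rho(S)\geq 2$ the associated rational quadratic form $q$ on $\Pic S\otimes\Q$ is indefinite. The key input is then the classical theorem of Meyer, a consequence of the Hasse--Minkowski local--global principle: an indefinite quadratic form over $\Q$ in at least $5$ variables is isotropic. Concretely, every quadratic form in $\geq 5$ variables over each $\Q_p$ is automatically isotropic (the anisotropic $p$-adic forms have dimension $\leq 4$), while indefiniteness forces isotropy over $\mathbf{R}$; Hasse--Minkowski then yields a nonzero $v\in\Pic S\otimes\Q$ with $q(v)=0$. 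Clearing denominators and dividing the resulting integral vector by the gcd of its coordinates produces a \emph{primitive} class $F\in\Pic S$ with $(F,F)=0$, since $q$ is homogeneous of degree $2$. Applying Proposition \ref{PropExistenceOfEllFib}(1) to $F$ then completes the argument, and the threshold $\rho(S)\geq 5$ is visibly the exact rank at which indefiniteness forces isotropy over $\Q$.

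Because the whole argument is a citation of Meyer's theorem followed by an application of an already-established result, there is essentially no computational obstacle. The one genuine point of care is the hypothesis on the characteristic: Proposition \ref{PropExistenceOfEllFib} is stated for $\car k\neq 2,3$, so the corollary as proved above inherits that restriction. To remove it, I would handle a K3 surface of finite height in characteristic $2$ or $3$ by lifting to $W(k)$ via Proposition \ref{PropLifting}, which preserves the N\'eron--Severi group; one produces the primitive isotropic class and hence the genus $1$ fibration over the characteristic $0$ fibre and then specializes, the supersingular (infinite height) case being handled by the standard fact that an isotropic class on any K3 surface still gives a genus $1$ fibration. This characteristic bookkeeping is the only step I expect to require additional attention.
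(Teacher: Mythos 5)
Your argument is correct and is essentially the paper's own proof: both reduce to finding an isotropic class via Hasse--Minkowski (an indefinite form of rank $\geq 5$ is isotropic over every $\Q_p$ and over $\mathbf{R}$) and then invoke Proposition \ref{PropExistenceOfEllFib}(1). Your extra remarks on primitivity and on the characteristic $2,3$ caveat (cf.\ Remark \ref{rem:char-p-bad}) are sensible bookkeeping that the paper's one-line proof leaves implicit.
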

\begin{proof} The quadratic form of $\Pic S$ is indefinite and represents zero over $\Q_p$ for all primes $p$.  Therefore by Hasse-Minkowski it represents zero rationally; i.e., there exists $F\in \Pic S$ with $(F,F)=0$.  By Proposition \ref{PropExistenceOfEllFib}, the surface $S$ admits a genus 1 fibration.
\end{proof}

\begin{rmk}\label{rem:char-p-bad} If the characteristic of $k$ is $2$ or $3$, it is possible that a nef divisor with self-intersection $0$ does not actually define a genus 1 fibration in the sense we have defined it; it can happen that the generic fibre has a cusp.  See \cite[proof of Proposition 2.3.10]{huybrechts}.  
\end{rmk}

In the situation of Proposition \ref{PropExistenceOfEllFib},  we have a genus 1 fibration on $S$ admitting a multisection $D$ of degree $d$.  We may pass to the Jacobian of this fibration \cite[Chapter 11, \S 4]{huybrechts}:  this is another K3 surface $S'$ admitting a Jacobian fibration.  There is a finite morphism $S\to S'$ of degree $d^2$, sending a section $P$ to $dP-D$.   The following lemma identifies the Picard lattice of $S'$.

\begin{lm}[{\cite[Lemma 2.1]{Keum}}]\label{lem:keum} Let $S$ be a K3 surface admitting a genus 1 fibration of multisection index $d$ with fiber $F\in \Pic S$, and let $S'$ be the Jacobian of that fibration.   Then  $\Pic S'$ is isomorphic to the overlattice $(\Pic S)[F/d]$ of $\Pic S$.  
\end{lm}

It is natural to ask about a sort of converse to Lemma \ref{lem:keum}.  Suppose $S$ is a K3 surface with Picard lattice $L$, and suppose $L'\supset L$ is an overlattice with $L'/L$ cyclic of degree $d$.   Does there exist a genus 1 fibration on $S$ whose Jacobian has Picard lattice $L'$?  This is true if the rank of $L$ is at least 13, as we will see in Proposition \ref{prop:genus-1-exists}.

\begin{lm}\label{lem:split-quad} Let $R$ be a principal ideal domain, let $M$ be a free $R$-module of finite rank, and let $Q$ be a quadratic form on $M$.  Let $N\subset M$ be a submodule such that the discriminant of $Q|_N$ is a unit in $R$.  Then
  $M = N \oplus N^{\perp}$.
\end{lm}

\begin{proof}
  First note that $N \cap N^\perp = 0$, as otherwise $Q|_N$ would
  have discriminant $0$.  
  
  Let $F$ be the fraction field of $R$, and let $N^\vee\subset M\otimes_R F$ be the dual lattice, i.e., the $R$-submodule of $M\otimes_R F$ consisting of those $m$ for which $(m,N)\subset R$.  Then $\Hom(N,R)\isom N^\vee$.   By hypothesis, we have $N = N^\vee$.  Let $m\in M$.   Then we can find $n\in N$ such that $(m,x)=(n,x)$ for all $x\in N$.  Then $m-n\in N^\perp$, and we conclude $N+N^\perp=M$.
\end{proof}

In the context of Lemma \ref{lem:split-quad}, we say that $Q$ {\em primitively represents} $a\in R$ if there exists a primitive element $x\in M$ with $Q(x)=a$.

\begin{lm}\label{lem:represent-zp} Let $p$ be prime
  and let $Q$ be a quadratic form over
  $\Z_p$ of rank $n\geq 3$ and
  unit discriminant.  If $p = 2$, assume further that $Q$ is even.
  Then $Q$ represents all elements of $\Z_p$
  %(if $p$ is odd) or $2\Z_p$ (if $p = 2$)
  primitively.
\end{lm}

\begin{proof}  Suppose $p$ is odd.  Then $Q$ may be diagonalized \cite[p. 369, Theorem 2]{SPLAG}:  $Q=\sum_{i=1}^n a_ix_i^2$. Since $Q$ has unit discriminant, each $a_i\in \Z_p^\times$.  A standard counting argument shows that $a_1x_1^2+a_2x_2^2+a_3x_3^2$ primitively represents all elements of $\F_p$.  By Hensel's lemma, $Q$ primitively represents all of $\Z_p$.

If $p = 2$, then (loc.~cit.) the form $Q$ is isomorphic to the direct
sum of a diagonal form $\sum_{i=1}^m a_ix_i^2$ with forms of the shape
$2^rQ_{a,b,c}$, where $r\geq 1$ and $Q_{a,b,c}(x,y)=ax^2+bxy+cy^2$ is a
form of unit discriminant
with $a,c\in\Z_2$ and $b\in\Z_2^\times$.  Note that $\disc 2^rQ_{a,b,c}$
has 2-adic valuation $2r$.  Since $Q$ has unit discriminant, all
$a_i\in\Z_2^\times$, and all the $r$ are $0$.  But then since $Q$ is even,
$m=0$.

Thus $Q$ is the direct sum of at least two forms $Q_{a,b,c}$.  For each of
these, we consider two cases.
If $2|ac$ then the discriminant $b^2-4ac$ is congruent to
$1 \bmod 8$.  It is therefore the square of a unit, and so $Q$ is a
product of two linear factors $(rx+sy),(r'x+s'y)$ that generate the space
of linear forms in two variables over $\Z_2$.
So $Q$ represents every element of $\Z_2$ primitively in this case.
Alternatively, if $a,c \in \Z_2^\times$, then
$Q_{a,b,c}$ defines a nonsingular conic over $\Z_2$, so by Hensel's
lemma $Q_{a,b,c}$ primitively represents all of $\Z_2^\times$.
Since every element of $\Z_2$ is either a unit or the sum of two units,
it follows that $Q$ primitively represents all of $\Z_2$ if there are two
factors of this type.
% 
%the rank must be even.  Choose $x, y$ with $(x,y) \in \Z_2^*$.
%  Then $Q$ restricted to the space spanned by $x$ and $y$ is still even.
%  Let $u \in \Z_2^*$: I claim that $2u$ is (primitively) represented
%  on the subspace $\langle x,y \rangle$.  Indeed, consider the quadric
%  corresponding to the matrix
%  $$\frac{1}{2}\begin{pmatrix}(x,x)&(x,y)&0\\(x,y)&(y,y)&0\\0&0&-2u\end{pmatrix}.$$
%    It reduces mod $2$ to $ka^2 + ab + \ell b^2 + c^2$, where
%    $k, \ell \in \F^2$.  In every case this defines a nonsingular
%    conic in $\P^2(\F_2)$, and so a point with $c$-coordinate $1$
%    can be lifted to $\Z_2$
%    by Hensel's lemma.  We repeat the process to split off a subspace of
%    rank $4$ on which twice the sum of any two units can be primitively
%    represented.  Since every element of $\Z_2$ is either a unit or the sum
%    of two units, the result follows.
\end{proof}

\begin{prop}\label{prop:genus-1-exists} Let $S$ be a 
  K3 surface with Picard number at least $13$.  Assume that the ground field has characteristic 0, or else that $S$ has finite height.  Let $p$ be prime, and let $D$ be a divisor of $S$ which is not divisible by $p$ in $L=\Pic S$.  Assume that $p\vert (D,x)$ for all $x\in L$, and also that $p^2\vert (D,D)$.   Equivalently, $L'=L[D/p]\subset \Pic S\otimes\Q$ is an overlattice of $L$ of index $p$.   If $p=2$, assume that $L'$ is even.  (Since $L$ is even, this condition is equivalent to $8\vert (D,D)$.)  
  
Then there exists a divisor class $D'\in D+pL$ such that the $p$-part of $\gcd_{x\in L}(D',x)$ is $p$, and a genus 1 fibration $S\to \PP^1$ with fiber $D'$.  In particular, if $S'\to \PP^1$ is the $p$-Jacobian $J^p$ (\cite[Remark 11.4.1]{huybrechts})) of the fibration, then $\Pic S'\isom L'$.  
\end{prop}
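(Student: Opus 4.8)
The plan is to reduce the statement to a single lattice-theoretic search and then feed the resulting class into Proposition~\ref{PropExistenceOfEllFib} and Lemma~\ref{lem:keum}. Write $L=\Pic S$, $\delta=D/p\in L'=L[D/p]$, and $a=(\delta,\delta)=(D,D)/p^2\in\Z$; since $L$ is even and, when $p=2$, $8\mid(D,D)$, the integer $a$ is even. If $S$ has positive characteristic and finite height, then by Proposition~\ref{PropLifting} I would lift $S$ to characteristic $0$ so that the specialization identifies the N\'eron--Severi lattices; this places me in characteristic $0$, where Proposition~\ref{PropExistenceOfEllFib} applies without the cusp pathology of Remark~\ref{rem:char-p-bad}, and the class and fibration I construct specialize back to $S$. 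The goal is then to find $F\in L'$ with $F\equiv\delta\pmod L$, with $(F,F)=0$, with $F$ primitive in $L'$, and with $(F,w)=1$ for some $w\in L$. Granting such an $F$, I set $D'=pF$. Then $D'\in D+pL$ because $F-\delta\in L$; one has $(D',D')=p^2(F,F)=0$; since $F\equiv\delta\pmod L$ with $\delta\notin L$ we get $F\notin L$, and combined with primitivity of $F$ in $L'$ this forces $D'$ to be primitive in $L$ (an equation $pF=mD''$ with $D''$ primitive forces $m\mid p$, and $m=p$ would put $F$ in $L$); finally $(D',w)=p(F,w)=p$ shows $\gcd_{x\in L}(D',x)=p$, so in particular its $p$-part is $p$.

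The key observation is that all four conditions on $F$ become automatic once $L$ contains a hyperbolic plane $U=\class{e,f}$ (with $(e,e)=(f,f)=0$, $(e,f)=1$) as an orthogonal direct summand. In that case I would take
\begin{equation*}
F=\delta+\bigl(1-(\delta,f)\bigr)\,e+\bigl((\delta,e)(\delta,f)-\tfrac{a}{2}-(\delta,e)\bigr)\,f,
\end{equation*}
which lies in $\delta+U\subset\delta+L$. Solving the conic $(F,F)=0$ by the factorization $(m+(\delta,f))(n+(\delta,e))=(\delta,e)(\delta,f)-\tfrac{a}{2}$ (the right-hand side being an integer because $a$ is even) shows $(F,F)=0$, and a direct computation gives $(F,f)=1$, which simultaneously yields primitivity of $F$ in $L'$ and the divisibility $(F,w)=1$ with $w=f$. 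Thus the whole proposition reduces to the purely lattice-theoretic claim that the even indefinite lattice $L$ of signature $(1,\rho-1)$ and rank $\rho\ge 13$ splits off a hyperbolic plane.

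To split off $U$ it suffices to produce a primitive isotropic vector $e\in L$ of divisibility one, i.e. with $(e,L)=\Z$: choosing $f_0$ with $(e,f_0)=1$ and replacing it by $f=f_0-\tfrac12(f_0,f_0)e$ (an integral vector since $L$ is even) gives $U=\class{e,f}$ of discriminant $-1$, and Lemma~\ref{lem:split-quad} over $\Z$ then yields $L=U\oplus U^\perp$. To find such an $e$ I would argue locally and globalize. Because $S$ now has characteristic $0$, $L$ embeds primitively in the K3 lattice $U^3\oplus E_8^2$, so its discriminant group $A_L=L^\vee/L$ has length $\ell(A_L)\le 22-\rho\le 9$; hence at every prime $q$ the non-unimodular part of $L\otimes\Z_q$ has rank $\le 9$, so its unimodular Jordan component $L_0$ has rank $\ge\rho-9\ge 4\ge 3$, and $L_0$ is even at $q=2$ since $L$ is. Lemma~\ref{lem:split-quad} over $\Z_q$ splits off $L_0$, and Lemma~\ref{lem:represent-zp} shows $L_0$ represents $0$ primitively, giving a primitive isotropic vector of divisibility one in $L\otimes\Z_q$; together with a real isotropic vector (possible as $L$ is indefinite), strong approximation for the spin group (Eichler--Kneser, valid since $L$ is indefinite of rank $\ge 3$) assembles these into a global primitive isotropic vector of divisibility one. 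Equivalently, the inequality $\rho\ge\ell(A_L)+2$ is exactly Nikulin's criterion for a hyperbolic summand, which explains the role of the rank bound.

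Finally, with $D'$ in hand, Proposition~\ref{PropExistenceOfEllFib}(1) produces a genus $1$ fibration $S\to\PP^1$ whose fiber class is $D'$, and part~(2) identifies its multisection index with $\gcd_{x\in L}(D',x)=p$; passing to the $p$-Jacobian $J^p$ and invoking Lemma~\ref{lem:keum} identifies $\Pic S'$ with the overlattice $L[D'/p]=L[D/p]=L'$, as required. I expect the main obstacle to be the third paragraph: guaranteeing that the local isotropic vectors can be taken of divisibility one at the finitely many primes dividing the discriminant of $L$, and that strong approximation then glues them into a global divisibility-one isotropic class. This is precisely where the bound $\ell(A_L)\le 22-\rho$ forced by $\rho\ge 13$, together with Lemmas~\ref{lem:split-quad} and~\ref{lem:represent-zp}, is indispensable.
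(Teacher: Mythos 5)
Your proof is correct, but it takes a genuinely different route from the paper's. The paper never splits off a hyperbolic plane over $\Z$: it works entirely $p$-adically, writing $L\otimes\Z_p=M\oplus N$ with $M$ unimodular of rank $\geq 4$ (Lemma~\ref{lem:split-quad} together with the bound $\rank N\leq 22-\rho$ coming from the primitive embedding of $L$ into the unimodular lattice $H^2(S,\Z)$), then using Lemma~\ref{lem:represent-zp} to represent $-(n,n)/p^2$ by $M$ and so produce a $\Z_p$-isotropic vector $D_0\equiv D\pmod{pL\otimes\Z_p}$, and finally invoking weak approximation on the quadric $(x,x)=0$ to find a global isotropic class in the right residue class. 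You instead upgrade the same discriminant bound $\ell(A_L)\leq 22-\rho\leq\rho-2$ to a global splitting $L\isom U\oplus L''$ (Nikulin's criterion, proved via Eichler--Kneser strong approximation) and then write down an explicit isotropic representative $F=\delta+me+nf$ of the coset $\delta+L$ with $(F,f)=1$; I checked the factorization $(m+(\delta,f))(n+(\delta,e))=(\delta,e)(\delta,f)-a/2$ and the primitivity argument for $D'=pF$, and they are correct. What your route buys: the construction is completely explicit, $\gcd_{x\in L}(D',x)$ equals $p$ on the nose (so the full Jacobian, not only the $p$-Jacobian, realizes $L'$), and you avoid the paper's slightly delicate ``clear denominators'' step, which must be carried out so as to preserve both primitivity of $D'$ and the congruence $D'\equiv D\pmod{pL}$. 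What it costs: the global splitting theorem is a strictly heavier input than anything the paper uses, and your third paragraph --- which you rightly flag as the crux --- is essentially a sketch of its proof; in a written version you should simply cite Nikulin's result rather than rederive it. Both arguments rest on the same two pillars, namely the length bound forced by unimodularity of the rank-22 K3 lattice and the lift to characteristic $0$ via Proposition~\ref{PropLifting}, so the hypothesis $\rho\geq 13$ plays an identical role in each.
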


%\adam{This one I think really doesn't work for supersingular K3s.}

\begin{proof} Using Proposition \ref{PropLifting} we assume that the ground field is $\mathbf{C}$.

Let $M\subset \Pic S\otimes \Z_p$ be a $\Z_p$-sublattice of maximal rank among those which have unit discriminant.  Write $Q$ for the quadratic form on $\Pic S\otimes\Z_p$.  By Lemma \ref{lem:split-quad}, we have $\Pic S\otimes\Z_p=M\oplus N$, with $N=M^{\perp}$.  Then $Q\vert_{N}$ is divisible by $p$.  Indeed, if $x\in N$ satisfies $Q(x)\in\Z_p^\times$, then $M\oplus \class{x}$ would be a larger submodule with unit discriminant.  In fact we claim that the intersection form on $N$ is divisible by $p$, i.e., that $N\subset pN^\vee$.  This follows automatically if $p$ is odd.  If $p=2$, we also need to exclude the possibility of $x,x'\in N$ with $(x,x')\in\Z_2^\times$.  But since we already know that $(x,x),(x',x')\in 2\Z_2$, we have $(x,x)(x',x')-(x,x')^2\in\Z_2^\times$, and again we have constructed a larger submodule $M\oplus\class{x,x'}$ with unit discriminant.

Recall that the discriminant group of a lattice $\Lambda$ (over whatever PID base) is $D(\Lambda)=\Lambda^\vee/\Lambda$.   Then $D(L)\otimes\Z_p\isom D(L\otimes \Z_p)$.  On the other hand $L\otimes\Z_p=M\oplus N$ as above, and we have $D(M)=0$, whereas 
\[ D(N)\otimes \F_p=N^\vee/(N,pN^\vee)=N^\vee/pN^\vee =N^\vee \otimes\F_p \]
since $N\subset pN^\vee$ as we observed above.  We conclude from this that $\dim_{\F_p} D(L)\otimes\F_p=\rank N$. 

Now we use that fact that $L=\Pic S=\NS(S)$ embeds primitively into the K3 lattice $H=H^2(X,\Z)$.   (This is because for any complex projective surface $S$, we have $\NS(S)\isom H^{1,1}(S,\mathbf{C})\cap H^2(S,\Z)$.)   Crucially, $H$ is unimodular.  Let $L^\perp$ be the orthogonal complement of $L$ in $H$.  The intersection pairing on $H$ induces a pairing $D(L)\otimes H/(L\oplus L^\perp)\to \Q/\Z$. 

This pairing puts $D(H)$ and $H/(L\oplus L^\perp)$ into Pontrjagin duality. 
Proof: if $h\in H$ satisfies $(L^\vee,h)\subset\Z$, write $h=\ell+\ell^\perp$, where $\ell\in L\otimes\Q$, $\ell^\perp\in L^\perp \otimes\Q$. Then $(L^\vee,\ell)\subset\Z$ implies $\ell\in L^{\vee\vee}=L$ and therefore $\ell^\perp\in H\cap (L^\perp\otimes\Q)=L^\perp$ and $h\in L\oplus L^\perp$.  Conversely if $\ell^\vee\in L^\vee$ satisfies $(\ell^\vee, H)\in\Z$, then $\ell^\vee\in H^\vee\cap L^\vee = H\cap L^\vee \subset H\cap (L\otimes\Q)=L$ because $L$ is embedded primitively in $H$.

In particular $\dim_{\F_p} D(L)\otimes\F_p = \dim (H/(L\oplus L^\perp))\otimes\F_p\leq \rank L^\perp=22-\rank L$, as one can see by extending a $\Z$-basis of $L$ to $H$.  Thus $\rank N\leq 22-\rank L$.

We have assumed $\rho(S)=\rank L\geq 13$, and so
\[ \rank M=\rank L -\rank N \geq 2\rank L - 22 \geq 4.\]

Our divisor $D$ was assumed to satisfy $p\vert (D,x)$ for all $x\in L$.   If we decompose $D$ as $D=m+n$ with $m\in M$, $n\in N=M^\perp$, then $p\vert (m,x)$ for all $x\in M$, which is to say, $m\in pM^\vee$.  But since $M$ is unimodular, we find that $m\in pM$.  We find that $n\equiv D\pmod{L\otimes p\Z_p}$.   We have also assumed that $L[D/p]$ is a lattice, which implies that $p^2\vert (D,D)$;  if $p=2$ we have assumed $8\vert (D,D)$.  The same statements are true when $D$ is replaced with $n$.   By Lemma \ref{lem:represent-zp}, we can find a vector $x\in M$ such that $(x,x)=-(n,n)/p^2$.  Then $D_0=n+px\in L\otimes\Z_p$ satisfies $D_0\equiv D\pmod{p}$ and $(D_0,D_0)=0$.

By weak approximation on quadric hypersurfaces, there exists $D'\in L\otimes\Q$ satisfying $(D',D')=0$ which is $p$-adically close to $D_0$.  After clearing denominators, we may assume $D'\in L$.  Now we may apply Proposition
  \ref{PropExistenceOfEllFib} to obtain the required genus 1  fibration.  
  
\end{proof}

\begin{cor}\label{cor:drop-p}
  Let $S$ be a finite height K3 surface of Picard number $\ge 13$, and let
$p$ be an odd prime.   Let $d=\disc \Pic S$, and let $p^v$ be the largest power of $p$ dividing $d$.  Suppose we have an isometry
  $\Pic S \otimes \Q_p \isom L \otimes \Q_p$, where $L$ is a unimodular $\Z_p$-lattice.  Then there is a map of finite degree from $S$
  to a K3 surface $S'$ for which $\disc \Pic S'=d/p^v$.  
\end{cor}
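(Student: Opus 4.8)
The plan is to strip the $p$-part off the discriminant one index-$p$ overlattice at a time, realizing each overlattice geometrically as a $p$-Jacobian via Proposition \ref{prop:genus-1-exists}. Write $P=\Pic S$ and $v=v_p(d)$. Since a unimodular $\Z_p$-lattice has unit discriminant, comparing discriminants in $\Q_p^{\times}/(\Q_p^{\times})^2$ shows that the hypothesis $P\otimes\Q_p\isom L\otimes\Q_p$ forces $v$ to be \emph{even}. The goal is to build a chain of even overlattices
\[ P=P_0\subset P_1\subset\cdots\subset P_{v/2}\subset P\otimes\Q, \]
each of index $p$ in the next, with $v_p(\disc P_i)=v-2i$ and with $P_i$ agreeing with $P$ at every prime $p'\neq p$. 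An index-$p$ overlattice divides the discriminant by $p^2$ and alters only the $p$-part of the discriminant group, so $\disc P_{v/2}=d/p^{v}$ with the prime-to-$p$ part untouched, which is exactly the asserted discriminant.

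The existence of this chain is pure lattice theory at $p$. Because passing to an overlattice does not change the rational span, $P_i\otimes\Q_p=P\otimes\Q_p$ is isometric to $L\otimes\Q_p$ for every $i$; in particular the quadratic space continues to admit a unimodular $\Z_p$-lattice. By the theory of maximal $\Z_p$-lattices (for $p$ odd, maximal integral lattices in a space of unimodular type are themselves unimodular), $P\otimes\Z_p$ is contained in a unimodular $\Z_p$-lattice $\Lambda$ with $[\Lambda:P\otimes\Z_p]=p^{v/2}$; refining a composition series of $\Lambda/(P\otimes\Z_p)$ produces the successive index-$p$ lattices, each integral as a submodule of the integral lattice $\Lambda$, and each even since $p$ is odd (integrality forces $(D_i/p,D_i/p)\in 2\Z$). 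Each step is represented by a class $w\in P_i^\vee$ with $pw\in P_i$; setting $D_i=pw$ gives a divisor with $p\nmid D_i$ in $P_i$, with $p\mid(D_i,x)$ for all $x$, and with $p^2\mid(D_i,D_i)$, so that $P_{i+1}:=P_i[D_i/p]$ meets the hypotheses of Proposition \ref{prop:genus-1-exists}. The Picard number $\rank P_i$ is constant equal to $\rho(S)\geq 13$ throughout, so that hypothesis survives as well.

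Now I would run the geometry. Set $S^{(0)}=S$. Given an ordinary (hence finite-height) K3 surface $S^{(i)}$ with $\Pic S^{(i)}\isom P_i$, Proposition \ref{prop:genus-1-exists} furnishes a genus $1$ fibration $S^{(i)}\to\PP^1$ whose $p$-Jacobian $S^{(i+1)}$ is a K3 surface with $\Pic S^{(i+1)}\isom P_{i+1}$, together with a finite morphism $S^{(i)}\to S^{(i+1)}$. To re-apply the proposition I must know $S^{(i+1)}$ is again of finite height: the finite dominant morphism realizes $H^2_{\mathrm{cris}}(S^{(i+1)})$ as an isogeny factor of $H^2_{\mathrm{cris}}(S^{(i)})$, so the two K3 surfaces share a Newton polygon and $S^{(i+1)}$ is again ordinary. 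Composing the $v/2$ finite morphisms gives a finite map $S\to S':=S^{(v/2)}$ with $\disc\Pic S'=d/p^{v}$, completing the proof.

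The main obstacle is ensuring the chain never stalls, i.e.\ that an admissible $D_i$ exists at every stage until the $p$-part of the discriminant is exhausted. This is precisely where the hypothesis $P\otimes\Q_p\isom L\otimes\Q_p$ with $L$ unimodular is indispensable: it guarantees the unimodular overlattice $\Lambda$ and rules out the only possible obstruction, an anisotropic binary block in the discriminant form (which would carry Hasse invariant $-1$ and so cannot occur in a space of unimodular type). Reassuringly, this hypothesis is inherited automatically along the tower because an overlattice does not alter the rational quadratic space. The secondary point needing care, the stability of finite height under the $p$-Jacobian construction, is handled by the Newton-polygon comparison above, and it is the assumption that $S$ is \emph{ordinary} (rather than merely of finite height) that makes this invariance cleanest.
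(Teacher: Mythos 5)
Your proposal is correct, and the geometric engine is the same as the paper's: iterate Proposition \ref{prop:genus-1-exists}, passing to the $p$-Jacobian of a suitable genus $1$ fibration so as to divide the discriminant by $p^2$ at each step. Where you genuinely diverge is in how you produce the admissible divisor class $D$ at each stage. The paper argues by induction on $v$ with a case analysis on the discriminant group: an element of order $p^2$ gives $D$ directly; if $D(\Pic S)\otimes\Z_p$ is $p$-torsion of dimension $v>2$ one finds an isotropic vector of a ternary conic over $\F_p$; and the residual case $v=2$ is handled by computing that the Hasse invariant of $\Pic S\otimes\Q_p$ must be trivial, forcing the binary block $pa_1x_1^2+pa_2x_2^2$ to be isotropic mod $p$. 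You instead invoke the classical theory of maximal lattices over $\Z_p$ ($p$ odd): since $\Pic S\otimes\Q_p$ contains a unimodular lattice, every maximal integral lattice in it is unimodular, so $\Pic S\otimes\Z_p$ embeds in a unimodular $\Lambda$ of index $p^{v/2}$, and a composition series of $\Lambda/(\Pic S\otimes\Z_p)$ hands you the entire chain of index-$p$ even overlattices at once, each step visibly of the form $L[D/p]$ with $D$ satisfying the hypotheses of Proposition \ref{prop:genus-1-exists}. This is cleaner and eliminates the case analysis, at the cost of importing Eichler's uniqueness theorem for maximal lattices rather than staying with bare-hands diagonalization and Hilbert symbols. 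You are also more careful than the paper on a point it leaves implicit: that the intermediate surfaces remain of finite height so the proposition can be reapplied; your isocrystal argument (the degree-$p^2$ finite dominant map identifies the $F$-isocrystals after inverting $p$, hence preserves the Newton polygon) settles this correctly.
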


\begin{proof}  The hypothesis on $\Pic S$ implies that $v$ is even, as the discriminant of a quadratic form is well-defined up to a square.  It also implies that the Hasse invariant of $\Pic S\otimes\Q_p$ is trivial, as this is the case for $L\otimes\Q_p$.  Indeed, we may diagonalize the quadratic form on $L$ as $\sum_{i=1}^n a_ix_i^2$, with $a_i\in\Z_p^\times$, and then the Hasse invariant is $\prod_{i<j}(a_i,a_j)$ (Hilbert symbol).  Each factor in the product is trivial (note that $p$ is odd).  

We proceed by induction on $v$, the case $v=0$ being obvious.  First suppose that $D(\Pic S)$ contains an element of order $p^2$, represented by $D^\vee\in (\Pic S)^\vee$.   Then $D=p^2D^\vee \in \Pic S$ satisfies the hypotheses of Proposition \ref{prop:genus-1-exists}, in which case there is a degree $p^2$ map $S\to S'$ with $\disc \Pic S'=d/p^2$.

Therefore assume that $D(\Pic S)\otimes\Z_p$ is $p$-torsion, in which case its $\F_p$-dimension is $v$.  The quadratic form on $\Pic S\otimes\Z_p$ is equivalent to the diagonal form
\begin{equation}
\label{EqLongQuadraticForm}
 pa_1x_1^2+\cdots+pa_vx_v^2+a_{v+1}x_{v+1}^2+\cdots +a_nx_n^2, 
 \end{equation}
with each $a_i\in\Z_p^\times$.  If $v>2$, there exist $x_1,x_2,x_3\in\Z_p$ such that $a_1x_1^2+a_2x_2^2+a_3x_3^2\equiv 0\pmod{p}$ since every nonsingular conic over $\F_p$ has a rational point.  Let $D\in \Pic S$ be $p$-adically close to $(x_1,x_2,x_3,0,\dots)\in \Pic S\otimes\Z_p$.  Then  $D$ satisfies the hypotheses of Proposition \ref{prop:genus-1-exists}, and again we can remove a factor of $p^2$ from $\disc \Pic S$.

We are now reduced to the case $v=2$.  Standard formulas for the Hilbert symbol reveal that $(pa_1,a_i)=(pa_2,a_i)$ for all $i\geq 3$, so that the Hasse invariant of \eqref{EqLongQuadraticForm} is $(pa_1,pa_2)=(-a_1a_2/p)$ (Legendre symbol).  But by the observation in the first paragraph, the Hasse invariant is trivial; i.e., $-a_1a_2$ is a square modulo $p$.  Thus $a_1x_1^2+a_2x_2^2$ represents 0 modulo $p$, and we proceed as in the previous paragraph.
\end{proof}

\subsection{Kummer surfaces $\Km(E_1\times E_2)$ of Picard rank 18}

Here we recall the basic constructions and properties of the Kummer surface $\Km(A)$ attached to an abelian surface $A$.   For the moment let us suppose we are in characteristic not 2.   Denoting by $\iota$ the involution $x\mapsto -x$ on $A$, the quotient $A/\iota$ has rational double point singularities at each of the 16 fixed points of $\iota$ (namely, the 2-torsion points of $A$).  Let $\Km(A)$ be the minimal resolution of $A/\iota$.  Then $\Km(A)$ is a K3 surface. 

The Picard lattice $\Pic \Km(A)=\NS \Km(A)$ contains both $\NS(A)$ and the classes of the 16 exceptional divisors.  In fact these generate $\Pic \Km(A)$ and we have:
\begin{equation}
\label{EqRhoRelation}
 \rho(\Km(A)) = 16+\rho(A) 
 \end{equation}

We are mostly interested in the case that $A=E_1\times E_2$ is a product of non-isogenous elliptic curves $E_1,E_2$.  In this case, \eqref{EqRhoRelation} gives $\rho(\Km(A))=18$.   There is an elliptic fibration $\Km(E_1\times E_2)\to \mathbf{P}^1$ given by projecting onto $E_2/\iota\isom \mathbf{P}^1$;  the geometric fibers are all isomorphic to $E_1$.  In fact this gives an alternate construction of $\Km(E_1\times E_2)$:  it is the quadratic twist of the constant elliptic fibration $E_1\times \mathbf{P}^1\to \mathbf{P}^1$ by the double cover $E_2\to \mathbf{P}^1$.  (Of course, we could have reversed the roles of $E_1$ and $E_2$ in this construction.)

The bad fibers of $\Km(E_1\times E_2)\to \mathbf{P}^1$ occur at the four branch points of $E_2\to \mathbf{P}^1$, and they are all of type $I_0^\ast = \tilde D_4$.  Therefore the trivial lattice of the fibration is $U\oplus D_4^{\oplus 4}$.  We note here that $\disc D_4=4$.  On the other hand the Mordell-Weil group of this fibration has order 4 (coming from the 2-torsion in $E_1$).  We conclude from \eqref{EqNSFormula} that $\Pic \Km(E_1\times E_2)$ has discriminant $-16$. The lattice $\Pic\Km(E_1\times E_2)$ does not depend on the elliptic curves $E_1$ or $E_2$, so long as they are nonisogenous.   In the complex setting, the transcendental lattice of $\Km(E_1\times E_2)$ is $T_{E_1\times E_2}(2)\isom (H^1(E_1,\Z)\otimes H^1(E_2,\Z))(2)\isom U^{\oplus 2}(2)$.

We are interested in the question of whether a given K3 surface $S$ of rank 18 is isogenous to a Kummer surface of the form $\Km(E_1\times E_2)$, and if so, how to find the elliptic curves $E_1$ and $E_2$.    The following result relies on a deep theorem of Mukai.

\begin{prop} \label{pro:Mukai} Let $S$ be a complex K3 surface such that $\Pic S\otimes \Q$ is isometric to $\Pic \Km(E_1\times E_2)\otimes \Q$ for some nonisogenous elliptic curves $E_1,E_2$.  Then there exists an isogeny between $S$ and a Kummer surface of that form.  (An isogeny between K3 surfaces $S$ and $S'$ is an algebraic cycle on $S\times S'$ inducing an isometry $H^2(S,\Q)\isomto H^2(S',\Q)$.) 
\end{prop}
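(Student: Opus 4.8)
The plan is to reduce the construction of the isogeny to producing a rational Hodge isometry $H^2(S,\Q)\isom H^2(K,\Q)$ for a suitable Kummer surface $K=\Km(E_1\times E_2)$, and then to invoke the algebraicity of such isometries. First I would record the numerology. Since $\Pic S\otimes\Q$ is isometric to $\Pic\Km(E_1\times E_2)\otimes\Q$, the surface $S$ has Picard rank $\rho(S)=18$, so its transcendental lattice $T(S)=(\Pic S)^{\perp}\subset H^2(S,\Z)$ has rank $4$; as $H^2(S)$ has signature $(3,19)$ and $\Pic S$ has signature $(1,17)$ by the Hodge index theorem, the transcendental part $T(S)$ has signature $(2,2)$. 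Moreover $T(S)\otimes\Q$ carries a polarized weight-$2$ Hodge structure of K3 type, i.e. with one-dimensional $(2,0)$-part. Because $H^2(\,\cdot\,,\Z)\otimes\Q$ is a fixed quadratic $\Q$-space (the K3 lattice over $\Q$), Witt cancellation applied to the hypothesized isometry of Picard parts shows that $T(S)\otimes\Q$ is isometric, as a quadratic space, to the transcendental space $T(\Km(E_1\times E_2))\otimes\Q\isom U_\Q^{\oplus 2}$.

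The heart of the argument is to upgrade this to an isomorphism of Hodge structures by realizing $T(S)\otimes\Q$ as the transcendental Hodge structure of a Kummer surface of product type. Here I would use the exceptional isogeny $\SL_2\times\SL_2\to\mathrm{SO}(2,2)$: on a four-dimensional $\Q$-space of signature $(2,2)$ the quadratic form is the tensor product of the symplectic forms on two two-dimensional spaces $V_1,V_2$, and a weight-$2$ Hodge structure of K3 type whose Mumford-Tate group lies in $\mathrm{SO}(T(S)\otimes\Q)$ is induced through this isogeny from weight-$1$ Hodge structures on $V_1$ and $V_2$. Thus I obtain a Hodge isometry $T(S)\otimes\Q\isom V_1\otimes V_2$ where each $V_i$ is a polarized weight-$1$ Hodge structure of rank $2$, and hence $V_i\isom H^1(E_i,\Q)$ for an elliptic curve $E_i$. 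The curves $E_1,E_2$ are necessarily nonisogenous: a nonzero Hodge class in $V_1\otimes V_2\isom\Hom(V_2,V_1)(-1)$ would be a nonzero morphism of Hodge structures $V_2\to V_1$, forcing $V_1\isom V_2$ (both being simple of rank $2$); but $T(S)\otimes\Q$, being transcendental, contains no nonzero Hodge class. Since $H^1(E_1,\Q)\otimes H^1(E_2,\Q)$ is exactly the transcendental Hodge structure of $K:=\Km(E_1\times E_2)$ (the lattice $T_{E_1\times E_2}(2)$ of the earlier computation, whose rescaling by $2$ is immaterial over $\Q$), this produces a Hodge isometry $T(S)\otimes\Q\isom T(K)\otimes\Q$.

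It remains to assemble a global Hodge isometry and to conclude. By hypothesis, and because $\Pic\Km(E_1\times E_2)$ is independent of the nonisogenous pair, there is an isometry $\Pic S\otimes\Q\isom\Pic K\otimes\Q$; as both sides are trivial Hodge structures of type $(1,1)$, it is automatically a Hodge isometry. Taking the orthogonal direct sum with the isometry of transcendental parts yields a rational Hodge isometry
\[ H^2(S,\Q)=(\Pic S\otimes\Q)\oplus(T(S)\otimes\Q)\isom(\Pic K\otimes\Q)\oplus(T(K)\otimes\Q)=H^2(K,\Q). \]
Finally I would invoke the theorem of Mukai, in the rational form established by Buskin \cite{Buskin}, that every rational Hodge isometry between the $H^2$'s of two complex K3 surfaces is induced by an algebraic cycle on the product. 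Applied to the isometry above, this cycle is precisely an isogeny between $S$ and $K=\Km(E_1\times E_2)$, as desired.

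The step I expect to be the main obstacle is the tensor decomposition of the rank-$4$ transcendental Hodge structure. The isometry of quadratic $\Q$-spaces with $U_\Q^{\oplus 2}$ is cheap, but one must verify that the Hodge decomposition genuinely respects a product structure coming from $\SL_2\times\SL_2\to\mathrm{SO}(2,2)$---equivalently, that $T(S)\otimes\Q$ lies in the image of the Kuga-Satake/Clifford construction for a product of two elliptic curves---and that the factors $V_i$ are \emph{effective} weight-$1$ Hodge structures, so that they are realized by honest elliptic curves rather than merely abstract Hodge structures.
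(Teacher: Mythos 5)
Your proposal is correct and follows essentially the same route as the paper: identify $T(S)\otimes\Q$ with $U_\Q^{\oplus 2}$, use the exceptional isogeny $\SL_2\times\SL_2\to\mathrm{SO}(2,2)$ (the paper's $(\Sp(2)\times\Sp(2))/\{\pm 1\}\isom O(2,2)^{\circ}$) to split the transcendental Hodge structure as $H^1(E_1,\Q)\otimes H^1(E_2,\Q)$, and then invoke the Mukai--Buskin algebraicity of rational Hodge isometries. The only cosmetic differences are that you use Witt cancellation where the paper computes Hasse--Minkowski invariants, and you cite Buskin's unrestricted version where the paper uses Mukai's original theorem for Picard rank $\geq 11$.
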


\begin{proof} (Sketch.)  Let $T_S$ be the transcendental lattice of $S$.  Then $T_S$ has signature $(2,2)$, and is the complement of $\Pic \Km(E_1\times E_2)$ in $H^2(S,\Z)$.  A calculation involving Hasse-Minkowski invariants shows that $T_S\otimes \Q$ is isometric to $U^{\oplus 2} \otimes \Q$.   Via this isometry, the Hodge structure on $T_S$ now determines a Hodge structure on $U^{\oplus 2}$, which is to say, a morphism of real groups $h\from \mathbf{S}\to O(2,2)$, where $\mathbf{S}$ is the Deligne torus.  Now observe that there is an isomorphism $(g_1,g_2)\mapsto g_1\otimes g_2$ from $(\Sp(2)\times \Sp(2))/\set{\pm 1}$ onto the neutral component of $O(2,2)$.  Therefore $h$ factors through a morphism $\mathbf{S}\to (\Sp(2)\times \Sp(2))/\set{\pm 1}$.  For $i=1,2$, let $h_i$ be the projection of this morphism onto the $i$th copy of $\Sp(2)/\set{\pm 1}\isom \PGL_2$;  then $h_i$ determines an elliptic curve $E_i$.   Thus we have an isometry of rational Hodge structures:
\[ T_S\otimes\Q\isom H^1(E_1,\Q)\otimes H^1(E_2,\Q) \isom T_{\Km(E_1\times E_2)}\otimes \Q \]

We now invoke a theorem of Mukai \cite[Corollary 1.10]{Mukai}:  an isometry between rational Hodge structures of two K3 surfaces of rank $\geq 11$ is always induced from an isogeny.  
\end{proof}

In this section we prove an effective version of Proposition \ref{pro:Mukai}, whereby the elliptic curves $E_1$ and $E_2$ can in theory be computed from $S$, and where the isogeny is simply a finite rational map (which can also be computed).   The idea is to leverage operations on elliptic fibrations.  We build up the result in stages.

%{\textcolor{red}{To get a complete description of the lattice $\Pic \Km(E_1\times E_2)$, I guess you would need to know whether each 2-torsion section meets each $D_4$ at a ``near'' or ``far'' component, right?  Or is there some other intrinsic way to describe this lattice;  maybe it is the only overlattice of $U\oplus D_4^{\oplus 4}$ of index $\Z/2\times\Z/2$?}}
%\adam{For a $\tilde D_4$ fibre there is no difference between ``near'' and ``far'', only between zero and nonzero.  Because this is an additive fibre, the group of points on the zero component is $K^+$, and so we can't have any torsion on the zero component.  Thus the map from the torsion subgroup to the component group $(\Z/2\Z)^2$ is injective, and there is no ambiguity.}
  
\begin{prop} \label{PropRecognizeKummer} Let $S$ be a K3 surface whose Picard lattice is isometric to $\Km(E_1\times E_2)$ for some nonisogenous elliptic curves $E_1,E_2$.  Then $S$ is isomorphic to a Kummer surface of that form.
\end{prop}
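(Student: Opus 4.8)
The plan is to produce on $S$ the very elliptic fibration that exhibits it as a Kummer surface, by transporting the standard product fibration of $K:=\Km(E_1\times E_2)$ across the given isometry and then reading off the geometry from the Picard lattice. I would let $F_0,O_0\in\Pic K$ be the fiber class and zero section of the fibration $K\to\mathbf{P}^1$ obtained by projecting onto $E_2/\iota$; recall from the discussion around \eqref{EqRhoRelation} that $\rho(K)=18$, that its four reducible fibers all have type $I_0^\ast=\tilde D_4$, and that its trivial lattice is $U\oplus D_4^{\oplus 4}$. Fixing the isometry $\phi\from\Pic K\xrightarrow{\sim}\Pic S$, I set $F=\phi(F_0)$. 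I will work under the assumption $\operatorname{char}k\neq 2,3$ (forced by Proposition \ref{PropExistenceOfEllFib} and the Kummer construction); in positive characteristic one would first lift $S$ to characteristic $0$ by Proposition \ref{PropLifting}, which preserves $\Pic S$ and hence every assertion below.

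Since $\phi$ is an isometry, $F$ is primitive and isotropic, so Proposition \ref{PropExistenceOfEllFib}(1) yields a genus $1$ fibration $S\to\mathbf{P}^1$ with fiber $\O(\Pic S)$-equivalent to $F$; because the multisection index and the frame lattice depend only on the $\O(\Pic S)$-orbit of an isotropic class, I may take $F$ itself to be the fiber class. I would then compute the multisection index as $d=\gcd_{x\in\Pic S}(F,x)=\gcd_{y\in\Pic K}(F_0,y)=1$, the last step because $(F_0,O_0)=1$; by part (2) the fibration acquires a section $O$, so it is elliptic and $\langle F,O\rangle\cong U$. The crucial observation is that the frame $F^{\perp}/\mathbf{Z}F$ is intrinsic to $(\Pic S,F)$ and, through $O$, is isometric to $\langle F,O\rangle^{\perp}$; since $\phi$ sends $F_0$ to $F$, it induces an isometry $F_0^{\perp}/\mathbf{Z}F_0\cong F^{\perp}/\mathbf{Z}F$, so the root sublattice of $\langle F,O\rangle^{\perp}$ is the same as that of the product fibration, namely $D_4^{\oplus 4}$. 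Part (3) then identifies the reducible fibers of $S\to\mathbf{P}^1$ with these four $D_4$ summands, i.e. four fibers of type $I_0^\ast$; since each contributes $6$ to the Euler number by \eqref{EqEulerNumberFormula} and $e(S)=24$, I conclude there are no further singular fibers.

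The remaining task is to reconstruct the Kummer structure. All singular fibers have type $I_0^\ast$, hence potentially good reduction, so the $j$-map $\mathbf{P}^1\to\mathbf{A}^1_j$ has no poles and is constant; the fibration is therefore isotrivial with generic fiber a fixed elliptic curve $E_1$. Because a fiber of type $I_0^\ast$ is exactly what a quadratic twist by $-1\in\Aut E_1$ produces at a point of good reduction, the four such fibers force $S\to\mathbf{P}^1$ to be the quadratic twist of the constant fibration $E_1\times\mathbf{P}^1\to\mathbf{P}^1$ by the double cover $C\to\mathbf{P}^1$ branched at the four points carrying them. Such a double cover is a smooth genus $1$ curve, i.e. an elliptic curve $E_2$, and by the construction recalled earlier $S\cong\Km(E_1\times E_2)$. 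Finally, $E_1$ and $E_2$ must be nonisogenous, since otherwise \eqref{EqRhoRelation} would give $\rho(S)=20$, contradicting $\rho(S)=\rho(K)=18$.

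I expect the main obstacle to lie in this last step: deducing the precise Kodaira fiber types from the lattice and then upgrading them to the quadratic-twist description. Pinning the root sublattice to $D_4^{\oplus 4}$ controls the reducible fibers, but one still has to argue that constant $j$ together with four $I_0^\ast$ fibers forces a quadratic, rather than a higher-order, twist—the delicate case being $j\in\{0,1728\}$, where $\Aut E_1$ is larger, and small characteristic, where one must fall back on the lift of Proposition \ref{PropLifting}.
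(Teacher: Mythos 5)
Your proposal is correct and follows essentially the same route as the paper: use the rational isometry to produce, via Proposition \ref{PropExistenceOfEllFib}, an elliptic fibration on $S$ whose only singular fibers are four of type $\I_0^\ast$, and then identify $S$ as the quadratic twist of a constant fibration $E_1\times\P^1$ by the double cover $E_2\to\P^1$ branched at those four points, i.e.\ as $\Km(E_1\times E_2)$. The one loose end you flag at the close --- ruling out a higher-order twist when $j\in\{0,1728\}$ --- is dispatched in the paper by running the twist in the opposite direction: quadratically twist $S\to\P^1$ itself by the double cover branched at the four $\I_0^\ast$ points, note that each $\I_0^\ast$ fiber becomes $\I_0$ so the twisted fibration has good reduction everywhere over $\P^1$ and is therefore the constant fibration $E_1\times\P^1$, and then undo the twist; this sidesteps any case analysis on $\Aut E_1$.
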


\begin{proof}  Proposition \ref{PropExistenceOfEllFib} produces an elliptic fibration $S\to \mathbf{P}^1$ with $I_0^\ast$ fibers at four points of $\mathbf{P}^1$.  Let $E_2\to \mathbf{P}^1$ be the elliptic curve branched at exactly these four points.  Then the quadratic twist of $S\to \mathbf{P}^1$ by $E_2\to\mathbf{P}^1$ has $I_0$ reduction at these points; i.e., it has good reduction everywhere and therefore must be a constant elliptic curve $E_1\times \mathbf{P}^1$.  Thus $S$ is a quadratic twist of $E_1\times \mathbf{P}^1$ by $E_2\to \mathbf{P}^1$, and this is exactly $\Km(E_1\times E_2)$.  
 % {\textcolor{red}{Is this proof valid?  That is, is it possible to determine the structure of all bad fibers starting from the fiber class $F\in \Lambda$?}}
%  \adam{Yes, I think this proof is correct.  See my email.}
\end{proof}

\begin{rmk}\label{rem:shioda-inose-exists}
  Recall that a {\em Shioda-Inose structure}
  (\cite[Definition 6.1]{morr}) on a complex K3 surface $S$ is a rational map
  $S \to S'$ of degree $2$ such that $S'$ is a Kummer surface and
  $T_{S'} \isom 2T_{S}$, meaning that that the Gram
  matrix of $T_{S'}$ is twice that of $T_S$.   Morrison proves \cite[Theorem 6.3]{morr} that $S$ has
  a Shioda-Inose structure if and only if $\Pic S \otimes \Q$ is isometric
  to $\Pic K \otimes \Q$ for some Kummer surface $K=\Km(A)$, if and only if
  $T_A$ embeds primitively in $U^3$.
\end{rmk}

\begin{prop}\label{prop:to-kum}
  Assume that $\car k\neq 2$.  Let $S$ be a K3 surface whose Picard lattice has rank $18$ and
  discriminant $-1$.  Then there is a finite map from $S$ to a K3 surface
  of the form $\Km(E_1 \times E_2)$, where
  $E_1,E_2$ are nonisogenous elliptic curves.
\end{prop}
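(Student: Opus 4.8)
The plan is to realize $S$ as an Inose surface and then invoke the (explicit) Shioda--Inose structure, reading off the two elliptic curves along the way. First I would analyze the lattice $\Pic S$. It has rank $18$, signature $(1,17)$, is even (as shown earlier for any K3), and has discriminant $-1$, hence is an \emph{even unimodular} lattice; by the classification of indefinite even unimodular lattices it is isometric to $U\oplus E_8^{\oplus 2}$ (our $E_8$ negative definite). Choose a hyperbolic pair $F,O$ spanning the $U$-summand, so $\class{F,O}\isom U$ and $\class{F,O}^{\perp}\isom E_8^{\oplus 2}$, which is its own root sublattice. Since $\rho(S)=18<22$, the surface is of finite height (a supersingular K3 would have $\rho=22$), so by Proposition \ref{PropLifting} we may lift $S$ to characteristic $0$ preserving the N\'eron--Severi lattice; this lets us apply Proposition \ref{PropExistenceOfEllFib} despite its hypothesis $\car k\neq 2,3$, and descend the resulting fiber class. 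We obtain an elliptic fibration $S\to\P^1$ with fiber $F$ and a genuine section (the multisection index is $1$ because $(F,O)=1$), whose reducible fibers correspond exactly to the two $E_8$ summands, i.e.\ two fibers of type $\II^\ast$. The Shioda--Tate formula then forces the Mordell--Weil rank to be $0$, and an Euler-number count $24=10+10+4$ shows the remaining fibers are irreducible.

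Second, I would pass to the Weierstrass model of this fibration, with the two $\II^\ast$ fibers placed at $t=0$ and $t=\infty$. Such a pencil has an Inose normal form whose coefficients depend on two modular parameters, symmetric in two $j$-invariants $j_1,j_2$; these determine two elliptic curves $E_1,E_2$, read off by solving an explicit quadratic whose coefficients are the moduli of the pencil. This is the classical Inose pencil, and the explicit equations — valid in any characteristic $\neq 2$ — are carried out in the appendix.

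Third, I would invoke the explicit Shioda--Inose structure for the Inose pencil: there is a rational map of degree $2$ onto $\Km(E_1\times E_2)$ realizing $T_{\Km}\isom 2T_S$ (cf.\ Remark \ref{rem:shioda-inose-exists}). Because $\Km(E_1\times E_2)$ is a minimal surface of nonnegative Kodaira dimension, this rational map is in fact a morphism, and being generically $2{:}1$ it is a finite morphism $S\to\Km(E_1\times E_2)$ in the required direction. Finally, a finite morphism of K3 surfaces induces an injection $H^2(\Km(E_1\times E_2),\Q)\hookrightarrow H^2(S,\Q)$, so $\rho(\Km(E_1\times E_2))\leq\rho(S)=18$; since the Kummer of a product always has $\rho\geq 18$, we get equality, which is impossible if $E_1$ and $E_2$ are isogenous (then $\rho$ would be $19$ or $20$). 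Hence $E_1,E_2$ are nonisogenous, as required.

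The main obstacle is the third step: producing the degree-$2$ map to the Kummer surface algebraically and verifying it is finite. Over $\mathbf{C}$ this is Morrison's Hodge-theoretic Shioda--Inose structure, but to obtain an honest finite morphism valid in characteristic $\neq 2$ — and to identify $E_1,E_2$ effectively from the pencil — one must exhibit the Nikulin involution and its quotient by explicit formulas on the Inose Weierstrass model, rather than by transcendental arguments. This is precisely the computation relegated to the appendix, on which the proof rests.
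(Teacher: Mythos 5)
Your proposal reaches the right conclusion but by a genuinely more constructive route than the paper. The paper's proof works on the transcendental side: since $\Pic S$ is unimodular, $T_S$ is even unimodular of signature $(2,2)$, hence $\isom U^2$; Morrison's theorem (Remark \ref{rem:shioda-inose-exists}) then supplies a Shioda--Inose structure abstractly, and the codomain — a Kummer surface with transcendental lattice $U^2(2)$ — is identified as $\Km(E_1\times E_2)$ with $E_1,E_2$ nonisogenous via Proposition \ref{PropRecognizeKummer}. You instead work on the Picard side: $\Pic S\isom U\oplus E_8^{\oplus 2}$, Proposition \ref{PropExistenceOfEllFib} produces the elliptic fibration with two $\II^\ast$ fibers and trivial Mordell--Weil group, and you recognize this as the Inose pencil, reading off $E_1,E_2$ from its Weierstrass coefficients. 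Your route is longer but effective — it names the two elliptic curves, which is exactly what the paper needs later (\S\ref{sec:extremalrational} and the appendix carry out precisely this identification in the examples) — whereas the paper's argument is shorter and defers all explicit work. Your treatment of small characteristic (lift via Proposition \ref{PropLifting}, using that $\rho=18<22$ forces finite height) is if anything more careful than the paper's, though note that ``infinite height $\Rightarrow\rho=22$'' is a deep theorem, not a formality.

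The one step that is wrongly justified is the assertion that the degree-$2$ rational map $S\dashrightarrow\Km(E_1\times E_2)$ is a morphism ``because $\Km(E_1\times E_2)$ is minimal of nonnegative Kodaira dimension.'' That principle is valid for targets containing no rational curves (abelian varieties, say), but a Kummer surface contains many rational curves, and in fact the Shioda--Inose map has genuine indeterminacy at the $8$ fixed points of the Nikulin involution: each such point would have to map onto one of the exceptional $(-2)$-curves of the resolution $\Km(E_1\times E_2)\to S/\iota$. What one actually obtains is a finite degree-$2$ morphism $S\to S/\iota$ onto the singular Kummer model, equivalently a dominant degree-$2$ rational map to $\Km(E_1\times E_2)$ — which is all the paper's own proof produces and all that the cohomological applications require, so this is a defect of justification rather than of substance. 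Relatedly, your nonisogeny argument via $\rho(\Km(E_1\times E_2))\leq\rho(S)=18$ leans on that finiteness; it is safer (and immediate from data you have already invoked) to note that $T_{\Km(E_1\times E_2)}\isom 2T_S$ has rank $4$, so $\rho(\Km(E_1\times E_2))=18$ and $E_1,E_2$ cannot be isogenous.
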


\begin{proof} The transcendental lattice of $S$ is an even unimodular lattice
  of rank $4$ and signature $(2,2)$, so it is isometric to $U^2$.  By
  Remark~\ref{rem:shioda-inose-exists}, there is a Shioda-Inose structure on
  $S$.  The codomain of this isogeny has transcendental lattice $U^2(2)$,
  which as we have seen is that of the Picard lattice of $\Km(E_1 \times E_2)$
  for nonisogenous $E_1, E_2$.  The result now follows from
  Proposition~\ref{PropRecognizeKummer}.
\end{proof}

We need a further lattice-theoretic result.

\begin{prop}\label{prop:always-contained}
  Every even lattice $L$ of discriminant $-4^i$ and signature $(1,17)$ is
  contained in a unimodular lattice, necessarily $II_{1,17}=U \oplus E_8^{\oplus 2}$.  
\end{prop}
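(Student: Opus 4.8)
The plan is to induct on $i$, removing one index-$2$ even overlattice at a time. Since $\rank L = 18 = \rank II_{1,17}$, any inclusion of $L$ into an even unimodular lattice of signature $(1,17)$ is automatically of finite index, and the ambient lattice is forced to be $II_{1,17}$ by the uniqueness of indefinite even unimodular lattices of a given signature. So it suffices to produce an even unimodular overlattice $M \supseteq L$ of the same rank, and I would build $M$ in $i$ steps: at each stage I replace $L$ by an even overlattice $L_1$ with $[L_1:L]=2$, so that $\disc L_1 = \disc L / 4 = -4^{i-1}$ while $L_1$ still has signature $(1,17)$ (passing to a finite-index overlattice does not change $L\otimes\mathbf{R}$). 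After $i$ such steps the discriminant is $-1$, the lattice is unimodular, and hence isomorphic to $II_{1,17}$; the base case $i=0$ is exactly the statement that an even unimodular lattice of signature $(1,17)$ is $II_{1,17}$.

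Each step reduces to finding a nonzero element $x$ of order $2$ in the discriminant group $D(L)=L^\vee/L$ that is isotropic for the discriminant quadratic form $q_L\colon D(L)\to\Q/2\Z$: given such an $x$, the preimage $L_1=\{v\in L^\vee : \bar v\in\langle x\rangle\}$ is an even overlattice of index $2$, evenness being exactly the condition $q_L(x)=0$. Two invariants of $q_L$ are at my disposal. First, $D(L)$ is a $2$-group whose order $|\disc L| = 4^i = (2^i)^2$ is a perfect square. Second, the Gauss--Milgram formula identifies the signature invariant of $q_L$ with $\operatorname{sign}(L) = 1-17 = -16 \equiv 0 \pmod 8$.

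The crux---and the step I expect to be the main obstacle---is the purely lattice-theoretic assertion that a nonzero finite quadratic form on a $2$-group whose order is a perfect square and whose Gauss--Milgram signature vanishes modulo $8$ must possess a nonzero isotropic element of order $2$. I would prove this from the orthogonal decomposition of $q_L$ into the standard homogeneous blocks on $2$-groups ($A_{2^k}(\theta)$, $u_k$, $v_k$, in the notation of Wall and Nikulin). Every block other than $A_2(\pm1)$, $A_4(\cdot)$, and $v_1$ already carries an isotropic element of order $2$---for instance $2^{k-1}g$ in $A_{2^k}(\theta)$ for $k\geq 3$, the element $(2^{k-1},0)$ in $u_k$ or $v_k$ for $k\geq 2$, and $(1,0)$ in $u_1$---so if $D(L)$ has no such element it must be an orthogonal sum of copies of $A_2(\pm1)$, $A_4(\cdot)$, and $v_1$. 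A short case analysis then shows that avoiding cross-isotropy (e.g. $A_2(\epsilon)\oplus A_2(-\epsilon)$, $A_4\oplus A_4$, and $v_1\oplus v_1$ all acquire isotropic order-$2$ vectors) together with the square-order hypothesis forces the total order to be exactly $4$; and each surviving form---$A_2(\pm1)^{\oplus 2}$, $A_4(\cdot)$, or $v_1$---has signature invariant $\not\equiv 0 \pmod 8$. This contradicts $\operatorname{sign}(L)\equiv 0\pmod 8$, so the desired isotropic element of order $2$ exists and the inductive step goes through. The delicate point throughout is that the square-order hypothesis is genuinely needed: without it the anisotropic form $A_4(1)\oplus A_2(-1)$ of order $8$ has vanishing signature invariant, so the two numerical inputs must be used in tandem.
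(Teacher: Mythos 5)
Your proof is correct, and it follows the same overall inductive strategy as the paper: reduce the discriminant by a factor of $4$ at each step by adjoining to $L$ an element $x$ of order $2$ in $D(L)=L^\vee/L$ that is isotropic for the discriminant quadratic form, then conclude by the uniqueness of indefinite even unimodular lattices. Where you diverge is in how you prove that such an $x$ exists. The paper runs a bare-hands case analysis on the structure of $D(L)$ (using that $\dim_{\F_2} D(L)[2]$ has the same parity as $\rank L$), and in the residual case $D(L)\isom(\Z/2\Z)^2$ it resorts to embedding $L$ primitively into the K3 lattice and then into $U^3$, invoking Morrison's results to pin down $L$ as $M_i\oplus E_8^{\oplus 2}$ explicitly and exhibit a vector divisible by $2$. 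You instead invoke the Wall--Nikulin block decomposition of finite quadratic forms on $2$-groups together with the Gauss--Milgram formula, reducing everything to the observation that the only anisotropic-in-order-$2$ forms of square order are trivial or one of $A_2(\epsilon)^{\oplus 2}$, $A_4(\theta)$, $v_1$, none of which has signature $\equiv 0\pmod 8$. (I checked your case analysis; it is right, including the cross-isotropy claims and the necessity of the square-order hypothesis, which your example $A_4(1)\oplus A_2(-1)$ cleanly demonstrates.) Your route is more uniform and arguably cleaner: it dispenses entirely with the K3-lattice embedding, isolates exactly which numerical inputs are used (square $2$-power discriminant and signature divisible by $8$), and proves a statement independent of the rank being $18$. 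The cost is that you must cite the classification of finite quadratic forms on $2$-groups into standard blocks, whereas the paper's Cases 1--3 need nothing beyond elementary manipulation; the paper pays for that economy only in Case 4, where it reaches for heavier lattice-embedding machinery.
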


%% \begin{rmk} Probably the discriminant and signature determine
%%   a unique maximal lattice in general, but we only need this specific case.
%% \end{rmk}

\begin{proof} We will show that if $i\geq 1$ then $L$ is contained in a lattice of discriminant $-4^{i-1}$, and then the result follows by induction. The idea is that if $x\in D(L)=L^\vee/L$ is an element of order 2 satisfying $(x,x)\in 2\Z$ (i.e., $x$ is isotropic in $D(L)[2]$), then $L'=L+\class{x}$ is again an even lattice, and $\disc L'=(\disc L)/4$. 

We need the fact that (for general lattices $L$) the $\F_2$-dimension of $D(L)[2]$ has the same parity as the rank of $L$, which in our case is even.  (Proof:  Use the classification of lattices over $\Z_2$ to reduce to the case that the quadratic form is either $x^2$ or $2^r(ax^2+bxy+cy^2)$ with unit discriminant where $a,c\in\Z_2$ and $b\in\Z_2^\times$.)   As a result the following four cases are exhaustive.

  \begin{itemize}
  \item{Case 1: $D(L)$ contains an element $a$ of order $2^i$, where $i>2$.}  We have $(a,2^i a)\in \Z$, and therefore $(a,a)\in 2^{-i}\Z$.  Now let $x=2^{i-1}a$;  then $x$ has order $2$ in $D(L)$ and $(x,x)=2^{2i-2}(a,a)\in 2^{i-2}\Z\subset 2\Z$ as desired.

  \item{Case 2: $D(L)$ contains two independent elements $a,b$ of
    order $4$.}  Let these be $a, b$.  We then have $4(a,a),4(b,b),4(a,b)\in \Z$.  
    If $4(a,a)$ and $4(b,b)$ are both odd, then
    $(2a+2b,2a+2b) = 4(a,a) + 8(a,b) + 4(b,b)$ is even.
    Thus at least one of $2a, 2a+2b, 2b$ is isotropic in $D(L)[2]$.
    
  \item{Case 3: $D(L)[2]$ contains $(\Z/2\Z)^{\oplus 4}$.}  Let $S$ be the subset of $D(L)[2]$
    consisting of $x$ with $(x,x) \in \Z$;  it is easy to see this is a subgroup.  If $x,y\in D(L)[2]$ do not lie in $S$, each of $(x,x),(y,y)\in \Z+\half$, and then $(x+y,x+y)=(x,x)+2(x,y)+(y,y)\in \Z$.  This shows that $S$ has index at most 2 in $D(L)[2]$, so that $S$ contains $(\Z/2\Z)^{\oplus 3}$.   Let $s_1, s_2, s_3\in S$ be independent elements.
    If any $s_i$ satisfies $(s_i,s_i) \in 2\Z$, it is isotropic.
    If $(s_i,s_j) \in \Z$ for any $i \ne j$, then $(s_i+s_j,s_i+s_j) \in 2\Z$.
    If not, then $(s_i,s_i) \in 1+2\Z$ for all $i$ and $(s_i,s_j) \in 1/2+\Z$
    for all $i \ne j$, so $(s_1+s_2+s_3,s_1+s_2+s_3) \in 2\Z$.  
  
  \item{Case 4: $D(L)\isom (\Z/2\Z)^2$.}  Let $a_1,a_2,a_3$ be the nonzero elements of $D(L)$.  Assume that none of these are isotropic.
  %The possible values of $2(a_i,a_i)$ modulo 4 are $(1,1,2),(3,3,2),(1,3,2)$.
      By \cite[Theorem 2.8]{morr} we can embed $L$ into the K3 lattice
      $U^3 + E_8^2$, and the complement is a lattice of signature $(2,2)$ and
      discriminant $4$.  By \cite[Theorem 2.8]{morr} again we can embed
      this into $U^3$, with complement of signature $(1,1)$ and discriminant
      $-4$.  This lattice has the same discriminant form as $L$, by
      applying \cite[Lemma 2.4]{morr} twice.  On the other hand, there are
      only two such lattices $L_1, L_2$, with Gram matrices
      $M_1 = \begin{pmatrix} 0&2\\2&2 \end{pmatrix}$ and
      $M_2 = \begin{pmatrix} 0&2\\2&0 \end{pmatrix}$
      (proof: let $x, y$ be a basis.
      If $(x,x)(y,y) < 0$ then we must have $(x,x) = 2, (y,y) = -2, (x,y) = 0$,
      or the same with $x, y$ switched, and that is the first case.  If not,
      we can take $0 \le (x,x) \le (y,y)$ and $(x,y) \ge 0$, and either we
      are in one of the cases above or we can decrease $(x,x) + (y,y)$ by
      replacing $y$ by $y \pm x$). 
       So $L$, being determined
      by its invariants (\cite[Theorem 2.2]{morr}), is the direct sum
      $M_i \oplus  E_8^{\oplus 2}$ for $i = 1$ or $2$.  In both cases there is a vector in
      $L$ that can be divided by $2$, namely $(x,0)$.
  %% Assuming that no value
  %% is $0$, the only possibilities for the values of $q$ multiplied by $4$ are
  %%   $(1,1,2), (3,3,2), (1,3,2)$.  The first or second would mean that the
  %%   orthogonal complement to our lattice in $U^3 + E_8^2$ is isomorphic to
  %%   $\pm A_1^2 + U$, which is impossible because it has the wrong real
  %%   signature.  In the third case, say we have
  %%   $q(a) = 1/2, q(b) = 3/2,  q(a+b) = 1$.  Then $(2a)^2 \equiv 2 \bmod 8$,
  %%   $(2b)^2 \equiv 6 \bmod 8, (2a+2b)^2 \equiv 4 \bmod 8$, so $(2a,2b)$ is
  %%   even.  But then $(a+b)/2$ also belongs to the discriminant group,
  %%   contradiction.  (This sort of thing does happen for a certain
  %%   even lattice of rank $4$ and discriminant $8$.)
  \end{itemize}
\end{proof}

We have reached the main theorem of this section.

\begin{thm}\label{thm:always-map} Let $\Lambda$ be the Picard lattice of (any) Kummer surface of the form $\Km(E_1\times E_2)$, where $E_1,E_2$ are nonisogenous elliptic curves.  Let $S$ be a K3 surface.  Assume there is an isometry $\Pic S\otimes\Q\isom  \Lambda\otimes\Q$.  Then there exists a finite morphism $S\to\Km(E_1\times E_2)$ for nonisogenous elliptic curves $E_1,E_2$.
\end{thm}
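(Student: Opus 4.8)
The plan is to realize the finite morphism $S\to\Km(E_1\times E_2)$ as a composite of finitely many isogenies of K3 surfaces, each of which strips a square factor from the discriminant of the Picard lattice, until the Picard lattice becomes unimodular, at which point Proposition \ref{prop:to-kum} applies directly. I first record the numerical constraints. Since $S$ is K3, $\Pic S$ is even of signature $(1,\rho(S)-1)$; the hypothesised isometry $\Pic S\otimes\Q\isom\Lambda\otimes\Q$ then forces $\rho(S)=18$, signature $(1,17)$, and — because $\disc\Lambda=-16\equiv-1$ modulo squares — $\disc\Pic S=-m^2$ for some positive integer $m$. Since $\rho(S)=18<22$, $S$ is not supersingular and has finite height, so Proposition \ref{PropLifting} lets me reduce to the case of characteristic $0$, where the characteristic hypotheses of the propositions below are automatic. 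Every isogeny used below preserves the rational Picard lattice, so the rational isometry with $\Lambda\otimes\Q$ persists at every stage.

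I first clear the odd part of $m$. For an odd prime $p\mid m$, the lattice $\Lambda\otimes\Z_p$ is unimodular, since $\disc\Lambda=-16$ is a $p$-adic unit; hence $\Pic S\otimes\Q_p\isom\Lambda\otimes\Q_p$ is isometric to a unimodular $\Z_p$-lattice, and Corollary \ref{cor:drop-p} supplies a finite map from $S$ to a K3 surface with the whole $p$-part of the discriminant removed. Because the rational Picard class is unchanged, I repeat this for each odd prime dividing $m$, arriving at a K3 surface $S'$ whose even, signature-$(1,17)$ Picard lattice has discriminant $-4^i$ for some $i\ge 0$.

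The core of the argument is the $2$-adic reduction, and this is the step I expect to be the main obstacle. I induct on $i$. When $i\ge 1$, the key internal step in the proof of Proposition \ref{prop:always-contained} produces an isotropic element $x\in D(\Pic S')[2]$ — one with $(x,x)\in 2\Z$ — so that $L''=\Pic S'+\class{x}$ is an \emph{even} index-$2$ overlattice of discriminant $-4^{i-1}$. Writing $D=2\tilde x$ for a lift $\tilde x\in(\Pic S')^\vee$ of $x$, the isotropy condition $(x,x)\in 2\Z$ is exactly the evenness hypothesis $8\mid(D,D)$ demanded by the $p=2$ case of Proposition \ref{prop:genus-1-exists}, while the remaining hypotheses ($2\nmid D$ in $\Pic S'$ and $2\mid(D,y)$ for all $y$) hold automatically from $\tilde x\in(\Pic S')^\vee$. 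Thus Proposition \ref{prop:genus-1-exists} yields a genus $1$ fibration whose $2$-Jacobian $S''$ satisfies $\Pic S''\isom L''$, together with a finite map $S'\to S''$. Iterating drives the discriminant down to $-4^0=-1$.

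Finally, the surface reached has rank $18$ and discriminant $-1$, so Proposition \ref{prop:to-kum} provides a finite map from it to a Kummer surface $\Km(E_1\times E_2)$ with $E_1,E_2$ nonisogenous. Composing this with all of the preceding finite maps gives the desired finite morphism $S\to\Km(E_1\times E_2)$. The delicate point throughout is the prime $2$: one must guarantee at each stage that the overlattice chosen to lower the discriminant is even, so that the $p=2$ case of Proposition \ref{prop:genus-1-exists}, and hence the passage to a genuine K3 Jacobian, applies. This is precisely what the isotropic-vector analysis of Proposition \ref{prop:always-contained} secures, which is why that lattice-theoretic input, rather than the odd-prime reduction, is the crux of the proof.
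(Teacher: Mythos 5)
Your proof is correct and follows essentially the same route as the paper: clear the odd part of the discriminant with Corollary \ref{cor:drop-p}, then reduce the power of $4$ step by step using the isotropic-vector argument of Proposition \ref{prop:always-contained} together with Proposition \ref{prop:genus-1-exists}, and finish with Proposition \ref{prop:to-kum}. The only (immaterial) difference is that you interleave each index-$2$ overlattice with its geometric realization, whereas the paper first builds the full chain $L_0\subset\cdots\subset L_m=II_{1,17}$ and then applies Proposition \ref{prop:genus-1-exists} along it.
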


\begin{proof} Since $\Lambda$ has discriminant $-16$, the hypothesis implies that $\abs{\disc \Pic S}$ is a square.  We apply
  Corollary \ref{cor:drop-p} to all odd primes dividing $\disc \Pic S$ to obtain a finite morphism from $S\to S'$, where $S'$ is a K3 surface with $\disc \Pic S'=-4^n$.  By Proposition \ref{prop:always-contained}, there is an embedding of $L'=\Pic S'$ into $L''$, where $L''=II_{1,17}$ is the even unimodular lattice of signature $(1,17)$.  We can factor this embedding as $L'=L_0\subset L_1\subset\cdots\subset L_m=L''$, with $L_i/L_{i+1}\isom \Z/2\Z$ for each $i$.  Successive applications of  \ref{prop:genus-1-exists} give a finite morphism $S'\to S''$, where $\Pic S''\isom L''$.  Finally, by Proposition \ref{prop:to-kum} there is a finite morphism from $S''$ to a Kummer surface of the form $\Km(E_1\times E_2)$.
\end{proof}

We conclude this section with some remarks on potential extensions of Theorem \ref{thm:always-map} to the case of more general Kummer surfaces.  Recall that if $A$ is an abelian surface, the Picard rank of the Kummer surface $\Km(A)$ equals $16+\rank \NS(A)$.  In characteristic 0, the rank of $\Km(A)$ takes one of the values 17, 18, 19, 20.   The cases of rank 19, 20 are easy to deal with.

\begin{rmk}
  If $\rank \Pic S> 18$, then a Shioda-Inose structure on $S$ always exists
  \cite[Theorem 6.3, Corollary 6.4]{morr},
  so there is a map of degree $2$ from $S$ to a Kummer surface.
  %% so we could always apply the corresponding isogeny
  %% repeatedly to force the Picard lattice of some quotient of $X$ to be
  %% contained in that of $A$.
  Thus the analogue of Theorem \ref{thm:always-map} is true for such~$S$.

%  On the other hand, the situation in rank $17$, and cases of rank
%  $18$ where the discriminant is not a power of $2$,
%  is less clear.  The first of these would be relevant to the problem
%  of generalizing these results to rational elliptic surfaces of rank $1$.
%  A generic base change of degree $2$ would produce a K3 surface of Picard
%  rank $17$, and we would be interested in knowing whether it is isogenous
%  to a Kummer surface.
\end{rmk}

We now turn to the case of rank 18.  We pose the question of whether a K3 surface $S$ of rank 18 should admit a finite morphism to a Kummer surface $\Km(A)$.  Here, $A$ would have to be an abelian surface admitting endomorphisms by an order $\OO$ in a real (and possibly split) quadratic extension of $\Q$.   Then $\disc \Pic \Km(A)=-16\disc \OO$.  We have already treated the split case $\OO=\Z\times \Z$, which corresponds to the case that $A$ is a product of elliptic curves.   Our methods can also treat the case of $\OO=\Z[\sqrt{2}]$.  

\begin{prop}  Let $S$ be a K3 surface.  Let $\Lambda$ be the Picard lattice of (any) Kummer surface of the form $\Km(A)$, where $A$ is an abelian surface with $\End A = \Z[\sqrt{2}]$.   Assume there is an isometry $\Pic S\otimes\Q\isom \Lambda\otimes\Q$.  Then there exists a finite morphism $S\to\Km(A)$ for an abelian surface $A$ of this sort.
\end{prop}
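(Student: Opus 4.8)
The plan is to follow the proof of Theorem~\ref{thm:always-map} almost line by line, replacing the unimodular target $II_{1,17}$ by a standard even lattice of discriminant $-8$. Set $\OO=\Z[\sqrt2]$, so $\disc\OO=8$ and, by the formula $\disc\Pic\Km(A)=-16\disc\OO$, the target lattice $\Lambda$ has discriminant $-128$; since $\Km(A)$ has rank $18$, its transcendental partner $T_A$ has rank $4$, signature $(2,2)$ and discriminant $8$, with $T_{\Km(A)}\isom T_A(2)$. The rational isometry $\Pic S\otimes\Q\isom\Lambda\otimes\Q$ forces $\disc\Pic S\equiv-2\pmod{(\Q^\times)^2}$, and for every odd prime $p$ the lattice $\Lambda\otimes\Z_p$ is unimodular (as $p\nmid128$), so $\Pic S\otimes\Q_p$ is rationally isometric to a unimodular $\Z_p$-lattice and Corollary~\ref{cor:drop-p} applies. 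Stripping all odd primes yields a finite morphism $S\to S'$ with $\disc\Pic S'=-2^{a}$ for some odd $a$. I first observe that $a\geq3$: if $a=1$ then $T_{S'}$ would be an even lattice of rank $4$ and discriminant $2$, and since $\Pic S'\otimes\Q\isom\Lambda\otimes\Q$ is that of a Kummer surface, Remark~\ref{rem:shioda-inose-exists} would supply a Shioda--Inose structure $S'\to\Km(B')$ with $T_{B'}\isom T_{S'}$; but $\NS(B')$ is an even binary lattice, whose discriminant is $\equiv0,3\pmod4$, never $\pm2$, so no such $B'$ exists.

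Next I would prove the analogue of Proposition~\ref{prop:always-contained}: every even lattice of signature $(1,17)$ and discriminant $-8\cdot4^{i}$ whose $2$-adic discriminant form agrees with that of $T_A(2)$ is contained in a lattice isometric to a fixed standard lattice $\Lambda_0$ of discriminant $-8$. As in the proof of Proposition~\ref{prop:always-contained}, one repeatedly locates an isotropic element of order $2$ in $D(L)[2]$ (using that the $\F_2$-rank of $D(L)[2]$ has the parity of $\rank L$) and passes to an even index-$2$ overlattice, each step removing a factor of $4$ from the discriminant; the new feature is that the process halts at discriminant $-8$, the residual $2$-torsion recording the real multiplication. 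Because $\Lambda_0$ is indefinite of rank $18$, it is the unique class in its genus by Eichler's theorem. Each index-$2$ extension $L\subset L'$ is then realized geometrically by Proposition~\ref{prop:genus-1-exists}: it corresponds to a genus $1$ fibration whose $2$-Jacobian has Picard lattice $L'$, giving a finite morphism. Iterating produces a finite morphism $S'\to S''$ with $\Pic S''\isom\Lambda_0$, hence $\disc\Pic S''=-8$ and $T_{S''}\isom T_A$.

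To conclude I would run the analogue of Proposition~\ref{prop:to-kum}. Since $\Pic S''\otimes\Q\isom\Lambda_0\otimes\Q$ is that of a Kummer surface (equivalently $T_A$ embeds primitively into $U^3$, being the transcendental lattice of the abelian surface $A$), Remark~\ref{rem:shioda-inose-exists} provides a degree $2$ map $S''\to\Km(B)$ onto a Kummer surface with $T_{\Km(B)}\isom T_{S''}(2)\isom T_A(2)$, so that $T_B\isom T_A$. It then remains to verify $\End B=\Z[\sqrt2]$, the analogue of Proposition~\ref{PropRecognizeKummer}: the real multiplication on $A$ is encoded by a self-adjoint endomorphism $\phi$ of $T_A\otimes\Q$ with $\phi^2=2$, and through the isometry $T_B\isom T_A$ together with the compatible Hodge structures this operator is transported to $T_B$, forcing $\rank\NS(B)=2$ and $\End B$ to be the order of discriminant $8$, namely $\Z[\sqrt2]$. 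Composing $S\to S'\to S''\to\Km(B)$ gives the required finite morphism.

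The main obstacle is the lattice-theoretic input of the second paragraph: formulating and proving the correct analogue of Proposition~\ref{prop:always-contained}, in particular pinning down the single genus of signature $(1,17)$ even lattices that reduces, under even index-$2$ extensions, to a standard lattice of discriminant exactly $-8$ rather than continuing past it. A secondary difficulty is the final identification of $\End B$ with $\Z[\sqrt2]$ rather than a larger or split order: this requires tracking the Hodge-theoretic real multiplication through the Shioda--Inose correspondence, which is precisely the effective content that Proposition~\ref{pro:Mukai} and Mukai's theorem leave implicit.
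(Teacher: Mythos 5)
Your strategy coincides with the paper's: strip the odd part of the discriminant using Corollary \ref{cor:drop-p}, push the $2$-part down to $-8$ through even index-$2$ overlattices realized geometrically by Proposition \ref{prop:genus-1-exists}, and finish with a Shioda--Inose structure. The two points you flag as the main obstacles are exactly where the paper's proof does its work, and both are closed by short concrete computations rather than by the general machinery you sketch. For the descent to discriminant $-8$: when $\disc\Pic S=-2^{2i+1}$ with $i\geq 2$, the group $D(\Pic S)$ has order at least $32$, so one of Cases 1--3 in the proof of Proposition \ref{prop:always-contained} always applies (Case 4, the only one that can fail to produce an isotropic element directly, requires $D(L)\isom(\Z/2\Z)^{2}$); no separate analogue of Proposition \ref{prop:always-contained} carrying a discriminant-form hypothesis is needed, and the process halts at $-8$ automatically because no even lattice of signature $(1,17)$ has discriminant $-2$. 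At discriminant $-8$ the paper then invokes \cite[Chapter 15]{SPLAG} to see that there is a \emph{unique} isomorphism class of even lattices of signature $(1,17)$ and discriminant $-8$, namely $D_9\oplus E_7\oplus U$. Your appeal to Eichler's theorem only gives uniqueness within a genus; it does not by itself identify the lattice or verify that its discriminant form is the one compatible with $\Lambda$, which is the gap you correctly sense in your second paragraph.

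For the identification of the endomorphism ring, the paper avoids transporting a Hodge-theoretic real-multiplication operator through the Shioda--Inose correspondence. Once $\Pic S\isom D_9\oplus E_7\oplus U$ is known, a Hasse--Minkowski computation forces $T_S\isom U\oplus\langle-2\rangle\oplus\langle 4\rangle$, so the abelian surface $A$ supplied by \cite[Corollary 6.2]{morr} has $\NS(A)=T_A^{\perp}\subset U^{\oplus 3}$ isometric to $\langle 2\rangle\oplus\langle-4\rangle$; the vector of length $2$ is a principal polarization, and $\NS(A)$ is then identified with the quadratic order of discriminant $8$, i.e.\ $\Z[\sqrt 2]$. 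Your exclusion of the case $\disc\Pic S'=-2$ via the nonexistence of an even binary lattice of discriminant $\pm 2$ for $\NS(B')$ is correct, just more roundabout than the paper's direct lattice-theoretic observation.
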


\begin{proof} (Sketch.)  As in Corollary \ref{cor:drop-p}, we may remove all odd primes from the discriminant of $\Pic S$.  Therefore assume that $\disc \Pic S=-2^{2i+1}$ for some $i\geq 0$.  An even lattice of rank $>1$ may not have discriminant $\pm 2$, so in fact we may assume $i\geq 1$.  

We claim that $\Pic S$ is contained in a lattice of discriminant $-8$.   We argue as in the proof of Proposition \ref{prop:always-contained}:  referring to that proof, if $i\geq 2$ then one of the first three cases always holds and we can always find an overlattice of $\Pic S$ of index 2.   Therefore let us assume that $i=1$, so that $\disc \Pic S = -8$.   Using the results of \cite[Chapter 15]{SPLAG}, we can confirm that there is only one isomorphism class of even lattices with discriminant $-8$ and signature $(1,17)$, namely $D_9\oplus E_7\oplus U$.  We can now assume that $\Pic S$ is isomorphic to this lattice.

Invoking a computation with Hasse-Minkowski invariants, this forces the transcendental lattice of $S$ to be isomorphic to $U \oplus \langle -2 \rangle \oplus \langle 4 \rangle$.  (This is also a consequence of \cite[Proposition 7]{ElkiesKumar}.) Thus by \cite[Corollary 6.2]{morr} there is a Shioda-Inose structure $S\to S'$,  where $S'=\Km(A)$ is the Kummer surface of an $A$ satisfying $T_A\isom U\oplus \langle -2 \rangle \oplus \langle 4 \rangle$.   The N\'eron-Severi lattice  $\NS(A)$ of $A$, being the complement of $T_A$ in $H^2(A,\Z)\isom U^{\oplus 3}$, is isomorphic to $\langle 2 \rangle \oplus \langle -4 \rangle$.  The vector of length 2 in $\NS(A)$ is a principal polarization on $A$, and $\End A=\NS(A)$ is the quadratic order of discriminant $-8$, namely $\Z[\sqrt{-2}]$.

% still not sure how \cite{ElkiesKumar} applies
%  from \cite{ElkiesKumar}, in particular Theorem 9 and Section 7,  $A$ with real multiplication by $\Z[\sqrt{2}]$.  
%The Hasse-Minkowski invariant of $\Pic S$ at all odd primes $p$ is $1$, because $p \nmid \disc \Pic S$, and at
%  $\infty$ it is $1$.  By the product formula, the invariant at $2$ is also
%  $1$.  The discriminant up to squares and the Hasse-Minkowski
%  invariants determine a quadratic form up to rational equivalence, so $\Pic S\otimes \Q$ 
%  $\Q$-equivalent to $L_0$.

%I'm sorry but I didn't understand this paragraph -JW
%  The discriminant group has an even number of generators; this cannot
%  be $0$ or greater than $3$, so it is $2$, and the discriminant group is
%  $\Z/2\Z \oplus \Z/4\Z$.  Thus over $\Z_2$ the form decomposes as a
%  direct sum of an even unimodular form of rank $16$ and forms
%  $ax^2, by^2$, where $|a|_2 = 1$ and $|b|_2$ = 2.  It is routine using
%  the results of \cite[Chapter 15]{SPLAG} to show that in fact there is
%  only one possible set of $2$-adic invariants and therefore only one
%  possible isomorphism class.  
\end{proof}

\begin{rmk}\label{rem:change-signs}
  It is easy to embed $D_9 \oplus E_7 \oplus U$ into the K3 lattice
  $E_8^{\oplus 2} \oplus U^{\oplus 3}$ with complement either
  $U \oplus \langle -2 \rangle \oplus \langle 4 \rangle$ or
  $U \oplus \langle 2 \rangle \oplus \langle -4 \rangle$.  However,
  these need not be distinguished.  Indeed,
  let $L$ be the lattice $\langle 2 \rangle \oplus \langle -4 \rangle$
  and let $x,y$ be the given basis.  Then $L \isom -L$, as one sees by
  changing to the basis $(x+y, 2x+y)$.  (This is a manifestation of the
  fact that $\Z[\sqrt{2}]$ has a unit of norm $-1$.)  
\end{rmk}

%The cases where the Picard number is greater than $18$ are not hard to
%deal with.  If $A$ is supersingular then we are in characteristic $2$ and it is
%unirational.  
%(This was proved by Rudakov and Shafarevich
%long before the incorrect proof of the general conjecture---which I don't
%really believe, but that is not the issue here.)
%If not, then by Corollary \ref{cor:drop-p} we find a map $X \to X'$,
%where $\disc \Pic X'$ is a power of $2$ and $\Pic X' \otimes \Q$
%is still isomorphic to $\Pic A \otimes \Q$.  
%We can assume that the field is $\mathbf{C}$.
%Again, if the Picard number is $19$ or $20$, we can always repeatedly use the
%Shioda-Inose structure on $X'$ to obtain a new K3 surface $X'$ with
%$\Pic X'$ a sublattice of $\Pic A$ and proceed as before.

Finally we turn to the case of Picard rank 17.

\begin{prop} Let $S$ be a K3 surface.  Let $\Lambda$ be the Picard lattice of (any) Kummer surface of the form $\Km(A)$, where $A$ is an abelian surface with $\End A = \Z$.   Assume there is an isometry $n\Pic S\otimes\Q\isom \Lambda\otimes\Q$, where $n \in \{1,2\}$.  Then there exists a finite morphism $S\to\Km(A)$ for an abelian surface $A$ of this sort.
\end{prop}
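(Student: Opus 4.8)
The plan is to run the same machine as in Theorem~\ref{thm:always-map} and in the preceding $\Z[\sqrt{2}]$ proposition: use the discriminant-reducing operations on elliptic fibrations (Corollary~\ref{cor:drop-p} and Proposition~\ref{prop:genus-1-exists}) to move $S$, by finite-degree maps, into the rational isometry class of a Kummer surface, and then finish with a Shioda-Inose structure. I interpret $n\Pic S$ as the rescaled lattice $(\Pic S)(n)$, and I assume $\car k\neq 2$ and that $S$ has finite height (or $\car k=0$), so that Propositions~\ref{PropLifting} and Corollary~\ref{cor:drop-p} apply; note $\rho(S)=17\geq 13$.

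First I do the discriminant bookkeeping. Taking $A_0$ principally polarized with $\End A_0=\Z$, so $\NS(A_0)=\class{2}$, one finds $T_{A_0}\isom U^{\oplus 2}\oplus\class{-2}$ and hence $T_{\Km(A_0)}\isom T_{A_0}(2)$ has discriminant $-64$; since $\Pic\Km(A_0)$ is its complement in the K3 lattice and has signature $(1,16)$, its discriminant is $+64$, a square. The hypothesis $n(\Pic S)\otimes\Q\isom\Lambda\otimes\Q$ then says that the square class of $\disc\Pic S$ is $1$ when $n=1$ and $2$ when $n=2$; equivalently, the $2$-adic valuation of $\disc\Pic S$ is even, resp.\ odd. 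Next I strip the odd primes: at each odd $p$ the rational isometry exhibits $\Pic S\otimes\Q_p$ as a unit rescaling of $\Lambda\otimes\Q_p$, which is $\Z_p$-unimodular because $\disc\Lambda$ is a power of $2$, so Corollary~\ref{cor:drop-p} applies and removes the $p$-part of the discriminant by a finite-degree map. Iterating over all odd $p$ reduces us to a K3 surface with $\disc\Pic S=+2^{k}$, where $k$ is even if $n=1$ and odd if $n=2$.

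Now I reduce the power of $2$, exactly as in the proof of Proposition~\ref{prop:always-contained}: I repeatedly find an isotropic element of order $2$ in $D(\Pic S)$, pass to the corresponding even overlattice, and realize this passage by a degree-$4$ map via Proposition~\ref{prop:genus-1-exists}. Each such step divides $|\disc|$ by $4$ and so preserves the parity of the $2$-adic valuation. Since $\rho(S)=17$ is odd, the $\F_2$-dimension of $D(\Pic S)[2]$ is odd, and the case analysis of Proposition~\ref{prop:always-contained} allows these reductions to be carried out until that dimension drops to $1$; a short computation with the discriminant form then shows the process terminates precisely at a surface $S''$ with cyclic discriminant group, namely $D(\Pic S'')\isom\Z/4$ (if $k$ is even) or $\Z/2$ (if $k$ is odd), i.e.\ $\disc\Pic S''=+4$ or $+2$. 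Finally I apply Shioda-Inose. The transcendental lattice $T_{S''}$ has rank $5$, signature $(2,3)$, and cyclic discriminant group $\Z/2m$ with $m\in\{1,2\}$, so it admits a primitive embedding into $U^{\oplus 3}$ with rank-$1$ orthogonal complement $\class{2m}$. By Remark~\ref{rem:shioda-inose-exists} and \cite[Corollary 6.2]{morr} there is a Shioda-Inose structure, a degree-$2$ map $S''\to\Km(A)$ with $T_A\isom T_{S''}$. Then $\NS(A)=T_A^{\perp}\subset H^2(A,\Z)\isom U^{\oplus 3}$ has rank $1$, so $\rho(A)=1$ and hence $\End A=\Z$ automatically. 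Composing all the maps gives the desired finite morphism $S\to\Km(A)$.

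The main obstacle is the middle step: confirming that the overlattice reductions can always be performed down to a cyclic discriminant group of order exactly $2$ or $4$. This is where the odd rank genuinely matters, since (unlike the rank-$18$ setting) there is no even unimodular lattice of signature $(1,16)$ to serve as a target, so one must control which terminal $2$-adic symbols can occur and recheck the case analysis of Proposition~\ref{prop:always-contained} in the odd-rank situation. Once the terminal cyclic discriminant group is established, the remaining structural point is essentially free: primitive embeddability of the rank-$5$ lattice $T_{S''}$ in $U^{\oplus 3}$ forces $\NS(A)$ to have rank $1$, and this is exactly what guarantees $\End A=\Z$ without any further identification of $A$.
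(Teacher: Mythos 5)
Your overall architecture --- strip the odd part of the discriminant with Corollary~\ref{cor:drop-p}, reduce the $2$-part by passing to even overlattices via Proposition~\ref{prop:genus-1-exists}, and finish with a Shioda--Inose structure --- is the same as the paper's, but you diverge at the terminal discriminant-$4$ case, and that is where your first real gap sits. You claim that a rank-$5$ even lattice of signature $(2,3)$ with cyclic discriminant group $\Z/2m$ ``admits a primitive embedding into $U^{\oplus 3}$ with rank-$1$ orthogonal complement $\langle 2m\rangle$.'' Cyclicity alone does not give this: the complement is forced to be positive definite of rank $1$, so by Nikulin the discriminant form of $T_{S''}$ must equal $-q_{\langle 2m\rangle}$, i.e.\ take the value $-1/(2m)$ in $\Q/2\Z$ on a generator; on $\Z/4$ there are four inequivalent nondegenerate forms and only one of them qualifies. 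The embedding does exist here --- the unique even lattice of signature $(1,16)$ and discriminant $4$ is $E_8^{\oplus 2}\oplus\langle 4\rangle$, whence $T_{S''}\isom U^{\oplus 2}\oplus\langle -4\rangle$ --- but that identification is exactly what has to be proved, and you assert it. The paper avoids the issue altogether: in the discriminant-$4$ case it identifies $\Pic S\isom L\oplus U$ with root sublattice $A_3\oplus D_{12}$, uses Proposition~\ref{PropExistenceOfEllFib} to produce an elliptic fibration with a $2$-torsion section, and quotients by that section; the resulting $2$-isogeny rescales $T_S$ by $2$ rationally, moves the surface into the other branch of the $n$-dichotomy, and reduces everything to the discriminant-$2$ case, where $T\isom U^{\oplus 2}\oplus\langle -2\rangle$ is immediate and \cite[Corollary 6.4 (iii)]{morr} applies. (Separately, $\rho(A)=1$ does not force $\End A=\Z$: a simple abelian surface with multiplication by an imaginary quadratic field also has Picard number $1$. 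The paper's sketch is equally silent on this, so I only flag it.)

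The step you yourself call ``the main obstacle'' is also a genuine gap, not a formality. In odd rank the overlattice reduction does not automatically terminate at $|\disc|\in\{2,4\}$: for instance the discriminant form of $\langle -2\rangle\oplus D_4$ has order $8$ and no nonzero isotropic element (the $\Z/2$ generator has $q=-1/2$ and every nonzero element of $D(D_4)$ has $q=1$, so all nonzero values lie in $\{1,\pm 1/2\}$), so a Picard lattice whose $2$-adic discriminant form looked like this would be stuck at discriminant $8$. Excluding such configurations requires feeding the rational-isometry hypothesis (Hasse invariants at $2$) and/or the primitive embedding into the K3 lattice back into the case analysis, in the spirit of Case 4 of Proposition~\ref{prop:always-contained} --- an argument you have not supplied. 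Until both of these points are filled in, the proof is incomplete.
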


\begin{proof} (Sketch.)  Again we apply the techniques of Corollary \ref{cor:drop-p} and \ref{prop:always-contained} to assume that $\disc \Pic S$ is either $-2$ or $-4$.  In these two cases we must have $n = 1$ or $2$ respectively.
  In both cases, the even lattice $\Pic S$ is uniquely determined by its discriminant and signature.

In the case of discriminant $-2$, we have $\Pic S\isom E_8\oplus E_7\oplus U$.   This lattice is rationally isometric to $\Lambda$. The orthogonal complement of $\Pic S$ in $H^2(S,\Z)$ is isomorphic to $A_1\oplus U^{\oplus 2}$.  The existence of a Shioda-Inose structure
  now follows from \cite[Corollary 6.4 (iii)]{morr}.
  
  In the case of discriminant $-4$, $\Pic S\isom L\oplus U$, where $L$ is a lattice containing its root sublattice $L_0$ with index 2, and $L_0\isom A_3\oplus D_{12}$.  Therefore by Proposition \ref{PropExistenceOfEllFib}, there exists an elliptic fibration on $S$ with 2-torsion section.  Let $S\to S'$ be the quotient by the 2-torsion section.  The induced map $T_{S'}\to T_S$ is not generally a rational isometry, but rather $T_{S'}$ is rationally isometric to $2T_{S}$:  see \cite[\S 2.4]{BSV}.  Since the rank of $T_S$ is odd,  
the discriminants of $T_S$ and $T_{S'}$ differ by twice the square of a rational number.   Since $\Pic S$ is the complement of $T_S$ in a unimodular lattice, and similarly for $\Pic S'$, we find that $\disc \Pic S'=-2n^2$ for some integer $n\geq 1$.  Also, we now have a rational
  isometry $\Pic S' \otimes \Q \isom \Lambda \otimes \Q$.  After once again removing squares from the discriminant we are in the case of the previous paragraph.
\end{proof}
\section{Verification of 2-modularity for extremal rational elliptic fibrations}
\label{sec:verification}

The goal of this section is to complete the proof of Theorem \ref{ThmMain2Modularity}:  every nonisotrivial tame extremal rational elliptic fibration $\E\to\P^1$ over a finite field is 2-modular.  In \S\ref{sec:extremalrational} we present an overview and classification of such fibrations, and show that when $\E\to \P^1$ is base changed along a double cover of $\P^1$, a K3 surface arises which is isogenous to a Kummer.  In \S\ref{sec:modular-fibrations} we consider the family of K3 surfaces $\caZ^2(\E)\to U^2$ that arose in the introduction, and prove the existence of an isogeny from $\caZ^2(\E)$ onto $\Km(\E'\times \E')$, where $\E'\to U$ is a 2-modular elliptic fibration of the same conductor as $\E$.  If $\E$ is semistable, this is enough to show that $\E$ and $\E'$ are isogenous, so that $\E$ is 2-modular as well.  The remaining sections feature case-by-case calculations which show that $\E'$ and $\E$ are isogenous in the unstable cases as well.  For those calculations, the theory of Shioda-Inose structures is indispensable.

\subsection{Extremal rational elliptic fibrations and associated K3 surfaces}
\label{sec:extremalrational}

%In this section we address Theorem \ref{ThmIsogenyFromZtoKm} concerning the base change of a nonisotrivial extremal rational fibration $\E\to\P^1$ by a double cover $\P^1\to \P^1$.  

%First, some preparations.  
A rational elliptic surface $\E$ over an algebraically closed field $F$  is isomorphic to $\P^2$ blown up at 9 points (possibly infinitely near), so that $\rho(\E)=10$ and $e(\E)=12$. If in addition $\E\to\P^1$ is extremal, and if we also assume that the singular fibers are tame, then by the Shioda-Tate formula \eqref{EqShiodaTate} we must have $\sum_v (m_v-1)=8$, where $m_v$ is the number of irreducible components in the fiber $\E_v$.  On the other hand by the Euler number formula \eqref{EqEulerNumberFormula} we have $\sum_v e(\E_v)=12$, where $e(E_v)$ is $m_v$ or $m_v+1$ as $v$ is multiplicative or additive.   Therefore the singular fibers of $\E\to\P^1$ fall into one of the following three possibilities:
\begin{enumerate}
\item Four multiplicative fibers,
\item Two multiplicative fibers and one additive fiber,
\item Two additive fibers.
\end{enumerate}
The case of two additive fibers can only occur if $\E\to \P^1$ is isotrivial (i.e., has constant $j$-invariant).  We discard this case, and refer to the (1) as the semistable case and (2) as the unstable case.

\begin{rmk}  There do exist nonconstant extremal rational elliptic fibrations with two singular fibers, for example the curve with Weierstrass equation
\[ y^2+txy=x^3-t^5 \]
in characteristic 2 has $j$-invariant $t$ and singular fibers exactly at $t=0,\infty$.  The fiber at $0$ is wild.
\end{rmk}

We present here the classification of semistable extremal rational elliptic fibrations $\E\to \P^1$.  These have exactly four fibers of multiplicative type.  This classification is due to Beauville \cite{Beauville}, to whom we refer the reader for Weierstrass equations and the connection to universal elliptic curves with level structure.   See also \cite{Ito}.

\begin{prop}\label{PropClassificationSemistable}  Let $k$ be an algebraically closed field, and let $\E\to \P^1_k$ be a nonisotrivial semistable extremal rational elliptic fibration.   Then the fibration $\E\to\P^1_k$ is determined up to isomorphism by its configuration of singular fibers.  Up to an automorphism of $\P^1_k$, those configurations appear in the table (grouped by isogeny class):

\begin{center}
\begin{tabular}{|l|l|l|}
\hline
Singular fibers (locations) & Mordell-Weil group & notes \\
\hline \hline
$\I_3,\I_3,\I_3,\I_3$ & $(\Z/3\Z)^2$ & $\car k\neq 3$\\

$\I_1,\I_1,\I_1,\I_9$ & $\Z/3\Z$ & \\
\hline
$(1,\omega,\omega',\infty)$ & & $\omega,\omega'$ roots of $x^2+x+1$ \\
\hline \hline
$\I_2,\I_2,\I_4,\I_4$ & $\Z/4\Z\times\Z/2\Z$ & $\car k\neq 2$\\

$\I_1,\I_1,\I_2,\I_8$ & $\Z/4\Z$ & \\
\hline
$(-1,1,0,\infty)$ & & \\
\hline \hline
$\I_1,\I_2,\I_3,\I_6$ & $\Z/6\Z$ & $\car k \neq 2,3$\\
\hline
$(4,-1/2,0,\infty)$ & &  \\
\hline \hline
$\I_1,\I_1,\I_5,\I_5$ & $\Z/5\Z$ & $\car k \neq 5$\\
\hline
$(\phi,\phi',1,\infty)$ & & $\phi,\phi'$ roots of $x^2-x-1$\\
\hline
\end{tabular}
\end{center}
\end{prop}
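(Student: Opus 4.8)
The plan is to decouple the classification into two essentially independent problems: a purely lattice-theoretic determination of which four-fiber configurations can occur (together with their Mordell--Weil groups), and a rigidity statement, via the $j$-map, that each admissible configuration is realized by a unique fibration. The characteristic hypotheses enter only at the end.

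\textbf{Admissible configurations.} For a rational elliptic surface with section, the orthogonal complement of $\class{F,O}$ in $\NS(\E)$ is the negative-definite root lattice $E_8$ (see \cite{SchuettShioda}). A semistable configuration $(\I_{n_1},\dots,\I_{n_4})$ contributes the sublattice $\bigoplus_i A_{n_i-1}$, which by the Shioda--Tate computation \eqref{EqShiodaTate} already carried out has full rank $\sum_i(n_i-1)=8$. Extremality forces this to sit with finite index in $E_8$, the Mordell--Weil group being the quotient; since $E_8$ is unimodular, $\disc\bigl(\bigoplus_i A_{n_i-1}\bigr)=\prod_i n_i$ must equal $\abs{\mathrm{MW}}^2$, a perfect square. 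I would then run through the partitions of $12$ into four positive parts, retain those whose product is a square, and for each survivor verify that the discriminant form of $\bigoplus_i A_{n_i-1}$ carries an isotropic subgroup gluing it up to $E_8$. This finite check leaves exactly the six configurations of the table and reads off each Mordell--Weil group as the corresponding glue quotient.

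\textbf{Rigidity.} Since $\E\to\P^1$ is nonisotrivial and semistable, all singular fibers are multiplicative, lie over $j=\infty$, and have unipotent monodromy, while the fibers over $j=0$ and $j=1728$ are smooth; hence the $j$-map must ramify to order divisible by $3$ over $0$ and by $2$ over $1728$ (otherwise a nonvanishing finite monodromy produces an additive fiber). Its degree is $\sum_i n_i=12$, with ramification profile $(n_1,n_2,n_3,n_4)$ over $\infty$. The numbers of preimages of $0$ and $1728$ are then at most $4$ and $6$, so the ramification contributions over those two values are at least $12-4=8$ and $12-6=6$; since Riemann--Hurwitz allots exactly $22-8=14$ to the finite locus, both bounds are equalities and there is no further ramification. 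Thus $j$ is a Belyi map with profiles $(n_i)$, $3^{4}$, $2^{6}$ over $\infty,0,1728$. For each of the six partitions the associated monodromy triple in $S_{12}$ --- a product of permutations of these cycle types equal to the identity and generating a transitive group --- is unique up to simultaneous conjugation, so the map is unique up to $\PGL_2$ on source and target. This fixes the cross-ratio of the four cusps (the normalized locations in the table) and recovers the Weierstrass model, hence the fibration, up to isomorphism and automorphism of the base. Equivalently, and more concretely, one may exhibit Beauville's explicit Weierstrass equations \cite{Beauville,Ito}, which realize each family as the universal elliptic curve over a genus-zero modular curve and give existence and rigidity simultaneously.

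\textbf{Positive characteristic, and the main obstacle.} Beauville's models are defined over $\Z[1/M]$, and their reduction modulo $p$ keeps the semistable configuration and the torsion sections distinct precisely when $p\nmid\abs{\mathrm{MW}}$ --- exactly the restriction recorded in the table ($\car k\neq 3$ for $\I_3^{4}$, $\car k\neq 2$ for the $4$-torsion families, and so on), since a section of order $n$ remains \'etale of order $n$ only away from characteristic $n$. The delicate point is the rigidity: saturating Riemann--Hurwitz shows such a Belyi map must exist in principle, but proving that there is exactly one dessin of each profile, and that it survives reduction with its profile intact, is a genuine computation rather than a formality. The cleanest route --- and the one I would ultimately take --- is to quote Beauville's integral models and reduce the rigidity step to a direct verification of the singular fibers of each listed equation, rather than arguing abstractly with monodromy triples.
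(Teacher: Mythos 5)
The paper offers no proof of this proposition at all: it is quoted from Beauville \cite{Beauville}, with \cite{Ito} invoked for characteristics $2$ and $3$, so the real comparison is with those sources. Your two-step strategy --- enumerate the partitions $12=\sum n_i$ into four parts with $\prod_i n_i$ a perfect square admitting an isotropic glue of $\bigoplus_i A_{n_i-1}$ into $E_8$, then pin the fibration down by the ramification profile of the $j$-map via Riemann--Hurwitz --- is in substance Beauville's own argument in characteristic $0$, and the lattice half is set up correctly (the six surviving partitions are exactly those in the table). Two comments on that half. First, the uniqueness of the dessin with profiles $(n_i)$, $3^4$, $2^6$ is asserted rather than proved, and you ultimately defer it to Beauville's explicit equations, which is no more than the paper does by citation; you should also dispose of the quadratic-twist ambiguity of the $j$-map (a nontrivial twist over $\P^1$ is necessarily ramified and creates additive fibers, so semistability forces the twist to be trivial). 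Second, your glue computation is sharp enough to detect an apparent typo in the table: the discriminant form of $A_7\oplus A_1$ has a unique nonzero isotropic element of order $2$, so the index-$4$ glue group is forced to be cyclic and the Mordell--Weil group of the $\I_1,\I_1,\I_2,\I_8$ fibration is $\Z/4\Z$ (as in Miranda--Persson and Oguiso--Shioda), not $(\Z/2\Z)^2$.

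The genuine gap is in the positive-characteristic step. Your claim that the configurations survive reduction modulo $p$ ``precisely when $p$ does not divide the order of the Mordell--Weil group'' is contradicted by the table you are proving: the $\I_9,\I_1,\I_1,\I_1$ and $\I_8,\I_2,\I_1,\I_1$ rows carry no characteristic restriction although their torsion groups have orders $3$ and $4$. Nor does the natural repair (requiring only the $p$-primary part of the torsion to be cyclic) work, since $\I_6,\I_3,\I_2,\I_1$ with cyclic group $\Z/6\Z$ is nevertheless excluded in characteristics $2$ and $3$ (it degenerates to the unstable $\IV,\I_2,\I_6$ and $\III,\I_3,\I_6$ configurations), and $\I_5,\I_5,\I_1,\I_1$ degenerates to $\II,\I_5,\I_5$ in characteristic $5$. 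The exclusions come from fibers colliding or becoming additive, which is a case-by-case matter settled in \cite{Ito} and cannot be read off from the torsion alone. Relatedly, the Belyi/Riemann--Hurwitz rigidity argument does not transfer to characteristic $p$ as stated: the $j$-map may be inseparable or wildly ramified, and $j=0$ and $j=1728$ coincide in characteristics $2$ and $3$. So the fallback you name at the end --- verify the fiber configuration of each explicit integral Weierstrass model in each residue characteristic --- is not merely the cleanest route but the only one among those you describe that actually closes the argument in positive characteristic; with that substitution the proof is complete and coincides with what the paper accomplishes by citing Beauville and Ito.
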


The following proposition classifies the tame extremal rational elliptic fibrations in the unstable case.   If such a fibration is nonisotrivial, then it has exactly three singular fibers, two additive and one multiplicative.  As $\Aut\P^1$ acts triply transitively, it is no longer necessary to keep track of the locations of the singular fibers.  We derive the following table from \cite{MirandaPersson} and \cite{Ito} (in characteristics 2,3).

\begin{prop} \label{PropClassificationUnstable} Let $k$ be an algebraically closed field, and let $\E\to \P^1_k$ be an unstable nonisotrivial tame extremal rational elliptic fibration.  Then the fibration is determined up to isomorphism by its configuration of singular fibers.  The possible configurations are listed below, grouped by isogeny class:

\begin{center}
\begin{tabular}{|l|l|l|}
\hline
Singular fibers & Mordell-Weil group & notes \\
\hline \hline
$\I_2^*,\I_2,\I_2$ & $(\Z/2\Z)^2$ & $\car k\neq 2$ \\
$\I_4^*,\I_1,\I_1$ & $\Z/2\Z$ & \\
$\I_1^*,\I_1,\I_4$ & $\Z/4\Z$ & \\
\hline\hline
$\II^*,\I_1,\I_1$ & $0$  & $\car k\neq 2,3$\\
\hline \hline
$\III^*,\I_1,\I_2$ & $\Z/2\Z$ & $\car k\neq 2$\\
$\III,\I_3,\I_6$ & $\Z/6\Z$ & $\car k=3$\\
\hline\hline
$\IV^*,\I_1,\I_3$ & $0$ & $\car k \neq 2,3$\\
\hline\hline
$\IV,\I_2,\I_6$ & $\Z/6\Z$& $\car k = 2$\\
$\IV^*,\I_1,\I_3$ & $\Z/3\Z$ & \\
\hline\hline
$\II,\I_5,\I_5$ & $\Z/5\Z$ &$\car k=5$\\
\hline
\end{tabular}
\end{center}
\end{prop}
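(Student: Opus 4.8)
The plan is to reduce the classification to a lattice-embedding problem in the $E_8$ frame lattice, and then to appeal to the explicit classifications of \cite{MirandaPersson} (characteristic $0$) and \cite{Ito} (characteristics $2,3$) for existence, uniqueness, and the small-characteristic subtleties. I would begin with the numerics. As in the trichotomy established above, an unstable nonisotrivial fibration has one additive fiber and two multiplicative fibers $\I_m,\I_n$. Let $R_0$ be the negative definite root lattice of the additive fiber, so $R_0$ is one of $0$ (type $\II$), $A_1$ ($\III$), $A_2$ ($\IV$), $D_m$ with $m\geq 4$ ($\I_{m-4}^\ast$), $E_6$ ($\IV^\ast$), $E_7$ ($\III^\ast$), or $E_8$ ($\II^\ast$). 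Since the fibration is extremal, the Shioda--Tate formula \eqref{EqShiodaTate} gives $\rho(\E)=2+\rank R_0+(m-1)+(n-1)=10$, hence
\[ \rank R_0 + (m-1) + (n-1) = 8; \]
tameness ($\delta_v=0$) makes the Euler number formula \eqref{EqEulerNumberFormula} yield the same relation, so the fiber root lattice $R=R_0\oplus A_{m-1}\oplus A_{n-1}$ has rank $8$ and no further numerical constraint appears.

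Next I would translate realizability into lattice theory. Because $\E$ is a rational elliptic surface, $\class{O,F}$ is a unimodular rank-$2$ summand of $\NS(\E)$ whose orthogonal complement is the negative definite $E_8$ lattice; the fiber components span $R$ as a finite-index root sublattice of this $E_8$, and the Mordell--Weil group $\NS(\E)/(\class{O,F}\oplus R)=E_8/R$ is pure torsion because the fibration is extremal. The problem thus becomes the enumeration of full-rank sub-root-systems of $E_8$ of the shape $R_0\oplus A_{m-1}\oplus A_{n-1}$, which the Borel--de Siebenthal procedure (iteratively deleting nodes of $\widetilde{E}_8$) reduces to a finite mechanical check; it returns exactly the configurations of the table, and computing $E_8/R$ in each case gives the Mordell--Weil groups listed. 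Two fibrations fall in the same isogeny class when their frame lattices are rationally isometric, which reproduces the grouping. This also yields half of the uniqueness claim: the singular-fiber configuration determines $\NS(\E)$ together with its Mordell--Weil group.

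The delicate point, which I expect to be the main obstacle, is upgrading this lattice data to a classification of fibrations up to isomorphism and treating small characteristics honestly. In characteristic $0$ every numerically admissible $R$ is realized by a unique surface, which is the Torelli-type theorem of \cite{MirandaPersson} and which I would cite directly. In characteristics $2$ and $3$ the tame bookkeeping can break down: additive fibers of type $\I_n^\ast,\IV^\ast,\III^\ast,\II^\ast$ may become wild (so $\delta_v\neq 0$), some configurations disappear, new tame ones appear (the rows flagged $\car k=2$ and $\car k=3$), and the Mordell--Weil group can jump, as in the two incarnations of $\IV^\ast,\I_1,\I_3$. These cases, together with the $\car k=5$ configuration $\II,\I_5,\I_5$, are precisely those handled by Ito \cite{Ito}, whose explicit list I would invoke for existence and uniqueness. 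Thus the lattice enumeration is routine, and the genuine content is the realization and uniqueness input from \cite{MirandaPersson} and \cite{Ito}.
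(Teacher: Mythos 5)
The paper gives no argument for this proposition: the sentence preceding it says the table is derived from \cite{MirandaPersson} and, in characteristics $2$ and $3$, from \cite{Ito}, and there is no proof environment at all. To the extent that your proposal also ends by delegating existence and uniqueness to those references, you land in the same place. But the lattice-theoretic scaffolding you put around the citation contains a genuine error, and if it were relied on it would give the wrong answer. It is not true that the Borel--de Siebenthal enumeration of full-rank root sublattices $R_0\oplus A_{m-1}\oplus A_{n-1}\subset E_8$ ``returns exactly the configurations of the table.'' For the small additive types the lattice test is far too weak: $A_8$, $A_1\oplus A_7$ and $A_4\oplus A_4$ are all full-rank root sublattices of $E_8$, so your criterion would admit the unstable configurations $\II,\I_1,\I_9$; $\II,\I_2,\I_8$; $\III,\I_1,\I_8$; and $\II,\I_5,\I_5$ in every characteristic $\geq 5$ --- none of which is realized in characteristic $0$, and only the last of which occurs at all (and only in characteristic $5$). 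The point is that the same root lattice $R$ can be distributed among singular fibers in several ways ($A_8$ is realized by the semistable configuration $\I_9,\I_1,\I_1,\I_1$ but not by $\II,\I_1,\I_9$), and deciding which distributions are geometrically realizable is precisely the content of \cite{MirandaPersson}, whose analysis of the degree-$12$ $j$-map and its ramification cannot be replaced by lattice embeddings. Correspondingly, your assertion that Miranda--Persson prove ``every numerically admissible $R$ is realized by a unique surface'' misstates their theorem: a substantial part of it consists of non-existence results for numerically admissible configurations.

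Two further points. The characteristic-$p$ rows are not all artifacts of wild ramification: type $\II$ is tame in characteristic $5$, and the $\II,\I_5,\I_5$ fibration exists there because the semistable $\I_1,\I_1,\I_5,\I_5$ fibration specializes --- its two $\I_1$ fibers sit at the roots of $x^2-x-1$ and collide modulo $5$ --- and likewise $\IV,\I_2,\I_6$ in characteristic $2$ is a specialization of $\I_1,\I_2,\I_3,\I_6$; this is exactly what the remark following the proposition records. Finally, ``rationally isometric frame lattices'' cannot reproduce the isogeny grouping: every frame lattice occurring here has finite index in $E_8$, so all of them are rationally isometric to one another, whereas the isogeny classes are distinguished by the generic fiber (equivalently, by the $j$-map), not by the N\'eron--Severi lattice.
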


\begin{rmk} In characteristic 2, the $\IV,\I_2,\I_6$ fibration is the specialization of the $\I_1,\I_2,\I_3,\I_6$ fibration, and in characteristic 5, the $\II,\I_5,\I_5$ fibration is the specialization of the $\I_1,\I_1,\I_5,\I_5$ fibration.  
\end{rmk}

Fix a nonisotrivial extremal rational elliptic fibration $\E\to \P^1_F$.  
Let $C\to\P^1_F$ be a separable double cover, with $C$ a rational curve; thus $C\to\P^1_F$ is ramified at two points. Assume these points are disjoint from the singular locus of $\E\to\P^1_F$.  Consider the base change
\[ S = C\times_{\P^1} \E.\]
Then $S$ is a K3 surface.  Indeed, $S$ is a double cover of the rational surface $\E$ branched along the sextic described by the union of two cubics (namely, the fibers of $\E$ over the branch points of $C\to \P^1$).  

 Under our hypothesis that $\E\to\P^1$ is rational and extremal, we must have $\rho(\E)=10=2+8$, where the 2 is from the identity $O$ and fiber $F$, and the 8 is from irreducible components of fibers which do not cross $O$.  Considering the elliptic fibration $S\to C$, the contribution to $\rho(S)$ from irreducible components of fibers is $2\cdot 8 = 16$, and so $\rho(S)\geq 2+16=18$.  

We now have a rational map of moduli spaces: (double covers of $\P^1$ branched at 2 points) $\to$ (K3 surfaces of Picard rank $\geq 18$).  Both spaces are 2-dimensional, so we expect the generic double cover $C\to\P^1$ to produce a K3 surface $S$ of rank 18, with Picard lattice not depending on $C$.  

\begin{prop}  \label{PropRationalIsometry}  Assume that $\Pic S$ has rank 18.  
There exists a rational isometry $\Pic S\otimes \Q\isom \Lambda\otimes \Q$, where $\Lambda$ is the Picard lattice of the Kummer surface associated to the product of two nonisogenous elliptic curves. 
\end{prop}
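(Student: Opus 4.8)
The plan is to reduce the desired rational isometry to a statement about finite-index sublattices of $E_8$, which sidesteps any Hasse--Minkowski computation. The guiding observation is trivial but decisive: if $M'\subseteq M$ is a sublattice of finite index, then $M'\otimes\Q = M\otimes\Q$ as quadratic spaces, so $M'$ and $M$ are rationally isometric. I will apply this repeatedly on both sides.

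First I would pin down the elliptic fibration $S\to C$. Because the two branch points of $C\to\P^1$ are disjoint from the singular locus of $\E\to\P^1$, the double cover is \'etale over every reducible fiber; hence Kodaira types are preserved and each reducible fiber of $\E$ produces exactly two fibers of $S$ of the same type. Thus the root part of the trivial lattice of $S\to C$ is $R_\E^{\oplus 2}$, where $R_\E=\bigoplus_v R_v$ is the sum of the fiber root lattices of $\E$. Applying the Shioda-Tate formula \eqref{EqShiodaTate} to the extremal fibration $\E$ gives $\operatorname{rank}R_\E=\sum_v(m_v-1)=8$; applying it to $S$ shows that the trivial lattice $T_S=U\oplus R_\E^{\oplus 2}$ has rank $18$, so under the hypothesis $\rho(S)=18$ the Mordell-Weil rank of $S\to C$ vanishes and $T_S$ has finite index in $\Pic S$. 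Therefore $\Pic S\otimes\Q=(U\oplus R_\E^{\oplus 2})\otimes\Q$. The identical reasoning on the Kummer side (four $I_0^\ast$ fibers) gives $\Lambda\otimes\Q=(U\oplus D_4^{\oplus 4})\otimes\Q$.

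The heart of the argument is then two full-rank embeddings into $E_8$. For a rational elliptic surface the orthogonal complement of $\class{F,O}$ in $\NS(\E)$ is the negative-definite unimodular lattice $E_8$ (see \cite{SchuettShioda}), and every fiber component not meeting $O$ lies in this complement, so $R_\E\subseteq E_8$ as its root sublattice; since $\operatorname{rank}R_\E=8=\operatorname{rank}E_8$, this inclusion has finite index and $R_\E\otimes\Q\isom E_8\otimes\Q$. On the other hand $D_4\oplus D_4$ occurs as a sub-root-lattice of $E_8$ (e.g. via $D_4\oplus D_4\subset D_8\subset E_8$), again of full rank $8$ and hence finite index, so $(D_4\oplus D_4)\otimes\Q\isom E_8\otimes\Q$. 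Combining these rational isometries of the rank-$16$ pieces with the identity on $U$, I obtain
\begin{align*}
\Pic S \otimes \Q
&\isom (U \oplus R_\E^{\oplus 2})\otimes\Q
\isom (U \oplus E_8^{\oplus 2})\otimes\Q \\
&\isom (U \oplus D_4^{\oplus 4})\otimes\Q
\isom \Lambda\otimes\Q,
\end{align*}
which is exactly the rational isometry claimed.

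The points requiring genuine care are geometric rather than arithmetic: that the reducible fibers of $S\to C$ really are two disjoint copies of those of $\E$ (no degeneration of Kodaira type, no extra components), and that the rank count $\sum_v(m_v-1)=8$ and the frame-lattice identification with $E_8$ hold uniformly, including for the wild fibrations in characteristics $2$ and $3$. Each of these follows from the Shioda-Tate formula, valid in all characteristics, together with the fact that the frame lattice of a rational elliptic surface is $E_8$; the \'etale base change preserves Kodaira types by construction. Notably I expect never to need the sign of the discriminant or any $p$-adic Hasse invariant, precisely because every comparison above is realized by a finite-index inclusion of integral lattices.
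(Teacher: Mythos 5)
Your proof is correct, and its skeleton coincides with the paper's: both arguments observe that the reducible fibers of $S\to C$ are two copies of those of $\E\to\P^1$, that extremality forces the fibral root lattice $R_\E$ to have full rank $8$ and hence finite index in the frame lattice $E_8$ of the rational elliptic surface, and that under $\rho(S)=18$ the trivial lattice $U\oplus R_\E^{\oplus 2}$ has finite index in $\Pic S$, giving $\Pic S\otimes\Q\isom (U\oplus E_8^{\oplus 2})\otimes\Q$. Where you genuinely diverge is in the last step. The paper finishes by asserting that ``a calculation involving Hasse--Minkowski invariants'' identifies $(U\oplus E_8^{\oplus 2})\otimes\Q$ with $\Lambda\otimes\Q$; you instead run the same finite-index argument on the Kummer side ($\Lambda$ contains $U\oplus D_4^{\oplus 4}$ with finite index because the Mordell--Weil group of $\Km(E_1\times E_2)\to\P^1$ is finite for nonisogenous $E_1,E_2$) and then use the full-rank inclusion $D_4\oplus D_4\subset D_8\subset E_8$, of index $4$, to get $(D_4^{\oplus 4})\otimes\Q\isom (E_8^{\oplus 2})\otimes\Q$. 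This is a clean and entirely elementary replacement for the invariant computation: every comparison is realized by an integral finite-index inclusion, so no signatures, discriminant signs, or local Hasse symbols ever need to be checked. The small price is that you must justify the Kummer-side finite-index claim (finiteness of the Mordell--Weil group, which rests on $E_1$ and $E_2$ being nonisogenous), a point the paper has already established in its discussion of $\Km(E_1\times E_2)$, so nothing is actually missing. Your cautionary remarks about wild fibers are also handled correctly: the count $\sum_v(m_v-1)=8$ needs only Shioda--Tate, $\rho(\E)=10$, and extremality, none of which require tameness.
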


\begin{proof}   We have $\Pic \E\isom E_8\oplus U$.   Let $L\subset \Pic \E$ be the sublattice generated by components of reducible fibers not meeting the identity section, so that $L\oplus U$ embeds into $E_8\oplus U$ with finite cokernel.   Observe therefore that we have embeddings of lattices:  $L^{\oplus 2}\oplus U \subset E_8\oplus L \oplus U \subset E_8^{\oplus 2} \oplus U$.

Now $\Pic S$ contains a finite-index sublattice isomorphic to $L^{\oplus 2}\oplus U$, generated by reducible fibers and the identity section.   By the observation above, $\Pic S\otimes \Q\isom (E_8^{\oplus 2}\oplus U)\otimes \Q$.    A calculation involving Hasse-Minkowski invariants shows that the latter is isomorphic to $\Lambda \otimes \Q$.  
\end{proof}

\subsection{On the construction of 2-modular elliptic fibrations}\label{sec:modular-fibrations}

With this background, we now turn to the question of 2-modularity for a tame extremal rational elliptic fibration $\E\to \P^1$ of conductor $N$ over a finite field.  We have a family of K3 surfaces $\caZ^2(\E)\to U^2$, obtained via base changing $\E\to \P^1$ by a family of double covers $\P^1\to \P^1$ parametrized by $U^2$.  Here is what we
know so far about $\caZ^2(\E)$:
\begin{enumerate}
        \item There exists an isogeny between families of K3 surfaces $\caZ^2(\E)\to \Km(\mathcal{A})$ over $U^2$.  Here $\mathcal{A}\to U^2$ is a family of abelian varieties which, possibly after passing to a double cover of $U^2$, splits as a product of nonisogenous elliptic curves with transcendental $j$-invariant.  
    \item There exists a dominant rational map $\Sht_G(\Gamma_0(N))\dashrightarrow \caZ^2(\E)$ over $U^2$, see \eqref{EqMapFromSht2ToZ2}. 
\end{enumerate}

\begin{rmk} The first statement is obtained by applying \ref{ThmIsogenyBetweenK3s}
to $\caZ^2(\E)$.  Strictly speaking, that theorem only applies over algebraically closed fields, but it can be made to work over the base $U^2$, with the proviso that the splitting of $\mathcal{A}$ into a product of elliptic curves may only happen over an \'etale double cover of $U^2$.  The argument runs this way:  consider the ring scheme $\End \mathcal{A}$ over $U^2$ which classifies endomorphisms of $\mathcal{A}$.  Since 
$\mathcal{A}_{\overline{\eta}_2}$ splits as a product of nonisogenous, transcendental elliptic curves, the $\overline{\eta}_2$-points of $\End \mathcal{A}$ are $\Z\times \Z$.  On the other hand, an appeal to the rigidity lemma shows that the union of those components of $\End \mathcal{A}$ which dominate $U^2$ together form an \'etale group scheme;  thus one can think of this union as a representation of $\pi_1(U^2,\overline{\eta}_2)$ on the ring $\Z\times \Z$.  There are two possibilities.    If the representation is trivial, in which case $\mathcal{A}$ is the product of two families of elliptic curves.  If the representation is nontrivial,   $\pi_1(U^2,\overline{\eta}_2)$ permutes the factors of the $\Z\times\Z$;  in this case $\mathcal{A}$ is the restriction of scalars of a family of elliptic curves over an \'etale double cover of $U^2$.
%In case we are pressed for more details, they can be found here:  
%https://mathoverflow.net/questions/265505/extending-homomorphisms-of-abelian-schemes
\end{rmk}

The main goal of the subsection is to prove the following theorem.
%Let $X/\F_q$ be a (geometrically connected, smooth, projective) curve, and let $N$ be a nonzero effective divisor of $X$.  Write $U\subset X$ for the complement of the support of $N$.  

\begin{thm} 
\label{ThmASeparates} 
Let $\mathcal{A}\to U^2$ be a family of abelian surfaces. Assume:
\begin{enumerate}
    \item \'Etale-locally on $U^2$, the abelian surface $\mathcal{A}$ is isomorphic to a product of non-isogenous elliptic curves, each of which has transcendental $j$-invariant.  
    \item There exists a dominant rational map from $\Sht_G^2(\Gamma_0(N))$ to $\Km(\mathcal{A})$ lying over $U^2$. 
\end{enumerate}
Then there exists a non-isotrivial family of elliptic curves $\E'\to U$ of conductor bounded by $N$, and an isogeny $\Km(\mathcal{A})\to\Km(\E'\times_{\F_q}
\E')$ over $U^2$.  This $\E'$ is $2$-modular.
\end{thm}

The main players in the proof are the \'etale fundamental groups and their representations.  Let us notate our \'etale fundamental groups as $\pi_1(U)$, $\pi_1(U^2)$, etc., the base point being understood to be $\overline{\eta}$ or $\overline{\eta}_2$.  The K\"unneth theorem fails in characteristic $p$, in the sense that the natural map $\pi_1(U^2)\to \pi_1(U)^2$ is not an isomorphism.  In fact it is neither injective nor surjective;  see the discussion of Drinfeld's Lemma in \cite[\S1.1]{ScholzeWeinstein}.  Write $\overline{U}$ (resp., $\overline{U}^2$) for the base change of $U$ (resp., $\overline{U}^2$) from $\F_q$ to $\overline{\F}_q$.  
The various fundamental groups fit into a diagram with exact rows and columns (here $\Delta$ = diagonal map):
\[
\xymatrix{
& 1 \ar[d] & 1 \ar[d] & & \\
& \mathcal{K} \ar[r]^{=} \ar[d] & \mathcal{K}\ar[d] \ar[r] &1\ar[d] & \\
1 \ar[r] & \pi_1(\overline{U}^2) \ar[d] \ar[r] & \pi_1(U^2) \ar[r] \ar[d] & \Gal_{\F_q} \ar[r] \ar[d]^{\Delta} & 1 \\
1 \ar[r] & \pi_1(\overline{U})^2 \ar[d] \ar[r] & \pi_1(U)^2 \ar[r] \ar[d] & \Gal_{\F_q}^2 \ar[r] \ar[d] & 1 \\
& 1 \ar[r] & \Gal_{\F_q} \ar[r]_{=} \ar[d] & \Gal_{\F_q} \ar[d] \\
&& 1 & 1& 
}
\]

Now consider our family of abelian surfaces $f\from \mathcal{A}\to U^2$.  Let $H^2(\mathcal{A},\Q_\ell)$ denote the fiber of $R^2f_*\Q_\ell$ at the geometric generic point $\overline{\eta}_2$:  this is a representation of $\pi_1(U^2)$.  Within this there is the 4-dimensional transcendental subspace $H^2_{\text{trans}}(\mathcal{A},\Q_\ell)$, meaning the subspace orthogonal to all algebraic classes.

\begin{lemma} \label{LemmaNoFixedVector} The Tate twist $H^2_{\text{trans}}(\mathcal{A},\overline{\Q}_\ell)(1)$ contains no nonzero vector fixed by an open subgroup of $\pi_1(U^2)$.  
\end{lemma}

\begin{proof} Suppose otherwise, and that the nonzero vector is fixed by the fundamental group of a connected finite cover of $U^2$ with fraction field $L$.  After a possible further extension of $L$, the base change $\mathcal{A}_L$ is isomorphic to $E_1\times_L E_2$ for elliptic curves $E_1,E_2$ over $L$.  Then $H^2_{\text{trans}}(\mathcal{A}_L,\Q_\ell)(1)$ is the $\Q_\ell$-linear dual of the tensor product $V_\ell(E_1)\otimes V_\ell(E_2)(-1)$, where $V_\ell$ means the rational Tate module.  This is  in turn isomorphic via the Weil pairing to $\Hom(V_\ell(E_1),V_\ell(E_2))$.  

The situation now is that $\Hom_{\Gal_L}(V_\ell(E_1),V_\ell(E_2))\otimes\overline{\Q}_\ell$ is nonzero.  We now appeal to the isogeny theorem over fields finitely generated over $\F_q$ \cite[Theorem 1.4]{Zarhin} to conclude that $E_1$ and $E_2$ are isogenous over $L$, contrary to our hypothesis about $\mathcal{A}$.
\end{proof}

We have assumed the existence of a dominant rational map from $\Sht_G^2(\Gamma_0(N))$ to $\Km(\mathcal{A})$.  This induces a
map on cohomology
\begin{equation}
\label{EqInducedMapOnCohomology}
{H^2_c(\Sht_G^2(\Gamma_0(N))_{\overline{\eta}_2}, {\mathbf{Q}}_\ell)} 
\to {H^2_{\text{trans}}({{\caA}},
\Q_\ell)}
\end{equation}
which is equivariant for the action of $\pi_1(U^2)$ on either side.   The map in \eqref{EqInducedMapOnCohomology} is surjective:  this reduces to a general fact about dominant rational maps between quasi-projective varieties, see \cite[Proposition 1.2.4]{Kleiman}.

\begin{lemma} The map in \eqref{EqInducedMapOnCohomology} is nonzero when restricted to the cuspidal subspace.
\end{lemma}

\begin{proof}  The cuspidal subspace is the kernel of the ``constant term map'' 
\[ H^2_c(\Sht_G^2(\Gamma_0(N))_{\overline{\eta}_2},\Q_\ell)(1)\to H^0_c(\Sht_M^2(\Gamma_0(N))_{\overline{\eta}_2},\Q_\ell)\]
towards the compactly supported cohomology of a space of shtukas relative to $M\subset G$, where $M\isom \Gm$ is the Levi subgroup. (For the construction of the constant term map, see \cite{Xue}.  The Tate twist (1) appears to compensate for the fact that we have used constant coefficients rather than intersection cohomology on the $G$-shtuka side.)  The cohomology of $M$-shtukas is known by class field theory:  after base extension to $\overline{\Q}_\ell$,  the image of the constant term map decomposes as a $\pi_1(U^2)$-module as direct sum of characters $\chi\boxtimes \chi^{-1}$, where $\chi$ runs over finite-order characters of $\pi_1(U)$ of conductor bounded by $N$.    Note that all such characters become trivial when restricted to an open subgroup.  For the map  \eqref{EqInducedMapOnCohomology} to be trivial on the cuspidal subspace, it would mean that such a direct sum surjects onto $H^2_{\text{trans}}(\mathcal{A},\overline{\Q}_\ell)(1)$, which violates Lemma \ref{LemmaNoFixedVector}.  
\end{proof}

Proposition \ref{PropCohoOfShtukaSpaceIntro} states that the action of $\pi_1(U^2)$ on the cuspidal subspace of $H^2_c(\Sht_G^2(\Gamma_0(N)),\overline{\Q}_\ell)$ extends along the homomorphism $\pi_1(U^2)\to \pi(U)^2$, and decomposes into a direct sum of representations of $\pi(U)^2$ of the form $\sigma^{\boxtimes 2}$, where $\sigma$ runs over irreducible representations of $\pi_1(U)$ with determinant $\overline{\Q}_\ell(-1)$ of conductor bounded by $N$.  Since each $\sigma^{\boxtimes 2}$ is irreducible of dimension 4, we find that there is a particular $\sigma$ for which 
\begin{equation}
    \label{EqIsomPi1}
 H^2_{\text{trans}}(\mathcal{A},\overline{\Q}_\ell) \isom \sigma^{\boxtimes 2}\vert_{\pi_1(U^2)}
 \end{equation}
as representations of $\pi_1(U^2)$.

\begin{lemma} \label{LemmaASplits} The family of abelian surfaces $\mathcal{A}\to U^2$ is isomorphic to $\E_1\times_{U^2} \E_2$, where $\E_1$ and $\E_2$ are families of elliptic curves over $U^2$.
\end{lemma}

\begin{proof} Recall we have assumed that $\mathcal{A}$ splits into such a product over an \'etale cover.  Assume this splitting doesn't occur globally over $U^2$.  Then $\mathcal{A}$ is the restriction of scalars of a family of elliptic curves $\E\to V$ along a connected double cover $V\to U^2$.  Consequently $H^2_{\text{trans}}(\mathcal{A})$ is isomorphic to the {\em {tensor-induced}} representation of $\rho:=H^1(\E_{\overline{\eta}_2},\overline{\Q}_\ell)$ from $\pi_1(V)$ to $\pi(U^2)$.  This can be modeled as the extension of $\rho^{\otimes 2}$ to $\pi_1(U^2)$ determined by the rule $s(v_1\otimes v_2)=v_2\otimes s^2 v_1$, where $s$ is any representative for the nontrivial coset of $\pi_1(V)$ in $\pi_1(U^2)$.  Such an element $s$ always acts nontrivially on $\rho^{\otimes 2}$:  to see this, choose $v_1$ to be an eigenvector of $s^2$, and choose $v_2$ to be linearly independent from $v_1$.  

Recall that $\mathcal{K}$ is the kernel of $\pi_1(U^2)\to \pi_1(U)^2$.  
The isomorphism \eqref{EqIsomPi1} shows that $\mathcal{K}$ acts trivially on $H^2_{\text{trans}}(\mathcal{A},\Q_\ell)$.  By the above observation about the nontrivial action of $s$, it follows that $\mathcal{K}\subset \pi_1(V)$.  Equivalently, there exists a finite connected cover $W\to U$ such that $V$ is intermediate to $W^2\to U^2$.  %\adam{After our discussions I feel somewhat better about this.}

Let us now base change the entire story from $U$ to $W$.  We have a family of elliptic curves $\E\to W^2$, such that if $\rho=H^1(\E_{\overline{\eta}_2},\overline{\Q}_\ell)$, then $\rho^{\otimes 2}$ extends along the homomorphism $\pi_1(W^2)\to \pi_1(W)^2$ to a representation of the form $\sigma^{\boxtimes 2}$.  

Let $\mathcal{K}_W$ be the kernel of $\pi_1(W^2)\to \pi_1(W)^2$.  An element of $\mathcal{K}_W$ acts trivially on $\rho^{\otimes 2}$, which implies that it must act as a scalar $\pm 1$ on $\rho$.  Therefore $\mathcal{K}_W$ acts trivially on the projective representation $P\rho$.

We find that the restriction of $P\rho$ to $\pi_1(\overline{W}^2)$ factors through a projective representation 
\[ P\rho\from \pi_1(\overline{W})^2\to \PGL_2(\overline{\Q}_\ell),\]
which is tantamount to two homomorphisms from $\pi_1(\overline{W})$ whose images commute with one another. %\adam{This part makes sense again} 
By considering Zariski closures, there are two possibilities, each of which leads to a contradiction:
\begin{itemize}
    \item $P\rho$ is trivial when restricted to one of the factors of $\pi_1(\overline{W})^2$. But then the same would be true of $P(\sigma^{\boxtimes 2})$, which is false.
\item The image of each copy of $\pi_1(\overline{W})$ under $P\rho$ lies in the same torus in $\PGL_2$.  This would imply that the image of $\pi_1(W^2)$ in $\GL_2(\overline{\Q}_\ell)$ lies in the normalizer of a torus.  By the isogeny theorem, this is only possible if $\End\E$ is larger than $\Z$.  This contradicts our hypothesis on $\mathcal{A}$:  its elliptic curve factors have transcendental $j$-invariant.  
\end{itemize}
\end{proof}

By Lemma \ref{LemmaASplits}, our abelian surface $\mathcal{A}$ is isomorphic to a product $\E_1\times_{U^2} \E_2$, where each $\E_i\to U^2$ is a family of elliptic curves.  Let $\rho_i$ be the representation of $\pi(U^2)$ on 
 $H^1(\E_{i,\overline{\eta}_2},\overline{\Q}_\ell)$.  The isomorphism in \eqref{EqIsomPi1} becomes:
 \begin{equation}
     \label{EqIsomPi2}
 \rho_1\otimes \rho_2\isom \sigma^{\boxtimes 2}\vert_{\pi_1(U^2)}.
  \end{equation}

Let $\text{pr}_1,\text{pr}_2$ refer to the projection maps $U^2\to U$.

\begin{lemma} \label{LemmaProjectiveRep}
After possibly relabeling, the projective representation $P\rho_i$ is isomorphic to $\text{pr}_i^*P\sigma$, meaning the pullback of $P\sigma$ along the map $\pi_1(U^2)\to \pi(U)$ induced by $\text{pr}_i$.  
\end{lemma}

\begin{proof}
Since $\mathcal{K}$ acts trivially on $\rho_1\otimes \rho_2$, it must act by scalars on each $\rho_i$, in fact by $\pm 1$ due to the Weil pairing on each $\rho_i$.  
Therefore the projective representations $P\rho_i$ are trivial on $\mathcal{K}$, and so $P\rho_i\vert_{\pi_1(\overline{U}^2)}$ factors through a projective representation of $\pi_1(\overline{U})^2$.  As such, $P\rho_i$ has to be trivial on one of the factors of $\pi_1(\overline{U})^2$.  (The argument is similar to that  given in the proof of Lemma \ref{LemmaASplits}.) 

Considering \eqref{EqIsomPi2}, the only possibility (after possibly relabeling) is that $P\rho_i\vert_{\pi_1(\overline{U}^2)}\isom \text{pr}_i^*P\sigma\vert_{\pi_1(\overline{U})}$. 
\end{proof}

\begin{lemma} \label{LemmaDescentToU} There are families of elliptic curves $\E_1',\E_2'\to U$ and characters 
$\chi_1,\chi_2\from \pi_1(U^2)\to \set{\pm{1}}$ such that $\E_i\isom \text{pr}_i^*\E_i'\otimes \chi_i$ for $i=1,2$.  
%$\chi_1,\chi_2\from \pi_1(U)\to \set{\pm 1}$ such that $\E_1=\E_1'\boxtimes \chi_2$ and $\E_2=\chi_1\boxtimes \E_2'$.  This means that $\E_1$ is the twist of $\E_1'\times_{\F_q} U\to U^2$ by the character of $\pi_1(U^2)$ given by $\chi_2$ applied to the second factor, and similarly for $\E_2$.  
\end{lemma}

\begin{proof} We run the argument for $\E_1$ only.  For the proof it will be convenient to distinguish the two copies of $U$:  $U^2=U_1\times U_2$.  We want to show that $\E_1$ is isotrivial relative to $U_2$.  

Let $\overline{\eta}_1\to U_1$ be a geometric generic point.  The base change of $\E_1$ to $\overline{\eta}_1\times U_2$ induces an action of the monodromy group 
$\pi_1(\overline{\eta}_1\times U_2)$ on $\rho_1$;  this action factors through the map $\pi_1(\overline{\eta}_1\times U_2)\to\pi_1(U_1\times U_2)$.  Lemma \ref{LemmaProjectiveRep} shows that this monodromy acts as a scalar, since the composition 
\[ \pi_1(\overline{\eta}_1\times U_2)\to \pi_1(U_1\times U_2)\to \pi_1(U_1) \]
is the identity.  This is enough to show that $\E_1$ is isotrivial relative to $U_2$:  its $j$-invariant lies in the coordinate ring of $\overline{\eta}_1$.  But the $j$-invariant already lies in the coordinate ring of $U_1\times U_2$, so it must lie in the coordinate ring of $U_1$.  

 Let $\E_1'\to U_1$ be a family of elliptic curves with the same $j$-invariant;  there exists a (possibly trivial) character $\chi_i\from \pi_1(U^2)\to \set{\pm 1}$ such that $\E_1$ is isomorphic to the twist $\E_1'\otimes \chi_i$.
\end{proof}

The following lemma completes the proof of Theorem \ref{ThmASeparates}:

\begin{lemma}  There is a family of elliptic curves $\E'\to U$ of conductor bounded by $N$ and an isogeny $\Km(\E_1\times_{U^2} \E_2)\to \Km(\E'\times_{\F_q} \E')$.  
\end{lemma}

\begin{proof} Let $\rho_i'$ be the representation of $\pi_1(U_i)$ on $H^1(\E_i',\overline{\Q}_\ell)$.  According to Lemma \ref{LemmaDescentToU}, $\rho_i\isom \text{pr}_i^*\rho_i'\otimes \chi_i$ as a representation of $\pi_1(U^2)$.  
The isomorphism \eqref{EqIsomPi2} now reads
\[ (\rho_1'\boxtimes \rho_2')\otimes \chi_1\chi_2 \isom \sigma^{\boxtimes 2}\vert_{\pi_1(U^2)} \]    

Since $\mathcal{K}$ acts trivially on the external tensor products $\rho_1\boxtimes \rho_2$ and $\sigma^{\boxtimes 2}$, it lies in the kernel of $\chi_1\chi_2$.  Therefore $\chi_1\chi_2$ factors through the image of $\pi_1(U^2)\to \pi_1(U)^2$.  It is possible to extend $\chi_1\chi_2$ to a character $\chi_1'\boxtimes\chi_2'$ of $\pi_1(U)^2$ valued in $\set{\pm 1}$ such that
\[ (\rho_1'\boxtimes \rho_2')\otimes (\chi_1'\boxtimes\chi_2') \isom \sigma\boxtimes \sigma, \]
in which case $\rho_1'\otimes \chi_1'\isom \rho_2'\otimes \chi_2'\isom\sigma$.  By the isogeny theorem, $\E_1'\otimes\chi_1'$ and $\E_2'\otimes\chi_2'$ are isogenous to the same family of elliptic curves $\E'\to U$, which has conductor bounded by $N$ (since $\sigma$ does).  

Since $\chi_1'\boxtimes \chi_2'$ 
extends $\chi_1\chi_2$, 
the products $\chi_1\pr_1^*\chi_1'$ 
and $\chi_2\pr_2^*\chi_2'$ 
represent the same character on $\pi_1(U^2)$;  
call this common character $\chi$.  
Now note that the formation of the Kummer surface of a product of elliptic curves is insensitive to twisting both curves by a common quadratic character: 
\[
\Km(\E_1\times_{U^2} \E_2)\isom \Km((\E_1\otimes \chi)\times_{U^2} (E_2\otimes\chi))\isom 
\Km((\E_1'\otimes\chi_1')\times_{U^2} (\E_2'\otimes\chi_2')). \]
The latter is isogenous to $\Km(\E'\times_{\F_q} \E')$.  
\end{proof}

Return now to the situation of Theorem \ref{ThmMain2Modularity}.  Let $\E\to \P^1$ be a tame extremal rational elliptic fibration over a finite field.  Then on the one hand $\mathcal{Z}^2(\E)\to U^2$ is a family of K3 surfaces admitting a dominant rational map from $\Sht_G^2(\Gamma_0(N))$, and on the other, there is a finite map $\mathcal{Z}^2(\E)\to \Km(\mathcal{A})$ for a family of abelian surfaces $\mathcal{A}\to U^2$ which splits \'etale-locally as a product of nonisogenous elliptic curves with transcendental $j$-invariant.  Theorem \ref{ThmASeparates} applies, and we find a 2-modular elliptic fibration $\E'\to U$ of conductor bounded by $N$.  Since $N$ has degree 4 and $\E'$ is nontrivial, the conductor of $\E'$ must be exactly $N$.   

We claim that $\E$ and $\E'$ are isogenous over $U$, which would imply that $\E$ is 2-modular as well.  In the semistable cases, there is only one isogeny class of 
conductor $N$ for $p$ large enough, so the isogeny is automatic for those primes.  For the remaining primes, we argue this way:  The objects $\E$ and  $\mathcal{Z}^2(\E)$ can be defined in characteristic 0;  we have found an isogeny between $\mathcal{Z}^2(\E)$ and $\Km(\E'\times\E')$, where $\E'$ is an elliptic fibration in characteristic 0.  After replacing $\E'$ with an isogenous fibration, we have shown that $\E$ and $\E'$ are isomorphic modulo almost all primes;  this shows immediately that they are isomorphic at all primes of good reduction.  

In the unstable cases, there are multiple elliptic fibrations with the same conductor, and we could not find a theoretical argument for why $\E$ and $\E'$ should be isogenous.  In all those cases however we were able to find an explicit isogeny.  

The remaning material in this section consists of calculations performed on each isogeny class of tame extremal rational elliptic fibrations, indicating how to find the required isogeny $\caZ(\E^2)\to \Km(\E\times\E)$.

\subsection{The $\I_2^\ast\I_2\I_2$ (Legendre) fibration, with Mordell-Weil group $(\Z/2\Z)^2$}

The simplest case of Theorem \ref{ThmIsogenyFromZtoKm} concerns the Legendre fibration $\E\to\P^1_t$, with equation
\begin{equation}
\label{EqLegendre}
 y^2=x(x-1)(x-t) 
\end{equation}
over a field $k$ of characteristic $\neq 2$.  Then $\E\to \P^1_t$ has singular fibers $\I_2^*,\I_2,\I_2$ at $t=\infty,0,1$, with split multiplicative reduction at $t=1$.  The conductor is $N=(0)+(1)+2(\infty)$, and the Mordell-Weil group is $(\Z/2\Z)^2$.  Let $U=\P^1\smallsetminus \set{0,1,\infty}$.   We write $t_1,t_2$ for the coordinates on $U^2$.

Let $\Sigma_\infty=\set{1}$ (this choice is unimportant);  we computed in Example \ref{ex:coinc-3} that the space of coincidences $\Coinc^3(\Gamma_0(N);\Sigma_\infty)\isom \P^1_s\times U^2$ is a double cover of $\P^1_t\times U^2$, with equation
\begin{equation}
\label{EqTInTermsOfS}
 t = \frac{s(s-t_1-t_2+1)}{s-t_1t_2} 
 \end{equation}
 (obtained by solving for $t=t_3$ in \eqref{EqMinPolyForCoinc3}).  

The elliptic fibration $\caZ^2(\E)\to \P^1_s\times U^2$ may be defined by substituting \eqref{EqTInTermsOfS} into \eqref{EqLegendre}.  For each pair $(t_1,t_2)\in U^2$, it has singular fibers of type $\I_2^*,\I_2^*,\I_2,\I_2,\I_2,\I_2$ at $s=\infty,t_1t_2,0,t_1,t_2$, and $\delta:=t_1+t_2-1$, respectively.  Generically, the Mordell-Weil group is again $(\Z/2\Z)^2$.  From this we conclude that the Picard lattice of the generic fiber of $\caZ^2(\E)\to U^2$ has rank 18 and determinant $-16$, which agrees with the corresponding data for the lattice of $\Km(A)$, where $A$ is the product of two nonisogenous elliptic curves. In fact the two lattices are isomorphic, suggesting that we can find an isomorphism between $\caZ^2(\E)$ and such a Kummer surface.  This is in fact the case:  there is an {\em isomorphism} (not just a finite rational map) of K3 surfaces over $U^2$:
\[ \caZ^2(\E) \isomto \Km(\E^2)  \]

To find the isomorphism, we should look for an elliptic fibration on $\caZ^2(\E)$ with four $\I_0^*$ fibers.  One $\I_0^*$ configuration can be found within the union of the identity section $\sigma_0$, together with two of the $\I_2^*$s and two of the $\I_2$s, shown here as $2\sigma_0+a_1+a_2+b_1+b_2$:

%\begin{center}
%\setlength{\unitlength}{1cm}
%\thicklines
%\begin{picture}(10,6)
%\put(0,1){\circle*{.2}}
%\put(0,1){\line(1,1){1}}
%\put(1,2){\circle*{.2}}
%\put(1,2){\line(1,-1){1}}
%\put(1,3){\circle*{.2}}
%\put(1,3){\line(0,-1){1}}
%\put(1,3){\line(0,1){1}}
%\put(1,4){\circle*{.2}}
%\put(1,4){\line(-1,1){1}}
%\put(1,4){\line(1,1){1}}
%\put(0,5){\circle*{.2}}
%\put(2,5){\circle*{.2}}
%\put(2,5){\line(3,-1){3}}
%\put(2,1){\circle*{.2}}
%\put(3.5,3.5){\circle*{.2}}
%\qbezier(3.4,2.5)(3.2,3)(3.4,3.5)
%\qbezier(3.6,2.5)(3.8,3)(3.6,3.5)
%\put(3.5,2.5){\circle*{.2}}
%\put(5,4){\circle*{.2}}
%\put(5,4){\line(-3,-1){1.5}}
%\put(5,4){\line(3,-1){1.5}}
%\put(5,4){\line(3,1){3}}
%\put(6.5,3.5){\circle*{.2}}
%\put(6.5,2.5){\circle*{.2}}
%\qbezier(6.4,2.5)(6.2,3)(6.4,3.5)
%\qbezier(6.6,2.5)(6.8,3)(6.6,3.5)
%\put(8,1){\circle*{.2}}
%\put(8,1){\line(1,1){1}}
%\put(9,2){\circle*{.2}}
%\put(9,2){\line(1,-1){1}}
%\put(9,3){\circle*{.2}}
%\put(9,3){\line(0,-1){1}}
%\put(9,3){\line(0,1){1}}
%\put(9,4){\circle*{.2}}
%\put(9,4){\line(-1,1){1}}
%\put(9,4){\line(1,1){1}}
%\put(8,5){\circle*{.2}}
%\put(10,5){\circle*{.2}}
%\put(10,1){\circle*{.2}}
%\put(.5,0){$s=\infty$}
%\put(3,0){$s=\delta$}
%\put(6,0){$s=t_1$}
%\put(8.5,0){$s=t_1t_2$}
%\put(4.75,4.25){$\sigma_0$}
%\put(1.75,5.25){$a_1$}
%\put(3.3,3.75){$b_1$}
%\put(6.3,3.75){$b_2$}
%\put(7.75,5.25){$a_2$}
%\end{picture}
%\end{center}

\begin{center}
\includegraphics[scale=0.75]{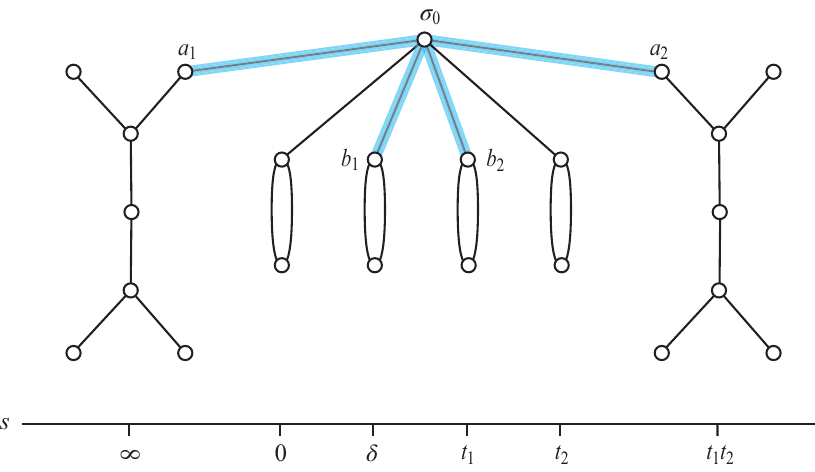}
\end{center}

By Proposition \ref{PropExistenceOfEllFib}(3), there exists an elliptic fibration $\caZ^2(\E)\to \P^1$ containing this $I_0^*$ as a fiber.  
We write it down explicitly:  Introduce a function $w\from \caZ^2(\E)\to \P^1$ by the substitutions (all are equivalent):
\begin{eqnarray*}
x&=&-\frac{(s-\delta)(s-t_1w)}{(s-t_1t_2)(w-1)} \\
x-1&=& -\frac{(s-t_1)(s-t_2-(t_1-1)w)}{(s-t_1t_2)(w-1)}\\
x-t&=&-\frac{(s-\delta)(s-t_1)w}{(s-t_1t_2)(w-1)} 
\end{eqnarray*}

Substituting the above into $y^2=x(x-1)(x-t)$ yields (after absorbing square factors into $y$):
\[ y^2 =-w(w-1)(s-t_1t_2)(s-t_1w)(s-t_2-(t_1-1)w), \]
and then $s=-(w-t_2)u+t_1w$ brings this into the form
\[ y^2=w(w-1)(w-t_2)u(u-1)(u-t_1),\]
which is the Kummer surface $\Km(\E^2)$.  

We used software to find rather ungainly proofs of Theorem \ref{ThmIsogenyFromZtoKm} for some other elliptic fibrations.  But then Masato Kuwata recognized that in all those cases $\caZ^2(\E)$ is related to the {\em Inose surface} of $\E^2$.  He graciously explained to us the connection in the following interlude.

\subsection{Interlude by Masato Kuwata:  Kummer surfaces and Inose surfaces}
\label{sec:MasatoAppendix}

Throughout this note, the base field $k$ is a field of characteristic different from~$2$.

Let $E_{1}$ and $E_{2}$ be two elliptic curves defined over~$k$.
Let $\iota$ be the inversion map $(P,Q)\mapsto (-P,-Q)$ on $E_{1}\times E_{2}$, and let $\bar S=E_{1}\times E_{2}/\class{\iota}$ be the quotient by $\iota$.  The surface $\bar S$ has sixteen double points corresponding to the $2$-torsion points of $E_{1}\times E_{2}$. 
The Kummer surface associated with the product $E_{1}\times E_{2}$, denoted by $\Km(E_{1}\times E_{2})$, is defined as the smooth surface obtained by blowing up these double points. 

There are two obvious elliptic fibrations on $\Km(E_{1}\times E_{2})$ corresponding to the projections $E_{1}\times E_{2}\to E_{i}$ ($i=1,2$).  
\[
\begin{tikzcd}[column sep = small]
             & \hbox to 0pt{\hss $\bar S = $} E_{1}\times E_{2}\hbox to 0pt {$/\class{\iota}$\hss} \arrow[ld, "\pi_{1}"'] \arrow[rd, "\pi_{2}"] &              \\
{\hbox to 0pt{\hss$E_{1}/\{\pm 1\}\simeq$}\P^{1}} &                                                                        & {\P^{1}\hbox to 0pt{$\simeq E_{2}/\{\pm 1\}$\hss}}
\end{tikzcd}
\]

It is known (Oguiso \cite{Oguiso}) that there are eleven different types of elliptic fibrations on $\Km(E_{1}\times E_{2})$ if $k$ is algebraically closed.  Generally, most of these fibrations (excepting the two above) will not be defined over $k$.

``Inose's pencil'' \cite{Kuwata-Shioda} is a genus 1 fibration on $\Km(E_1\times E_2)$ which is always defined over $k$.  To define it, 
choose Weierstrass equations of $E_{1}$ and $E_{2}$:
\begin{align*}
E_{1}: y^2 = x^3 + a_{2}x^2 + a_{4}x + a_{6},\\
E_{2}: y^2 = x^3 + a'_{2}x^2 + a'_{4}x + a'_{6},
\end{align*}
Then an affine model of $\Km(E_{1}\times E_{2})$ is given by
\begin{equation}\label{eq:affine}
(z^3 + a_{2}z^2 + a_{4}z + a_{6})t^{2}
= x^3 + a'_{2}x^2 + a'_{4}x + a'_{6}.
\end{equation}
This equation can be viewed as a cubic curve in $x$, $z$ over the function field $k(t)$.  
The map $\Km(E_{1}\times E_{2})\to \P^{1}$ given by $(x,z,t)\mapsto t$ defines a genus 1 fibration.  This is Inose's pencil.  It does not admit a section unless $E_1$ or $E_2$ has a $k$-rational 2-torsion point.  

Let $J\to\P^1$ be the Jacobian of Inose's pencil \eqref{eq:affine}.  Using the formula in \cite{Artin-et-al:Jacobian}, we obtain the Weierstrass equation for $J$ as follows:
%\adam{Either there is a typo here or there is a clash
%  of conventions.  In order to get these formulas to work I had to change
%$+ 32 a_2 a_4 a'_2 a'_4 - 864 a_6 a'_6$ to
%$- 32 a_2 a_4 a'_2 a'_4 + 864 a_6 a'_6$.}
\begin{multline}\label{eq:Weierstrass}
Y^2 = X^3 + 4a_{2} a'_{2} X^2
+ 16(a_{2}^2 a'_{4} - 3 a_{4}a'_{4} + a_{4} a'_{2}{}^2) X
\\
+ \bigl(\Delta_{E_{1}} t^{2} -
(c_{6} a'_{6} - 32 a_{2} a_{4} a'_{2} a'_{4} + 864 a_{6} a'_{6}
+ c'_{6} a_{6} )
+ \Delta_{E_{2}}t^{-2}\bigr),
\end{multline}
where
\begin{align*}
&\Delta_{E_{1}} = -16(4 a_{2}^3 a_{6} - a_{2}^2 a_{4}^2 - 18 a_{2} a_{4} a_{6} + 4 a_{4}^3 + 27 a_{6}^2),\\
&\Delta_{E_{2}} = -16(4 a'_{2}{}^3 a'_{6} - a'_{2}{}^2 a'_{4}{}^2 - 18 a'_{2} a'_{4} a'_{6} + 4 a'_{4}{}^3 + 27 a'_{6}{}^2),
\\
&c_{6} =-32(2 a_{2}^3 - 9 a_{2} a_{4} + 27 a_{6}),\quad
c'_{6} = -32 (2 a'_{2}{}^3 - 9 a'_{2} a'_{4} + 27 a'_{6}).
\end{align*}

An alternate way to obtain \eqref{eq:Weierstrass} is to substitute $t=(t')^3$ into \eqref{eq:affine}, and then use the rational point $(1:(t')^2:0)$ to convert \eqref{eq:affine} into Weierstrass form (see \cite[\S2.1]{Kumar-Kuwata:Rk18}).

Following \cite{Kumar-Kuwata:Rk18}, the {\em Inose surface} $\Ino(E_{1}\times E_{2})$ may be defined as the quotient of \eqref{eq:Weierstrass} by the involution $t\mapsto -t$.  It has equation:
\begin{multline}\label{eq:Inose}
\Ino(E_{1}\times E_{2}): Y^2 = X^3 + 4a_{2} a'_{2}  X^2
+ 16(a_{2}^2 a'_{4} - 3 a_{4}a'_{4} + a_{4} a'_{2}{}^2)  X
\\
+ \bigl(\Delta_{E_{1}} T -
(c_{6} a'_{6} + 32 a_{2} a_{4} a'_{2} a'_{4} - 864 a_{6} a'_{6}
+ c'_{6} a_{6} )
+ \Delta_{E_{2}}T^{-1}\bigr),
\end{multline}
where $T=t^{2}$ is the parameter.  
%Not sure why this sentence is necessary: This definition is valid in characteristic~$3$.
It has two $\II^{*}$ fibers, at $T=0$ and $\infty$;  all other singular fibers are irreducible.  It is known that, if $E_{1}$ and $E_{2}$ are not isogenous to each other, the transcendental lattice of the Inose surface is isomorphic to $U\oplus U$, where $U=\left(\begin{smallmatrix}0 & 1 \\ 1 & 0\end{smallmatrix}\right)$ is the hyperbolic plane, whereas the transcendental lattice of the Kummer surface is isomorphic to $U(2)\oplus U(2)$.

There is an involution on  $\Ino(E_{1}\times E_{2})$ given by $i:(X,Y,T)\mapsto (X,-Y,\Delta_{E_{2}}/\Delta_{E_{1}}T)$.  The quotient by this involution has equation:
\begin{multline}\label{eq:InoseKummer}
\Ino(E_{1}\times E_{2})/\class{i}: 
Y^2 = X^3 + 4a_{2} a'_{2} (S^2 - 4\Delta_{E_{1}}\Delta_{E_{2}}) X^2
\\
+ 16(a_{2}^2 a'_{4} - 3 a_{4}a'_{4} + a_{4} a'_{2}{}^2)  
(S^2 - 4\Delta_{E_{1}}\Delta_{E_{2}})^{2} X
\\
-
(16 S + c_{6} a'_{6} - 32 a_{2} a_{4} a'_{2} a'_{4} + 864 a_{6} a'_{6}
+ c'_{6} a_{6}  )(S^2 - 4\Delta_{E_{1}}\Delta_{E_{2}})^3.
\end{multline}
(Note the sign $-Y$ in the definition of the involution.  Without it, the resulting quotient would be a rational surface.)
In fact $\Ino(E_1\times E_2)/\class{i}$ is isomorphic to $\Km(E_1\times E_2)$, and the quotient map is nothing but the rational map $\pi_{2}$ Shioda and Inose \cite{Shioda-Inose} used to construct the so-called Shioda-Inose structure:
\[
\begin{tikzcd}[column sep = tiny]
\Ino(E_{1}\times E_{2}) \arrow[rd, "\pi_2"' near start, dashed] &          & E_{1}\times E_{2} \arrow[ld, "\pi_1" near start, dashed] \\
                               &\Km(E_{1}\times E_{2}) &                                 
\end{tikzcd}
\]
Here $\pi_1$ and $\pi_2$ are each dominant rational maps of degree 2.  
The isomorphism between the quotient $\Ino(E_{1}\times E_{2})/\class{i}$ and $\Km(E_{1}\times E_{2})$ is defined only over an extension of $k$ containing all the $2$-torsion points of both $E_{1}$ and $E_{2}$.

%%Added by JW
In general, there is always a $k$-rational morphism $\Km(E_1\times E_2)\to \Ino(E_1\times E_2)$ of degree 8, using the degree 2 multisection of Inose's pencil to get a degree 4 morphism $\Km(E_1\times E_2)\to J$, followed by the degree 2 morphism $J\to \Ino(E_1\times E_2)$.  

\subsection{The remaining unstable fibrations}

We consider one from each isogeny class.

\subsubsection{The $\II^*\I_1\I_1$ fibration, with Mordell-Weil group $0$}
\label{subsec:II*}
Assume that $\car k\neq 2,3$.  Consider the elliptic fibration $\E\to \P^1_k$ defined by the Weierstrass equation:
%\begin{equation}
%\label{EqEII}
%y^2 = x^3 - 27x + 54(2t - 1).
%\end{equation}
\begin{equation}
\label{EqEII}
y^{2}=x^3 - 3x - 2(2t - 1)
\end{equation}
The singular fibers of $\E\to\P^1_k$ are of type $\I_1,\I_1,\II^\ast$ at $0,1,\infty$, respectively.
%\[
%\left\{
%\begin{array}{cll}
%\I_{1} &(\text{nonsplit})& t=0 \\
%\I_{1} &(\text{split})  & t=1\\
%\II^{*} &  & t=\infty
%\end{array}\right.
%\]

\begin{prop} There is an isomorphism $\caZ^2(\E)\isom \Ino(\E^2)$ of varieties over $U^2$.
\end{prop}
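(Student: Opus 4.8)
The plan is to write down an explicit Weierstrass model of $\caZ^2(\E)\to\P^1_s$ and to match it with the Weierstrass equation \eqref{eq:Inose} of the Inose surface, specialized to $E_1=\E_{t_1}$ and $E_2=\E_{t_2}$ (the fibers of $\E$ over $t_1$ and $t_2$). Recall from Example \ref{ex:coinc-3} that $\caZ^2(\E)$ is the base change of $\E\to\P^1_t$ along the coincidence double cover $\P^1_s\to\P^1_t$ of \eqref{EqTInTermsOfS}, namely $t=s(s-t_1-t_2+1)/(s-t_1t_2)$. Substituting this into \eqref{EqEII} and abbreviating $\delta'=t_1+t_2-1$, I would obtain over $\P^1_s$ the equation $y^2=x^3-3x+g(s)$ with
\[ g(s)=-2\,\frac{2s^2-(2\delta'+1)s+t_1t_2}{s-t_1t_2}, \]
a rational function with a simple pole at $s=t_1t_2$ and linear growth as $s\to\infty$. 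That the coefficient of $x$ is already constant is the first indication that the result ought to be an Inose surface.

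First I would identify the Kodaira fibers of $\caZ^2(\E)\to\P^1_s$. The map $t(s)$ has degree $2$; it is unramified over $t=\infty$, with $\{s=t_1t_2,\ s=\infty\}$ lying over $t=\infty$, while $\{s=0,\ s=\delta'\}$ and $\{s=t_1,\ s=t_2\}$ lie over $t=0$ and $t=1$, and its two branch points lie over smooth fibers of $\E$. Hence the $\II^*$ fiber of $\E$ at $t=\infty$ pulls back to two $\II^*$ fibers, each $\I_1$ pulls back to two $\I_1$ fibers, and the branch points contribute nothing. By \eqref{EqEulerNumberFormula} the Euler number is $2\cdot 10+4\cdot 1=24$, so $\caZ^2(\E)$ is a K3 surface with fiber configuration $\II^*,\II^*,\I_1,\I_1,\I_1,\I_1$ --- precisely that of the Inose surface of a product of two nonisogenous elliptic curves.

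Next I would bring the equation into Inose normal form. Applying the M\"obius change of base $T=\mu\,(s-t_1t_2)$, which sends the two $\II^*$ fibers to $T=0,\infty$, together with a constant Weierstrass rescaling $x=\lambda^2X$, $y=\lambda^3Y$ with $\lambda^4=144$, the equation takes the shape $Y^2=X^3-432X+(\alpha T+\beta+\gamma T^{-1})$. I would then compare with equations \eqref{eq:Weierstrass} and \eqref{eq:Inose} for $E_1=\E_{t_1}$, $E_2=\E_{t_2}$: here $a_2=a_2'=0$, $a_4=a_4'=-3$, and $a_6,a_6'=-2(2t_i-1)$, so that the $X^2$-coefficient vanishes, the $X$-coefficient is $-432$, and $\Delta_{\E_{t_i}}=-6912\,t_i(t_i-1)$. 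Matching the pole coefficients at $T=0$ and $T=\infty$ determines $\mu$ and identifies $\alpha=\Delta_{\E_{t_1}}$ and $\gamma=\Delta_{\E_{t_2}}$; one then checks that the remaining coefficient $\beta$ agrees. Since every substitution is rational in $(t_1,t_2)$, the isomorphism is defined over $k(t_1,t_2)$ and propagates to an isomorphism of elliptic K3 fibrations over all of $U^2$, the two families degenerating in the same way along loci such as $t_1+t_2=1$, where $\E_{t_1}$ and $\E_{t_2}$ become isomorphic over $\bar k$.

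The main obstacle is this final matching: one must verify that the two elliptic curves produced by the Inose normal form are exactly $\E_{t_1}$ and $\E_{t_2}$, rather than isogenous or quadratic-twisted variants, which requires carefully tracking the overall scaling $\lambda$ and all signs through the normalization. This is the computation the authors describe as ungainly and carry out with software; the conceptual content supplied by the appendix is the recognition that the target is $\Ino(\E^2)$, so that the problem reduces to the comparison of two completely explicit Weierstrass equations. The vanishing of $a_2$ for this particular $\E$ keeps the algebra tractable, but confirming that the normalization yields $\E\times\E$ on the nose --- and not a twist --- is where the real work lies.
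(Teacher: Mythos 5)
Your proposal is correct and follows essentially the same route as the paper: substitute the coincidence relation \eqref{EqTInTermsOfS} into \eqref{EqEII}, note that the resulting fibration over $\P^1_s$ has its two $\II^*$ fibers at $s=t_1t_2$ and $s=\infty$, and identify it with $\Ino(E_1\times E_2)$ by a fractional-linear change of the base parameter together with a constant Weierstrass rescaling --- the paper simply writes the substitution explicitly as $s=t_2(Tt_1-t_2+1)/T$, i.e.\ $T=t_2(1-t_2)/(s-t_1t_2)$, which differs from your $T=\mu(s-t_1t_2)$ only by the symmetry $T\mapsto\Delta_{E_{2}}/(\Delta_{E_{1}}T)$ of the Inose equation. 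Your matching of the pole coefficients at $T=0,\infty$ is exactly how that constant is found, and the remaining check of the middle coefficient (to rule out a twist) that you flag is precisely the final verification the paper carries out.
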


\begin{proof}  For convenience we work over the generic fiber $\eta_2=\Spec k(t_1,t_2)$ of $U^2$.  We have $\E^2_{\eta_2}=E_1\times E_2$, where for $i=1,2$,  $E_i/k(t_1,t_2)$ is the elliptic curve
\[ E_i: y^2=x^3-3x-2(2t_i-1). \]
The Inose surface $\Ino(E_{1}\times E_{2})$ has equation
\[
\Ino(E_{1}\times E_{2}):
Y^2 = X^3 - 3X + 2 \Bigl( 2 t_{1} (t_{1} - 1) T 
- (2 t_{1} - 1) (2 t_{2} - 1) 
+ \frac{2 t_{2} (t_{2} - 1)}{T}\Bigr),
\]
with $\II^\ast$ fibers at $T=0,\infty$.
On the other hand the surface $\caZ^2(\E)$ is obtained by substituting 
%
%
%Let $s$ be a parameter satisfying
%\[
%s^2 + (t_{1}+t_{2}+t_{3}-1)s -t_{1}t_{2}t_{3}=0.
%\]
%The base change of $E_{3}$ by 
\begin{equation}\label{eq:base-change-3}
t = \frac{s(s - t_{1} - t_{2} + 1)}{s - t_{1} t_{2}}
\end{equation}
into \eqref{EqEII}, giving
\begin{equation}
\label{eq:tau}
Y^{2} = X^3 - 3(s - t_{1} t_{2})^4 X 
-2\bigl(2s^2 - (2 t_{1} + 2 t_{2} - 1) s + t_{1} t_{2}\bigr) (s - t_{1} t_{2})^5
\end{equation}
Like $\Ino(E_1\times E_2)$, the fibration $\caZ^2(\E)$ also has two $\II^\ast$ fibers, located at $s=\infty,t_1t_2$. 
It is now easy to see that the linear transformation
\[
s =\frac{t_{2} (T t_{1} - t_{2} + 1)}{T}
\]
transforms \eqref{eq:tau} to $\Ino(E_{1}\times E_{2})$.
%So, we have the diagram
%\[
%\begin{tikzcd}
%\Ino(E_{1}\times E_{2}) \arrow[r] \arrow[d] & E_{3} \arrow[d] \\
%\operatorname{Coinc}(P_{0},P_{1},P_{\infty})^\circ \arrow[r]            & \mathbf{P}^{1}_{t_{3}}  
%\end{tikzcd}
%\]
\end{proof}

\subsubsection{The $\III^{*}\I_2\I_1$ fibration, with Mordell-Weil group $\Z/2\Z$}\label{subsec:III*}
Assume that $\car k\neq 2$.  
Let $\E\to \P^1_k$ be the elliptic fibration with Weierstrass equation:
\begin{equation}
\label{EqEIII}
y^2 = x(x^2-2x+t) 
\end{equation}
The singular fibers of $\E\to\P^1_k$ are of type $\I_2,\I_1,\III^\ast$ at $t=0,1,\infty$, respectively.  The Mordell-Weil group is $\Z/2\Z$.

\begin{prop} There is a finite morphism $\caZ^2(\E)\to \Ino(\E^2)$ of degree 4 of varieties over $U^2$.
\end{prop}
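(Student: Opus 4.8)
The plan is to follow the explicit strategy of \S\ref{subsec:II*}, but to insert a single $2$-isogeny: whereas in the $\II^*$ case $\caZ^2(\E)$ and $\Ino(\E^2)$ turned out isomorphic, here their Picard lattices have discriminants $-4$ and $-1$, so no isomorphism is possible and one expects precisely a degree $4$ map.

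First I would write down the Weierstrass model of $\caZ^2(\E)$ by substituting the coincidence base change \eqref{eq:base-change-3} into \eqref{EqEIII} and clearing denominators; this produces an elliptic surface over $\P^1_s$ whose discriminant factors completely, revealing singular fibers of type $\III^*,\III^*,\I_2,\I_2,\I_1,\I_1$. The two $\III^*$ fibers sit at $s=\infty$ and $s=t_1t_2$, the two points lying over $t=\infty$ (the base change being unramified there). The $2$-torsion point $(0,0)$ of \eqref{EqEIII} pulls back to a $2$-torsion section, so the Mordell--Weil group contains $\Z/2\Z$, and the Shioda--Tate formula \eqref{EqShiodaTate} gives $\rho=18$ together with $|\disc \Pic\caZ^2(\E)| = (2\cdot2\cdot2\cdot2)/2^2 = 4$.

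Next I would exhibit a second genus $1$ fibration on $\caZ^2(\E)$ carrying two $\II^*$ fibers. The $E_7$ lattice of one $\III^*$ fiber, the $A_1$ of an adjacent $\I_2$ fiber, and the zero- and $2$-torsion sections can be assembled into an effective class $F'$ with $(F',F')=0$ whose supporting curves form a $\tilde E_8$ configuration; Proposition \ref{PropExistenceOfEllFib} then realizes $F'$ as the fiber class of a genus $1$ fibration. Since a K3 surface carrying two $\II^*$ fibers \emph{and} a section has Picard discriminant a unit times $-1$, whereas $\disc\Pic\caZ^2(\E)=-4$, this fibration cannot have a section: its multisection index is forced to be $2$. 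Passing to the Jacobian (Lemma \ref{lem:keum}) therefore yields a finite morphism $\caZ^2(\E)\to J'$ of degree $2^2=4$, where $\Pic J' = (\Pic\caZ^2(\E))[F'/2]$ is an even overlattice of index $2$, so that $|\disc\Pic J'|=1$ and $J'$ again has two $\II^*$ fibers.

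Finally I would identify $J'$ with $\Ino(\E^2)$. Using the formula of \cite{Artin-et-al:Jacobian} I would compute an explicit Weierstrass equation for $J'$ and match it, after a M\"obius substitution in the base parameter and a rescaling of $X,Y$, with the Inose equation of \S\ref{sec:MasatoAppendix} specialized to $E_1=E_2=\E$, i.e. $a_2=a_2'=-2$, $a_4=t_1$, $a_4'=t_2$, $a_6=a_6'=0$. The composite $\caZ^2(\E)\to J'\isom \Ino(\E^2)$ is then finite of degree $4$ and commutes with the projections to $U^2$ by construction. The hard part is this last matching: one must pin down the correct $\tilde E_8$ configuration (equivalently the class $F'$) and verify that its Jacobian is not merely \emph{some} rank $18$, discriminant $-1$ K3 surface but precisely $\Ino(\E\times\E)$, which amounts to following the transcendental data through the $2$-isogeny. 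In practice this is a finite but unwieldy symbolic computation, best confirmed by machine as in \cite{code}.
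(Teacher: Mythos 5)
Your proposal is correct and follows essentially the same route as the paper: base-change to get the $\III^*,\III^*,\I_2,\I_2,\I_1,\I_1$ fibration, locate a second genus~1 fibration with two $\II^*$ fibers inside the fiber-and-section configuration, pass to its Jacobian via the degree~2 multisection to get a degree~4 map, and identify the Jacobian with $\Ino(\E^2)$ by an explicit Weierstrass computation (a 2-neighbor step with elliptic parameter $w=x/(s-\delta)$). The only cosmetic difference is that you deduce the multisection index~2 from a discriminant comparison, whereas the paper simply exhibits a component of a reducible fiber of the original fibration meeting the new fiber class with multiplicity~2.
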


%\begin{proof}  Once again, let $E_1,E_2/k(t_1,t_2)$ be two copies of the generic fiber of $\E$, so that $\E^2_{\eta}=E_1\times E_2$.   Then $\Ino(E_1\times E_2)$ is the fibration with Weierstrass equation
%\begin{equation}
%\label{EqInoOfIII}
%Y^2=X^3+4X^2+(4t_1+4t_2-3t_1t_2)X \\
%+(-t_1^2(t_1-1)T+2t_1t_2-t_2^2(t_2-1)T^{-1})
%\end{equation}
%The fibration $\Ino(\E_1\times E_2)\to\P^1_\eta$ has $\II^\ast$ fibers at $T=0,\infty$. 
\begin{proof}
The surface $\caZ^2(\E)$ obtained by substituting \eqref{EqTInTermsOfS} into \eqref{EqEIII} is an elliptic fibration 
%with 
%Weierstrass equation
%\[ 
%y^2=x(x^2-2(s-t_1t_2)^2x + s(s-\delta)(s-t_1t_2)^3, \]
with singular fibers of type $\III^*,\III^*,\I_2,\I_2,\I_1,\I_1$ at $s=\infty,t_1t_2,0,\delta=t_1+t_2-1,t_1,t_2$, respectively, with Mordell-Weil group again $\Z/2\Z$.  
%\adam{fixed a couple of typos in the last sentence} 
Let $\sigma_0$ and $\sigma_1$ be the identity and 2-torsion section, respectively.  We display here the $\III^*$ and $\I_2$ fibers along with $\sigma_0$ and $\sigma_1$:

\begin{center}
\includegraphics[scale=.75]{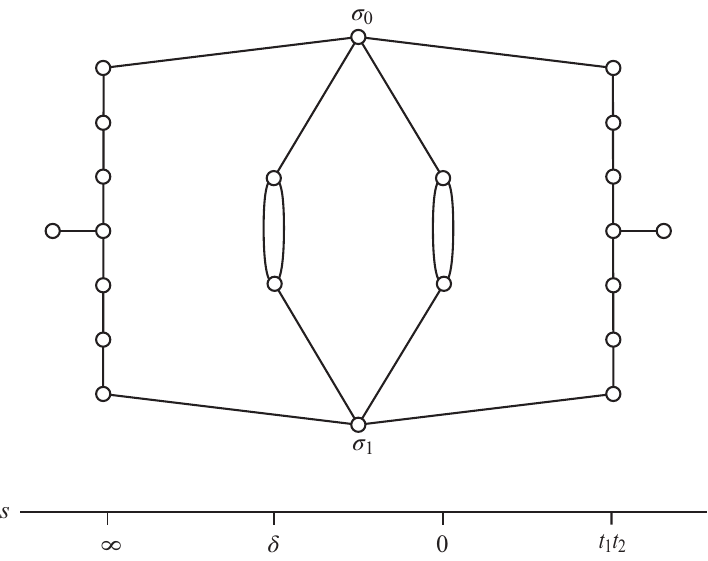}
\end{center}

This configuration contains two $\II^*$ fibers, highlighted below:

 \begin{center}
 \includegraphics[scale=.75]{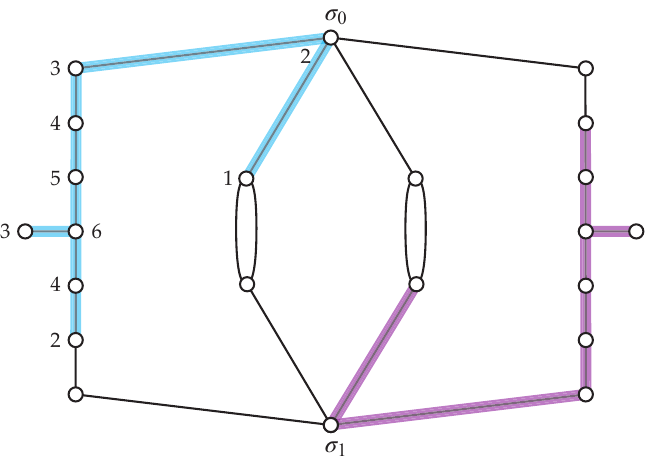}
\end{center}

By Proposition \ref{PropExistenceOfEllFib}, there exists an elliptic fibration $\caZ^2(\E)\to \P^1$ with two $\II^\ast$ fibers.  
To compute it explicitly, we use the so-called ``$2$-neighbor step'' developed by Noam Elkies.  For comprehensible accounts, see \cite{Kumar:2014}, \cite{Sengupta:2017}, \cite{Utsumi:2012}. 
%I LaTex'd the diagram.
%\[
%\includegraphics[scale=0.8]{III-star}
%\]
%The result is that
Since the divisor of the function $x$ equals $2(\sigma_{1})-2(\sigma_{0})$, the pole of the function
\[ %w = \frac{x}{s(s-t_1t_2)^3}
 w = \frac{x}{s-\delta}
\]
coincides with the divisor colored in blue. Thus, $w$
defines a genus 1 fibration on $\caZ^2(\E)$ with these $\II^\ast$ fibers at $w=0$ and $w=\infty$, with equation
\begin{equation}
\label{EqHyperellipticQuartic}
%y^2 = w^3s^4 - 3t_1t_2w^3s^3
%+(3t_1^2t_2^2w^3-2w^2)s^2+(-t_1^3t_2^3w^3+4t_1t_2w^2+w)s-2t_1^2t_2^2w^2-(t_1+t_2-1) w
y^{2}=w s^4 - 3 t_{1}t_{2} w s^3 + (3 t_{1}^2 t_{2}^2 w - 2 w^{2}) s^2 
- (t_{1}^3 t_{2}^3 w - 4 t_{1} t_{2} w^{2} - w^3) s 
-  2 t_{1}^2 t_{2}^2 w^2 - (t_{1} + t_{2} - 1) w^{3}
\end{equation}
This fibration has no section, but it does have a multisection $D$ of degree 2.  Indeed, any of the uncolored vertices in the figure represents a curve which meets each fiber with multiplicity 2.   Therefore there is a degree 4 morphism from $\caZ^2(\E)$ onto the Jacobian $J$ of the fibration.   Standard formulas supply the Weierstrass equation for $J$ in terms of the coefficients of the quartic in \eqref{EqHyperellipticQuartic}.  After a further substitution $w=-t_2^2(t_2-1) T$, this Weierstrass equation becomes
\begin{equation}
\label{EqInoOfIII}
Y^2=X^3+4X^2+(4t_1+4t_2-3t_1t_2)X \\
+(-t_1^2(t_1-1)T+2t_1t_2-t_2^2(t_2-1)T^{-1}),
\end{equation}
which we recognize as the equation for $\Ino(E_1\times E_2)$.  
\end{proof}
%\begin{equation}\label{eq:III->II}
%w = -\frac{1}{4 t_{1}^2 (t_{1}-1)}\cdot\frac{X}{(S - t_{1} - t_{2} + 1))}.
%\end{equation}
%(The first part is a fudge factor which makes the resulting equation coincide exactly with the Inose surface.)
%With this parameter, the new equation becomes
%\begin{equation}\label{eq:II-no-section}
%Y^{2} = -w\bigl((t_{1} - 1) X^4 - t_{2} X^3  - 2 w X^2  -  t_{1} w^2 X - (t_{2} - 1) w^2\bigr).
%\end{equation}
%We can now verify that the Jacobian of \ref{eq:II-no-section} is nothing but the Inose surface $\Ino(E_{1}\times E_{2})$.  

In this case, each $E_i$ has a 2-torsion point $(0,0)$, so the Inose pencil \eqref{eq:affine} on $\Km(E_1\times E_2)$ admits a section, and there is a double cover $\Km(E_1\times E_2)\to \Ino(E_1\times E_2)$. 

\subsubsection{The $\IV^*\I_3\I_1$ fibration, with Mordell-Weil group $\Z/3\Z$}\label{subsec:IV*}
Assume that $\car k\neq 2,3$.   Let $\E\to \P^1_k$ be the elliptic fibration with Weierstrass equation
\begin{equation}
\label{EqEIV}
 y^2 = x^3 + 9 x^2 + 24 t x + 16 t^2. 
 \end{equation}
The singular fibers of $\E\to\P^1_k$ are of type $\I_3,\I_1,\IV^{\ast}$ at $t=0,1,\infty$, respectively.  The Mordell-Weil group is $\Z/3\Z$, generated by $(0,4t)$.  

\begin{prop} There is a morphism $\caZ^2(\E)\to \Ino(\E^2)$ of degree 9 of varieties over $U^2$.
\end{prop}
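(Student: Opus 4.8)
The plan is to imitate the $\III^\ast\I_2\I_1$ argument, but now extracting two $\IV^\ast$ ($\tilde E_6$) fibers and promoting them to $\II^\ast$ ($\tilde E_8$) fibers. First I would substitute the base change \eqref{EqTInTermsOfS}, $t = s(s-t_1-t_2+1)/(s-t_1t_2)$, into \eqref{EqEIV} to obtain an explicit Weierstrass model for $\caZ^2(\E)$ as an elliptic fibration over $\P^1_s\times U^2$. Since the double cover $s\mapsto t$ is unramified over $t=\infty,0,1$ for generic $(t_1,t_2)$, I expect singular fibers of type $\IV^\ast,\IV^\ast,\I_3,\I_3,\I_1,\I_1$, located at $s=\infty,t_1t_2$ (over $t=\infty$), $s=0,\delta=t_1+t_2-1$ (over $t=0$), and $s=t_1,t_2$ (over $t=1$), with Mordell-Weil group still $\Z/3\Z$ generated by the image of $(0,4t)$. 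The Shioda-Tate formula \eqref{EqShiodaTate} then gives $\rho=2+16+0=18$.

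Next I would locate, inside $\NS(\caZ^2(\E))$, the class $F'$ of a genus $1$ fibration with two $\II^\ast$ fibers, which is the signature of an Inose surface. The building blocks are the components of the two $\IV^\ast$ fibers (each an $\tilde E_6$), the components of an $\I_3$ fiber (an $A_2$), and the three-torsion sections $\sigma_0,\sigma_1,\sigma_2$, whose incidences with the three multiplicity-one components of each $\IV^\ast$ allow one to extend an $E_6$ to an $E_8$. I would exhibit two disjoint $E_8$ configurations of $(-2)$-curves; their supporting divisor is $\O(\Pic S)$-equivalent to a nef class $F'$ with $(F',F')=0$, and Proposition~\ref{PropExistenceOfEllFib}(1),(2) produces the fibration together with a multisection $D$ of degree $d=\gcd_x(F',x)$.

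I would then make this fibration explicit via a $3$-neighbor step (\cite{Kumar:2014},\cite{Sengupta:2017},\cite{Utsumi:2012}): choose a function $w$ on $\caZ^2(\E)$ whose polar divisor is exactly the $\II^\ast$ configuration, analogous to the choice $w=x/(s-\delta)$ in the $\III^\ast$ case (here $w$ will be a similar ratio adapted to the $3$-torsion structure). This presents $\caZ^2(\E)$ as a genus $1$ fibration with no section but a degree-$3$ multisection, and the Jacobian map $P\mapsto 3P-D$ then gives a degree $3^2=9$ morphism $\caZ^2(\E)\to J$. Computing the Weierstrass equation of $J$ from the quartic model and performing a linear substitution in the base parameter $T$ should yield exactly the equation \eqref{eq:Inose} for $\Ino(E_1\times E_2)$ with $E_i:y^2=x^3+9x^2+24t_ix+16t_i^2$, identifying $J$ with $\Ino(\E^2)$ over $U^2$.

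The main obstacle will be the explicit neighbor-step computation: pinning down the precise $w$ that cuts out the two $\II^\ast$ fibers, and then verifying (with software) that the Jacobian of the resulting pencil transforms into $\Ino(\E^2)$. A useful consistency check that also forces $d=3$ is the discriminant: the trivial lattice $U\oplus E_6^{\oplus 2}\oplus A_2^{\oplus 2}$ has discriminant $-81$, so the $\Z/3\Z$ torsion gives $\disc\Pic\caZ^2(\E)=-81/9=-9$. Since the Inose surface is unimodular ($\disc=-1$), Lemma~\ref{lem:keum} forces the multisection degree to satisfy $d^2=9$, i.e.\ $d=3$, which is exactly what pins the morphism's degree at $9$.
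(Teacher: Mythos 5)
Your proposal follows essentially the same route as the paper: the same base change and fiber analysis giving $\IV^\ast,\IV^\ast,\I_3,\I_3,\I_1,\I_1$ with $\Z/3\Z$ torsion, the same two $\II^\ast$ configurations assembled from the $\IV^\ast$ fibers, the $\I_3$ components and the $3$-torsion sections, and the same $3$-neighbor step yielding a genus $1$ fibration with a degree-$3$ multisection and hence a degree $9$ map to its Jacobian. The only small discrepancy is at the very end: the Jacobian the paper computes is the quadratic twist of $\Ino(\E^2)$ by $-3$ rather than literally the displayed Inose equation, a detail your (otherwise welcome) discriminant consistency check cannot detect.
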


\begin{proof} 
%Write $\E^2_{\eta}=E_1\times E_2$ as usual.  
%%Twisting $E_{\IV^{*}}$ by $-3$ makes the fiber at $t=0$ nonsplit and the one at $t=1$ split.
%%So, take three copies of $E_{\IV^{*}}^{(-3)}$:
%%$E_{i} : y^2 = x^3 - 27 x^2 + 216 t_{i} x -432 t^2$ ($i=1,2,3$).
%The Inose surface $\Ino(E_{1}\times E_{2})$ is given by 
%\begin{multline*}
%\Ino(E_{1}\times E_{2}):
%Y^2 = X^3 - 27 (8 t_{1} - 9)(8 t_{2} - 9) X 
%\\
%- 54\Bigl(32 t_{1}^3 (t_{1} - 1) T - (8 t_{2}^2 - 36 t_{2} + 27) (8 t_{1}^2 - 36 t_{1} + 27) + \frac{32 t_{2}^3 (t_{2} - 1))}{T}\Bigr)
%\end{multline*}
The surface $\caZ^2(\E)$ is obtained by substituting $t=s(s-t_1-t_2+1)/(s-t_1t_2)$ into \eqref{EqEIV}, giving
\begin{equation}
\label{eq:base-ch-IV}
y^{2} = x^{3} +9  (s -t_{1} t_{2})^2 x^{2} 
+ 24 s (s - \delta) (s - t_{1} t_{2})^3 x+16 s^2 (s - \delta)^2 (s - t_{1} t_{2})^4,
\end{equation}
It has singular fibers of type $\IV^\ast,\IV^\ast,\I_3,\I_3,\I_1,\I_1$ at $s=\infty,t_1t_2,0,t_1+t_2-1,t_1,t_2$, respectively, with Mordell-Weil group again $\Z/3\Z$.  Let $\sigma_0$ be the identity section and let $\sigma_1$ be one of the 3-torsion sections.   We display here the $\IV^\ast$ and $\I_3$ fibers along with $\sigma_0$ and $\sigma_1$:

\begin{center}
\includegraphics[scale=.75]{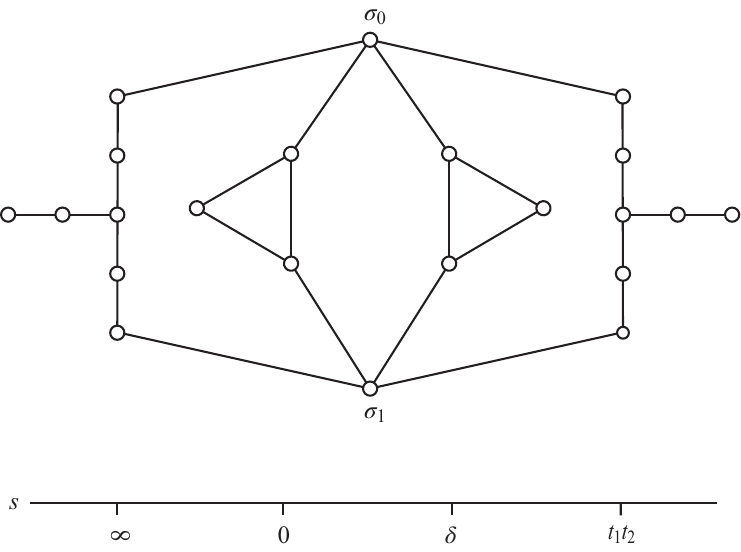}
\end{center}

This configuration contains two $\II^\ast$ fibers, highlighted below:
\begin{center}
\includegraphics[scale=.75]{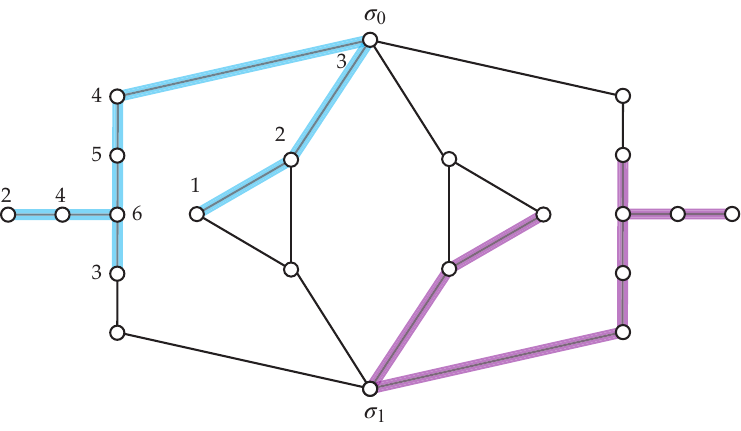}
\end{center}

Therefore there exists a genus 1 fibration $\caZ^2(\E)\to \P^1\times U^2$ with two $\II^{*}$ fibers.  
To perform a $3$-neighbor step, we first find the tangent line at each point of the $3$-torsion section $\sigma_{1}=\bigl(0,4s(s - t_{1} - t_{2} + 1)(s - t_{1}t_{2})^2\bigr)$:
\[
y=3(s-t_{1}t_{2})x + 4s(s - t_{1} - t_{2} + 1)(s-t_{1}t_{2})^2.
\]
Using this, we find an elliptic parameter 
\begin{equation}
\label{paramIV}
w = -\frac{t_{2}^3(t_{2} - 1)}{8}\cdot
\frac{y - 3(s-t_{1}t_{2})x - 4s(s - t_{1} - t_{2} + 1)(s-t_{1}t_{2})^2}{s^2(s-t_{1}t_{2})^4},
\end{equation}
which leads to a cubic curve in $x'$ and $s$ with parameter~$w$:
\begin{equation}
\label{EqIVCubic}
t_{2}^6 (t_{2} - 1)^2 {x'}^3 + 12 t_{2}^3 (t_{2} - 1) w (s -t_{1} t_{2}) x'  + 8 t_{2}^3 (t_{2} - 1) w (s - t_{1} - t_{2} + 1) - 8 w^2 s (s - t_{1} t_{2})^2,
\end{equation}
where $x'=2s(s-t_{1}t_{2})^2 x$.

This fibration lacks a section, but it does have a multisection of degree~$3$.  Indeed, any of the uncolored vertices in the figure represents a curve which meets each fiber with multiplicity~$3$.  Therefore there exists a degree~$9$ map from $\caZ^2(\E)$ to the Jacobian $J$ of the fibration.  The Weierstrass equation for $J$ is\[
Y^2 = X^3 - 3  (8 t_{2} - 9) (8 t_{1} - 9)X 
+ 2 (32 t_{1}^3 (t_{1} - 1) w - (8 t_{2}^2 - 36 t_{2} + 27) (8 t_{1}^2 - 36 t_{1} + 27) + 32 t_{2}^3 (t_{2} - 1) w^{-1}),
\]
which coincides with the twist of $\Ino(\E^2)$ by~$-3$.%
%\footnote{Adam: There seem to be a couple of sign errors.  I believe that what Masato meant to write is
%$$Y^2 = X^3 - 3  (8 t_{2} - 9) (8 t_{1} - 9)X 
%+ 2 (32 t_{1}^3 (t_{1} - 1) w - (8 t_{2}^2 - 36 t_{2} + 27) (8 t_{1}^2 - 36 t_{1} + 27) + 32 t_{2}^3 (t_{2} - 1) w^{-1}).$$}
%He actually wrote Y^2 = X^3 + 3  (8 t_{2} - 9) (8 t_{1} - 9)X 
% + 2 (32 t_{1}^3 (t_{1} - 1) w + (8 t_{2}^2 - 36 t_{2} + 27) (8 t_{1}^2 - 36 t_{1} + 27) + 32 t_{2}^3 (t_{2} - 1) w^{-1})

\end{proof}

\subsection{The semistable fibrations}

The verification of Theorem \ref{ThmIsogenyFromZtoKm} for the semistable extremal rational elliptic fibrations is more difficult, since now one must keep track of the locations of the singular fibers of $\E\to\P^1$.  Each time, we found a genus 1 fibration on $\caZ^2(\E)$ with two $\II^\ast$ fibers, which means we can apply the techniques of \S\ref{sec:MasatoAppendix}, and conclude that there is a product of elliptic curves $\E_1\times \E_2\to U^2$ and a finite morphism $\caZ^2(\E)\to \Km(\E_1\times \E_2)$ commuting with the maps to $U^2$.  We demonstrate this fact with the figures that follow.

However, we were not able to directly compute $\E_1\times\E_2$ in all cases.  
The trouble is that in two of the cases, the genus 1 fibration on
$\caZ^2(\E)$ with two $\II^\ast$ fibers has a multisection of degree 5
(resp., 6).  There is currently no explicit formula for the Weierstrass
equation of the Jacobian of a genus 1 curve with such a multisection.
(Compare with \cite{Artin-et-al:Jacobian}, which has formulas
for the Jacobian of a plane cubic, and with \cite{akmmmp}, which shows
how to proceed for the intersection of two quadrics in $\P^3$.)
Fisher \cite{Fis18} gives a method for finding the equation in all degrees,
and it has
been implemented in Magma for degree~$5$.  However, the implementation
did not conclude
within a reasonable time in the example that arose.  For degree $6$, even
this resource is not available.

\begin{rmk}\label{rem:elliptic-param}
  In all four cases, there is an elliptic parameter for the $\II^*$ fibration
  whose divisor is of the form $d(\sigma_1 - \sigma_0) + F$, where $d$ is
  the torsion order of the semistable fibration and $\sigma_1-\sigma_0$
  generates the torsion group, while $F$ is a fibral divisor for this
  fibration.  We do not have a unified explanation for this fact.  If we
  were also to consider the curves $E$ with torsion subgroup
  $\Z/3\Z \oplus \Z/3\Z$ and $\Z/4\Z \oplus \Z/2\Z$, isogenous to the first
  two considered below, we would not find fibrations with two $\II^*$ fibres
  on $\mathcal{Z}^2(\E)$.  The reason for this is that the discriminant
  of the Picard lattice is $t^2$, where $t$ is the torsion order, and so
  the multisection degree would have to be $t$.  However, the discriminant
  group has no elements of order $t$, and therefore no such fibration exists
  by Lemma~\ref{lem:keum}.
\end{rmk}

\subsubsection{The $\I_9\I_1\I_1\I_1$ fibration, with Mordell-Weil group $\Z/3\Z$}

The reducible fibers of $\caZ^2(\E)\to\P^1\times U^2$ are of type $\I_9,\I_9$, and these together with the trivial section and a section of order 3 contain two $\II^\ast$ configurations as shown below:

\begin{center}
\includegraphics[scale=.75]{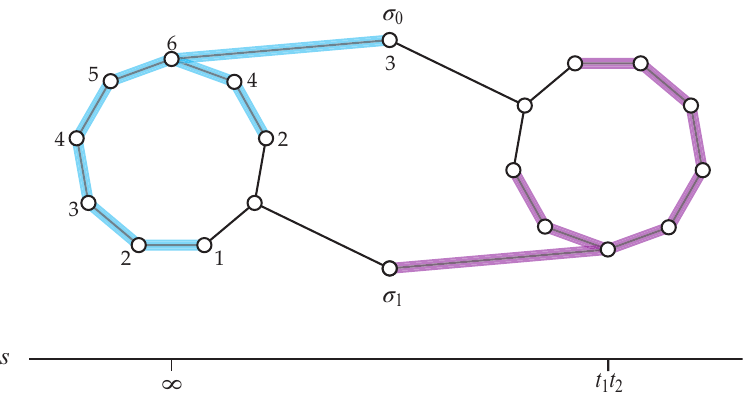}
\end{center}

The genus 1 fibration on $\caZ^2(\E)$ with these two $\II_2^\ast$ fibers has a degree 3 multisection, represented by any of the uncolored vertices (or the other torsion section which is not shown).   Therefore $\caZ^2(\E)$ admits a degree 9 map to the Jacobian of the fibration, which is an Inose surface. 

In this case, we were able to make all of the equivalences explicit.  The
first step is to write down the elliptic parameter for the fibration with the
two $\II^*$ fibers by exhibiting a function whose divisor is described by
the figure.  Computationally, we approached this by first writing down a
projective model for $\caZ^2(\E)$ in $\P^6$ with five $A_1$ singularities
and two $A_4$ singularities such that each of the two $I_9$ fibres consists
of three curves of degree $1$, two $A_1$ points, and one $A_4$ point.
The other $A_1$ singularity is the zero section of the fibration, while the
$3$-torsion sections are curves of degree $2$ in this model.  (It may be
surprising that such a nice model exists for a K3 surface of such large
Picard rank and small discriminant.)  In this model it is straightforward
to exhibit, not only the elliptic parameter $t$, but also three
functions $f_0, f_1, f_2 = 1$ whose restrictions to a smooth fiber generate
the Riemann-Roch space of $\OO(D)$, where $D$ is the multisection of degree
$3$.

Once this was done, we wanted to find the image of the map
$(f_0:f_1:f_2),(t:1)$ from $\caZ^2(\E)$ to $\P^2 \times \P^1$, since the
general fibre of the image of the map to $\P^1$ would be the cubic whose
Jacobian has the two $\II^*$ fibers.  This proved to be computationally
difficult and we resorted to interpolation; however, the final result is
rigorous, because we could verify the equations of the map and its codomain
in Magma once we had found them.  At that point it was a routine matter to
use the formulas of \S\ref{sec:MasatoAppendix}, in particular
\ref{eq:Inose}, to verify that up to a change of coordinates and twist
the Inose surface is isomorphic to $\Km(E_1 \times E_2)$, where the $E_i$
are obtained by substituting $t_i$ for $t$ in the equation defining an
elliptic curve over $\Q(t)$ with four bad fibres of type $\I_3$ and no others.
(This is isogenous to the curve with one $\I_9$ and three $\I_1$ fibres.)
See \cite{code} for details.  % magic-i9-new.mag, ans-i9.mag.

\subsubsection{The $\I_8\I_2\I_1\I_1$ fibration, with Mordell-Weil group $\Z/4\Z$} The reducible fibers of $\caZ^2(\E)\to\P^1\times U^2$ are of type $\I_8,\I_8,\I_2,\I_2$, and these together with the trivial section and a section of order 4 contain two $\II^\ast$ configurations as shown below:

\begin{center}
\includegraphics[scale=.75]{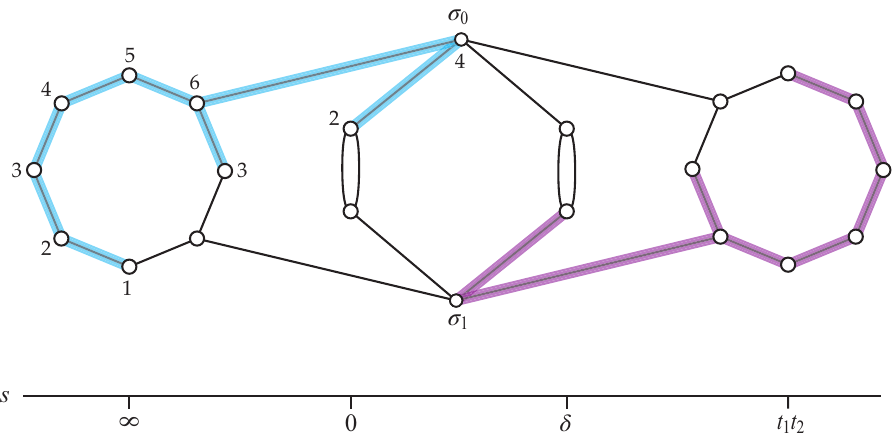}
\end{center}

The genus 1 fibration on $\caZ^2(\E)$ with these two $\II_2^\ast$ fibers has a degree 4 multisection, represented by any of the uncolored vertices (or the other two torsion sections which are not shown).  Therefore $\caZ^2(\E)$ admits a degree 16 map to the Jacobian of the fibration, which is an Inose surface.

\subsubsection{The $\I_5\I_5\I_1\I_1$ fibration, with Mordell-Weil group $\Z/5\Z$}

Assume that $\car k\neq 2$.  
Let $\E\to \P^1_k$ be the elliptic fibration with Weierstrass equation:
\begin{equation}
\label{EqEI5}
y^2 = x^3 + (t^2 + 1) x^2 - 4 t (t^2 + t - 1) x + 4 t^2 (t^2 + 1)
\end{equation}
The reducible fibers of $\E\to \P^1_k$ are of type $\I_5,\I_5$ at $t=0,\infty$. The Mordell-Weil group is $\Z/5\Z$, generated by $\sigma_{1}=(2t, 4t)$. 

%% This isn't the correct substitution - JW
%The surface $\caZ^2(\E)$ over $k(t_1,t_2)$ is obtained by substituting $t=s(s-t_1-t_2+1)/(s-t_1t_2)$ into \eqref{EqEI5}, giving
%\begin{multline}
%\label{eq:base-ch-I5}
%y^{2} =
%x^3 + 
%(s^4 - 2 (t_{1} + t_{2} - 1) s^3 + (t_{1}^2 + 2 t_{1} t_{2} + t_{2}^2 - 2 t_{1} - 2 t_{2} + 2) s^2 - 2 t_{1} t_{2}  + t_{1}^2 t_{2}^2) x^2 
%\\
%- 4 s (s - t_{1} - t_{2} + 1) (s - t_{1} t_{2})
% (s^4 - (2 t_{1} + 2 t_{2} - 3) s^3 
% + (t_{1}^2 + t_{1} t_{2} + t_{2}^2 - 3 t_{1} - 3 t_{2} + 1) s^2 
% + t_{1} t_{2} (t_{1} + t_{2} + 1) s - t_{1}^2 t_{2}^2) x
%\\
%+ 4 s^2 (s - t_{1} - t_{2} + 1)^2 (s-t_{1} t_{2})^2
%(s^4 - 2 (t_{1} + t_{2} - 1) s^3 + (t_{1}^2 + 2 t_{1} t_{2} + t_{2}^2 - 2 t_{1} - 2 t_{2} + 2) s^2 - 2 t_{1} t_{2}  + t_{1}^2 t_{2}^2) 
%\end{multline}
The reducible fibers of $\caZ^2(\E)\to\P^1\times U^2$ are of type $\I_5,\I_5,\I_5,\I_5$, and these together with the $5$-torsion section $\sigma_{1}$ and the $0$-section $\sigma_{0}$ contain two $\II^\ast$ configurations as shown below:

\begin{center}
\includegraphics[scale=.75]{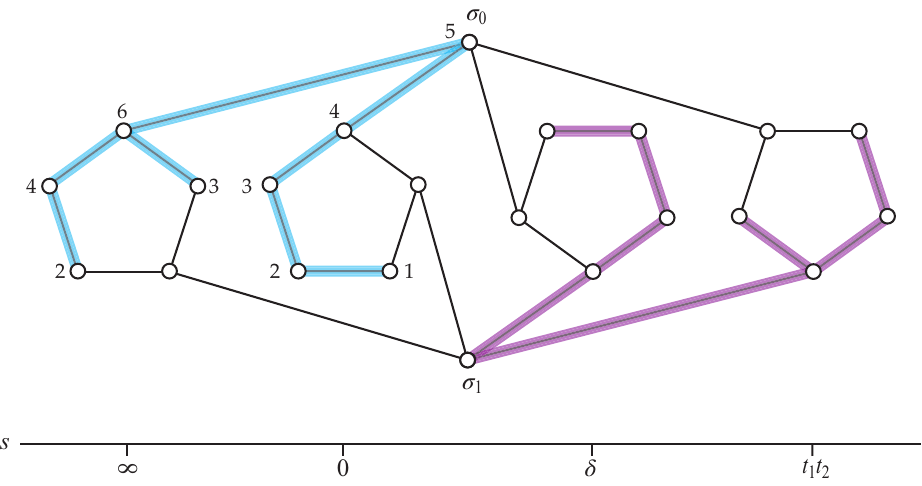}
\end{center}

The genus 1 fibration on $\caZ^2(\E)$ with these two $\II^\ast$ fibers has a degree 5 multisection, represented by any of the uncolored vertices (or the other four torsion sections which are not shown).  Therefore $\caZ^2(\E)$ admits a degree 25 map to the Jacobian of the fibration, which is an Inose surface.

\subsubsection{The $\I_6\I_3\I_2\I_1$ fibration, with Mordell-Weil group $\Z/6\Z$} 

The reducible fibers of $\caZ^2(\E)\to\P^1\times U^2$ are of type $\I_6,\I_6,\I_3,\I_3,\I_2,\I_2$, and these together with the trivial section and a section of order 6 contain two $\II^\ast$ configurations as shown below:

\begin{center}
\includegraphics[scale=.75]{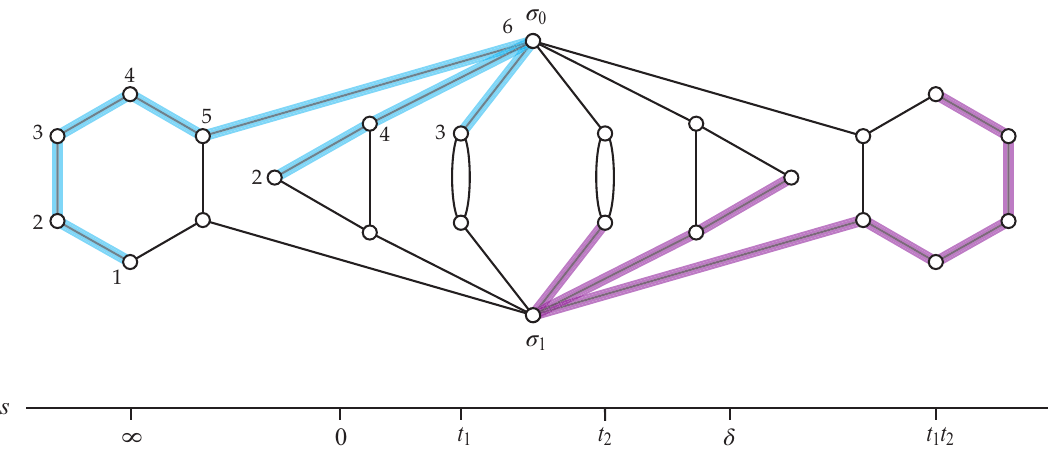}
\end{center}

The genus 1 fibration on $\caZ^2(\E)$ with these two $\II_2^\ast$ fibers has a degree 6 multisection, represented by any of the uncolored vertices (or the other four torsion sections which are not shown).  Therefore $\caZ^2(\E)$ admits a degree 36 map to the Jacobian of the fibration, which is an Inose surface.

\section{Calabi-Yau threefolds and 3-modularity}

We discuss here the problem of proving that an elliptic fibration is 3-modular, at least for the unstable extremal rational elliptic surfaces of Proposition \ref{PropClassificationUnstable}.  Following the technique of proof for the case of 2-modularity, we consider a coincidence variety
\[ \Coinc_G^4(\Gamma_0(N);\Sigma_\infty)\to (\P^1_F)^4 \]
for an arbitrary algebraically closed field $F$.
As we saw in Example \ref{ex:coinc-4}, the generic fiber of this morphism is an affine open in an elliptic curve.  Let $\eta_3\isom \Spec F(t_1,t_2,t_3)$ be the generic point of $(\P^1_F)^3$, and let $\Coinc_G^4(\Gamma_0(N);\Sigma_\infty)_{\eta_3}$ be the fiber over $\eta_3$ in the projection onto the first three coordinates.  Then projection onto the remaining coordinate $t=t_4$ defines an elliptic fibration with affine part $\Coinc_G^4(\Gamma_0(N);\Sigma_\infty)_{\eta_3}$, which we shall call $\caC$.

The Weierstrass equation for $\caC$ is  
\[ y^2 +e_1xy = x^3 + (-e_2+e_3-2e_4)x^2 + (1-e_1+e_2-e_3+e_4)e_4x,\]
where $e_1,\dots,e_4$ are the elementary symmetric polynomials in
$t_1,t_2,t_3,t_4$, and we take $t=t_4$ as the parameter on the base. 
As polynomials in $t$, the coefficients $a_i(t)$ of the fibration satisfy $\deg a_i\leq i$, so that $\caC$ is (at least over an algebraic closure of $\eta_3$) a rational elliptic surface.  We compute that $\caC\to\P^1_{\eta_3}$ has one singular fiber of type $\I_4$ at $t=\infty$, two of type $\I_2$ at $t=0,1$, and four of type $\I_1$ at other points.  Its Mordell-Weil group is $\Z^{\oplus 3} \oplus \Z/2\Z$.

\begin{rmk}  The fibration $\caC\to\P^1_{\eta_3}$ is the universal fibration of type No. 21 in Oguiso-Shioda's tables \cite{Oguiso-Shioda:RES} classifying all rational elliptic fibrations.
\end{rmk}

Now let $\E\to \P^1_F$ be a rational elliptic fibration with multiplicative fibers at $0,1$ and additive fiber at $\infty$.  Define a projective variety $\caZ^3(\E)$ over $\eta_3$ as the fiber product:
\begin{equation}
\label{DefnZrE}
\xymatrix{
\caZ^3(\E) \ar[r] \ar[d] & \E \times\eta_3\ar[d] \\
\caC \ar[r]  & \P^1_{\eta_3} 
}
\end{equation}

The technique of proof of Theorem \ref{ThmMain2Modularity} reduces the 3-modularity of such elliptic fibrations to the following conjecture.

\begin{conj} \label{ConjZ3}
There exists an algebraic correspondence between $\caZ^3(\E)$ and $\E^3_{\eta_3}$, such that the induced map $H^3(\caZ^3(\E))\to H^3(\E^3_{\eta_3})$ is surjective over the transcendental part of $H^3(\E^3_{\eta_3})$.   Here the $H^3$ can refer to $\ell$-adic cohomology as a Galois representation, or (if $F=\mathbf{C}$) Betti cohomology as a family of Hodge structures.
\end{conj}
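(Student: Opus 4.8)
The plan is to mirror the proof of $2$-modularity: produce an intermediate ``generalized Kummer'' Calabi--Yau threefold $\Km(\E^3)$ attached to the triple product, construct a dominant rational map $\caZ^3(\E)\dashrightarrow \Km(\E^3)$ commuting with the maps to $\eta_3$, and then pull back the quotient correspondence relating $\E^3_{\eta_3}$ to $\Km(\E^3)$. Here by $\Km(\E^3)$ I mean a crepant resolution of $(\E_{t_1}\times\E_{t_2}\times\E_{t_3})/G$, where $G\subset\{\pm1\}^3$ is the index-two subgroup of sign vectors of product $+1$. This choice is forced: only such a $G$ preserves the holomorphic $3$-form, so that the quotient is Calabi--Yau, and moreover $G$ acts trivially on the K\"unneth summand $H^1(\E_{t_1})\otimes H^1(\E_{t_2})\otimes H^1(\E_{t_3})$. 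That summand is precisely the transcendental part of $H^3(\E^3_{\eta_3})$ in the relevant sense — it realizes $\sigma^{\boxtimes 3}$ — and it therefore survives in $H^3(\Km(\E^3))$. Composing the transpose of the quotient correspondence $\E^3_{\eta_3}\dashrightarrow\Km(\E^3)$ with the graph of $\caZ^3(\E)\dashrightarrow\Km(\E^3)$ then yields the sought correspondence between $\caZ^3(\E)$ and $\E^3_{\eta_3}$.

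Granting such a map, the required surjectivity reduces, exactly as in the $2$-modular argument, to the statement that a dominant morphism $S\to S'$ of varieties with $S'$ projective induces a map $H^\bullet_c(S)\to H^\bullet(S')$ whose image contains the transcendental part of the target. Indeed, the quotient correspondence identifies the transcendental part of $H^3(\E^3_{\eta_3})$ with the $G$-invariant, hence transcendental, part of $H^3(\Km(\E^3))$, and a dominant $\caZ^3(\E)\to\Km(\E^3)$ surjects onto the latter. Thus the entire difficulty is concentrated in exhibiting the correspondence $\caZ^3(\E)\dashrightarrow\Km(\E^3)$.

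For this I would first record the geometry of $\caZ^3(\E)$: it is the crepant resolution of the fiber product of the rational elliptic surface $\caC$ of Oguiso--Shioda type No.~21 with $\E$ over $\P^1_{\eta_3}$, and it is elliptically fibered over $\caC$; analyzing its singular fibers together with the rank-$3$ Mordell--Weil group $\Z^{\oplus 3}\oplus\Z/2\Z$ of $\caC$ confirms that it is Calabi--Yau. For the Legendre fibration the map to $\Km(\E^3)$ was found explicitly (Theorem~\ref{ThmMain3Modularity}), and I would attempt the analogous explicit birational geometry for each unstable fibration of Proposition~\ref{PropClassificationUnstable}, ideally by locating a suitable auxiliary fibration on $\caZ^3(\E)$ playing the role that the two-$\II^\ast$-fiber (Inose) fibration played in \S\ref{sec:MasatoAppendix}, and then passing to its relative Jacobian.

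The hard part will be the absence of any Calabi--Yau analogue of Theorem~\ref{ThmIsogenyBetweenK3s}. For K3 surfaces, a rational isometry of Picard lattices already forces a finite correspondence, via Mukai's theorem and the lattice machinery developed above; there is no such structural statement for the transcendental motive of a Calabi--Yau threefold, so one cannot deduce the existence of $\caZ^3(\E)\dashrightarrow\Km(\E^3)$ from numerical data alone. One is thus thrown back on case-by-case explicit computation, where the same computational wall encountered for the semistable K3 fibrations reappears: in the fibrations whose associated genus-$1$ fibration carries only a multisection of degree $5$ or $6$, no usable formula for the Jacobian is available. A genuinely new ingredient — either an effective transcendental-motive comparison for this class of Calabi--Yau threefolds, or a uniform construction of the correspondence directly from the coincidence-variety description of $\caZ^3(\E)$ — seems to be required to settle Conjecture~\ref{ConjZ3} in general.
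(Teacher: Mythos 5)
Your proposal matches the paper's strategy essentially exactly: the statement is left as a conjecture in general, and the paper establishes it only for the Legendre fibration (Theorem~\ref{ThmLegendreBirational}) by producing a birational map $\caZ^3(\E)\dashrightarrow\Km(E_1\times E_2\times E_3)$, where $\Km$ is the crepant resolution of the quotient by the same index-two sign group $V$ you describe, and then invoking the same dominant-morphism surjectivity onto the transcendental part; your diagnosis that the whole difficulty lies in constructing this map, and that no Calabi--Yau analogue of Theorem~\ref{ThmIsogenyBetweenK3s} is available, is exactly the point of Remark~\ref{rem:just-lucky}. The one divergence is in the speculative mechanism: the paper's explicit Legendre construction does not proceed via an Inose-type elliptic fibration and relative Jacobian on the threefold, but by finding K3 \emph{surface} fibrations on each of $\caZ^3(\E)$ and $\Km(E_1\times E_2\times E_3)$ (located by ad hoc search, point-counting over finite fields, and interpolation) whose generic fibers turn out to be isomorphic.
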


We were able to prove Conjecture \ref{ConjZ3} in the case that $\E$ is the Legendre fibration, by way of a study of Calabi-Yau threefolds.

\subsection{Fiber products of two elliptic fibrations} 

The fiber product of two rational elliptic fibrations $S_1,S_2\to\P^1$ over a common base was studied in \cite{Schoen}, as a means of constructing interesting Calabi-Yau threefolds.  To see why such a threefold might result, let us model $S_i$ as an equation of bidegree $(3,1)$ in $\P^2\times \P^1$.  Then the fiber product $S=S_1\times_{\P^1} S_2$ is defined by equations of tridegree $(3,0,1),(0,3,1)$ in $\P=\P^2\times\P^2\times\P^1$, and so, by the adjunction formula, its canonical divisor is $K_S=(K_{\P})\vert_S \otimes \OO(3,3,2)=\OO_S$. 

The fiber product $S$ generally has singular points; to construct (nonsingular) Calabi-Yau threefolds, we must determine whether the canonical divisor is trivial on a desingularization of $S$.  Recall that a {\em crepant resolution} $f\from \tilde{S}\to S$ is a desingularization which does not affect the canonical class:  $f^*K_S=K_{\tilde{S}}$.  Thus if our fiber product $S=S_1\times_{\P^1} S_2$ admits a crepant resolution, then its desingularization is a Calabi-Yau threefold.

It is not hard to see that the only possible singularities of $S$ occur over those points $v\in \P^1$ where both $S_v$ and $S'_v$ are singular.  It is shown in \cite{Schoen} that as long as $S_v$ and $S_v'$ are multiplicative, then all singular points of $S$ are ordinary double points, and consequently $S$ admits a crepant resolution.  

However, in the fiber product defining $\caZ^3(\E)$, the fibers of the two fibrations $\caC,\E\to\P^1$ at $\infty$ are of multiplicative and additive type, respectively, and so we are not in the situation considered in \cite{Schoen}.  Nonetheless, the fiber product still has a crepant resolution:

\begin{prop}  Let $S_1,S_2\to X$ be two elliptic fibrations.  Suppose there is no point of $X$ at which the fibers of $S_1$ and $S_2$ are both additive.  Then $S=S_1\times_X S_2$ admits a projective crepant resolution.
\end{prop}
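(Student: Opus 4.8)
The plan is to reduce to a local analytic computation at the singular points of $S$ and to recognize the local models as toric singularities that are crepantly resolvable. First I would locate $\operatorname{Sing} S$. Since $S$ is the hypersurface $\{f_1 = f_2\}$ inside the smooth fourfold $S_1\times S_2$, where $f_i$ denotes the pullback to $S_i$ of a local coordinate on $X$, a point $(p,q)$ is singular exactly when $p$ is a critical point of $S_1\to X$ and $q$ is a critical point of $S_2\to X$; in particular the singular locus lies over the finite set of $v\in X$ at which both fibers degenerate. By hypothesis at least one of the two fibers over such a $v$ --- say that of $S_1$ --- is multiplicative, so near $p$ the surface $S_1$ is smooth with $f_1 = x_1y_1$ in suitable local coordinates (the two local branches of the nodal fiber). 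Hence near $(p,q)$ the threefold $S$ is cut out in $\A^4$ by
\[ x_1 y_1 = f_2(x_2,y_2), \]
where $f_2$ is the local defining function of the degenerate fiber of $S_2$.

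The key point I would exploit is that, for a relatively minimal elliptic surface, the singular fibers are normal-crossing configurations of rational curves, so at each of its singular points $f_2$ is, up to a unit and an analytic coordinate change, a monomial $x_2^a y_2^b$, with $a,b$ the multiplicities of the two crossing components. (For $a=b=1$ this recovers Schoen's ordinary double point $x_1y_1 = x_2y_2$.) Each local model is then the binomial hypersurface $\{x_1y_1 = x_2^a y_2^b\}$, which is a normal Gorenstein toric threefold. At this stage I would invoke the existence of projective crepant resolutions for Gorenstein toric threefolds: the primitive ray generators of the defining cone lie on an affine hyperplane, and any regular lattice triangulation of the resulting cross-sectional polygon into elementary triangles yields a smooth toric variety whose structure morphism is projective and crepant, since no lattice points are introduced off the hyperplane and $K$ is therefore preserved. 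The canonical form $\operatorname{Res}\frac{dx_1\wedge dy_1\wedge dx_2\wedge dy_2}{x_1y_1-f_2}$ provides the nowhere-vanishing $3$-form whose extension across the exceptional locus is the crepancy to be checked, and the toric picture makes this transparent.

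Finally I would globalize. Each local resolution is the blow-up of an ideal sheaf supported over $\operatorname{Sing} S$ and is an isomorphism away from it, so the local resolutions patch to a single proper birational $\pi\from \tilde S\to S$; crepancy is a condition along the exceptional divisors, hence local, so it holds globally, and projectivity follows because $\pi$ is a composition of blow-ups of coherent ideals. The main obstacle I anticipate is precisely the reduction to monomial form in the previous paragraph: it rests on the fibers being normal crossings, which is immediate for $I_n$ and for the starred and $\tilde E_6,\tilde E_7,\tilde E_8$ types, but fails for the non-transverse fibers $III$ and $IV$ and for the unibranch cuspidal fiber $II$. For $III$ and $IV$ the function $f_2$ is still reducible (two, respectively three, analytic branches), so the model $\{x_1y_1=f_2\}$, while not toric, admits a small resolution separating the branches and is again crepant. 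The genuinely delicate case is $II$, where $\{x_1y_1 = y_2^2 - x_2^3\}$ is $\Q$-factorial and terminal and so admits no crepant resolution at all; I would expect to handle this either by verifying that such a configuration does not arise for the fibrations under consideration --- whose additive fibers are of starred or $\tilde E$ type --- or by a separate argument, and pinning down this point is where I would spend the most care.
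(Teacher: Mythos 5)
Your approach is essentially the paper's: locate the singularities of $S$ at products of critical points of the two fibrations, use the multiplicative fiber to put the local model in the form $x_1y_1=f_2$, and resolve the resulting hypersurface singularities crepantly. The paper's own proof is explicitly a sketch that treats only the case where the additive fiber has type $\I_n^*$ (``indeed this is the only case we will use''), writing down the three binomial models $x_1^2x_2^j-x_3x_4=0$ ($j=0,1,2$) and verifying crepancy of explicit blowups via adjunction on the exceptional divisor. Your toric packaging of the generalized conifolds $x_1y_1=x_2^ay_2^b$ handles all the normal-crossings additive types uniformly and is, if anything, cleaner; the one thing to tighten is globalization, since the centers you blow up must be globally defined (components of the singular subscheme, or products of global fiber components as in Schoen), not merely analytic-local, in order for the local toric resolutions to glue to a single projective morphism.

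The point you flag at the end about type $\II$ deserves emphasis: it is not a loose end you can close with more care, but a genuine counterexample to the proposition as literally stated. If $S_1$ has an $\I_n$ fiber and $S_2$ a type $\II$ fiber over the same point of $X$, the local model at (node)$\times$(cusp) is $x_1y_1=y_2^2-x_2^3$; its general hyperplane section through the origin is an $A_1$ point, so this is an isolated $cA_1$ singularity, hence terminal, and since $y_2^2-x_2^3$ is irreducible in the formal power series ring the local ring is factorial, so no small resolution exists either --- there is no crepant resolution at all. Your proposed escape (``the additive fibers under consideration are of starred or $\tilde E$ type'') is exactly what the paper does: it proves, and uses, only the $\I_n^*$ case. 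The $\III$ and $\IV$ cases can indeed be handled as you suggest by small modifications of $x_1y_1=\ell_1\cdots\ell_r$ obtained by blowing up products of global fiber components, though note that a single such blowup may leave residual ordinary double points requiring a further step. So the correct fix is to add a hypothesis excluding a type $\II$ fiber of one surface over a point where the other fibration is singular (or to restrict the statement to the additive types actually needed); with that caveat your argument is sound and covers more ground than the proof in the paper.
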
 

\begin{proof} (Sketch.) Consider a singular point $P=(P_1,P_2)$ of $S$.  Then $P_i$ is a singular point of a fiber $(S_i)_v$ of $S_i$ for $i=1,2$.  In light of \cite{Schoen} it suffices to assume that $(S_1)_v$ is additive and $(S_2)_v$ is multiplicative.  We will only discuss the case that $(S_1)_v$ has an $\I_n^*$ fiber (indeed this is the only case we will use).  Then $P_1$ falls into one of the following three cases: it belongs to a single nonreduced component of multiplicity 2, it is the intersection of such a component with a reduced component, or else it is the intersection of two nonreduced components.  

The local equations of $P$ for these three types are 
$$x_1^2 - x_3 x_4 = 0, \quad x_1^2 x_2 - x_3 x_4 = 0, \quad
x_1^2 x_2^2 - x_3 x_4 = 0,$$
as hypersurfaces in $\mathbf{A}^4$.  
(In the first case, for instance, let $t$ be a local coordinate for $X$ at $v$.  Then the local equations for $P_1,P_2$ are $x_1^2=t$ and $x_3x_4=t$, respectively, and so the fiber product has equation $x_1^2=x_3x_4$.)  We analyze each case in turn.

First consider $x_1^2 - x_3 x_4 = 0$ as a hypersurface in $\mathbf{A}^4$.  This is a product of $\mathbf{A}^1$ with a surface with an $A_1$ singularity.  This hypersurface has canonical singularities in the sense of \cite{Reid};  we confirm that it has a crepant resolution. 

Let $Q\subset\P^4$ be the projective closure of the hypersurface.  Then $Q$ is the cone over the cone over a smooth conic.  Let $\pi: \tilde{Q}\subset \P^4\times\P^2 \to Q$ be the blowup of $Q$ along its singular locus $S$, which is a line in $\P^4$.  A simple calculation shows that $\tilde{Q}$ is projective and smooth, and that the exceptional divisor $E$ is isomorphic to $(\P^1)^2$. We claim also that $\pi: \tilde{Q} \to Q$ is a crepant resolution of $Q$.

Let $\pi_1, \pi_2$ be the two projections $(\P^1)^2 \to \P^1$.  Identifying
$E$ with $(\P^1)^2$ and $S$ with $\P^1$, we identify $\pi_1$ with $\pi\vert_E$.
Let $p$ be a point of the singular locus of $Q$ and let $F = \pi^{-1}(p)$.
Let $G$ be a fibre of $\pi_2$, viewed as a curve on $E$.
The canonical divisor $K_Q$ of the quadric hypersurface $Q\subset \P^4$ is $K_Q=\OO(-3)$, so we'd like
  to know that the canonical divisor $K_{\tilde{Q}}$ of $\tilde Q\subset \P^4\times\P^2$ is $\pi^*K_Q=\OO(-3,0)$.  We have
  $h^2(\tilde Q) = 2$, which forces $K_{\tilde{Q}}=\pi^*K_Q + cE$. 
  To determine $c$, we use the adjunction formula.  We have
  $(K_{\tilde Q}+E) \cdot E = K_E = -2F - 2G$.  Clearly $\OO(1,0)$ misses $F$
  (a general hyperplane doesn't pass through a given point) and hits
  $G$ once (in the fibre above the point of intersection in $\P^4$), so the
  intersection is $F$.
  On the other hand, if we take a section $S$ of $\OO(1,0)$ containing the
  exceptional divisor (i.e., the strict transform of a hyperplane containing
  the singular locus of the quadric), then we can compute that
  $F \sim S \cdot E = (R+E)\cdot E = 2G + E^2$, so $E^2 = F - 2G$.  It
  follows that $K_{\tilde Q} \cdot E = -3F$, and so $c = 0$ as desired.

We now consider the second case where the local equation is
$x_1^2 x_2 - x_3 x_4 = 0$.
%Empirically, a random hyperplane section through
%the origin has an $A_2$ singularity there, so this should work.
The singular subscheme of the affine scheme defined
by this equation has two components: a line $x_1 = x_3 = x_4 = 0$, and an
embedded component $x_1^2 = x_2 = x_3 = x_4 = 0$.  If we blow up the first
of these (after projectivizing, to make the calculation easier) we obtain
a variety whose singular subscheme misses the locus
$x_1 = x_2 = x_3 = x_4 = 0$.  In other words, the single blowup has resolved
the singularity.  In codimension $1$ this is the same as the previous example,
and it follows that this is likewise a crepant resolution.
%(it's
%harder to check directly because the equation above defines a surface with
%a lot of singularities).

Finally, in the third case, the local equation is $x_1^2 x_2^2 - x_3 x_4 = 0$.
%and a random hyperplane section through the origin has an $A_3$ singularity.
There are three components of the singular subscheme: two of the form
$x_i = x_3 = x_4 = 0$ for $i = 1, 2$, and the embedded component
$x_1^2 = x_2^2 = x_3 = x_4$.  Blowing up the first component takes care of
the embedded component and leaves a line of $A_1$ singularities, which we
have already seen to have a crepant resolution.  Note also that in this
case we have converted two rational curves into divisors, which should
increase $h^2$ and $h^4$ by $2$ each, whereas in the previous examples
the contribution was only $1$.  Further, since the resolution method is an
actual blowup, rather than a small resolution, projectivity is automatic.
\end{proof}

\begin{rmk} Similar considerations show a stronger result:  The fiber product $S_1\times_X S_2$ admits a crepant resolution if and only if, when two additive fibers come together at the same point of $X$, the fiber types belong to the following list:  $$(\I_n^*,\II), (\I_0^*,\III), (\IV^*,\II), (\IV,\II), (\IV,\III), (\III,\III), (\III,\II),
(\II,\II).$$
Most of the other cases can be excluded by the observation that
if the fibers both contain a nonreduced component,
then there cannot be a crepant resolution.  The local equation is
$x_2^i - x_4^j = 0$ for $i, j > 1$, and this is singular in dimension $2$,
i.e., along a divisor. 
\end{rmk}

\begin{cor} \label{CorZ3isCY} The fiber product $\caZ^3(\E)$ is birational to a nonsingular projective Calabi-Yau threefold.
\end{cor}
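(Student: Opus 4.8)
The plan is to recognize $\caZ^3(\E)$ as a fiber product of two elliptic fibrations over $\P^1_{\eta_3}$ and then invoke the preceding Proposition. By the defining square \eqref{DefnZrE}, we have $\caZ^3(\E) = \caC \times_{\P^1_{\eta_3}} (\E\times\eta_3)$, the fiber product of the rational elliptic surface $\caC\to\P^1_{\eta_3}$ with the base change of the rational elliptic fibration $\E\to\P^1$. Presenting both as relatively minimal elliptic surfaces with section, I would first check the hypothesis of the Proposition, namely that no point of the base carries additive fibers on both factors. This is immediate: the singular fibers of $\caC\to\P^1_{\eta_3}$ were computed above to be one $\I_4$ at $t=\infty$, two $\I_2$ at $t=0,1$, and four $\I_1$, every one of which is multiplicative, so $\caC$ has no additive fiber whatsoever. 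Hence the forbidden collision never occurs, for every admissible $\E$. The only points over which $\caZ^3(\E)$ can be singular are those where both factors degenerate, and there the $\caC$-fiber is always multiplicative; in the case of primary interest the single additive fiber of $\E$ (of type $\I_n^{\ast}$) meets the multiplicative $\I_4$ fiber of $\caC$ over $t=\infty$, which is exactly the mixed collision resolved in the Proposition.

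Applying the Proposition, I obtain a smooth projective threefold $\tilde S$ together with a projective crepant resolution $f\colon \tilde S\to S:=\caZ^3(\E)$; in particular $\tilde S$ is birational to $\caZ^3(\E)$. To see that $\tilde S$ has trivial canonical class, present $\caC$ and $\E$ by Weierstrass models as hypersurfaces of bidegree $(3,1)$ in $\P^2\times\P^1$, so that $S$ is cut out by equations of tridegree $(3,0,1)$ and $(0,3,1)$ in $\P^2\times\P^2\times\P^1$. The adjunction computation recalled just before the Proposition shows that the dualizing sheaf $\omega_S$ is trivial, and since $f$ is crepant we get $\omega_{\tilde S}=f^{\ast}\omega_S\cong\OO_{\tilde S}$.

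It remains to verify the cohomological Calabi--Yau conditions $H^1(\tilde S,\OO_{\tilde S})=H^2(\tilde S,\OO_{\tilde S})=0$. The singularities of $S$ produced in the Proposition are hypersurface (hence Gorenstein) and admit crepant resolutions, so they are canonical, and canonical singularities are rational (\cite{Reid}); therefore $Rf_{\ast}\OO_{\tilde S}=\OO_S$ and $H^j(\tilde S,\OO_{\tilde S})=H^j(S,\OO_S)$ for all $j$. I would compute the latter from the Leray spectral sequence for the projection $\mathrm{pr}\colon S\to\caC$, which is the base change of $\E\to\P^1$: one has $\mathrm{pr}_{\ast}\OO_S=\OO_{\caC}$, $R^1\mathrm{pr}_{\ast}\OO_S=g^{\ast}\OO_{\P^1}(-1)$ (with $g\colon\caC\to\P^1$, using $\chi(\OO_\E)=1$), and $R^q\mathrm{pr}_{\ast}\OO_S=0$ for $q\ge 2$. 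Since $\caC$ is a rational surface, $H^1(\OO_{\caC})=H^2(\OO_{\caC})=0$, and pushing $g^{\ast}\OO_{\P^1}(-1)$ forward to $\P^1$ (using $R^1 g_{\ast}\OO_{\caC}=\OO_{\P^1}(-1)$ and the vanishing of $H^0(\P^1,\OO(-1))$, $H^0(\P^1,\OO(-2))$, $H^1(\P^1,\OO(-1))$) shows that $H^\bullet(\caC,g^{\ast}\OO_{\P^1}(-1))$ vanishes in degrees $0$ and $1$. The spectral sequence then forces $H^1(S,\OO_S)=H^2(S,\OO_S)=0$. Alternatively this vanishing follows from Schoen's Hodge-number computations for fiber products of rational elliptic surfaces \cite{Schoen}. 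Combining the three steps, $\tilde S$ is a smooth projective threefold with $\omega_{\tilde S}\cong\OO_{\tilde S}$ and $H^1(\OO_{\tilde S})=H^2(\OO_{\tilde S})=0$, i.e. a Calabi--Yau threefold birational to $\caZ^3(\E)$.

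The genuinely hard work --- resolving the mixed additive/multiplicative fiber-product singularities in a crepant fashion --- has already been carried out in the preceding Proposition, which I am free to assume. For the corollary itself the essential new points are merely that $\caC$ is semistable (so that two additive fibers never collide) and that the resolution simultaneously preserves the trivial canonical class (by crepancy) and the vanishing of $H^1,H^2$ of the structure sheaf (by rationality of the singularities). I expect the most delicate bookkeeping to be matching the precise additive fiber type of $\E$ at $t=\infty$ against the local models allowed by the Proposition; but in the case we actually use (the Legendre fibration, with an $\I_2^{\ast}$ fiber) this is precisely the $\I_n^{\ast}$-against-multiplicative collision treated there.
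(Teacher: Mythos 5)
Your proof is correct and follows essentially the same route as the paper: check that $\caC$ has only multiplicative fibers so the hypothesis of the preceding Proposition holds, apply the crepant resolution, and combine with the adjunction computation of $K_S$ already recorled before the Proposition. The only addition is your explicit verification of $H^1(\OO)=H^2(\OO)=0$ via rationality of the singularities and the Leray spectral sequence, which the paper leaves implicit but which is a correct and worthwhile supplement.
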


%{\textcolor{red}{Can we put the Hodge diamond here?  Do we know it? If $\E$ is the Legendre family, then it must agree with the Hodge diamond of $\Km(\E^3_{\eta})$, right?  I have read that birational CYs must have the same Hodge diamond.}} \adam{Yes, I've read that too, though I don't pretend to understand the proof.  See my email.}

\subsection{The Kummer threefold}

On the other hand, there is a separate Calabi-Yau threefold that we can associate to three elliptic curves $E_1,E_2,E_3$;  this is a generalization of the Kummer surface associated to the product of two elliptic curves.

\begin{defn}
  Let $Q=(E_1\times E_2\times E_3)/V$, where nontrivial elements of the group $V=(\Z/2\Z)^{\oplus 2}$ act by negating two of the factors $E_1,E_2,E_3$ at a time.  
\end{defn}

\begin{prop} The threefold $Q$ admits a crepant resolution $\tilde{Q}\to Q$, where $\tilde{Q}$ is a Calabi-Yau threefold.  
\end{prop}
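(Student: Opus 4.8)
Set $A := E_1 \times E_2 \times E_3$, a smooth abelian threefold with trivial canonical bundle, and assume $\car k \neq 2$ so that the $V$-action is nontrivial. The plan is to realize $Q = A/V$ as a Gorenstein quotient singularity and to invoke the existence of crepant resolutions in dimension three. First I would record the key numerical coincidence: each nontrivial $g \in V$ acts on the tangent space of $A$ at a fixed point as a diagonal involution with exactly two entries equal to $-1$, hence with determinant $+1$. Thus $V$ acts through $\SL_3$ on tangent spaces, and the translation-invariant $3$-form $\omega = \omega_1 \wedge \omega_2 \wedge \omega_3$ on $A$ is $V$-invariant. Consequently the dualizing sheaf $\omega_Q$ is a line bundle trivialized by $\omega$, and $Q$ has (Gorenstein) canonical quotient singularities.

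Next I would describe the singular locus explicitly. A point $(P_1,P_2,P_3)$ has nontrivial stabilizer precisely when one of the pairs $(P_2,P_3),(P_1,P_3),(P_1,P_2)$ lies in the $2$-torsion. This produces two strata: along (punctured) elliptic curves — sixteen for each of the three involutions, e.g.\ $E_1 \times E_2[2] \times E_3[2]$ for the involution fixing $E_1$ — the stabilizer is $\Z/2\Z$ and the transverse singularity is an ordinary double point $\mathbf{C}^2/\{\pm 1\}$; at the $64$ points of $A[2]$ the full group $V$ fixes the point and the local model is $\mathbf{C}^3/V$, where $V \subset \SL_3$ is the group of diagonal sign matrices of determinant $1$.

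I would then produce the resolution. The delicate local model $\mathbf{C}^3/V$ is an abelian, hence toric, Gorenstein quotient singularity; all three nontrivial elements of $V$ have age $1$, so the junior simplex has three interior lattice points and any basic (unimodular) triangulation through them gives a projective crepant resolution, a construction valid in any characteristic. Away from $A[2]$ only transverse $A_1$ singularities along smooth curves remain, resolved crepantly by a single blow-up of the reduced singular locus. The cleanest way to assemble these is the theorem of Bridgeland--King--Reid: for a finite subgroup $V \subset \SL_3$ acting on a smooth quasi-projective threefold, the $V$-Hilbert scheme $V\text{-}\mathrm{Hilb}(A)$ is a projective crepant resolution of $A/V$; I would take $\tilde Q = V\text{-}\mathrm{Hilb}(A)$. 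Finally I would verify the Calabi--Yau conditions: crepancy together with $\omega_Q \isom \OO_Q$ gives $\omega_{\tilde Q} = \pi^\ast \omega_Q \isom \OO_{\tilde Q}$, and since quotient singularities of order prime to $\car k$ are rational, $H^i(\tilde Q, \OO_{\tilde Q}) \isom H^i(Q,\OO_Q) = H^i(A,\OO_A)^V$ for all $i$. A short computation of the $V$-action on $H^\ast(A,\OO_A) = \bigwedge^\ast H^1(A,\OO_A)$ — each nontrivial element negating two of the three one-dimensional summands — yields $H^1(A,\OO_A)^V = H^2(A,\OO_A)^V = 0$ and $H^3(A,\OO_A)^V = k$, so $\tilde Q$ is a genuine Calabi--Yau threefold.

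The main obstacle is the behaviour at the $64$ triple points $A[2]$, where the three families of $A_1$-curves meet and the full group $V$ acts. The only genuinely delicate point is to check that the toric crepant resolution of the local model $\mathbf{C}^3/V$ is compatible with the $A_1$-resolutions along the incident curves — equivalently, that $V\text{-}\mathrm{Hilb}(A)$ simultaneously resolves both strata crepantly and projectively. Everything else reduces to the routine bookkeeping of fixed loci recorded above together with the exterior-algebra computation of invariants.
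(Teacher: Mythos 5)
Your proof is correct, and its core coincides with the paper's: both reduce the existence of a crepant resolution to the local model $\mathbf{A}^3/V$ with $V\subset\SL_3$ the group of diagonal sign matrices of determinant one, and both then invoke the dimension-three crepant-resolution theorems (the paper cites Roan--Ito--Markushevich; you cite the toric junior-simplex construction together with Bridgeland--King--Reid). The surrounding steps differ. For triviality of the canonical class the paper embeds $Q$ as the hypersurface $y^2=\prod_i f_i(x_i,z_i)$ in the toric variety $\P(1,2,1)^3/V$ and computes $K_Q=0$ by adjunction; you instead observe that $V$ acts through $\SL_3$ on tangent spaces, so the invariant $3$-form descends and $Q$ is Gorenstein with $\omega_Q\isom\OO_Q$ intrinsically. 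Your route is cleaner for this proposition, but the paper's toric model is not wasted effort: the same equation is reused later to produce the K3 fibration on $\Km(E_1\times E_2\times E_3)$ in the proof of Theorem \ref{ThmLegendreBirational}. You also supply two things the paper leaves implicit: the $V$-Hilbert scheme gives a single global \emph{projective} crepant resolution in one stroke (where the paper's ``the result carries over to our case'' silently glues local resolutions), and you verify $h^{0,1}=h^{0,2}=0$ directly, which the paper only records in the subsequent Hodge-diamond proposition. One caveat: Bridgeland--King--Reid is a characteristic-zero theorem, so over a field of positive characteristic (other than $2$) you would need to fall back on the toric local resolutions and a compatibility argument along the $A_1$-curves --- exactly the ``delicate point'' you flag at the end --- though the paper is no more rigorous on this score.
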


We refer to any such resolution as a {\em Kummer threefold} and denote it
by $\Km(E_1\times E_2\times E_3)$.

\begin{proof}  (Assuming $\car k\neq 2$.)  First we consider how to embed $Q$ into a toric 
variety.  We can view each $E_i$ as a hyperelliptic curve $y_i^2=f_i(x_i,z_i)$ in weighted projective space $\P(1,2,1)$ , where $f$ is homogenous of degree 4.  With respect to these coordinates, negation on $E_i$ is $(x_i:y_i:z_i)\mapsto (x_i:-y_i:-z_i)$.  

The product $E_1\times E_2\times E_3$ lives in $\P(1,2,1)^3$.  The group $V$ acts on $\P(1,2,1)^3$ in the evident matter.   The quotient $T=\P(1,2,1)^3/V$ has coordinates $y=y_1y_2y_3,x_1,z_1,x_2,z_2,x_3,z_3$;  it is the toric variety obtained by taking the quotient of $\mathbf{A}^7$ by $\Gm^3$ acting by the weights $(2,1,1,0,0,0,0)$, $(2,0,0,1,1,0,0)$, and $(2,0,0,0,0,1,1)$.  Finally, $Q$ is the hypersurface in $T$ with equation:
\[ y^2 = \prod_{i=1}^3 f_i(x_i,z_i).\]

Identify $\Pic T$ with $\Z^3$ by these three weight vectors.  
The canonical divisor of a toric variety is the negative of the sum of the toric divisors \cite[Proposition, section 4.3]{Fulton},
so $K_T=\OO(-4,-4,-4)$.  Since the defining equation for $Q$ has degree 4 with respect to each copy of $\Gm$, we have by the adjunction formula $K_Q = (K_T \otimes \OO(Q))\vert_Q = 0$.

It remains to show that $Q$ has a crepant resolution.  The subschemes of $E_1 \times E_2 \times E_3$ where a nontrivial element of $V$ has a fixed point are precisely those where two or three of the
  coordinates are points of order $1$ or $2$.
  Let $V$ act on $\A^3$ by negating any two of the coordinates.  This
  is a linear action by a subgroup of $\SL_3$, so by a theorem of Roan,
  Ito, and Markushevich \cite[Theorem 1]{Roan} there is a crepant resolution.
  The action of $V$ on the tangent spaces of fixed points of
  $E_1 \times E_2 \times E_3$ is the same as for $\A^3$, so the result
  carries over to our case.
\end{proof}

\begin{rmk} The crepant resolution of $Q$ is obtained by blowing up 48 curves.  These are the images of those curves in $E_1\times E_2\times E_3$ of the form $\set{P_1}\times \set{P_2}\times E_3,\set{P_1}\times E_2\times\set{P_3},E_1\times\set{P_2}\times\set{P_3}$, where $P_1,P_2,P_3$ are 2-torsion points.  The order of blowing up is significant: at each of the 64 points $(P_1,P_2,P_3)$, there are 6 possible orders to choose from, and exchanging two adjacent ones amounts to a flop.  However, for our purposes the choice makes no difference.
\end{rmk}

\begin{rmk} This construction generalizes to give a Calabi-Yau manifold $\Km(E_1\times\cdots\times E_d)$ for $d$ elliptic curves $E_1,\dots,E_d$.  This is the desingularization of the quotient $(E_1\times \cdots E_d)/V$, where now $V\subset (\Z/2\Z)^{\oplus d}$ is the subgroup where the sum of the coordinates is 0.
\end{rmk} 

\begin{prop} The Hodge diamond of $\Km(E_1\times E_2\times E_3)$ is 
\begin{center}
\begin{tabular}{ccccccc}
&&&$1$&&&\\
&&$0$&&$0$&&\\
&$0$&&$51$&&$0$&\\
$1$&&$3$&&$3$&&$1$\\
&$0$&&$51$&&$0$&\\
&&$0$&&$0$&&\\
&&&$1$&&&
\end{tabular}
\end{center}
\end{prop}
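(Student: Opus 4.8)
The plan is to compute the Hodge numbers of the crepant resolution $\tilde Q=\Km(E_1\times E_2\times E_3)$ through the Chen--Ruan orbifold cohomology of the quotient orbifold $[A/V]$, where $A=E_1\times E_2\times E_3$. The key input is the theorem (Batyrev, Denef--Loeser; in the present abelian-quotient setting this is the McKay correspondence in the spirit of Roan and Ito--Reid) that the Hodge numbers of a crepant resolution of a Gorenstein quotient $A/V$ coincide with the orbifold (equivalently, stringy) Hodge numbers of $[A/V]$. Each nontrivial $g\in V$ negates two of the three factors, hence acts on the tangent space with determinant $(-1)(-1)(+1)=1$, so $V\subset\SL_3$, the quotient is Gorenstein, and the crepant resolution whose existence was established above applies. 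This reduces the problem, over $\mathbf{C}$, to the orbifold decomposition $h^{p,q}(\tilde Q)=\sum_{g\in V}\bigl(H^{p-\mathrm{age}(g),\,q-\mathrm{age}(g)}(A^g)\bigr)^{V}$.

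First I would treat the untwisted sector $g=e$, which contributes the $V$-invariants of $H^*(A)$. Writing $H^*(A)=\bigwedge^\bullet\bigl(H^1(E_1)\oplus H^1(E_2)\oplus H^1(E_3)\bigr)$ and using that each nontrivial $g$ acts by $-1$ on exactly two summands $H^1(E_i)$, a direct sign count shows the invariants are: the point class in $H^0$; the three fiber classes $H^2(E_i)$ of type $(1,1)$ in $H^2$; the $8$-dimensional $H^1(E_1)\otimes H^1(E_2)\otimes H^1(E_3)\subset H^3$, whose Hodge types give $h^{3,0}=1,\ h^{2,1}=3$; together with their Poincaré duals in $H^4$ and $H^6$. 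This yields the ``core'' diamond with $h^{1,1}=3$, $h^{2,1}=3$, $h^{3,0}=1$, $h^{2,2}=3$, and zeros off this pattern.

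Next I would compute the three twisted sectors. For the involution $g_1$ negating $E_1,E_2$, the fixed locus $A^{g_1}=E_1[2]\times E_2[2]\times E_3$ is $16$ disjoint copies of $E_3$, and $g_1$ has tangent eigenvalues $(-1,-1,+1)$, so $\mathrm{age}(g_1)=\tfrac12+\tfrac12=1$. The residual action of $V$ on $A^{g_1}$ fixes each of the $16$ components (negation is trivial on $2$-torsion) and acts on each $E_3$ by $x\mapsto-x$; hence $(H^*(A^{g_1}))^V$ consists of $16$ classes of type $(0,0)$ and $16$ of type $(1,1)$, the vanishing of $H^1$-invariants being the crucial point. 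After the age shift by $(1,1)$ this contributes $16$ to $h^{1,1}$ and $16$ to $h^{2,2}$, and symmetrically for $g_2,g_3$. Summing the three twisted sectors with the core gives $h^{1,1}=3+48=51$, $h^{2,2}=3+48=51$, $h^{2,1}=3$, $h^{3,0}=1$, which is the asserted diamond. As a consistency check I would verify the orbifold Euler number $\tfrac{1}{|V|}\sum_{g,h}\chi(A^{\langle g,h\rangle})=\tfrac14(6\cdot 64)=96$, matching $2(h^{1,1}-h^{2,1})=2(51-3)$ (only the six pairs generating $V$ contribute, through $A^{V}=64$ points).

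The main obstacle is bookkeeping rather than conceptual: one must pin down the ages and, above all, the residual $V$-action on each fixed locus, since it is precisely the absence of odd invariant cohomology on the $A^{g_i}$ that prevents the resolution from raising $h^{2,1}$. The one genuinely delicate structural point is invoking the crepant-resolution-equals-orbifold-cohomology theorem cleanly. Alternatively one could argue directly from the $48$ blown-up rational curves described in the preceding remark: each resolves a transverse $A_1$ singularity and so adds one ruled exceptional divisor, contributing $+1$ to each of $h^{1,1}$ and $h^{2,2}$ and nothing to $H^3$ (the curves being rational), which reproduces $51$ and $3$; but this route then forces one to control the $64$ triple points $\mathbf{C}^3/V$ and to check that the choice in the order of blow-ups changes only flops, not Hodge numbers. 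I would present the orbifold computation as the primary argument and use the direct count as a cross-check.
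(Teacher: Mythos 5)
Your proof is correct, but its primary route is genuinely different from the paper's. The paper computes the $V$-invariants of $H^*(E_1\times E_2\times E_3)$ by hand (exactly your untwisted sector, giving $h^{1,1}=3$, $h^{3,0}=1$, $h^{2,1}=3$) and then accounts for the exceptional classes by directly counting the $48$ blown-up rational curves, each adding $1$ to $h^{1,1}$ and $h^{2,2}$ --- that is, the paper's proof is precisely your ``cross-check'' argument. You instead package the exceptional contributions as the three twisted sectors of the Chen--Ruan orbifold cohomology of $[A/V]$ and invoke the Batyrev/Denef--Loeser identification of orbifold Hodge numbers with those of a crepant resolution; your age and residual-action computations (age $1$, sixteen components each contributing one $(0,0)$ and one $(1,1)$ invariant class, no odd invariants) are all correct, as is the orbifold Euler number check $\tfrac14(6\cdot 64)=96=2(51-3)$. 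What your route buys: it is manifestly independent of the choice of crepant resolution, so it proves (rather than asserts) that the blowup order is irrelevant, and it sidesteps any local analysis at the $64$ points where three singular curves meet --- a point the paper's proof glosses over. What the paper's route buys: it is elementary and self-contained, needing no appeal to the crepant-resolution/orbifold-cohomology comparison theorem, and it stays closer to the explicit resolution constructed earlier in the section. Either argument suffices; yours is arguably the more robust of the two.
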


\begin{proof} We assume $k=\mathbf{C}$ to give a simple proof in terms of differential forms.
Let $H^{1,0}$ and $H^{0,1}$ of $E_i$ be spanned by $z_i, \bar z_i$ respectively.
Then $H^1(E_1 \times E_2 \times E_3)$ is spanned by all the $z_i, \bar z_i$
(abusively using the same notation for forms on the $E_i$ and their pullbacks
to the product) and $H^n(E_1 \times E_2 \times E_3)$ is identified with
$\bigwedge^n H^1$, as usual for an abelian variety.  We can write
$H^n = \oplus_{j=0}^n H^{j,n-j}$, where $H^{j,n-j}$ is spanned by the products
of $j$ holomorphic and $n-j$ antiholomorphic forms.

The negation map on $E_i$ negates $z_i, \bar z_i$ while fixing
the others.  Thus $V$ acts on $H^1, H^2, H^3$ with the following fixed
subspaces:
\begin{enumerate}
\item the fixed subspace on $H^1$ is trivial;
\item the fixed subspace on $H^2$ is spanned by the $z_i \wedge \bar z_i$;
\item the fixed subspace on $H^3$ is spanned by products of one form with
  each subscript.
\end{enumerate}
In order to obtain the Hodge diamond of $\Km(E_1\times E_2\times E_3)$, we must consider the
effect of the blowup.  Each blowup of a rational curve replaces a subvariety
with $h^{0,0} = h^{1,1} = 1$ by one with $h^{0,0} = h^{2,2} = 1$ and $h^{1,1} = 2$,
so it increases $h^{1,1}$ and $h^{2,2}$ by $1$.  Since there are $48$ such
curves, we obtain the values claimed in the statement of the proposition.
\end{proof}

\subsection{A birational map between a Kummer threefold and a fiber product of elliptic fibrations}

Assume that $\car k\neq 2$.  Let $\E\to\P^1_k$ be the Legendre fibration, with Weierstrass equation
\[ y^2 = x(x-1)(x-t). \]
Let $\eta=\Spec k(t_1,t_2,t_3)$ be the generic point of $(\P^1_k)^3$, and as usual write $\E^3_{\eta}=E_1\times E_2\times E_3$.  Thus $E_i$ is the elliptic curve over $\eta$ with Weierstrass equation $y^2=x(x-1)(x-t_i)$.  We are interested in two Calabi-Yau threefolds over $\eta$.    On the one hand,  we have the Kummer threefold $\Km(E_1\times E_2\times E_3)$, which is birational to the subvariety of the toric variety $T=\mathbf{A}^7/\mathbf{G}_m^3$ with equation
\begin{equation}
\label{EqForKm3fold}
y^2 = \prod_{i=1}^3 x_iz_i(x_i-t_iz_i)
\end{equation}

On the other hand, we have the fiber product $\caZ^3(\E)=\caC\times_{\P^1_{\eta}} \E_{\eta}$, which is birational to a Calabi-Yau variety by Corollary \ref{CorZ3isCY}.  

\begin{thm} \label{ThmLegendreBirational} The varieties $\caZ^3(\E)$ and $\Km(E_1\times E_2\times E_3)$ are birational over $\eta$.  Therefore Conjecture \ref{ConjZ3} is true for $\E$, and (if $k=\F_q$) then $\E$ is 3-modular.
\end{thm}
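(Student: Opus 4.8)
The plan is to prove the birational equivalence directly by exhibiting an explicit birational map, and then to extract Conjecture \ref{ConjZ3} and three-modularity from it using the machinery already assembled. The guiding principle is the precise analogy with the Legendre case of two-modularity: there $\caZ^2(\E)$ was identified with $\Km(\E^2)$ by introducing a single auxiliary elliptic parameter and thereby realizing the surface as the double cover of $\P^1\times\P^1$ branched along two Legendre quartics. In the same spirit I expect $\caZ^3(\E)=\caC\times_{\P^1_{\eta_3}}\E$ to become, after a rational change of coordinates, the double cover of $(\P^1)^3$ branched along three Legendre quartics, which is exactly the affine model \eqref{EqForKm3fold} of $\Km(E_1\times E_2\times E_3)$ in the chart $z_i=1$, namely $y^2=\prod_{i=1}^3 x_i(x_i-1)(x_i-t_i)$.

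First I would fix explicit birational models over $\eta_3$. By Corollary \ref{CorZ3isCY} the fiber product $\caZ^3(\E)$ of \eqref{DefnZrE} is birational to a nonsingular Calabi-Yau threefold, and I would take as a working model the affine variety whose coordinates are those of the Legendre equation $y^2=x(x-1)(x-t)$ together with Weierstrass coordinates on the coincidence curve $\caC$, the two being glued along the common base parameter $t=t_4$. Against this I place the model \eqref{EqForKm3fold} of $\Km(E_1\times E_2\times E_3)$. The heart of the argument is then to produce three rational functions $w_1,w_2,w_3$ on $\caZ^3(\E)$ playing the role of the three Legendre coordinates, and a rational function serving as $y$, so that $y^2=\prod_{i}w_i(w_i-1)(w_i-t_i)$ holds identically; since $\caC$ encodes $t_4$ through the coincidence relation, I would search for the $w_i$ as rational combinations of $x$, the $\caC$-coordinates, and the $t_i$, modeled on the substitutions used in the $\I_2^\ast\I_2\I_2$ computation of \S\ref{sec:extremalrational}. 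Having produced a candidate, I would verify by direct computation (cf.\ \cite{code}) that $(w_1,w_2,w_3,y)$ defines a dominant rational map onto \eqref{EqForKm3fold} and that it is in fact birational, e.g.\ by exhibiting an inverse or by a degree-one computation.

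I expect this last point to be the main obstacle. In dimension two one could recognize the target through the Weierstrass equation of a Jacobian (Kuwata's appendix, and the classical formulas for plane cubics), but in dimension three no such formula is available, exactly the difficulty that blocked the explicit computation for two of the semistable fibrations. Hence the birational map must be found and its birationality checked essentially by hand or by machine. The full symmetry of the coincidence variety in all four legs $t_1,\dots,t_4$, together with the matching of Hodge numbers forced by the Hodge diamond of $\Km(E_1\times E_2\times E_3)$ and the coincidence of transcendental lattices, are strong consistency checks but do not themselves supply the map; the genuine work is to write it down and certify it.

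Finally I would deduce the two stated consequences. Birationality yields a degree-one correspondence between $\caZ^3(\E)$ and $\Km(E_1\times E_2\times E_3)$; composing with the correspondence furnished by the quotient $E_1\times E_2\times E_3\dashrightarrow Q$ and its crepant resolution $\tilde Q\to Q$ gives a correspondence between $\caZ^3(\E)$ and $\E^3_{\eta_3}$. On the transcendental part of $H^3$ this correspondence is an isomorphism: the only $V$-invariant Künneth piece of $H^3(E_1\times E_2\times E_3)$ is $H^1(E_1)\otimes H^1(E_2)\otimes H^1(E_3)$, which survives in $H^3(\Km(E_1\times E_2\times E_3))$ (the $1,3,3,1$ middle row of the Hodge diamond) and equals the transcendental part $\sigma^{\boxtimes 3}$ of $H^3(\E^3_{\eta_3})$ for nonisogenous $E_i$. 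This is precisely Conjecture \ref{ConjZ3}. When $k=\F_q$ with $q$ odd, three-modularity then follows from the reduction already used for Theorem \ref{ThmMain2Modularity}: the coincidence square together with one-modularity of $\E$ supplies a dominant rational map $\Sht^3_G(\Gamma_0(N);\Sigma_\infty)\dashrightarrow\caZ^3(\E)$, and composing with the correspondence above and invoking the threefold analogue of the cohomological surjectivity statement, together with the description of $H^3_c$ of the shtuka space in Proposition \ref{PropCohoOfShtukaSpace}, produces the algebraic correspondence $Z$ required by Definition \ref{DefnRModularity} with $p_Z(s)$ surjective at every geometric point $s$ of $U^3$.
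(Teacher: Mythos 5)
There is a genuine gap: the central mathematical content of the theorem is the existence of the birational map, and your proposal does not produce it. You describe a search strategy (find rational functions $w_1,w_2,w_3,y$ on $\caZ^3(\E)$ satisfying $y^2=\prod_i w_i(w_i-1)(w_i-t_i)$, modeled on the substitutions from the surface case) and then candidly admit that ``the genuine work is to write it down and certify it.'' That work is the theorem. Unlike the two-dimensional case, where the target could be recognized via Jacobians of genus 1 fibrations and the Shioda--Inose machinery, here there is no structural result guaranteeing that your proposed coordinates exist or that the search terminates; indeed the paper's own route was quite different and, by the authors' account, serendipitous. They did not find a direct presentation of $\caZ^3(\E)$ as a double cover of $(\P^1)^3$: instead they exhibited a K3 surface fibration on $\Km(E_1\times E_2\times E_3)$ (via the ratio $(y:x_1x_2x_3z_1z_2z_3)$, with $(2,2,2)$ fibers), mapped $\caZ^3(\E)$ to a quintic threefold in $\P^4$, extracted pencils of quartic K3 surfaces from planes contained in that quintic, matched the two pencils by point counts over finite fields, and only then verified that the generic fibers are isomorphic by comparing elliptic fibrations on them. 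Remark \ref{rem:just-lucky} explicitly warns that there is no systematic way to enumerate K3 fibrations on such Calabi--Yau threefolds, which is precisely why a ``reach for the obvious analogue of the surface case'' plan cannot be accepted as a proof.

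Your final paragraph, deducing Conjecture \ref{ConjZ3} and 3-modularity from the birational equivalence, is essentially sound and consistent with the paper's framework: the $V$-invariant part of $H^3(E_1\times E_2\times E_3)$ is indeed $H^1(E_1)\otimes H^1(E_2)\otimes H^1(E_3)$, which survives in the crepant resolution and maps onto the transcendental part of $H^3(\E^3_{\eta_3})$, and the reduction from 3-modularity to Conjecture \ref{ConjZ3} via the coincidence square and 1-modularity is the one the paper sets up. But these consequences are conditional on the birationality, which remains unproven in your write-up.
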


\begin{proof}  We found a birational equivalence between $\caZ^3(\E)$ and $\Km(E_1\times E_2\times E_3)$ by exhibiting K3 surface fibrations on each threefold, such that the generic fibers of the fibrations are isomorphic.  The discovery of the birational equivalence was extraordinarily  serendipitous:  we reached for a few of the easiest possible K3 fibrations to construct on either side, and happened upon two pairs that matched.  For the birational equivalence in terms of coordinates, we refer to the code \cite{code}. We only describe here the method we used to find it.

The K3 surface fibration on $\Km(E_1\times E_2\times E_3)$ is easy enough to describe.  Recall that $\Km(E_1\times E_2\times E_3)$ is the desingularization of a subvariety $Q$ of a toric variety $T$, with equation \eqref{EqForKm3fold}.  Consider the rational map $T\dashrightarrow \P^1$ defined by the ratio $(y:x_1x_2x_3z_1z_2z_3)$.  We claim that its restriction to $Q\dashrightarrow \P^1$ has generic fiber a K3 surface.  Indeed, the fiber over $u\in\P^1$ is the subvariety of $(\P^1)^2$ with equation
\begin{equation}
\label{EqK3FibrationOnKummer3Fold}
 u^2\prod_{i=1}^3 x_iz_i = \prod_{i=1}^3 (x_i-z_i)(x_i-t_iz_i), 
 \end{equation}
 which one checks is nonsingular for generic $u$.  A nonsingular surface in $(\P^1)^3$ of degree $(2,2,2)$ is a K3 surface.  This is our K3 fibration $\Km(E_1\times E_2 \times E_3)\dashrightarrow \P^1$.  

Now consider $\caZ^3(\E)$, which is the fiber product of two rational elliptic fibrations over a common base.  This is naturally a subvariety of $\P^2\times \P^2\times \P^1$.  Let $S$ be the set of prime components of the singular subscheme of $\caZ^3(\E)$ of dimension 1, and consider the linear system of $(1,1,1)$-forms vanishing on $S$.  Our computations showed that this linear system gives a biratonal map from $\caZ^3(\E)$ to a quintic threefold $Z\subset \P^4$.  

The quintic $Z$ is singular along $8$ lines and some isolated points.  By considering spans of pairs of these lines, we found 9 planes $H\subset Z$.  If such a plane has equations $\ell_0=\ell_1=0$ for linear forms $\ell_0,\ell_1$ on $\P^4$, then the quintic defining $Z$ can be written $\ell_0f_0+\ell_1 f_1$ for forms $f_0,f_1$ of degree 4.  Then $[\ell_0:\ell_1]$ defines a rational map $Z\dashrightarrow \P^1$, whose generic fiber is a quartic in $\P^3$; i.e., a K3 surface.  

Experimentally, we counted points of fibers of these $Z\dashrightarrow \P^1$ over a finite field (choosing random values for $t_1,t_2,t_3$, until we found one which matched the point counts from fibers of $\Km(E_1\times E_2\times E_3)\dashrightarrow \P^1$.  It was then possible to change the coordinate $u$ on the $\P^1$ so that the point count matched fiber by fiber.  This suggested that the generic fibers of the two fibrations, which are K3 surfaces over $k(t_1,t_2,t_3,u)$, were isomorphic, and indeed they were.  We verified this by finding elliptic fibrations with the same configuration of singular fibers, and then directly observing that those elliptic fibrations are isomorphic.
\end{proof}

\begin{rmk}\label{rem:just-lucky}
  In contrast to the theory of elliptic fibrations on a K3 surface, which
  is well developed and draws on the very rich theory of lattice genera,
  K3 surface fibrations on Calabi-Yau varieties are not well understood.
  Such fibrations are described by certain extremal rays of the ample
  cone.  However, there is no general theory that allows one to construct a
  set of orbit representatives for such extremal rays with respect to the
  action of the automorphism group.  In any case, this would not be sufficient,
  because we might need to consider not only K3 fibrations on one particular
  Calabi-Yau model but on all equivalent ones.  Though there are finitely
  many Calabi-Yau varieties birational to a given one up to isomorphism,
  it is not clear how to determine them all in practice or approximately
  how many there should be for a variety such as $Q$.
  It is known (the relevant facts are nicely summarized in
  \cite[Introduction]{Fryers}) that the ample cones of different Calabi-Yau
  models exhaust the movable cone, but one expects that for a Calabi-Yau
  variety of Picard number $51$ such as $Q$ the geometry of this partition
  would be exceedingly complicated.  For example, we can blow up the $48$
  curves of singularities in any order (though blowups in two disjoint curves
  commute).  The group of automorphisms
  acting on this set of Calabi-Yau models is small,
  %\footnote{I think
  %  all you can do is translate by $2$-torsion in any direction, not permute
  %  the $2$-torsion in any other way or permute the fibers, so just order
  %  $64$.  Granted, the same fibration can give K3s, or at least correct
  %  point counts, on lots of models, but still, the more I think about it,
  %  the more it feels amazingly lucky
  %  that we could find the equivalence for the Legendre family.}
  and presumably there are many other ways to obtain models as well.

  These problems have discouraged us from attempting to extend the result of
  Theorem~\ref{ThmLegendreBirational} to other examples such as the elliptic
  surface with singular fibers $\III^*, \I_2, \I_1$.  We have verified numerically
  that the $\F_p$-rational point counts match mod $p$ and therefore expect
  a correspondence.  If the analogy with surfaces holds, we might hope that
  there is a K3 surface fibration on each side such that the corresponding
  fibers are isogenous (just as in our proof of $2$-modularity for this
  family we find genus $1$ fibrations on both K3 surfaces whose fibers have
  the same number of points and that are therefore related by an isogeny).
  However, at our present level of understanding there is no possibility of
  finding it except by a lucky stab into a potentially enormous space of
  fibrations.  The other families with one additive and two multiplicative
  fibers are more daunting still, since we would not expect any single
  fibration to be sufficient; we would need a third isogenous Calabi-Yau
  threefold to mediate between the two sides.
\end{rmk}

\bibliographystyle{amsalpha}
\bibliography{bibfile}

\newcommand{\etalchar}[1]{$^{#1}$}
\providecommand{\bysame}{\leavevmode\hbox to3em{\hrulefill}\thinspace}
\providecommand{\MR}{\relax\ifhmode\unskip\space\fi MR }
% \MRhref is called by the amsart/book/proc definition of \MR.
\providecommand{\MRhref}[2]{%
  \href{http://www.ams.org/mathscinet-getitem?mr=#1}{#2}
}
\providecommand{\href}[2]{#2}
\begin{thebibliography}{AKM{\etalchar{+}}01}

\bibitem[AKM{\etalchar{+}}01]{akmmmp}
Sang~Yook An, Seog~Young Kim, David~C. Marshall, Susan~H. Marshall, William~G.
  McCallum, and Alexander~R. Perlis, \emph{Jacobians of genus one curves}, J.
  Number Theory \textbf{90} (2001), no.~2, 304--315. \MR{1858080}

\bibitem[ARVT05]{Artin-et-al:Jacobian}
Michael Artin, Fernando Rodriguez-Villegas, and John Tate, \emph{On the
  {J}acobians of plane cubics}, Adv. Math. \textbf{198} (2005), no.~1,
  366--382. \MR{2183258}

\bibitem[BCDT01]{BCDT}
Christophe Breuil, Brian Conrad, Fred Diamond, and Richard Taylor, \emph{On the
  modularity of elliptic curves over {$\bold Q$}: wild 3-adic exercises}, J.
  Amer. Math. Soc. \textbf{14} (2001), no.~4, 843--939. \MR{1839918}

\bibitem[Bea82]{Beauville}
Arnaud Beauville, \emph{Les familles stables de courbes elliptiques sur {${\bf
  P}^{1}$} admettant quatre fibres singuli\`eres}, C. R. Acad. Sci. Paris
  S\'{e}r. I Math. \textbf{294} (1982), no.~19, 657--660. \MR{664643}

\bibitem[BSV17]{BSV}
Samuel Boissi\`ere, Alessandra Sarti, and Davide~Cesare Veniani, \emph{On prime
  degree isogenies between {K}3 surfaces}, Rend. Circ. Mat. Palermo (2)
  \textbf{66} (2017), no.~1, 3--18. \MR{3690544}

\bibitem[Bus19]{Buskin}
Nikolay Buskin, \emph{Every rational {H}odge isometry between two {$K3$}
  surfaces is algebraic}, J. Reine Angew. Math. \textbf{755} (2019), 127--150.
  \MR{4015230}

\bibitem[CS99]{SPLAG}
J.~H. Conway and N.~J.~A. Sloane, \emph{Sphere packings, lattices and groups},
  third ed., Grundlehren der mathematischen Wissenschaften [Fundamental
  Principles of Mathematical Sciences], vol. 290, Springer-Verlag, New York,
  1999, With additional contributions by E. Bannai, R. E. Borcherds, J. Leech,
  S. P. Norton, A. M. Odlyzko, R. A. Parker, L. Queen and B. B. Venkov.
  \MR{1662447}

\bibitem[DF13]{DeligneFlicker}
Pierre Deligne and Yuval~Z. Flicker, \emph{Counting local systems with
  principal unipotent local monodromy}, Ann. of Math. (2) \textbf{178} (2013),
  no.~3, 921--982. \MR{3092473}

\bibitem[dFF20]{Fernandez}
Mar\'ia~In\'es de~Frutos-Fern\'andez, \emph{Moduli spaces of shtukas over the
  projective line}, 2020.

\bibitem[Dri74]{DrinfeldEllipticModules}
V.~G. Drinfeld, \emph{Elliptic modules}, Mat. Sb. (N.S.) \textbf{94(136)}
  (1974), 594--627, 656.

\bibitem[Dri80]{DrinfeldLanglandsConjectureForGL2}
\bysame, \emph{Langlands' conjecture for {${\mathrm{GL}}(2)$} over functional
  fields}, Proceedings of the {I}nternational {C}ongress of {M}athematicians
  ({H}elsinki, 1978), Acad. Sci. Fennica, Helsinki, 1980, pp.~565--574.

\bibitem[EK14]{ElkiesKumar}
Noam Elkies and Abhinav Kumar, \emph{K3 surfaces and equations for {H}ilbert
  modular surfaces}, Algebra and Number Theory \textbf{8} (2014), no.~10,
  2297–--2411.

\bibitem[Fal83]{FaltingsMordell}
G.~Faltings, \emph{Endlichkeitss\"{a}tze f\"{u}r abelsche {V}ariet\"{a}ten
  \"{u}ber {Z}ahlk\"{o}rpern}, Invent. Math. \textbf{73} (1983), no.~3,
  349--366. \MR{718935}

\bibitem[Fis18]{Fis18}
Tom Fisher, \emph{A formula for the {J}acobian of a genus one curve of
  arbitrary degree}, Algebra Number Theory \textbf{12} (2018), no.~9,
  2123--2150. \MR{3894430}

\bibitem[Fry01]{Fryers}
M.~J. Fryers, \emph{The movable fan of the {H}orrocks-{M}umford quintic}, 2001.

\bibitem[Ful93]{Fulton}
William Fulton, \emph{Introduction to toric varieties}, Annals of Mathematics
  Studies, vol. 131, Princeton University Press, Princeton, NJ, 1993, The
  William H. Roever Lectures in Geometry. \MR{1234037}

\bibitem[GZ86]{GrossZagier}
Benedict~H. Gross and Don~B. Zagier, \emph{Heegner points and derivatives of
  {$L$}-series}, Invent. Math. \textbf{84} (1986), no.~2, 225--320. \MR{833192}

\bibitem[Huy16]{huybrechts}
Daniel Huybrechts, \emph{Lectures on {K}3 surfaces}, Cambridge Studies in
  Advanced Mathematics, vol. 158, Cambridge University Press, Cambridge, 2016.
  \MR{3586372}

\bibitem[Ito02]{Ito}
Hiroyuki Ito, \emph{On extremal elliptic surfaces in characteristic 2 and 3},
  Hiroshima Math. J. \textbf{32} (2002), no.~2, 179--188. \MR{1925896}

\bibitem[Keu00]{Keum}
JongHae Keum, \emph{A note on elliptic {$K3$} surfaces}, Trans. Amer. Math.
  Soc. \textbf{352} (2000), no.~5, 2077--2086. \MR{1707196}

\bibitem[KK17]{Kumar-Kuwata:Rk18}
Abhinav Kumar and Masato Kuwata, \emph{Elliptic {K}3 surfaces associated with
  the product of two elliptic curves: {M}ordell-{W}eil lattices and their
  fields of definition}, Nagoya Math. J. \textbf{228} (2017), 124--185.
  \MR{3721376}

\bibitem[Kle68]{Kleiman}
S.~L. Kleiman, \emph{Algebraic cycles and the {W}eil conjectures}, Dix
  expos\'es sur la cohomologie des sch\'emas, Adv. Stud. Pure Math., vol.~3,
  North-Holland, Amsterdam, 1968, pp.~359--386. \MR{292838}

\bibitem[Kol90]{Kolyvagin}
V.~A. Kolyvagin, \emph{Euler systems}, The {G}rothendieck {F}estschrift, {V}ol.
  {II}, Progr. Math., vol.~87, Birkh\"{a}user Boston, Boston, MA, 1990,
  pp.~435--483. \MR{1106906}

\bibitem[KS08]{Kuwata-Shioda}
Masato Kuwata and Tetsuji Shioda, \emph{Elliptic parameters and defining
  equations for elliptic fibrations on a {K}ummer surface}, Algebraic geometry
  in {E}ast {A}sia---{H}anoi 2005, Adv. Stud. Pure Math., vol.~50, Math. Soc.
  Japan, Tokyo, 2008, pp.~177--215. \MR{2409557}

\bibitem[Kum14]{Kumar:2014}
Abhinav Kumar, \emph{Elliptic fibrations on a generic {J}acobian {K}ummer
  surface}, J. Algebraic Geom. \textbf{23} (2014), no.~4, 599--667.
  \MR{3263663}

\bibitem[Laf18]{VincentLafforgueChtoucas}
Vincent Lafforgue, \emph{Chtoucas pour les groupes r\'{e}ductifs et
  param\'{e}trisation de {L}anglands globale}, J. Amer. Math. Soc. \textbf{31}
  (2018), no.~3, 719--891. \MR{3787407}

\bibitem[LW]{code}
A.~Logan and J.~Weinstein, Magma scripts available for download at
  \url{https://github.com/adammlogan/shtuka-scripts} or
  \url{https://tinyurl.com/shtuka-scripts}.

\bibitem[LZ18]{LafforgueZhu}
Vincent Lafforgue and Xinwen Zhu, \emph{D\'ecomposition au-dessus des
  param\`etres de {L}anglands elliptiques}, 2018.

\bibitem[Mor84]{morr}
D.~Morrison, \emph{On {K}3 surfaces with large {P}icard number}, Invent. Math.
  \textbf{75} (1984), 105--121.

\bibitem[MP86]{MirandaPersson}
Rick Miranda and Ulf Persson, \emph{On extremal rational elliptic surfaces},
  Math. Z. \textbf{193} (1986), no.~4, 537--558. \MR{867347}

\bibitem[Muk87]{Mukai}
S.~Mukai, \emph{On the moduli space of bundles on {$K3$} surfaces. {I}}, Vector
  bundles on algebraic varieties ({B}ombay, 1984), Tata Inst. Fund. Res. Stud.
  Math., vol.~11, Tata Inst. Fund. Res., Bombay, 1987, pp.~341--413.
  \MR{893604}

\bibitem[Mum78]{MumfordAnAlgebroGeometricConstruction}
D.~Mumford, \emph{An algebro-geometric construction of commuting operators and
  of solutions to the {T}oda lattice equation, {K}orteweg de{V}ries equation
  and related nonlinear equation}, Proceedings of the {I}nternational
  {S}ymposium on {A}lgebraic {G}eometry ({K}yoto {U}niv., {K}yoto, 1977),
  Kinokuniya Book Store, Tokyo, 1978, pp.~115--153.

\bibitem[Ogu89]{Oguiso}
Keiji Oguiso, \emph{On {J}acobian fibrations on the {K}ummer surfaces of the
  product of nonisogenous elliptic curves}, J. Math. Soc. Japan \textbf{41}
  (1989), no.~4, 651--680. \MR{1013073}

\bibitem[OS91]{Oguiso-Shioda:RES}
Keiji Oguiso and Tetsuji Shioda, \emph{The {M}ordell-{W}eil lattice of a
  rational elliptic surface}, Comment. Math. Univ. St. Paul. \textbf{40}
  (1991), no.~1, 83--99. \MR{1104782}

\bibitem[Pvv71]{PSS}
I.~I. Pjatecki\u{\i}-\v{S}apiro and I.~R. \v{S}afarevi\v{c}, \emph{Torelli's
  theorem for algebraic surfaces of type {${\rm K}3$}}, Izv. Akad. Nauk SSSR
  Ser. Mat. \textbf{35} (1971), 530--572. \MR{0284440}

\bibitem[Rei80]{Reid}
Miles Reid, \emph{Canonical {$3$}-folds}, Journ\'{e}es de {G}\'{e}ometrie
  {A}lg\'{e}brique d'{A}ngers, {J}uillet 1979/{A}lgebraic {G}eometry, {A}ngers,
  1979, Sijthoff \& Noordhoff, Alphen aan den Rijn---Germantown, Md., 1980,
  pp.~273--310. \MR{605348}

\bibitem[Roa96]{Roan}
Shi-Shyr Roan, \emph{Minimal resolutions of {G}orenstein orbifolds in dimension
  three}, Topology \textbf{35} (1996), no.~2, 489--508. \MR{1380512}

\bibitem[Sch88]{Schoen}
Chad Schoen, \emph{On fiber products of rational elliptic surfaces with
  section}, Math. Z. \textbf{197} (1988), no.~2, 177--199. \MR{923487}

\bibitem[Sen17]{Sengupta:2017}
Tathagata Sengupta, \emph{Elliptic fibrations on supersingular {$K3$} surface
  with {A}rtin invariant 1 in characteristic 3}, Analytic and algebraic
  geometry, Hindustan Book Agency, New Delhi, 2017, pp.~239--292. \MR{3728136}

\bibitem[SI77]{Shioda-Inose}
T.~Shioda and H.~Inose, \emph{On singular {$K3$} surfaces}, Complex analysis
  and algebraic geometry, Iwanami Shoten, Tokyo, 1977, pp.~119--136.
  \MR{0441982}

\bibitem[SS10]{SchuettShioda}
Matthias Sch\"{u}tt and Tetsuji Shioda, \emph{Elliptic surfaces}, Algebraic
  geometry in {E}ast {A}sia---{S}eoul 2008, Adv. Stud. Pure Math., vol.~60,
  Math. Soc. Japan, Tokyo, 2010, pp.~51--160. \MR{2732092}

\bibitem[SW20]{ScholzeWeinstein}
Peter Scholze and Jared Weinstein, \emph{Berkeley lectures on {$p$}-adic
  geometry}, Annals of Mathematics Studies, vol. 207, Princeton University
  Press, Princeton, NJ, 2020. \MR{4446467}

\bibitem[Tat95]{TateBSD}
John Tate, \emph{On the conjectures of {B}irch and {S}winnerton-{D}yer and a
  geometric analog}, S\'{e}minaire {B}ourbaki, {V}ol. 9, Soc. Math. France,
  Paris, 1995, pp.~Exp. No. 306, 415--440. \MR{1610977}

\bibitem[Ulm03]{UlmerAnalogies}
Douglas Ulmer, \emph{Elliptic curves and analogies between number fields and
  function fields}, 2003.

\bibitem[Uts12]{Utsumi:2012}
Kazuki Utsumi, \emph{Weierstrass equations for {J}acobian fibrations on a
  certain {$K3$} surface}, Hiroshima Math. J. \textbf{42} (2012), no.~3,
  355--383. \MR{3050126}

\bibitem[Var04]{Varshavsky}
Yakov Varshavsky, \emph{Moduli spaces of principal {$F$}-bundles}, Selecta
  Math. (N.S.) \textbf{10} (2004), no.~1, 131--166. \MR{2061225}

\bibitem[Xue20a]{Xue}
Cong Xue, \emph{Cuspidal cohomology of stacks of shtukas}, Compos. Math.
  \textbf{156} (2020), no.~6, 1079--1151. \MR{4098603}

\bibitem[Xue20b]{XueFiniteness}
\bysame, \emph{Finiteness of cohomology groups of stacks of shtukas as modules
  over {H}ecke algebras, and applications}, \'{E}pijournal G\'{e}om.
  Alg\'{e}brique \textbf{4} (2020), Art. 6, 42. \MR{4113657}

\bibitem[Yan22]{Yang}
Ziquan Yang, \emph{Isogenies between {K}3 surfaces over {$\overline {\Bbb
  F}_p$}}, Int. Math. Res. Not. IMRN (2022), no.~6, 4407--4450. \MR{4391893}

\bibitem[YZ17]{YunZhangShtukas}
Zhiwei Yun and Wei Zhang, \emph{Shtukas and the {T}aylor expansion of
  {$L$}-functions}, Ann. of Math. (2) \textbf{186} (2017), no.~3, 767--911.

\bibitem[YZ19]{YunZhangShtukasII}
\bysame, \emph{Shtukas and the {T}aylor expansion of {$L$}-functions ({II})},
  Ann. of Math. (2) \textbf{189} (2019), no.~2, 393--526.

\bibitem[Zar14]{Zarhin}
Yuri~G. Zarhin, \emph{Abelian varieties over fields of finite characteristic},
  Cent. Eur. J. Math. \textbf{12} (2014), no.~5, 659--674. \MR{3165576}

\end{thebibliography}

\end{document}